\newcommand{\R}{\mathbb{R}}
\newcommand{\1}{\mathbbm{1}}
\renewcommand{\P}{\mathbb{P}}
\newcommand{\F}{\mathcal{F}}
\newcommand{\E}{\mathbb{E}}
\DeclareMathOperator\supp{supp}
\DeclareMathOperator\id{id}
\newtheorem{theorem}{Theorem}
\newtheorem{proposition}[theorem]{Proposition}
\newtheorem{lemma}[theorem]{Lemma}
\newtheorem{corollary}[theorem]{Corollary}
\newtheorem{definition}[theorem]{Definition}
\theoremstyle{definition}
\newtheorem{example}[theorem]{Example}
\newtheorem{remark}[theorem]{Remark}
\title{Failure of the Markov property for stochastic Volterra equations}
\author[1]{Martin Friesen \thanks{Email: martin.friesen@dcu.ie}}
\author[2]{Stefan Gerhold \thanks{Email: sgerhold@fam.tuwien.ac.at}}
\author[2]{Kristof Wiedermann \thanks{Email: kristof.wiedermann@tuwien.ac.at}}
\affil[1]{\small School of Mathematical Sciences, Dublin City University}
\affil[2]{\small Institute of Statistics and Mathematical Methods in Economics, TU Wien}
\date{\today}
\numberwithin{equation}{section}
\numberwithin{theorem}{section}
\begin{document}

\maketitle

\begin{abstract}
\noindent Memory-driven stochastic dynamics arise naturally in many applications, and stochastic Volterra equations (SVEs) offer a flexible framework for modeling such systems. Their convolution structure with Volterra kernels endows the dynamics with a formal path-dependency, which suggests the failure of the Markov property. While this has previously been rigorously established only for Gaussian Volterra processes, by constructing nondegenerate admissible perturbations through Markovian lifts, we prove that also general SVEs with Hölder-continuous coefficients do not possess the Markov property for a broad class of Volterra kernels. Moreover, we show that the associated Markovian lift is, in general, necessarily infinite-dimensional. These observations reflect the intrinsic infinite-dimensionality of memory effects in SVEs and underscore the need for analytical and probabilistic tools beyond the classical Markovian framework.
\end{abstract}
\vspace{0.2cm}
{\small \textbf{Keywords:} stochastic Volterra equation; path-dependence; Markov property; Markovian lift; nondegenerate perturbation; admissible perturbation; regularity results.\vspace{0.2cm}\newline
\textbf{2020 Mathematics Subject Classification:} 60G22; 60H15; 60H20; 60J25.}

\section{Introduction}

\subsection{Literature and Motivation}

Many systems that shape our world, from cells and ecosystems to financial markets and weather patterns, evolve in fundamentally random ways. Assuming that the future depends only on the present leads to the rich and well-developed theory of Markov processes, whose elegance and generality have established it as a cornerstone of modern stochastic modeling. In practice, however, the present does not always determine the future distributions, and memory arises across disciplines. For instance, it appears in biology through age-structured populations \cite{MR3024808}, in physics and chemistry through anomalous diffusions~\cite{MR3793188} and materials with stress retention \cite{MR1238939}, in computer science through recurrent architectures such as LSTMs, and in finance through rough volatility \cite{pricingroughvol, volisrough} and electricity spot prices \cite{BENNEDSEN2017301}. Memory effects also emerge naturally from model reduction (e.g.\ through the Mori–Zwanzig formalism \cite{MR4565387, MR4396389}) or are deliberately introduced to capture long- and short-range temporal dependencies in data.

Within this landscape, stochastic Volterra equations (SVEs) have assumed a central role. They form a highly active area of research, particularly driven by applications to rough volatility models and electricity spot-price dynamics, where roughness at small time scales plays a crucial role \cite{MR3079297, rvol, doi:10.1137/23M1617370}, but are also frequently used in mathematical physics for modeling fractional dynamics. SVEs encode memory through the Volterra kernel, which determines how past trajectories influence future dynamics on infinitesimal scales, while simultaneously allowing for flexible modeling of rough behavior. This flexibility, however, comes at the cost of significant mathematical complexity. For singular kernels, SVEs fail to be semimartingales, while for kernels not equal to the exponential function, they introduce a formal path-dependency into the dynamics.

The intrinsic path-dependence of SVEs gives rise to both conceptual and practical challenges, strongly suggesting the failure of the Markov property. For instance, simulation schemes must access the entire past trajectory of the driving noise at each step \cite{alfonsi2022}, a difficulty that motivates finite-dimensional Markovian approximations \cite{multifactorapproxroughvol, MR4521278}. Limit distributions and stability properties exhibit fundamentally new phenomena, such as power-law convergence rates for linear models and multiple equilibria, that extend well beyond the classical Markovian framework \cite{MR4503737, FrJi22, JPS22}. The corresponding distributions often display heavy tails and long-range dependence, giving rise to limit theorems outside the Gaussian domain of attraction \cite{MR3561100}. At the same time, analyzing distributional properties through path-dependent Kolmogorov equations remains highly nontrivial \cite{MR4047986}. Path-dependence also manifests explicitly after suitable transformations of the process \cite{BFK25}, where it plays a key role in establishing equivalence of laws and studying maximum likelihood estimation. In rough volatility models, it is further reflected in hedging strategies that depend on the past volatility or the forward variance curve \cite{roughhestonhedging, MR3968277}. Taken together, these observations provide compelling evidence for a breakdown of the Markov property. Indeed, it has become almost folklore that stochastic Volterra processes are inherently non-Markovian, although rigorous proofs remain scarce outside Gaussian Volterra processes \cite{BaGe20}, or specific one-dimensional settings \cite{FGW25}.

Recall that a stochastic process $X$ with state space $D$ defined on a filtered probability space $(\Omega, \mathcal{F}, (\mathcal{F}_t)_{t \in\R_+}, \P)$, has the Markov property with respect to the filtration $(\mathcal{F}_t)_{t \in\R_+}$ if for all $0 \leq t < T$ and each bounded and measurable function $f:D\longrightarrow\R$ on $D$ it holds
\begin{align}\label{eq: Markov property}
    \E[f(X_T) \, | \, \F_t] = \E[ f(X_T) \, | \, X_t] \ \ \text{ a.s.}
\end{align}
Clearly, the Markov property is preserved when passing to a coarser filtration with respect to which $X$ is adapted. We also emphasize that \eqref{eq: Markov property} formulates the Markov property only for a fixed (not necessarily deterministic) initial state $X_0 \sim \nu$. 

To illustrate the failure of the Markov property, let us consider the Volterra square-root process, which is a Volterra analogue of the classical square-root diffusion process. It is defined as the unique nonnegative weak solution of the SVE
\begin{equation}\label{eq:roughHeston}
X_t = x_0 + \int_0^t K(t-s)(b + \beta X_s)\, \mathrm{d}s + \sigma \int_0^t K(t-s)\sqrt{X_s}\, \mathrm{d}B_s,
\quad t \in \R_+,
\end{equation}
where $B$ is a standard Brownian motion, $\beta\in\R$, and $x_0, b, \sigma \in \R_+$, see \cite[Theorem 6.1]{AbiJaLaPu19}. In the special case of the fractional Riemann-Liouville kernel $K(t) = t^{H-1/2} / \Gamma(H+\tfrac{1}{2})$, where $H \in (0,1/2)$ denotes the Hurst parameter, one obtains the rough Cox–Ingersoll–Ross process central for rough volatility modeling \cite{roughhestonhedging, roughhestcharfct}. The path-dependence in \eqref{eq:roughHeston} becomes visible in the structure of its conditional expectations (see \cite[Theorem 4.5\hspace{0.03cm}(i)]{AbiJaLaPu19}), which for $0 \leq t < T$ take the form
\begin{align} \nonumber
\mathbb{E}[X_T\, | \, \mathcal{F}_t] &= b \int_0^{T-t}E_K(s)\, \mathrm{d}s - \Pi_{T-t}(t)\hspace{0.02cm}x_0
\\ &\quad \,  + (\Delta_{T-t}E_K\ast L)(0)\,X_t + \int_{[0,t]} X_{t-s}\,\Pi_{T-t}(\mathrm{d}s),\label{eq: 8}
\end{align}
where the operator $\Delta_h$ is for $h\in\R_+$ defined by $\Delta_h f = f(\cdot+h)$. Here $E_K$ is the unique solution of $E_K(t) = K(t) + \beta \int_0^t K(t-s)E_K(s)\,\mathrm{d}s$, $L$ denotes the resolvent of the first kind of $K$, and $\Pi_z(t) = (\Delta_z E_K\ast L)(t) - \Delta_z(E_K\ast L)(t)$ for $t,z \in\R_+$. Since the right-hand side of \eqref{eq: 8} formally depends on the entire past $(X_s)_{s\le t}$, it is natural to conclude that $X$ is not Markov. However, this intuition alone is insufficient. For example, if $K \equiv 1$, then $\Pi_z \equiv 0$, and the path-dependence vanishes. From this perspective, a rigorous proof of non-Markovianity requires an argument that $\Pi_z$ is genuinely non-constant and that the integral $\int_{[0,t]} X_{t-s}\,\Pi_{T-t}(\mathrm{d}s)$ cannot be  $X_t$-measurable. This problem is addressed in Subsection~\ref{section:affine case} based on the absolute continuity of the law for a class of Volterra Ito-processes with nondegenerate Volterra kernels.

\subsection{Discussion of the results}

We study the failure of the Markov property for the general class of convolution-type stochastic Volterra equations (SVEs) on $\R^d$ of the form
\begin{equation}\label{eq:generalSVIE}
  X_t = g(t)+\int_{0}^{t}K^b(t-s) \hspace{0.03cm}b(s,X_{s})\, \mathrm{d}s + \int_0^t K^{\sigma}(t-s)\hspace{0.03cm}\sigma(s,X_s)\, \mathrm{d}B_s,\quad t \in \R_{+},
\end{equation} 
where $B$ denotes an $m$-dimensional Brownian motion on some filtered probability space $(\Omega,\mathcal{F},(\mathcal{F}_t)_{t \in\R_+},\mathbb{P})$, $b: \R_+\times\R^d \longrightarrow \R^d$ the drift, and $\sigma: \R_+\times\R^d \longrightarrow \R^{d \times m}$ the diffusion coefficients. The Volterra kernels $K^b, K^{\sigma} \in L_{\mathrm{loc}}^2(\R_+; \R^{d \times d})$ encode the memory structure, while $g:\R_+\longrightarrow\R^d$ is an $\mathcal{F}_0$-measurable input that is naturally linked to the Volterra kernels. For classical SDEs with $K^b=K^{\sigma} \equiv 1$, the choice $g\equiv x_0\in\R^d$ is commonly used, while for general Volterra kernels, the class of admissible input curves $g$ will be discussed in
Section~\ref{section:markovianliftapproach}, see \eqref{eq: admissible input curves} therein. 

Concerning our main assumptions, we suppose that the drift and diffusion coefficients are H\"older continuous in space, uniformly in the time variable. Furthermore, the corresponding Volterra kernels $K^b, K^{\sigma}$ shall satisfy a standard $L^2$-increment condition (see~\eqref{eq: K increment} in condition (A)). Such an assumption guarantees that solutions of \eqref{eq:generalSVIE} have H\"older continuous sample paths. Finally, we suppose that the Gram matrix function associated with $K^{\sigma} (K^{\sigma})^{\intercal}$ is positive definite and its smallest eigenvalue has power-type lower asymptotics as $t \searrow 0$. Under these assumptions and a minor technical balance condition, we establish as a combination of Sections \ref{section:abs cont} -- \ref{section:failure} the failure of the Markov property in an abstract setting. More precisely, any weak solution $X$ of \eqref{eq:generalSVIE} does not fulfill the Markov property, provided that $\P[X_{t_0} \in \Gamma_{\sigma,t_0}] >0$ holds for some $t_0 > 0$ with
\begin{align}\label{eq: Gamma sigma}
            \Gamma_{\sigma,t} := \big\{ x \in \R^d \ : \ \mathrm{det}\big(\sigma(t,x)\hspace{0.02cm}\sigma(t,x)^{\intercal}\big) \neq 0 \big\}, \quad t>0.
\end{align}
The assumption $\P[X_{t_0} \in \Gamma_{\sigma,t_0}] >0$ merely states that the diffusion term is nondegenerate with positive probability, and hence is not a major restriction. For example, if $d = 1$ and $\mathbb{P}[X_t\in\Gamma_{\sigma,t}]=0$ holds for each $t > 0$, it would simply imply $\sigma(\cdot,X) = 0$ up to indistinguishability. The process $X$ would then be entirely $\F_0$-measurable, as its dynamics would be solely governed by $g$. 

In Section~\ref{section: examples} we verify our abstract conditions for the case of diagonal Volterra kernels with regularly varying entries. To illustrate our results, let us consider below the scalar-valued case under the additional condition that $g \equiv x_0\in\R^d$ is deterministic. Our first result covers the simpler case where $K^b = K^{\sigma}$.

\begin{theorem}[$K^b = K^\sigma$ scalar-valued]\label{thm: intro 1}
    Let $K^b = K^{\sigma} = k \hspace{0.03cm}\mathrm{id}_{\R^d}$ where $k$ is smooth on $\R_+^*$, $k'$ is eventually monotone as $t \searrow 0$, and $\int_{1}^{\infty} \big| k'(t)\big|^2\, \mathrm{d}t < \infty$. Assume that $k$ is regularly varying with index $\rho = H - \frac{1}{2}$, where $H \in (0,1)$, $k^{(m)}$ is locally square-integrable and completely monotone with Bernstein measure $\mu$ for some $m \in \mathbb{N}_0$, and there exists a sequence $(\lambda_n)_{n \geq 1} \subseteq \mathrm{supp}(\mu) \cap \R_+^*$ of pairwise distinct elements such that 
    \begin{align}\label{eq: 14}
        \sum_{n=1}^{\infty} \frac{1}{\lambda_n} = \infty.
    \end{align}
    Let $b, \sigma$ be $\chi_b, \chi_{\sigma} \in (0,1]$-H\"older continuous in $x$ uniformly in $t$, see \eqref{eq:uniformHölder}, and consider a deterministic $g \equiv x_0$. Then any weak solution $X$ of \eqref{eq:generalSVIE} is not a Markov process with respect to the filtration generated by $X$ and the associated Brownian motion $B$, provided that there exists $t_0 > 0$ with $\P[ \mathrm{det}(\sigma(t_0,X_{t_0})\hspace{0.02cm}\sigma(t_0,X_{t_0})^{\intercal}) \neq 0] > 0$. 
\end{theorem}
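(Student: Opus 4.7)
The plan is to deduce Theorem~\ref{thm: intro 1} as a direct application of the abstract non-Markovianity result established in Sections~\ref{section:abs cont}--\ref{section:failure}, by checking its hypotheses for the specific kernel $k$ and the coefficients $b,\sigma$ of the statement. Concretely, I would read off from those sections a checklist consisting of: (i) an $L^2$-increment condition on the kernel guaranteeing Hölder continuous sample paths; (ii) positivity and power-type lower asymptotics near $t=0$ of the Gram matrix of $K^\sigma$; (iii) a Markovian lift of \eqref{eq:generalSVIE} on some function space; (iv) existence of a nondegenerate admissible perturbation through this lift; and (v) absolute continuity of the law of $X_{t_0}$ on the set $\Gamma_{\sigma,t_0}$. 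Items (i) and (ii) are essentially analytic and follow in the scalar case from the regular variation of $k$ with index $\rho=H-\tfrac12\in(-\tfrac12,\tfrac12)$, smoothness of $k$ on $\R_+^*$, eventual monotonicity of $k'$ at the origin, and the assumed tail integrability of $|k'|^2$: standard Karamata/Potter-type bounds give the required $L^2$-increment control and a lower bound of the form $k(t)^2\gtrsim t^{2H-1+\varepsilon}$ near zero.

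The core of the proof is the Markovian lift, which I would build from the Bernstein representation of $k^{(m)}$. Complete monotonicity provides a measure $\mu$ with
\begin{equation*}
    k^{(m)}(t)=\int_{\R_+^*} e^{-\lambda t}\,\mu(\mathrm{d}\lambda),\quad t>0,
\end{equation*}
and after $m$-fold integration $k$ is written, up to a polynomial correction, as a $\mu$-mixture of exponentials. This decomposition yields a family of one-dimensional Ornstein--Uhlenbeck-type Markov components $(Z_t(\lambda))_{\lambda\in\supp(\mu)}$ such that $X_t$ is recovered as a $\mu$-integral of $Z_t$; the joint process $Z_t$ is then Markov on a suitable function space. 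At this stage the distinct sequence $(\lambda_n)\subseteq\supp(\mu)\cap\R_+^*$ enters: the condition $\sum_n 1/\lambda_n=\infty$ is a Müntz--Szász-type condition on the system $\{e^{-\lambda_n t}\}_n$, which guarantees that this system is rich enough so that the canonical map from the lift state to $X_{t_0}$ has an infinite-dimensional kernel. This provides the freedom to construct nondegenerate admissible perturbations of the initial curve $g\equiv x_0$ that leave the distribution of $X_{t_0}$ unchanged but alter the distribution of $X_T$ for some $T>t_0$.

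Finally, on the event $\{X_{t_0}\in\Gamma_{\sigma,t_0}\}$ of positive probability, the diffusion coefficient is nondegenerate, so the absolute continuity result of Section~\ref{section:abs cont} applies and the marginal law of $X_{t_0}$ admits a density on this set. Combining this density with the perturbation constructed above and invoking the abstract failure-of-Markov argument from Section~\ref{section:failure} precludes the equality \eqref{eq: Markov property} for the natural filtration of $(X,B)$, which is the conclusion. The main obstacle is step (iv): constructing admissible perturbations when $b,\sigma$ are merely $\chi_b,\chi_\sigma$-Hölder continuous rather than Lipschitz. This is precisely where the divergent-series condition \eqref{eq: 14} is indispensable, as it supplies the infinite-dimensional supply of perturbation directions needed to compensate for the weaker regularity of the coefficients and to propagate the initial perturbation through the nonlinear dynamics without losing nondegeneracy at time $t_0$.
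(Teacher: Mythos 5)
Your high-level architecture (lift, admissible perturbation, density, contradiction) matches the paper's, but two of the key mechanisms are misidentified in ways that would prevent the proof from closing.

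First, the role of condition \eqref{eq: 14} and the contradiction mechanism. The paper does not perturb the initial curve $g$ and compare laws of $X_T$; that coupling-type argument is not what is used and would be hard to control for nonlinear dynamics with H\"older coefficients. Instead, one fixes an auxiliary Volterra It\^o-process $Z_t=\int_0^t\widetilde{k}(t-s)b(s,X_s)\,\mathrm{d}s+\int_0^t\widetilde{k}(t-s)\sigma(s,X_s)\,\mathrm{d}B_s$ with $\widetilde{k}(t)=t^{H-1/2+\varepsilon}$, and shows: (a) if $X$ were Markov, then each $\Xi^i S(z)\mathcal{X}_t$ is $\Sigma(X_t)$-measurable (this follows from applying the Markov property to $f=\mathrm{id}$ \emph{and} to $f=b(r,\cdot)$ in the conditional-expectation identity $\E[X_T\,|\,\mathcal{F}_t]=\Xi S(T-t)\mathcal{X}_t+\int_t^T K^b(T-r)\E[b(r,X_r)\,|\,X_t]\,\mathrm{d}r$ — a step your proposal omits entirely); (b) since $\widetilde{k}$ is an $L^2$-limit of linear combinations of translates $k(\cdot+z)$, the limit $Z_{t_0}$ inherits $\Sigma(X_{t_0})$-measurability, i.e.\ $Z_{t_0}=\ell(t_0,X_{t_0})$ a.s.; (c) the joint law of $(X_{t_0},Z_{t_0})$ has a density on $\Gamma_{\sigma,t_0}\times\R$, so it cannot charge a graph, a contradiction. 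Condition \eqref{eq: 14} enters only in step (b), via Lemma \ref{lemma:compmonadmissiblekernel}: it is a M\"untz/Laplace-uniqueness condition guaranteeing that any $g\in L^2([0,T])$ orthogonal to all translates of $k$ vanishes, hence that \emph{every} $L^2$ kernel — in particular the power law $\widetilde{k}$ with a different regular-variation index — is admissible. It has nothing to do with compensating for the H\"older (rather than Lipschitz) regularity of $b,\sigma$; that regularity is handled separately by the balance condition \eqref{eq: H condition} in the density theorem and is automatically satisfied here because $K^b=K^\sigma$.

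Second, your item (v) asks only for absolute continuity of the marginal law of $X_{t_0}$ on $\Gamma_{\sigma,t_0}$. That is not sufficient: the contradiction requires absolute continuity of the \emph{joint} law of $(X_{t_0},Z_{t_0})$, which in turn requires $\gamma_*$-nondegeneracy of the stacked kernel $(k,\widetilde{k})^{\intercal}$ — this is exactly why $\widetilde{k}$ must be chosen with a regular-variation index distinct from that of $k$ (Lemma \ref{lemma: fractional like}), and why the anisotropic refinement (Theorem \ref{thm: density diagonal}) is needed when $\sigma$ is diagonal. Your Bernstein-measure/OU-mixture lift is a legitimate alternative to the paper's shift-semigroup lift, but as described it does not supply the measurability step (a) nor the joint-density step (c), so the argument as proposed does not reach the conclusion.
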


Theorem \ref{thm: intro 1} is a scalar version of Theorem \ref{thm: time shift deterministic} applied for constant $g \equiv x_0$. It covers a wide range of regularly varying kernels for which $\mu$ has either a nontrivial absolutely continuous component with respect to the Lebesgue measure (thus including regular kernels), or its support is sufficiently rich as $x \to \infty$ (covering singular or slowly varying kernels). In particular, our results contain the parametric class 
\begin{align}\label{eq: parametric}
k(t) = c\hspace{0.02cm} (t+\varepsilon_0)^{H - \frac{1}{2}}e^{- \lambda (t+\varepsilon_1)}\log(1 + (t + \varepsilon_2)^{\alpha}),
\end{align}
where $\lambda, \varepsilon_0, \varepsilon_1, \varepsilon_2 \geq 0$, $c > 0$, and $H \in (0,\frac{1}{2}]$, $\alpha \in [-1,0]$ in the completely monotone case. Moreover, for $\lambda=0$, it covers the case of Bernstein functions with $H \in (\frac{1}{2}, 1)$, $\varepsilon_0>0$ and $\alpha=0$ or $H=\frac{1}{2}$, $\varepsilon_2>0$ and $\alpha \in (0,1]$. Condition \eqref{eq: 14} requires that either $H \neq \frac{1}{2}$ or $\alpha \neq 0$. Note also that for shifted kernels with $\varepsilon_0 > 0$, hyperrough cases with $H < 0$ are included. As a concrete example, Theorem \ref{thm: intro 1} covers the Volterra square-root process~\eqref{eq:roughHeston} for regularly varying Volterra kernels.

Next, we formulate an analogue of Theorem~\ref{thm: intro 1} for scalar-valued Volterra kernels $K^b = k^b\hspace{0.02cm} \mathrm{id}_{\R^d}$ and $K^{\sigma} = k^{\sigma}\hspace{0.02cm}\mathrm{id}_{\R^d}$ that are not necessarily identical. In such a case, additional technical conditions that relate $k^b$ with $k^{\sigma}$ need to be imposed.

\begin{theorem}[general scalar-valued case]\label{thm: intro}
    Let $K^b = k^b\hspace{0.03cm} \mathrm{id}_{\R^d}$ and $K^{\sigma} = k^{\sigma}\hspace{0.03cm}\mathrm{id}_{\R^d}$, where $k^b, k^{\sigma}$ are continuously differentiable on $\R_+^*$, $(k^b)', (k^{\sigma})'$ are eventually monotone as $t \searrow 0$, and
    \[
        \int_{1}^{\infty} \left(\big| (k^b)'(t)\big|^2+\big| (k^{\sigma})'(t)\big|^2\right)\, \mathrm{d}t < \infty.
    \]
    Suppose that there exist $H^b, H^{\sigma} \in (0,1)$ with $|H^b - H^{\sigma}| < \frac{1}{2}$, $H^{\sigma} \neq \frac{1}{2}$ such that $k^a$ is regularly varying in $t = 0$ with index $\rho_a = H^a - \frac{1}{2}$ for $a \in \{b,\sigma\}$, and there exists $C > 0$ such that
    \begin{align}\label{eq: 15}
        \left|\frac{k^a(tu)}{k^a(t)}\right| \leq C u^{(H^a - \frac{1}{2})_+}, \qquad t \in (0,1), \ u \geq 1, \ a \in \{b,\sigma\}.
    \end{align}
    Let $b, \sigma$ be $\chi_b, \chi_{\sigma} \in (0,1]$-H\"older continuous in $x$ uniformly in $t$, see \eqref{eq:uniformHölder}, and let $g \equiv x_0$ be deterministic. Finally, suppose that the following balance conditions hold 
    \begin{align}\label{eq: intro}
        \max\{ H^b, H^{\sigma}\} < \frac{1}{2} + \chi \min\{H^b, H^{\sigma}\}, \ \text{ where }\, \chi = \begin{cases} \chi_{\sigma}, &  H^{\sigma} < H^b
        \\ \min\left\{ \chi_{\sigma}, \frac{1 + \chi_b}{2}\right\}, &  H^b \leq H^{\sigma}.
        \end{cases}
    \end{align}   
    Then any weak solution $X$ of \eqref{eq:generalSVIE} is not a Markov process with respect to the filtration generated by $X$ and the associated Brownian motion $B$, provided that there exists $t_0 > 0$ with $\P[ \mathrm{det}(\sigma(t_0,X_{t_0})\hspace{0.02cm}\sigma(t_0,X_{t_0})^{\intercal}) \neq 0] > 0$. 
\end{theorem}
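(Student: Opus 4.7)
The plan is to derive Theorem~\ref{thm: intro} from the abstract failure-of-Markov-property result assembled in Sections~\ref{section:abs cont}--\ref{section:failure}. That result yields non-Markovianity of any weak solution of~\eqref{eq:generalSVIE} whenever one has (i) condition~(A) on $K^b, K^\sigma$ (in particular, the $L^2$-increment condition); (ii) positive definiteness of the Gram matrix function associated with $K^\sigma(K^\sigma)^\intercal$ with power-type lower asymptotics of its smallest eigenvalue as $t\searrow 0$; (iii) a balance condition linking the H\"older exponents of $b,\sigma$ with the kernel indices; and (iv) the nondegeneracy $\P[X_{t_0}\in\Gamma_{\sigma,t_0}]>0$. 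Since (iv) is supplied by hypothesis, the task reduces to verifying (i)--(iii) in the scalar diagonal setting.

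Condition~(i) would follow from the smoothness assumptions on $k^b, k^\sigma$: the eventual monotonicity of $(k^a)'$ as $t\searrow 0$, combined with regular variation of $k^a$ with index $\rho_a = H^a - \tfrac{1}{2} > -\tfrac{1}{2}$, controls the $L^2$-modulus of continuity near $0$ via Karamata-type estimates, while the tail integrability $\int_1^\infty |(k^a)'(t)|^2\,\mathrm{d}t<\infty$ handles increments away from the origin. Condition~(ii) collapses, by the diagonal structure $K^\sigma = k^\sigma\,\mathrm{id}_{\R^d}$, to a power-type lower bound on $\int_0^t k^\sigma(s)^2\,\mathrm{d}s$ as $t\searrow 0$, which follows from the regular variation of $k^\sigma$ with index $\rho_\sigma>-\tfrac{1}{2}$ and Karamata's theorem. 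Section~\ref{section: examples} provides the template for these verifications in the regularly varying setting.

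The crux of the argument is condition~(iii), namely the balance condition~\eqref{eq: intro}. In the abstract proof, an admissible perturbation $\widetilde g$ of the input curve yields a perturbed weak solution $\widetilde X$ whose difference $\widetilde X - X$ inherits a H\"older regularity dictated by $H^b$ on the drift side and $H^\sigma$ on the diffusion side. The H\"older nonlinearities $b$ and $\sigma$ then propagate this perturbation through a Picard-type scheme with effective exponents $\chi_b H^b$ and $\chi_\sigma H^\sigma$; one needs the resulting regularity to dominate the raw singularity exponent $\max\{H^b, H^\sigma\} - \tfrac{1}{2}$ of the kernels, and the two-case structure of $\chi$ in~\eqref{eq: intro} reflects whether the drift or the diffusion channel is the bottleneck. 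When $H^b\leq H^\sigma$, the stochastic-integral contribution produced by the drift perturbation accounts for the additional $\tfrac{1+\chi_b}{2}$ term. The uniform Potter-type scaling bound~\eqref{eq: 15} together with the constraint $|H^b-H^\sigma|<\tfrac{1}{2}$ ensures that drift and diffusion scales remain comparable throughout the iteration, while $H^\sigma\neq\tfrac{1}{2}$ excludes the critical Brownian scaling under which the perturbation would become indistinguishable from the unperturbed diffusion.

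The main obstacle is the explicit construction of a nondegenerate admissible perturbation in the asymmetric situation $k^b\neq k^\sigma$, where the common-kernel mechanism underlying Theorem~\ref{thm: intro 1} (based on the Bernstein representation of $k$ and the divergence of $\sum 1/\lambda_n$) is unavailable. Instead, the perturbation must be built purely from the regular-variation structure of $k^b$ and $k^\sigma$, quantified through~\eqref{eq: 15}, and matched with the H\"older exponents of $b$ and $\sigma$ via~\eqref{eq: intro}. Once the perturbation is in place and the associated absolute-continuity argument on $\F_{t_0}$ is validated by the balance condition, the abstract framework produces two admissible inputs whose corresponding weak solutions have coinciding marginal laws at $t_0$ but distinct conditional laws of $X_T$ given $\F_{t_0}$, contradicting~\eqref{eq: Markov property}.
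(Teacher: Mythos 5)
Your high-level plan---invoke the abstract non-Markovianity machinery of Sections~\ref{section:abs cont}--\ref{section:failure} after verifying conditions analogous to (A), nondegeneracy, and a balance condition---is broadly on the right track, but it stops short of the step the paper actually takes and mischaracterizes the contradiction mechanism in a way that would not close the argument.

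First, the paper's proof of Theorem~\ref{thm: intro} is a one-page reduction to Theorem~\ref{theorem:NonMarkovSec5MostAbstract}, which is the realization of the abstract framework in the \emph{shift-semigroup} Markovian lift $(\mathcal{H}_\eta, S(t)y = y(\cdot+t))$ of Section~\ref{section: examples}. The entire burden of the proof is to check that condition~\eqref{eq: intro} implies \eqref{eq: H condition Section 5} in that theorem, after identifying $\beta_a = (H^a - \tfrac12)_+$ from the Potter bound~\eqref{eq: 15}. You never construct a concrete Markovian lift nor appeal to Theorem~\ref{theorem:NonMarkovSec5MostAbstract}; your outline would therefore require you to reprove it from scratch, which you do not do.

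Second, and more seriously, you do not identify the admissible perturbation that actually makes the $K^b \neq K^\sigma$ case work. You correctly note that the weak-admissibility mechanism of Theorem~\ref{thm: intro 1} (complete monotonicity plus density of translates via M\"untz) is unavailable, but your proposed remedy (``built purely from the regular-variation structure'') is a placeholder, not a construction. The paper's perturbation is the abstract forward Weyl/Marchaud fractional derivative of Example~\ref{admissible operator fractional differentiation}, applied simultaneously to $K^b$ and $K^\sigma$; its admissibility in the strong sense of Definition~\ref{def:admissibleoperator}---which requires \emph{simultaneous} $L^2$-approximation of $\widetilde k^b$ and $\widetilde k^\sigma$ by the same sequence of translate combinations---is precisely the nontrivial point. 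Relatedly, Lemma~\ref{lemma: appendix B} is needed to show this perturbation yields a regularly varying kernel with shifted index, feeding into the Gram-matrix nondegeneracy (Lemma~\ref{lemma: fractional like}).

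Third, your description of the contradiction is not the paper's. You claim the framework ``produces two admissible inputs whose corresponding weak solutions have coinciding marginal laws at $t_0$ but distinct conditional laws of $X_T$ given $\F_{t_0}$.'' The argument in Theorem~\ref{thm: abstract nonmarkov} is of a different kind: assuming the Markov property, one shows that $\Xi^i S(z)\mathcal X_{t_0}$ is $\sigma(X_{t_0})$-measurable for every $z \geq 0$, hence so is its admissible-limit $\widetilde\Xi\mathcal X_{t_0}$; this forces $(X_{t_0}, \widetilde\Xi\mathcal X_{t_0})$ to be concentrated on a measurable graph, which contradicts absolute continuity on $\Gamma_{\sigma,t_0}\times\R$. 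Your alternative picture (two distinguishable perturbed solutions) is a different strategy and is not supported by the machinery you invoked. Similarly, the balance condition \eqref{eq: intro} does not arise from a ``Picard-type scheme with effective exponents''; it arises from the anisotropic Besov-regularity estimate in Proposition~\ref{proposition:VolterraItoanisotropdensityperturbations}, where $\chi_b\gamma_K$ and $\chi_\sigma\gamma_K$ enter as the $L^p$-increment exponents of $b(\cdot,X)$ and $\sigma(\cdot,X)$. Taken together, these gaps mean the proposal would not assemble into a proof without supplying the explicit perturbation, the lift, and the correct logical structure of the contradiction.
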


Again, Theorem \ref{thm: intro} is the scalar version of Theorem \ref{theorem:NonMarkovSec5MostAbstract} applied for $g \equiv x_0$. In contrast to Theorem \ref{thm: intro 1}, here we impose the asymptotic scaling property \eqref{eq: 15}. Furthermore, condition \eqref{eq: intro} is a technical requirement ensuring the absolute continuity of the corresponding Volterra Ito-process arising from admissible perturbations. It is automatically fulfilled when $H^b \in (0,\frac{1}{2}]$ and $H^{\sigma} \in (0,\frac{1}{2})$. Furthermore, \eqref{eq: intro} is equivalent to
\[
    | H^b - H^{\sigma}| < \frac{1}{2} - \left( 1 - \chi \right) \min\{H^b, H^{\sigma}\}, 
\]
which shows that $H^b, H^{\sigma}$ are required to be sufficiently close to each other. As before, these conditions are satisfied for a broad class of Volterra kernels including kernels $k^b$, $k^{\sigma}$ of the form \eqref{eq: parametric} with parameters $c_{a}\neq 0$, $\lambda_a,\varepsilon_{0,a},\varepsilon_{1,a},\varepsilon_{2,a}\ge 0$, $H_a\in (0,\frac{1}{2}]$, $\alpha_a\in [-1,0]$ for $a\in\{b,\sigma\}$. Using the notation of Theorem \ref{thm: intro}, $H^{\sigma} \neq \frac{1}{2}$ is guaranteed for $H_{\sigma} \neq \frac{1}{2}$ and $\varepsilon_{0,\sigma}= 0$. Moreover, under \eqref{eq: intro}, Theorem \ref{thm: intro} also applies to general $H_a\in (0,1)$ provided that, in view of \eqref{eq: 15}, $\varepsilon_{0,a}=0$ holds for each $a\in\{b,\sigma\}$ with $H_a>1/2$.

Finally, let us remark that for general \textit{H\"older continuous} coefficients, our density method from Section 2 appears to be optimal, whence \eqref{eq: intro} cannot be substantially relaxed. For smooth coefficients $b,\sigma$, however, alternative techniques based on Malliavin calculus may yield further improvements on~\eqref{eq: intro}.

Our contribution is complementary to the accompanying work~\cite{FGW25}, which excludes the \textit{stronger} time-homogeneous Markov property for one-dimensional SVEs. In \cite{FGW25}, computational moment methods and a small-time CLT reduction show that, for a broad class of kernels and affine drifts, the time-homogeneous Markov property is essentially equivalent to 
\begin{align}\label{eq: exponential}
    \exists \lambda \in \R: \qquad K(t) = K(0)\hspace{0.02cm}e^{\lambda t}, \quad t \in \R_+.
\end{align}
In contrast, the present work addresses the more general (time-inhomogeneous) Markov property \eqref{eq: Markov property} for $d$-dimensional SVEs with potentially time-inhomogeneous coefficients. We provide sufficient conditions for its failure, but do not address the necessity of \eqref{eq: exponential}. Moreover, since our method relies on nondegenerate admissible perturbations, it cannot cover the case $K^{\sigma} \equiv 1$, which remains accessible to the computational method of \cite[Theorem 2.1]{FGW25} as it does not rely on the structure of the diffusion part. Conversely, our results do apply when $K^b \equiv 1$, which is outside the scope of \cite{FGW25}. Finally, \cite{FGW25} treats the full range $H > 0$, whereas here we restrict to $H \in (0,1)$ to avoid more sophisticated constructions of the Markovian lift. From this perspective, each approach covers cases that the other does not.

\subsection{Admissible nondegenerate perturbations and methodology} 

To prove the failure of the Markov property, let us suppose that $X$ would satisfy \eqref{eq: Markov property}. We show that this leads to a contradiction, whenever one can construct a one-dimensional process $(Z_t)_{t \in \R_+}$ such that~$Z_t$ is measurable with respect to the sigma algebra generated by $X_t$ for $t > 0$, and $(X_t, Z_t)$ is absolutely continuous to the Lebesgue measure. In the specific case of the rough Cox-Ingersoll-Ross process \eqref{eq:roughHeston}, the relation \eqref{eq: 8} offers the natural candidate $Z_t = \int_{[0,t]} X_{t-s}\,\Pi_{T-t}(\mathrm{d}s)$. Beyond this illustrative case, we construct $Z$ as a Volterra Ito-process of the form
\[
    Z_t = \widetilde{g}(t) + \int_0^t \widetilde{k}^b(t-s)\hspace{0.02cm}b(s,X_s)\, \mathrm{d}s + \int_0^t \widetilde{k}^{\sigma}(t-s)\hspace{0.02cm}\sigma(s,X_s)\, \mathrm{d}B_s,\quad t\in\R_+.
\]
Here $\widetilde{g}$ is $\F_0$-measurable, and $\widetilde{k}^b, \widetilde{k}^{\sigma}$ are perturbations of the Volterra kernels $K^b, K^{\sigma}$.

In Section~\ref{section:abs cont}, we prove that the laws of the joint random vectors $(X_t, Z_t)$ are absolutely continuous with respect to the Lebesgue measure restricted to $\Gamma_{\sigma, t} \times \R$. For this purpose, we extend the method from \cite[Section~3]{F24} to also accommodate anisotropic regularity (see Proposition \ref{proposition:VolterraItoanisotropdensityperturbations} in the appendix). In this way, we also cover non-scalar kernels with different orders of regularity. Our central assumption is based on a non-determinism condition as introduced, e.g., in \cite{MR4488556, MR4342752} for Volterra-L\'{e}vy and Gaussian Volterra processes, and in \cite{F24} for Volterra Ito-processes. Expressed via Gram matrices, this condition requires that $G(h) = \int_0^h \widetilde{K}^{\sigma}(r)\widetilde{K}^{\sigma}(r)^{\intercal}\, \mathrm{d}r$, where $\widetilde{K}^{\sigma} = (K^{\sigma}, \widetilde{k}^{\sigma})^{\intercal}$, is positive definite, and that its smallest eigenvalue satisfies the lower bound
\[
    \liminf_{h \searrow 0} h^{-2\gamma_*}\lambda_{\min}(G(h)) > 0
\]
for some $\gamma_* > 0$. This condition implies the linear independence of the rows of $\widetilde{K}^{\sigma}$. As expected, it typically fails for kernels of the form \eqref{eq: exponential} with the same exponent, while different exponents in \eqref{eq: exponential} yield larger values of $\gamma_*$, which rules out our technical balance conditions. Still, rich classes of non-exponential kernels fulfill this condition, with examples and sufficient conditions discussed in Subsection~\ref{subsection:nondegregvarkernel}. 

Our construction of the Volterra Ito-process $Z$ is crucially based on Markovian lifts that embed the dynamics into an enlarged state space that implicitly contains the past trajectory of the process. In this way, a Markovian structure is restored, albeit at the cost of infinite-dimensionality and unbounded coefficients. For stochastic Volterra equations, several variants of Markovian lifts based on Laplace transforms have been proposed \cite{MR3057145, MR2511555, MR1658690, MR3926553}. In \cite{MR4503737}, lifts based on time shifts were constructed for absolutely continuous, and hence regular, Volterra kernels. A general approach based on generalized Feller semigroups was developed in \cite{MR4181950}, while \cite{DG23} investigates Markovian lifts in UMD Banach spaces with applications to optimal control. Hilbert-space-valued lifts for completely monotone kernels were introduced in \cite{H23, hamaguchi2023weak} and further refined in \cite{FGW24}. In the present work, we adopt the recent framework of \cite{BBCF25}, which formulates an abstract Hilbert space setting that unifies the approaches of \cite{MR4503737, H23}. This framework offers the necessary flexibility to capture the essential features of memory and accommodates all examples of interest from the literature. Such an abstract formulation allows us to separate lift-specific constructions from general structures behind the path-dependency, and offers a flexible framework to cover also other types of Markovian lifts beyond the established ones.  

Requiring that $Z_t$ is $X_t$-measurable essentially restricts the class of possible kernels to \textit{admissible perturbations} $\widetilde{k}^b, \widetilde{k}^{\sigma}$ of the original kernels $K^b, K^{\sigma}$. Using the flexibility gained by Markovian lifts, we characterize \textit{admissible perturbations} as $L^2$-limits of linear combinations of time shifts of the original Volterra kernels, subject to the constraint that for each $t > 0$ the admissible perturbations $\widetilde{k}^b(t), \widetilde{k}^{\sigma}(t)$ depend solely on the values of $K^b(r), K^{\sigma}(r)$ for $r \geq t$. Beyond general characterizations, we demonstrate that an abstract version of the forward Weyl/Marchaud fractional derivative/integral yields a generic procedure for obtaining admissible perturbations. As a byproduct of this construction, we show that the corresponding Markovian lift is typically infinite-dimensional, and that the support of the regular conditional distribution of the Markovian lift given the Volterra process contains infinitely many elements. 

Finally, remark that our methodology extends to non-convolution kernels $K^b(t,s)$, $K^{\sigma}(t,s)$, albeit at the cost of more delicate admissibility conditions with respect to linear combinations of translations of $K^b(\cdot,s)$, $K^{\sigma}(\cdot,s)$ with an additional parameter~$s$. Moreover, the methods developed herein can be adapted to establish the failure of the \textit{almost everywhere Markov property} for stochastic Volterra processes with jumps. This weaker form of the Markov property is tailored to processes that lack sufficient time-regularity and arises naturally in the study of Markov selection theorems~\cite{MR2365480}.

\subsection{Notation} 
Here and below, for arguments requiring estimates merely modulo a multiplicative constant, we denote by~$\lesssim$ an inequality up to a constant factor that is not further specified. The precise quantities on which the constant is allowed to depend are, in any case, clear from the context. 

For $x \in \R^d$, we denote by $|x|$ the usual Euclidean norm. For a matrix $A \in \R^{d \times m}$, let us define the operator (spectral) norm $|A| = \sup_{|x|=1}|Ax| = \lambda_{\mathrm{max}}(A^{\intercal}A)^{1/2}$, where the Euclidean norms on $\R^m$ and $\R^d$ are, by slight abuse of notation, both denoted by~$|\cdot|$. In particular, as a consequence of Weyl's inequality for singular values, one gets for $A,B \in \R^{d\times m}$ the Lipschitz bound 
\begin{align}\label{eq: matrix bound}
    \big| \lambda_{\min}(A^{\intercal}A)^{1/2} - \lambda_{\min}(B^{\intercal}B)^{1/2} \big| \leq |A - B|.
\end{align}

For $n,l\in\mathbb{N}$ and $\chi \in (0,1]$ we denote by $B(\R_+; C^{\chi}(\R^n; \R^l))$ the space of measurable functions $f:\R_+\times\R^n\longrightarrow \R^l$ such that for each $T > 0$ there exists $c_T > 0$ with 
\begin{align}\label{eq:uniformHölder}
    |f(t,x)-f(t,y)| \le c_T\hspace{0.02cm} |x-y|^{\chi} \ \text{ and } \ |f(t,0)| \leq c_T.
\end{align}
where $t\in [0,T],\ x,y\in\R^n$. For $\chi = 0$, by slight abuse of notation, let $B(\R_+; C^{0}(\R^n; \R^l))$ be the space of measurable functions $f:\R_+\times\R^n\longrightarrow \R^l$ such that for each $T > 0$ there exists a constant $\overline{c}_T>0$ with
\begin{equation}\label{eq:uniformlineargrowth}
    |f(t,x)|\le \overline{c}_T\hspace{0.02cm} (1+|x|), \quad \forall t\in [0,T],\ x\in\R^n.
\end{equation}
Clearly, one has $B(\R_+; C^{\chi}(\R^n; \R^l)) \subseteq B(\R_+; C^{0}(\R^n; \R^l))$ for $\chi \in (0,1]$.

\subsection{Structure of the work}

Section~\ref{section:abs cont} provides the preliminary setup for our results. We first study the absolute continuity of the law for Volterra Ito-processes with nondegenerate kernels, discuss extensions to anisotropic settings, and finally illustrate our density method for the Volterra square-root process introduced in \eqref{eq:roughHeston}. 
In Section~\ref{section:markovianliftapproach}, we introduce the Markovian lift framework and proceed by showing that for nondegenerate Volterra kernels the range of the corresponding covariance operator is necessarily infinite-dimensional, and that the support of the regular conditional distribution of the Markovian lift given the Volterra process contains infinitely many elements. Section~\ref{section:failure} forms the core technical part of this work. Here, we introduce the notion of admissible perturbations, discuss their properties, sufficient conditions, and particular examples. The section concludes with a proof of the failure of the Markov property. Concrete realizations of our Markovian lift framework and illustrative examples are presented in Section~\ref{section: examples}, while further technical auxiliary results are collected in the appendix.

\section{Absolute continuity of Volterra Ito-processes}\label{section:abs cont}

\subsection{General case}\label{subsection:exampleperturbations}

Below, we study the absolute continuity for Volterra Ito-processes with drift $b_t = b(t,X_t)$ and diffusion $\sigma_t = \sigma(t,X_t)$ with $X$ satisfying \eqref{eq:generalSVIE}, i.e. 
\begin{align}\label{eq:perturbedVolterraprocessDef}
    \widetilde{Z}_t := \widetilde{g}(t) + \int_0^t \widetilde{K}^{b}(t-s)\hspace{0.02cm}b(s,X_s)\, \mathrm{d}s
    + \int_0^t \widetilde{K}^{\sigma}(t-s)\hspace{0.02cm}\sigma(s,X_s)\, \mathrm{d}B_s,\quad t\in\R_+.
\end{align}
Here, $\widetilde{K}^b, \widetilde{K}^{\sigma} \in L_{\mathrm{loc}}^2(\R_+; \R^{N \times d})$, $\widetilde{g}: \R_+ \longrightarrow \R^N$ is $\F_0$-measurable, and $N \in \mathbb{N}$. 

To deduce the existence of a density, it is required that $\widetilde{K}^{\sigma}$ is sufficiently nondegenerate. The latter is captured by non-determinism conditions formulated in terms of lower bounds on the eigenvalues of Gram matrices constructed from $\widetilde{K}^{\sigma}(\widetilde{K}^{\sigma})^{\intercal}$. Namely, define on $\R_+$ an $\R^{N\times N}$-valued matrix function $G$ as the sum of suitable Gram matrix functions $G^{(i)}$ in the following way: For each $i \in \{1,\dots, d\}$, and $\ell, \ell' \in \{1,\dots, N\}$ we set
    \begin{align}\label{eq: Gram}
        G^{(i)}_{\ell \ell'}(h) = \int_0^h \widetilde{K}^{\sigma}_{\ell i}(r) \widetilde{K}^{\sigma}_{\ell' i}(r)\, \mathrm{d}r \ \ \text{ and } \ \ 
        G_{\ell\ell'}(h) = \sum_{i=1}^d G_{\ell \ell'}^{(i)}(h).
    \end{align}

\begin{definition}\label{def: nondegeneracy general}
   A Volterra kernel $\widetilde{K}^{\sigma} \in L_{\mathrm{loc}}^2(\R_+; \R^{N \times d})$ is $\gamma_*$-nondegenerate, if  
    \begin{align}\label{eq: lower bound}
       \liminf_{h \searrow 0}h^{-2\gamma_*} \lambda_{\min}(G(h)) > 0,
    \end{align}
    where $\gamma_* > 0$ and $\lambda_{\min}(G(h))$ denotes the smallest eigenvalue of $G(h)$.
\end{definition}

In particular, $\gamma_*$-nondegeneracy implies that the rows of $\widetilde{K}^{\sigma}$ are linearly independent. It can be verified through a lower bound for   
\begin{align*}
    \int_0^h \big| \widetilde{K}^{\sigma}(r)^{\intercal}\xi\big|^2\, \mathrm{d}r 
    = \sum_{i=1}^d \int_0^h \bigg( \sum_{\ell = 1}^N \widetilde{K}^{\sigma}_{\ell i}(r) \xi_{\ell} \bigg)^2\, \mathrm{d}r
    = \sum_{\ell, \ell' = 1}^N \xi_{\ell}\xi_{\ell'} G_{\ell \ell'}(h),
\end{align*}
uniformly in $\xi \in \R^N$ with $|\xi| = 1$. In Subsection~\ref{subsection:nondegregvarkernel}, we will show that kernels with regularly varying components typically satisfy the nondegeneracy property \eqref{eq: lower bound}.

Recall that~$X$ appearing in \eqref{eq:perturbedVolterraprocessDef} is a solution of \eqref{eq:generalSVIE}. Let us suppose that the Volterra kernels $K^b, K^{\sigma}$ and $g$ in \eqref{eq:generalSVIE} satisfy the following minor regularity condition:
\begin{enumerate}
    \item[(A)] $K^b, K^{\sigma} \in L_{\mathrm{loc}}^2(\R_+;\R^{d\times d})$, and there exists $\gamma_K > 0$ such that for each $T > 0$ there exists a constant $C_T > 0$ satisfying
    \begin{align}\label{eq: K increment}
        \int_0^T |K^a(r + h) - K^a(r)|^2 \, \mathrm{d}r + \int_0^h |K^a(r)|^2\, \mathrm{d}r \leq C_T h^{2\gamma_K}, \qquad h \in (0,T],
    \end{align}
    for each $a \in \{b,\sigma\}$. Moreover, there exists $p > 2$ such that
    \begin{align}\label{eq: g Hoelder}
        \|g(t) - g(s)\|_{L^p(\Omega)} \leq C_T(t-s)^{\gamma_K}, \qquad 0 \leq s \leq t \leq T.
    \end{align}
\end{enumerate}

The constant $\gamma_K$ determines the sample path regularity of $X$. Sufficient conditions and characterizations for \eqref{eq: K increment} can be found in \cite[Example 2.3]{AbiJaLaPu19} and \cite[Section~3]{FGW24}. Recall that $\Gamma_{\sigma,t} = \{ x \in \R^d \, : \, \mathrm{det}(\sigma(t,x)\sigma(t,x)^{\intercal}) \neq 0\}$ was defined in \eqref{eq: Gamma sigma}. The following theorem establishes the absolute continuity of the law for Volterra kernels $\widetilde{K}^{\sigma}$ that satisfy the nondegeneracy condition.

\begin{theorem}\label{thm: density}
    Suppose that condition (A) holds, $b \in B(\R_+; C^{\chi_b}(\R^d; \R^d))$, and $\sigma \in B(\R_+; C^{\chi_{\sigma}}(\R^d; \R^{d\times m}))$ where $\chi_b \in [0,1]$ and $\chi_{\sigma} \in (0,1]$. Let $\widetilde{K}^b, \widetilde{K}^{\sigma} \in L_{\mathrm{loc}}^2(\R_+; \R^{N \times d})$ with $N \in \mathbb{N}$ be given such that
    \begin{align}\label{eq: upper bound}
         \int_0^h \big|\widetilde{K}^{a}(r)\big|^2\, \mathrm{d}r \le C\hspace{0.02cm}h^{2\gamma_{a}},\quad h\in [0,1],\ a\in\{b,\sigma\},
    \end{align}
    is satisfied for some $C,\gamma_b, \gamma_{\sigma} > 0$. Suppose that $\widetilde{K}^{\sigma}$ is $\gamma_*$-nondegenerate and that the constants satisfy the relation 
    \begin{align}\label{eq: H condition}
        \gamma_* < \min \left\{ \gamma_b + \frac{1}{2} + \chi_b\hspace{0.03cm}\gamma_K,\, \gamma_{\sigma} + \chi_{\sigma}\hspace{0.03cm}\gamma_K \right\}.
    \end{align}
    Let $X$ be a continuous weak solution of \eqref{eq:generalSVIE}, $\widetilde{g}: \R_+ \longrightarrow \R^N$ be an $\F_0$-measurable function, and denote by $\widetilde{Z}$ the Volterra Ito-process given by \eqref{eq:perturbedVolterraprocessDef}. Then  
    \[
        \nu_t(A) = \P\big[ \widetilde{Z}_t \in A, \ X_t \in \Gamma_{\sigma,t} \big],\quad A\in\mathcal{B}(\R^N),
    \]
    is for each $t > 0$ absolutely continuous with respect to the Lebesgue measure on $\R^N$. 
\end{theorem}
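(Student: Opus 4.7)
The strategy is Fourier-analytic: I would aim to show that
\[
\hat\nu_t(\xi)=\E\bigl[e^{i\langle\xi,\widetilde Z_t\rangle}\1_{\{X_t\in\Gamma_{\sigma,t}\}}\bigr],\qquad \xi\in\R^N,
\]
decays rapidly enough in $|\xi|$ that Fourier inversion on $\R^N$ produces a bounded continuous Lebesgue density for $\nu_t$. Concretely, the plan is to invoke the anisotropic density result announced in the appendix (Proposition \ref{proposition:VolterraItoanisotropdensityperturbations}, an extension of \cite[Section~3]{F24}) and verify its hypotheses via a short-time freezing argument coupled to the nondegeneracy~\eqref{eq: lower bound}. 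For each $h\in(0,t)$ I would write $\widetilde Z_t=Y_t^{(h)}+R_t^{(h)}$, where $Y_t^{(h)}$ collects the $\F_{t-h}$-measurable past together with the fluctuation with spatially \emph{frozen} coefficients $\int_{t-h}^t \widetilde K^b(t-s)\hspace{0.02cm}b(s,X_{t-h})\hspace{0.02cm}\mathrm ds+\int_{t-h}^t \widetilde K^{\sigma}(t-s)\hspace{0.02cm}\sigma(s,X_{t-h})\hspace{0.02cm}\mathrm dB_s$, and
\[
R_t^{(h)}=\int_{t-h}^t\widetilde K^b(t-s)\bigl[b(s,X_s)-b(s,X_{t-h})\bigr]\hspace{0.02cm}\mathrm ds+\int_{t-h}^t\widetilde K^{\sigma}(t-s)\bigl[\sigma(s,X_s)-\sigma(s,X_{t-h})\bigr]\hspace{0.02cm}\mathrm dB_s.
\]

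Conditionally on $\F_{t-h}$, the fluctuation piece of $Y_t^{(h)}$ is centered Gaussian with covariance matrix $\Sigma^{(h)}=\int_0^h\widetilde K^{\sigma}(u)\hspace{0.03cm}\sigma(t-u,X_{t-h})\sigma(t-u,X_{t-h})^{\intercal}\hspace{0.03cm}\widetilde K^{\sigma}(u)^{\intercal}\hspace{0.03cm}\mathrm du$, so on the truncated event $\Omega_\eta=\{\inf_{s\in[t-h,t]}\lambda_{\min}(\sigma(s,X_{t-h})\sigma(s,X_{t-h})^{\intercal})\ge\eta\}$ the nondegeneracy assumption yields $|\E[e^{i\langle\xi,Y_t^{(h)}\rangle}\mid\F_{t-h}]|\le \exp(-c\hspace{0.02cm}\eta\hspace{0.02cm} h^{2\gamma_*}|\xi|^2)$. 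For the remainder, the $L^p(\Omega)$-H\"older regularity of $X$ with exponent $\gamma_K$, which follows from condition~(A), \eqref{eq: g Hoelder}, and BDG applied to~\eqref{eq:generalSVIE}, together with the spatial H\"older continuity of $b,\sigma$, Cauchy--Schwarz for the drift part, BDG for the diffusion part, and the size bound~\eqref{eq: upper bound}, yields the sharp estimate
\[
\|R_t^{(h)}\|_{L^p(\Omega)}\lesssim h^{\gamma_b+\frac12+\chi_b\gamma_K}+h^{\gamma_{\sigma}+\chi_{\sigma}\gamma_K}.
\]
The two exponents reproduce exactly the two terms on the right-hand side of~\eqref{eq: H condition}, and the strict inequality $\gamma_*<\min\{\cdot,\cdot\}$ there is precisely the slack required in the final optimization.

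Combining these ingredients via
\[
|\hat\nu_t(\xi)|\le \E\bigl[e^{-c\eta h^{2\gamma_*}|\xi|^2}\bigr]+\E\bigl[1\wedge |\xi|\hspace{0.02cm}|R_t^{(h)}|\bigr]+\P\bigl[\Omega_\eta^c\cap\{X_t\in\Gamma_{\sigma,t}\}\bigr],
\]
one substitutes $h=|\xi|^{-\alpha}$ and $\eta=|\xi|^{-\kappa}$ with $\alpha,\kappa>0$ tuned so that the exponential factor forces fast decay while the polynomial errors, sharpened by increasing $p$ (allowed by condition~(A)), stay summable; the slack in~\eqref{eq: H condition} guarantees $(1+|\xi|)^N|\hat\nu_t(\xi)|\in L^1(\R^N)$, after which Fourier inversion delivers the density. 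The principal difficulty, in my view, will be the quantitative bookkeeping of $\Omega_\eta$: one must relate it to the target event $\{X_t\in\Gamma_{\sigma,t}\}$ by propagating the $L^p$-path H\"older regularity of $X$ through the Lipschitz bound~\eqref{eq: matrix bound} for $\lambda_{\min}^{1/2}$, and control uniformly the probability that $\lambda_{\min}(\sigma\sigma^{\intercal})$ falls in the shrinking window $(0,\eta)$. It is precisely this anisotropic, delicate bookkeeping that Proposition~\ref{proposition:VolterraItoanisotropdensityperturbations} in the appendix is designed to carry out.
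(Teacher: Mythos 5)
Your freezing decomposition, the conditional-Gaussian lower bound via \eqref{eq: lower bound}, and the remainder exponents $h^{\gamma_b+\frac12+\chi_b\gamma_K}+h^{\gamma_\sigma+\chi_\sigma\gamma_K}$ are exactly the ingredients behind the paper's argument (they are conditions (A1)--(A3) of \cite[Theorem 3.6(b)]{F24}, which the paper verifies and then cites). However, your concluding step contains a genuine gap. The claim that the slack in \eqref{eq: H condition} yields $(1+|\xi|)^N\hat\nu_t(\xi)\in L^1(\R^N)$ is false in general: optimizing $h=|\xi|^{-\alpha}$ against the two competing terms gives only a decay $|\hat\nu_t(\xi)|\lesssim|\xi|^{-\delta}$ with $\delta$ bounded by roughly $(\gamma_\sigma+\chi_\sigma\gamma_K)/\gamma_*-1$, which is an arbitrarily small positive number under \eqref{eq: H condition} and nowhere near the $N+\varepsilon$ needed for Fourier inversion. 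Increasing $p$ does not help, because the exponent of $h$ in $\|R_t^{(h)}\|_{L^p}$ is dictated by the H\"older regularity of the coefficients, not by $p$. This is precisely why the paper (and \cite{F24}, \cite{FJR18}) works with a Besov-space smoothing criterion — an estimate $|\E[\rho_t\,\Delta_h\phi(\widetilde Z_t)]|\lesssim\|\phi\|_{C^\lambda}|h|^{\lambda+\varepsilon}$ for small $\lambda$ already forces the density to lie in $B^{\varepsilon'}_{1,\infty}$ — rather than with integrability of the characteristic function.

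A second, related gap is the term $\P[\Omega_\eta^c\cap\{X_t\in\Gamma_{\sigma,t}\}]$: with $\eta=|\xi|^{-\kappa}$ and $h=|\xi|^{-\alpha}$ this probability tends to zero by continuity, but without any rate, so it destroys the integrability you need even if the other two terms decayed fast. The paper avoids this entirely by proving absolute continuity of the \emph{weighted} measure $A\mapsto\E[\rho_t\1_A(\widetilde Z_t)]$ with $\rho_t=\lambda_{\min}(\sigma(t,X_t)\sigma(t,X_t)^{\intercal})^{1/2}\wedge1$ (the weight absorbs the degenerate region quantitatively, via \eqref{eq:2dimdensityB3estimation}), and only afterwards removes the weight by the monotone exhaustion $\Gamma_{\sigma,t}=\cup_n\Gamma^{(n)}_{\sigma,t}$. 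Finally, a smaller misstep: Proposition \ref{proposition:VolterraItoanisotropdensityperturbations} is not the right reference for this theorem, since its hypothesis (D1) requires diagonal-like kernels, which a general $\widetilde K^{\sigma}\in L^2_{\mathrm{loc}}(\R_+;\R^{N\times d})$ need not satisfy; the general isotropic case is handled by \cite[Theorem 3.6(b)]{F24}, with the appendix proposition reserved for Theorem \ref{thm: density diagonal}.
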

\begin{proof}
    We deduce the assertion by an application of \cite[Theorem 3.6\hspace{0.03cm}(b)]{F24}. For this purpose, fix an arbitrary $T > 0$. Below we verify the properties (A1) -- (A3) from \cite{F24} for any $s,t \in [0,T]$ with $s < t$ and constants independent of $(s,t)$:
    \begin{enumerate}
        \item[(i)] Combining \eqref{eq: upper bound} with Jensen's inequality and $(t-s)^{\gamma_b+1/2}, (t-s)^{2\gamma_{\sigma}}\ge 1$ when $(t-s)\ge 1$ proves $\int_s^t \big|\widetilde{K}^{b}(t-r)\big|\, \mathrm{d}r\lesssim (t-s)^{\gamma_b+1/2}$ and $\int_s^t \big|\widetilde{K}^{\sigma}(t-r)\big|^2\, \mathrm{d}r\lesssim (t-s)^{2\gamma_{\sigma}}$.
        \item[(ii)] Note that for $p>2$ from \eqref{eq: g Hoelder}, a straightforward  extension of \cite[Proposition~4.1, Remark~4.2]{F24} towards time-dependent $b,\sigma$, see also \cite[Lemma 3.4]{PROMEL2023291} for deterministic~$g$, yields the increment bound $\|X_t - X_s\|_{L^p(\Omega)} \lesssim (t-s)^{\gamma_K}$. Hence, it follows from the Hölder regularity of $b$, $\sigma$, which is uniform in the time argument on $[0,T]$, see~\eqref{eq:uniformHölder}, and Jensen's inequality that $\| b(t,X_t) - b(s,X_s)\|_{L^p(\Omega)} \lesssim (t-s)^{\alpha_b}$ and $\|\sigma(t,X_t) - \sigma(s,X_s)\|_{L^p(\Omega)} \lesssim (t-s)^{\alpha_{\sigma}}$ holds for $\alpha_b = \chi_b \gamma_K$, $\alpha_{\sigma} = \chi_{\sigma} \gamma_K$ and the above $p>2$. In particular, note that the previous bound remains valid for the case $\chi_b=0$, where $b$ is merely of linear growth in the space variable.
        \item[(iii)] Set $\rho_{t}:= \lambda_{\min}(\sigma(t,X_t)\hspace{0.02cm}\sigma(t,X_t)^{\intercal})^{1/2} \wedge 1$ for $t\in [0,T]$. By the assumed nondegeneracy condition~\eqref{eq: lower bound}, there exist $h_0 \in (0,1)$ and a constant $C > 0$ such that for each $\xi \in \R^N$ with $|\xi|=1$, and $0 \leq t-s \leq h_0$ we find
        \begin{align}\label{eq:2dimdensityB3estimation}
            \int_s^t \big| \sigma(s,X_s)^{\intercal} \widetilde{K}^{\sigma}(t-r)^{\intercal}\xi \big|^2 \, \mathrm{d}r
            \geq \rho_s^2 \int_s^t \big| \widetilde{K}^{\sigma}(t-r)^{\intercal}\,\xi \big|^2\, \mathrm{d}r
            \geq C\,\rho_{s}^2\, (t-s)^{2\gamma_*}.
        \end{align}
    By the nonnegativity of the integrand, this clearly extends to all $(t-s)\in[0,T]$ through an adjustment of the constant. Moreover, using \eqref{eq: matrix bound}, we obtain the bound $\|\rho_t - \rho_s \|_{L^p(\Omega)} \lesssim \| \sigma(t,X_t) - \sigma(s,X_s)\|_{L^p(\Omega)} \lesssim (t-s)^{\chi_{\sigma}\gamma_K}$.     
    \end{enumerate}
   The particular choice of $\alpha_b, \alpha_{\sigma}$ combined with \eqref{eq: H condition} proves $\gamma_* < \min\{ \alpha_b +\gamma_b+ \frac{1}{2}, \ \alpha_{\sigma} + \gamma_{\sigma} \}$. Therefore, an application of \cite[Theorem 3.6\hspace{0.03cm}(b)]{F24} implies that, for every $t\in (0,T]$, the measure defined by
    \[
        \mathcal{B}\big(\R^{N}\big)\ni A \longmapsto \E\Big[ \rho_t \cdot \hspace{0.03cm} \1_{A}\big(\widetilde{Z}_t\big) \Big]
    \]
    is absolutely continuous with respect to the Lebesgue measure~$\mathrm{Leb}_{N}$ on $\R^{N}$. For $n \geq 1$, define $\Gamma^{(n)}_{\sigma,t} := \big\{ x \in \R^d \, : \, \lambda_{\min}(\sigma(t,x)\sigma(t,x)^{\intercal})^{1/2} \wedge 1 \geq 1/n\big\}$. Then for each Borel set $A \subseteq \R^{N}$ with $\mathrm{Leb}_{N}(A)=0$, one has
    \begin{align*}
        \P\big[ \widetilde{Z}_t \in A, \ X_t \in \Gamma^{(n)}_{\sigma,t} \big]
        \leq n\, \E\big[ \big(\lambda_{\min}\left(\sigma(t,X_t)\sigma(t,X_t)^{\intercal}\right)^{1/2}\wedge 1\big)\, \1_{A}\big(\widetilde{Z}_t\big) \big] = 0.
    \end{align*}
    Thus, noting that $\Gamma_{\sigma,t} = \{x \in \R^d \, : \, \mathrm{det}(\sigma(t,x)\sigma(t,x)^{\intercal}) \neq 0\} = \cup_{n \geq 1}\Gamma_{\sigma,t}^{(n)}$, we obtain from the continuity from below of probability measures:
    \begin{align*}
        \nu_t(A) = \P\big[ \widetilde{Z}_t \in A, \ X_t \in \Gamma_{\sigma,t}\big] 
        = \lim_{n \to \infty} \P\big[ \widetilde{Z}_t \in A, \ X_t \in \Gamma_{\sigma,t}^{(n)} \big] = 0.
    \end{align*}
    Therefore, $\nu_t$ is absolutely continuous with respect to the Lebesgue measure on $\R^N$. 
\end{proof} 

For $\widetilde{K}^b = K^b$ and $\widetilde{K}^{\sigma} = K^{\sigma}$ we obtain $\nu_t(A) = \P[ X_t \in A \cap \Gamma_{\sigma,t}]$, and hence absolute continuity of the law of $X_t$ restricted onto $\Gamma_{\sigma,t}$. For the failure of the Markov property and estimates on the dimension of the Markovian lift, we need to study perturbations of~$X$ as outlined in the following example. 

\begin{example}\label{example: 1}
    Let $N \geq 1$ and set $\widetilde{K}^a = \big(K^a, \widetilde{k}^a\big)^{\intercal}$ where $\widetilde{k}^a \in L_{\mathrm{loc}}^2(\R_+; \R^{N \times d})$ for each $a \in \{b,\sigma\}$. In this case, we may obtain the absolute continuity on $\Gamma_{\sigma,t} \times \R^N$ of the $\R^d \times \R^N$-valued process $\widetilde{Z} = \big(X,\widetilde{X}\big)^{\intercal}$ with 
    \[
        \widetilde{X}_t = \widetilde{g}(t) + \int_0^t \widetilde{k}^b(t-s)\hspace{0.02cm}b(s,X_s)\, \mathrm{d}s + \int_0^t \widetilde{k}^{\sigma}(t-s)\hspace{0.02cm}\sigma(s,X_s)\, \mathrm{d}B_s,\quad t\in\R_+.
    \]
    We will see that $N = 1$ suffices to prove the failure of the Markov property, while general~$N$ will be used in 
    Section~\ref{section:markovianliftapproach} to study the dimension and the conditional distribution of the Markovian lift.
\end{example}

\subsection{Anisotropic case}

In many cases, the Volterra kernels have a more specific structure, and refinements of the $\gamma_*$-nondegeneracy condition may be used to sharpen condition \eqref{eq: H condition}. Below, we consider the most important case that reflects diagonal kernels up to permutations and also contains more general kernel matrices with $N\neq d$. The latter allows us to derive anisotropic regularity for kernels where in each coordinate different asymptotics are present.

\begin{definition}
    We call a Volterra kernel $\widetilde{K} \in L_{\mathrm{loc}}^2(\R_+; \R^{N \times d})$ diagonal-like, if for each $j \in \{1,\dots, N\}$ there exist $\widetilde{k}_j\in L_{\mathrm{loc}}^2(\R_+;\R)$ and a unique $a(j) \in \{1,\dots, d\}$ such that
    \[
        \widetilde{K}_{j\ell}(t) = \delta_{\ell a(j)}\widetilde{k}_j(t), \qquad t > 0, \ \ \ell \in \{1,\dots, d\}.
    \]
\end{definition}

This definition is motivated by a particular case of Example \ref{example: 1} where, e.g., $\widetilde{K}^a = (K^a, \widetilde{k}^a)$ for $a\in\{b,\sigma\}$ with diagonal $K^a$ and a row vector $\widetilde{k}^a$ with exactly one non-zero entry. More generally, we may also take $\widetilde{K}^a = (K^a, K_1^a,\dots, K_N^a)^{\intercal}$ where $K^a, K_1^a,\dots, K_N^a$ are diagonal $d\times d$-matrix functions. In any case, such a sparse structure leads to a specific form of the corresponding Gram matrix function $G$ defined in \eqref{eq: Gram}. 

Let $\widetilde{K}^{\sigma}$ be such a diagonal-like Volterra kernel. Define for $i \in\{1,\dots, d\}$ the sets 
\begin{align}\label{eq: S decomposition}
    S_i = \{ j \in \{1,\dots, N\} \ : \ a(j) = i \}.
\end{align}
Then $(S_i)_{i \in \{1, \dots, d\}}$ forms a partition of $\{1,\dots, N\}$. The associated Gram matrix $G$ defined in \eqref{eq: Gram} satisfies $G_{\ell \ell'} \equiv 0$ if $\ell \in S_i$ and $\ell' \in S_j$ for $i \neq j$, while for $\ell, \ell' \in S_i$ we obtain $G_{\ell \ell'}(h) = \int_0^h \widetilde{k}^{\sigma}_{\ell}(r)\hspace{0.02cm} \widetilde{k}^{\sigma}_{\ell'}(r)\, \mathrm{d}r$. In particular, it follows that
\begin{align}\label{eq: Gram2}
    \int_0^h \big| \widetilde{K}^{\sigma}(r)^{\intercal}\xi\big|^2\, \mathrm{d}r
    = \sum_{i=1}^d \bigg( \sum_{\ell, \ell' \in S_i} \xi_{\ell}\xi_{\ell'} G_{\ell \ell'}(h) \bigg).
\end{align}
This provides a natural decomposition of $G$ into blocks indexed by $S_1,\dots, S_d$. Hence, let us define for each $i \in \{1,\dots, d\}$ and $h>0$ the Gram matrices $\widetilde{G}_{\ell \ell'}^{(i)}(h) = \int_0^h \widetilde{k}^{\sigma}_{\ell}(r)\widetilde{k}^{\sigma}_{\ell'}(r)\, \mathrm{d}r$, $\ell, \ell' \in S_i$. On each $\R^{|S_i|\times|S_i|}$-valued block $\widetilde{G}^{(i)}$, we suppose that the nondegeneracy condition is satisfied with parameters $\gamma_*^1, \dots, \gamma_*^d$. 

\begin{definition}\label{def: nondegeneracy anisotrop}
    The diagonal-like Volterra kernel $\widetilde{K}^{\sigma} \in L_{\mathrm{loc}}^2(\R_+; \R^{N \times d})$ is called $\gamma_*$-nondegenerate, where $\gamma_* = (\gamma_*^1,\dots, \gamma_*^d)\in (\R_+^*)^d$, if for each $i \in \{1,\dots, d\}$, it holds that
    \[
        \liminf_{h \searrow 0} h^{-2\gamma_*^i}\lambda_{\min}\big(\widetilde{G}^{(i)}(h)\big) > 0.
    \]
    For $i \in \{1,\dots, d\}$ with $S_i=\emptyset$ we define $\gamma_*^{i}=0$.
\end{definition}
 
\begin{remark}\label{remark:nondegeneracyconnection}
    A diagonal-like kernel $\widetilde{K}^{\sigma}$ that is nondegenerate according to Definition~\ref{def: nondegeneracy anisotrop} is also $\max\{\gamma_*^1, \dots, \gamma_*^d\}$-nondegenerate in the sense of Definition \ref{def: nondegeneracy general}. Thus, Definition~\ref{def: nondegeneracy anisotrop} provides a refinement where we keep track of the asymptotics on each $S_1,\dots, S_d$ separately.
\end{remark}

For SDEs driven by cylindrical L\'evy processes, anisotropic regularity effects were studied in~\cite{FJR18} in terms of anisotropic Besov spaces. An extension of these results for Volterra Ito-processes is given in Proposition \ref{proposition:VolterraItoanisotropdensityperturbations} of the appendix. In particular, we obtain the following refinement of Theorem~\ref{thm: density}:

\begin{theorem}\label{thm: density diagonal} 
   Suppose that condition~(A) is satisfied, and the components of $b: \R_+\times \R^d \longrightarrow \R^d$ and $\sigma = \mathrm{diag}(\sigma_1,\dots, \sigma_d): \R_+\times\R^d \longrightarrow \R^{d \times d}$ fulfill $b_i \in B(\R_+; C^{\chi_b^{i}}(\R^d; \R))$ and $\sigma_i \in B(\R_+; C^{\chi_{\sigma}^{i}}(\R^d; \R))$ with $\chi_b^i \in [0,1]$ and $\chi_{\sigma}^i \in (0,1]$ for each $i\in \{1,\dots, d\}$. Let $\widetilde{K}^b, \widetilde{K}^{\sigma} \in L_{\mathrm{loc}}^2(\R_+; \R^{N \times d})$ with $N \geq 1$ be diagonal-like with the same partition $(S_i)_{i\in\{1,\dots,d\}}$ in \eqref{eq: S decomposition}. Assume that $\widetilde{K}^{\sigma}$ is $\gamma_*$-nondegenerate in the sense of Definition \ref{def: nondegeneracy anisotrop}, and for each $i \in \{1,\dots, d\}$ and $a\in\{b,\sigma\}$,
   \begin{align}\label{eq: ani upper bound 1}
    \int_0^h \big|\widetilde{k}^{a}_j(t)\big|^2\, \mathrm{d}t \le C\hspace{0.02cm}h^{2\gamma_{a}^i}, \qquad j \in S_i,\ h\in [0,1],
   \end{align}
   holds for some $C>0$, $\gamma_{b}^1,\dots, \gamma_{b}^d,\gamma_{\sigma}^1,\dots, \gamma_{\sigma}^d > 0$, with the convention $\gamma_{a}^i = \infty$ when $S_i = \emptyset$, and these parameters satisfy the relation
        \begin{equation}\label{eq: H condition diagonal}
            \gamma^i_* < \min\left\{\gamma_b^i + \frac{1}{2}+\chi^i_{b}\hspace{0.02cm}\gamma_K,\, \gamma_{\sigma}^i + \chi^i_{\sigma}\hspace{0.02cm}\gamma_K \right\}, \qquad i \in\{1,\dots, d\}.
    \end{equation}
    Let $X$ be a continuous weak solution of \eqref{eq:generalSVIE}, $\widetilde{g}: \R_+ \longrightarrow \R^N$ be a $\F_0$-measurable function, and denote by $\widetilde{Z}$ the Volterra Ito-process given by \eqref{eq:perturbedVolterraprocessDef}. Then  
    \[
        \nu_t(A) = \P\big[ \widetilde{Z}_t \in A, \ X_t \in \Gamma_{\sigma,t} \big],\quad A\in\mathcal{B}(\R^N),
    \]
    is for each $t > 0$ absolutely continuous with respect to the Lebesgue measure on $\R^N$. 
\end{theorem}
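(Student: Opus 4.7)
The plan is to mirror the proof of Theorem~\ref{thm: density}, but invoke the anisotropic density criterion Proposition~\ref{proposition:VolterraItoanisotropdensityperturbations} from the appendix in place of \cite[Theorem 3.6(b)]{F24}. The essential gain comes from the diagonal-like structure: it decouples the Gram matrix $G$ associated to $\widetilde K^{\sigma}$ into blocks indexed by $S_1,\dots,S_d$, which allows us to track different regularity exponents $\gamma_*^i$ on each block rather than a single global exponent.

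First, I would fix $T>0$ and verify, for each $i\in\{1,\dots,d\}$ with $S_i\neq\emptyset$, the three ingredients needed on the block $S_i$:
\begin{enumerate}
    \item[(i)] Kernel upper bounds: from \eqref{eq: ani upper bound 1} together with Jensen's inequality, one obtains $\int_s^t |\widetilde k^b_j(t-r)|\,\mathrm{d}r \lesssim (t-s)^{\gamma_b^i + 1/2}$ and $\int_s^t |\widetilde k^{\sigma}_j(t-r)|^2\,\mathrm{d}r \lesssim (t-s)^{2\gamma_{\sigma}^i}$ for $j\in S_i$, uniformly in $0\le s\le t\le T$.
    \item[(ii)] Coefficient regularity: as in step (ii) of the proof of Theorem~\ref{thm: density}, the increment bound $\|X_t-X_s\|_{L^p(\Omega)} \lesssim (t-s)^{\gamma_K}$ follows from the extension of \cite[Proposition 4.1]{F24} using condition (A). Combined with the coordinate-wise uniform H\"older continuity of $b_i$ and $\sigma_i$, this yields $\|b_i(t,X_t)-b_i(s,X_s)\|_{L^p(\Omega)}\lesssim (t-s)^{\chi_b^i\gamma_K}$ and analogously for $\sigma_i$.
    \item[(iii)] Block nondegeneracy: for $\xi\in\R^N$ supported on $S_i$ with $|\xi|=1$, the decomposition \eqref{eq: Gram2} gives
    \[
        \int_s^t \big|\sigma(s,X_s)^{\intercal}\widetilde K^{\sigma}(t-r)^{\intercal}\xi\big|^2\,\mathrm{d}r
        \;\geq\; \rho_{i,s}^2 \sum_{\ell,\ell'\in S_i}\xi_{\ell}\xi_{\ell'}\widetilde G^{(i)}_{\ell\ell'}(t-s),
    \]
    where $\rho_{i,s} := |\sigma_i(s,X_s)|\wedge 1$. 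The assumed $\gamma_*$-nondegeneracy (Definition~\ref{def: nondegeneracy anisotrop}) then yields $\geq C\hspace{0.02cm}\rho_{i,s}^2(t-s)^{2\gamma_*^i}$ for small $t-s$, extended to all $t-s\in[0,T]$ by the nonnegativity of the integrand. The Lipschitz bound \eqref{eq: matrix bound} delivers the time-regularity $\|\rho_{i,t}-\rho_{i,s}\|_{L^p(\Omega)}\lesssim (t-s)^{\chi_{\sigma}^i\gamma_K}$.
\end{enumerate}

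With (i)--(iii) in place, the balance inequalities \eqref{eq: H condition diagonal} read precisely $\gamma_*^i<\min\{\chi_b^i\gamma_K+\gamma_b^i+1/2,\, \chi_{\sigma}^i\gamma_K+\gamma_{\sigma}^i\}$ for each $i$, which is the anisotropic analogue of \eqref{eq: H condition}. Applying Proposition~\ref{proposition:VolterraItoanisotropdensityperturbations} then gives, for every $t\in(0,T]$, the absolute continuity of the measure
\[
    A \longmapsto \E\Big[\Big(\prod_{i: S_i\neq\emptyset}\rho_{i,t}\Big)\,\1_A\big(\widetilde Z_t\big)\Big]
\]
with respect to Lebesgue measure on $\R^N$. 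Finally, I would conclude by the same truncation argument used at the end of the proof of Theorem~\ref{thm: density}: approximate $\Gamma_{\sigma,t}$ from within by the sets $\Gamma_{\sigma,t}^{(n)}=\{x: \lambda_{\min}(\sigma(t,x)\sigma(t,x)^{\intercal})^{1/2}\wedge 1\geq 1/n\}$, on which all factors $\rho_{i,t}$ are uniformly bounded below by $1/n$ (since the diagonal structure of $\sigma$ gives $\lambda_{\min}(\sigma\sigma^{\intercal})=\min_i\sigma_i^2$), and pass to the limit by monotone convergence.

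The main obstacle I anticipate is purely bookkeeping at the level of Proposition~\ref{proposition:VolterraItoanisotropdensityperturbations}: one must be careful that the coordinate-wise Hölder exponents interact correctly with the block partition $(S_i)$, in particular when estimating cross terms $\xi_{\ell}\xi_{\ell'}\widetilde G^{(i)}_{\ell\ell'}(h)$ for $\ell,\ell'\in S_i$ and in handling $\xi\in\R^N$ not supported on a single block via the block-diagonal vanishing $G_{\ell\ell'}\equiv 0$ when $\ell\in S_i,\ell'\in S_j,\, i\neq j$. Once the anisotropic proposition is stated in the right form, the rest is a verification analogous to Theorem~\ref{thm: density} carried out block by block.
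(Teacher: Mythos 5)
Your proposal follows essentially the same route as the paper: verify conditions (D1)--(D4) of Proposition~\ref{proposition:VolterraItoanisotropdensityperturbations} (with $\alpha_b^i=\chi_b^i\gamma_K$, $\alpha_\sigma^i=\chi_\sigma^i\gamma_K$ via the increment bound from condition (A)), apply that proposition, and conclude with the truncation over $\Gamma_{\sigma,t}^{(n)}$ exactly as in Theorem~\ref{thm: density}. The only cosmetic deviation is the weight in the auxiliary measure — the proposition as stated yields $1\wedge\min_i|\sigma_i(t,X_t)|$ rather than the product $\prod_i\rho_{i,t}$ — but since both are bounded below on $\Gamma_{\sigma,t}^{(n)}$ this does not affect the argument.
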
 
\begin{proof}
    Since $\widetilde{Z}$ is a Volterra Ito-process of the form \eqref{eq:anisotropicVolterraIto} with $b_s = b(s,X_s)$ and $\sigma_s^{i}=\sigma_{i}(s,X_s)$, we may prove regularity via Proposition \ref{proposition:VolterraItoanisotropdensityperturbations}. By assumption, it is clear that conditions (D1) and (D2) therein are satisfied. As before, condition (A) combined with \cite[Proposition~4.1, Remark~4.2]{F24} gives $\|X_t - X_s\|_{L^p(\Omega)} \lesssim_{p,T} (t-s)^{\gamma_K}$ for all $s,t\in [0,T]$ with $s\le t$ and each $T>0$. Hence, condition (D3) holds for $\alpha_b^{i} = \chi_b^{i} \gamma_K$ and $\alpha_{\sigma}^{i} = \chi_{\sigma}^{i} \gamma_K$ since $\| b_i(t,X_t) - b_i(s,X_s)\|_{L^p(\Omega)} \lesssim_{p,T} (t-s)^{\chi_b^{i} \gamma_K}$ and $\|\sigma_i(t,X_t) - \sigma_i(s,X_s)\|_{L^p(\Omega)} \lesssim_{p,T} (t-s)^{\chi_{\sigma}^{i}\gamma_K}$ for every $0<s<t\le T$, which again remains valid for the case $\chi_b^{i}=0$. Finally, condition~(D4) is an immediate consequence of \eqref{eq: H condition diagonal}. Hence, Proposition~\ref{proposition:VolterraItoanisotropdensityperturbations} proves that for each $t>0$ the measure
    \begin{equation}\label{eq:anisotropSVIEmeasuredensity}
        \mathcal{B}\big(\R^{N}\big)\ni A \longmapsto \E\left[\left(1\wedge\min_{i\in\{1,\dots,d\}}|\sigma_i(t,X_t)|\right)\, \1_{A}\big(\widetilde{Z}_t\big) \right]
    \end{equation}
    is absolutely continuous with respect to the Lebesgue measure on $\R^{N}$. Consider the sets $\Gamma^{(n)}_{\sigma,t} := \big\{ x \in \R^d \, : \, 1\wedge \min_{i\in\{1,\dots,d\}}|\sigma_i(t,x)|  \geq 1/n\big\}$ for $n\ge 1$. Arguing exactly as in the proof of Theorem \ref{thm: density} yields the assertion.
\end{proof} 

Note that, by the diagonality of $\sigma$, the set $\Gamma_{\sigma,t}^{(n)}$ in Theorem \ref{thm: density diagonal} is consistent with its general definition in the proof of Theorem \ref{thm: density}. We proceed with a short remark on the role of the H\"older exponents:
\begin{remark}
    If the drift $b$ or diffusion coefficient $\sigma$ is constant, we may take $\chi_b = \infty$ or $\chi_{\sigma} = \infty$, respectively. This simplifies conditions \eqref{eq: H condition} and \eqref{eq: H condition diagonal}. In particular, for Gaussian Volterra processes $\int_0^t K(t-s)\sigma\, \mathrm{d}B_s$ no relation between $\gamma$ and $\gamma_*$ is required. In this case, stronger results can be directly obtained by classical Fourier methods.
\end{remark}

Finally, let us mention that our assumption on $\widetilde{K}^{b}$ being also diagonal-like is for mere convenience. For general $\widetilde{K}^{b}\in L_{\mathrm{loc}}^2(\R_+; \R^{N \times d})$, it can easily be seen from the proof of Proposition \ref{proposition:VolterraItoanisotropdensityperturbations} that the absolute continuity of the measures $\nu_t$ can be established as soon as there exists $\gamma_b > 0$ such that $\limsup_{h \searrow 0}h^{-2\gamma_b}\int_0^h |\widetilde{K}^b(r)|^2\, \mathrm{d}r < \infty$, and \eqref{eq: H condition diagonal} is strengthened to
\[
    \gamma_*^i < \min\left\{ \gamma_b + \frac{1}{2} + \chi_b \gamma_K,\ \gamma_{\sigma}^i + \chi_{\sigma}^i \gamma_K \right\}, \qquad i \in \{1,\dots, d\},
\]
where $\chi_b=\min_{i\in\{1,\dots,d\}}\chi_b^{i}$. Similarly, one could also study more complex structures of $\widetilde{K}^{\sigma}$, in between diagonal-like and fully general kernels, where certain coordinates with different orders of regularity interact with each other. However, such a generalization requires its own custom-tailored regularity result in the spirit of Proposition~\ref{proposition:VolterraItoanisotropdensityperturbations}.

\subsection{The Volterra square-root process}\label{section:affine case}

Let us suppose that $K: \R_+^* \longrightarrow \R$ is completely monotone, not identically zero, and there exists $\gamma \in (0,1]$ such that for each $T > 0$ we find a constant $C_T > 0$ with
\begin{align}\label{eq:kernelincrementconditions}
    \int_0^h K(t)^2\, \mathrm{d}t + \int_0^T |K(t+h) - K(t)|^2\, \mathrm{d}t \leq C_T h^{2\gamma}, \qquad h \in (0,1].
\end{align}
Given $x_0, b, \sigma \in\R_+$ and $\beta \in \R$, it follows from \cite[Theorem 6.1]{AbiJaLaPu19} that \eqref{eq:roughHeston} admits a unique continuous nonnegative weak solution $X$. In this section, we illustrate how the absolute continuity for corresponding Volterra Ito-processes can be used to prove the failure of the Markov property for the Volterra square-root process \eqref{eq:roughHeston}. To keep this section concise and informative, we focus only on the one-dimensional case. However, the method below could certainly be applied to general affine Volterra processes and even to models with general Hölder diffusion coefficients beyond the square-root.

First, we recall a representation of the conditional first moment of the process obtained in \cite[Theorem 4.5\hspace{0.03cm}(i)]{AbiJaLaPu19}. Since $K$ is completely monotone, it follows from \cite[Theorem 5.5.4]{grippenberg} that it admits a unique (measure) resolvent of the first kind $L$ defined by $L \ast K = K \ast L = 1$. This resolvent satisfies
\begin{align}\label{eq: L resolvent}
    L(\mathrm{d}s) = K(0)^{-1}\delta_0(\mathrm{d}s) + L_0(s)\,\mathrm{d}s,
\end{align}
where $K(0) \in (0,\infty]$ with the convention $\infty^{-1} := 0$, and $L_0 \in L_{\mathrm{loc}}^1(\R_+)$ is completely monotone. Define $\Delta_h f(t) = f(t+h)$, and by \cite[p.\ 3170]{AbiJaLaPu19} combined with the substitution $t+h - u = r$ and the particular form of $L$, we obtain $\Delta_h K \ast L (t) = 1 - \int_0^h K(r)\hspace{0.02cm} L_0(t+h-r)\, \mathrm{d}r$. In particular, $\Delta_h K \ast L \in [0,1]$ is nondecreasing, and hence of locally bounded variation. As the latter holds uniformly in $h\in [0,T]$, the technical condition \cite[Eq.\ (4.15)]{AbiJaLaPu19} is satisfied. 

Next, denote by $E_K \in L_{\mathrm{loc}}^2(\R_+) \cap C(\R_+^*)$ the unique solution of $E_K(t) = K(t) + \beta \int_0^t K(t-s)E_K(s)\, \mathrm{d}s$, and let $(\Pi_z)_{z\in\R_+}$ be given by
\begin{equation}\label{eq:Pihdefinition}
    \Pi_z(t)=( \Delta_z E_K\ast L)(t) - \Delta_z(E_K\ast L)(t),\quad t,z \in\R_+.
\end{equation}
By direct computation, we find that
\begin{align}\label{eq: Pi z formula}
    \Pi_z(t) = - \int_{(t,t+z]} E_K(t+z - r)\, L(\mathrm{d}r) = - \int_0^z E_K(r) L_0(t+z-r)\, \mathrm{d}r,
\end{align}
where we have used \eqref{eq: L resolvent} and the substitution $r \longmapsto t+z-r$. In particular, $\Pi_z$ is for each $z > 0$ continuous and nonpositive. For $\beta \leq 0$, $E_K$ is completely monotone (which follows from \cite[Theorem 5.3.1]{grippenberg} applied to $-\beta E_K$), while for $\beta > 0$, $E_K$ is also nonnegative. Consequently, $\Pi_z$ is monotone and thus of locally bounded variation. An application of \cite[Theorem 4.5\hspace{0.03cm}(i)]{AbiJaLaPu19} gives for every $t, T\in\R_+$ with $t<T$ the desired formula for the conditional expectation: 
\begin{align}\label{eq:fancyexpectationformula}
    \notag \mathbb{E}[X_T\, | \, \mathcal{F}_t] &= \int_0^{T-t}E_K(s)b\, \mathrm{d}s - \Pi_{T-t}(t)x_0 
    \\ &\qquad  + ( \Delta_{T-t}E_K\ast L)(0)\hspace{0.02cm}X_t + \int_{[0,t]} X_{t-s}\,\Pi_{T-t}(\mathrm{d}s),
\end{align}

Path-dependence is encoded in the integral $\int_{[0,t]} X_{t-s}\, \Pi_{T-t}(\mathrm{d}s)$. Its dynamics in $t$ can be derived from the original SVE, where $K$ is replaced with the auxiliary Volterra kernel 
\[
        K_z(t) =(K\ast\mathrm{d}\Pi_z)(t)= \int_{[0,t]} K(t-s) \,\Pi_{z}(\mathrm{d}s).
\]
The next lemma shows that $K_z$ is bounded above and below by the shifted kernel $K(\cdot + z)$.

\begin{lemma}\label{lemma:Kzkernelasymptotics}
    Assume that $L_0'\not\equiv 0$. Then $\mathrm{d}\Pi_z>0$ for each $z > 0$. Moreover, there exists an increasing function $f:\R_+^* \longrightarrow \R_+$ such that $f(0+) = 0$, and  
    \begin{equation}\label{eq:Kzkernelasymptoticsalternate}
        1-f(z) \leq \frac{K_z(t)}{K(t+z) - K(z)K(0)^{-1}K(t)} \leq 1 + f(z)
    \end{equation}
    holds for all $t,z > 0$ with the convention $\infty^{-1} = 0$, and the denominator is strictly positive.
\end{lemma}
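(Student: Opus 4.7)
The plan is to handle both claims via a single exact representation of $K_z$ that separates a $\beta$-independent ``dominant'' piece $M_z$ from a small perturbation in $\beta$ and $z$.

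First, for $\mathrm{d}\Pi_z > 0$, differentiating \eqref{eq: Pi z formula} in $t$ yields
\[
    \Pi_z'(t) = -\int_0^z E_K(r)\hspace{0.02cm} L_0'(t+z-r)\,\mathrm{d}r.
\]
Since $L_0$ is completely monotone by \cite[Theorem~5.5.4]{grippenberg}, so is $-L_0'$, and under the assumption $L_0'\not\equiv 0$ it is nontrivial, hence strictly positive on $\R_+^*$. Combined with $E_K > 0$ (completely monotone and nonzero for $\beta\le 0$; bounded below by $K > 0$ for $\beta > 0$), this gives $\Pi_z'(t) > 0$ for all $t\ge 0$ and $z > 0$, proving positivity of the Lebesgue–Stieltjes measure $\mathrm{d}\Pi_z$.

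The cornerstone of the second part is the identity
\[
    K_z(t) = M_z(t) + \beta\int_0^z E_K(s)\hspace{0.02cm} M_{z-s}(t)\,\mathrm{d}s, \qquad M_z(t) := K(t+z) - K(0)^{-1}K(z)K(t),
\]
which I would derive by Laplace transforms: using $\widehat{L}(\lambda) = 1/(\lambda\widehat{K}(\lambda))$ and the closed form $(E_K\ast L)(t) = 1 + \beta\int_0^t E_K(s)\,\mathrm{d}s$, the formula $\widehat{K_z} = \widehat{K}[\lambda\widehat{\Pi_z} - \Pi_z(0)]$ inverts to $K_z(t) = E_K(t+z) - \beta\int_0^t K(t-s)E_K(s+z)\,\mathrm{d}s - K(0)^{-1}E_K(z)K(t)$. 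Applying the resolvent identity $E_K(u) = K(u) + \beta(K\ast E_K)(u)$ at $u = t+z$ (split at $s = z$) and at $u = z$ then simplifies the right-hand side to the claimed form. In particular, for $\beta = 0$ the perturbation vanishes and $K_z\equiv M_z$, so $f\equiv 0$ works. The strict positivity $M_z(t) > 0$ uses the Bernstein representation $K(t) = \int_{[0,\infty)}e^{-xt}\mu(\mathrm{d}x)$: symmetrisation yields
\[
    K(0)\hspace{0.02cm} M_z(t) = \tfrac{1}{2}\iint (e^{-yt}-e^{-xt})(e^{-yz}-e^{-xz})\,\mu(\mathrm{d}x)\,\mu(\mathrm{d}y),
\]
which is positive since both factors share the sign of $x-y$, and strictly positive whenever $\mu$ is not a Dirac. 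But $\mu$ being a Dirac is equivalent to $K$ being a pure exponential, which via $K(0)^{-1}K + K\ast L_0 = 1$ amounts to $L_0$ being constant, i.e.\ $L_0'\equiv 0$, excluded by hypothesis.

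The hard part will be the uniform-in-$t$ bound
\[
    g(z) := \sup_{t>0}\frac{\int_0^z E_K(s)\hspace{0.02cm} M_{z-s}(t)\,\mathrm{d}s}{M_z(t)} \longrightarrow 0 \quad \text{as } z\to 0,
\]
after which $f(z) := |\beta|\sup_{0<z'\le z}g(z')$ supplies the required increasing function with $f(0+) = 0$. Using the Bernstein representation once more, $M_z(t)$ is a positive mixture of the rank-one products $\phi_{x,y}(t)\phi_{x,y}(z)$ with $\phi_{x,y}(u) = e^{-yu}-e^{-xu}$ and $x > y$, so by Fubini the bound reduces to controlling $(E_K\ast\phi_{x,y})(z)/\phi_{x,y}(z)$ after weighting by the positive factor $\phi_{x,y}(t)$. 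On the regime $\max(xz,yz)\lesssim 1$ this pointwise ratio is of order $\int_0^z E_K(s)\,\mathrm{d}s$ uniformly in $(x,y)$, while on the complementary regime where the pointwise ratio may blow up the weighting factor $\phi_{x,y}(t)$ in the mixture provides the necessary compensation. Quantifying this trade-off via a scale-splitting in $(x,y)$ and dominated convergence is expected to deliver $g(z)\to 0$ as $z\to 0$, completing the proof.
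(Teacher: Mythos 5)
Your first claim (positivity of $\mathrm{d}\Pi_z$) is proved correctly and matches the paper. Your decomposition $K_z(t) = M_z(t) + \beta\int_0^z E_K(s)M_{z-s}(t)\,\mathrm{d}s$ with $M_z(t) = K(t+z)-K(0)^{-1}K(z)K(t)$ is also a valid identity, and the Bernstein-symmetrization proof of $M_z(t)>0$ is sound. However, the final step is a genuine gap, not a routine detail: you need $g(z) := \sup_{t>0}\int_0^z E_K(s)M_{z-s}(t)\,\mathrm{d}s\big/M_z(t) \to 0$, and the compensation argument you sketch is not correct as stated. In the mixture representation, the ratio of the two mixtures is a weighted average of the pointwise ratios $(E_K\ast\phi_{x,y})(z)/\phi_{x,y}(z)$ with nonnegative weights $\phi_{x,y}(t)\phi_{x,y}(z)\,\mu(\mathrm{d}x)\,\mu(\mathrm{d}y)$, so the common factor $\phi_{x,y}(t)$ does \emph{not} compensate for a divergent pointwise ratio --- it appears in both numerator and denominator and, as a weight, cannot pull the average below the divergent values unless one can show the weights concentrate away from the bad region. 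For $x,y\to\infty$ with $x/y$ bounded, the pointwise ratio is of order $e^{yz}/y$, which is unbounded, so the argument needs a real uniform-in-$t$ estimate that you have not supplied.

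The underlying reason your approach is harder than necessary is that you separated the leading term before applying any smallness. The paper instead keeps the full double-integral representation
\[
K_z(t) = -\int_0^t\!\int_0^z K(t-r)\,L_0'(r+z-u)\,E_K(u)\,\mathrm{d}u\,\mathrm{d}r,
\]
in which every factor is nonnegative and $E_K$ appears \emph{only} for arguments in $[0,z]$. From $E_K = K + \beta(K\ast E_K)$, $(K\ast E_K)(0)=0$ and $K>0$, one obtains the pointwise sandwich $(1-f(z))K(u)\le E_K(u)\le (1+f(z))K(u)$ on $(0,z]$ with $f(z) = |\beta|\sup_{u\in[0,z]}|(K\ast E_K)(u)|/K(z)$, which is increasing with $f(0+)=0$. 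Substituting this sandwich in place of $E_K(u)$ and identifying the resulting integral with $M_z(t)$ gives \eqref{eq:Kzkernelasymptoticsalternate} immediately, with no ratio of integrals to control. Your decomposition is equivalent to applying $E_K = K + \beta(K\ast E_K)$ \emph{after} integrating, which separates a term that you then must compare back against $M_z(t)$; applying the sandwich \emph{inside} the integral avoids this entirely.
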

\begin{proof}
    Since $L_0'\not\equiv 0$, the complete monotonicity of $L_0$ gives $L_0'<0$. Using \eqref{eq: Pi z formula}, we obtain $\Pi_z(\mathrm{d}t) = - \int_0^z E_K(r)L_0'(t+z-r)\, \mathrm{d}r\,\mathrm{d}t$, and hence $\mathrm{d}\Pi_z>0$. By complete monotonicity combined with $K \not \equiv 0$, we get $K(z) > 0$ for each $z > 0$. Hence, by the continuity of $K\ast E_K$ and the complete monotonicity of $K$, the function
    \[
        f(z) = \frac{|\beta| \sup_{u\in [0,z]}|(K\ast E_K)(u)|}{K(z)},\quad z>0,
    \]
    is well-defined and increasing with $f(0+) = 0$. Using $E_K = K + \beta (K \ast E_K)$, the continuity of $K\ast E_K$ with $(K \ast E_K)(0) = 0$, and $K(0) > 0$, we obtain for every $z>0$:
    \begin{equation}\label{eq:KEKEstimate}
        \left( 1 - f(z) \right) K(u) \leq E_K(u) \leq \left( 1 + f(z)\right)K(u), \qquad u \in (0,z].
    \end{equation}
    Hence, by \eqref{eq:KEKEstimate} we can estimate for all $t\in\R_+^*$:
    \begin{align*}
        0 < K_z(t) &= - \int_0^t \int_0^z K(t-r)L_0'(r+z-u)E_K(u)\, \mathrm{d}u\, \mathrm{d}r
        \\ &\le  (1 + f(z))\int_0^t \int_0^z K(t-r)(-L_0')(r+z-u)K(u)\, \mathrm{d}u \,\mathrm{d}r
        \\ &= (1 + f(z))\int_{[0,t]} K(t-r) \,\widetilde{\Pi}_z(\mathrm{d}r),
    \end{align*}
    and analogously obtain the lower bound $K_z(t)\ge (1 - f(z))\int_{[0,t]} K(t-r) \,\widetilde{\Pi}_z(\mathrm{d}r)$. Here we have set $\widetilde{\Pi}_z(t) = (\Delta_z K \ast L)(t) - \Delta_z (K\ast L)(t) = - \int_0^z K(r) L_0(t+z-r)\, \mathrm{d}r$, which corresponds to $\Pi_z$ with $\beta = 0$, i.e.\ $E_K = K$. By direct computation, we find that
    \begin{align*}
        \widetilde{\Pi}_z(t) &= K(0)^{-1}K(t+z) + \int_0^t K(t+z-r)L_0(r)\, \mathrm{d}r - 1,
    \end{align*}
    and, hence, it follows that $\widetilde{\Pi}_z(\mathrm{d}t) = (\Delta_z K' \ast L(t) + K(z)L_0(t))\,\mathrm{d}t$. From this, we deduce
    \begin{align*}
        \int_{[0,t]} K(t-r) \,\widetilde{\Pi}_z(\mathrm{d}r) &= K \ast (\Delta_z K' \ast L)(t) + K(z) (K \ast L_0)(t)
        \\ &= K(t+z) - K(z)K(0)^{-1}K(t),
    \end{align*}
    where we have used the associativity of the convolution and $K \ast L = 1$. Since $\widetilde{\Pi}_z$ is strictly increasing and $K>0$, we get $\int_{[0,t]} K(t-r) \,\widetilde{\Pi}_z(\mathrm{d}r) > 0$ and hence the denominator in~\eqref{eq:Kzkernelasymptoticsalternate} is strictly positive. 
\end{proof}

For singular kernels, i.e.\ $K(0)=\infty$, \eqref{eq:Kzkernelasymptoticsalternate} implies in combination with $f(0+)=0$ that for small $z>0$, $K_z$ becomes uniformly close to the shifted kernel $K(\cdot+z)$. Moreover, as the construction of the function~$f$ shows, when $\beta = 0$, we even have $K_z=K(\cdot+z)$ for all $z>0$, provided that $K(0) = \infty$. 

\begin{remark}
 By the definition of resolvents, the following properties are equivalent:
 \begin{enumerate}
     \item[(i)] There exist $\lambda, c\in\R_+$ such that $K(t) = c e^{-\lambda t}$ for $t \in\R_+$,
     \item[(ii)] The resolvent of the first kind is given by $L(\mathrm{d}t)=K(0)^{-1}\delta_0(\mathrm{d}t)+c\,\mathrm{d}t$ for some $c \in\R_+$.
     \item[(iii)] The resolvent of the first kind \eqref{eq: L resolvent} satisfies $L_0' \equiv 0$.
 \end{enumerate}
 It is a remarkable consequence of Lemma \ref{lemma:Kzkernelasymptotics} that for $L_0'\not \equiv 0$, it follows that $\mathrm{d}\Pi_z>0$ for all $z>0$, whence $X\ast\mathrm{d}\Pi_z$ in \eqref{eq:fancyexpectationformula} integrates over the full past trajectory of $X$. From this perspective, below we prove that this path-dependent integral is indeed not measurable with respect to the $\sigma$-algebra generated by $X_t$.
\end{remark}

Let $M_z:\R_+\longrightarrow \R^{2\times 2}$ be the matrix-valued function which maps $h\in\R_+$ to the positive semidefinite matrix
\[
        M_z(h) = \int_0^h \begin{pmatrix} K(r)^2 & K(r)K_z(r) \\ K(r) K_z(r) & K_z(r)^2 \end{pmatrix} \, \mathrm{d}r,
\]
and denote by $\lambda_{\min}(M_z(h))\ge 0$ its smallest eigenvalue. Based on formula \eqref{eq:fancyexpectationformula}, we provide a sufficient condition such that $X$ cannot be a Markov process.

\begin{theorem}\label{thm:affinedensitynonMarkov}
    Let $X$ be the Volterra square-root process on $\R_+$ with $\sigma, b > 0$ defined on a filtered probability space $(\Omega, \mathcal{F}, (\mathcal{F}_t)_{t \in \R_+}, \P)$. Assume that $L_0'\not\equiv 0$ and there exist $z, \gamma_* > 0$ such that
    \begin{align}\label{eq: 9}
        \gamma_* < \min \left\{ \gamma,\ \frac{1}{2} \right\} + \frac{\gamma}{2},
    \end{align}
    where $\gamma$ is given by \eqref{eq:kernelincrementconditions}, and $\gamma_*$ satisfies
    \begin{equation}\label{eq:eigenvalueasympAffine}
        \liminf_{h \searrow 0} h^{-2\gamma_*} \lambda_{\min}(M_z(h)) > 0.
    \end{equation} 
    Then $X$ is not a Markov process in the sense of \eqref{eq: Markov property} with respect to $(\mathcal{F}_t)_{t \in \R_+}$.
\end{theorem}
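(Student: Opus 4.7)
The plan is proof by contradiction, leveraging the explicit representation \eqref{eq:fancyexpectationformula} of the conditional first moment. Fix the $z>0$ of the hypothesis and choose $t_0>0$; setting $T=t_0+z$ and assuming for contradiction that $X$ is Markov, formula \eqref{eq:fancyexpectationformula} shows that the path-dependent integral
\[
Z_t := \int_{[0,t]} X_{t-s}\,\Pi_z(\mathrm{d}s)
\]
is forced to be $\sigma(X_{t_0})$-measurable at $t=t_0$, since all remaining terms are either deterministic or scalar multiples of $X_{t_0}$. Hence there exists a Borel function $\phi:\R\to\R$ with $Z_{t_0}=\phi(X_{t_0})$ $\P$-a.s. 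My goal is to contradict this by showing that the joint law of $(X_{t_0},Z_{t_0})$ on $\R_+^*\times\R$ is absolutely continuous with respect to two-dimensional Lebesgue measure, and therefore cannot concentrate on the graph of $\phi$.

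I first rewrite $Z_t$ as a Volterra Ito-process. Substituting \eqref{eq:roughHeston} into the definition of $Z_t$ and applying a (stochastic) Fubini theorem — justified by the local bounded variation of $\Pi_z$ together with $K\in L^2_{\mathrm{loc}}(\R_+)$ — yields
\[
Z_t = x_0\int_{[0,t]}\Pi_z(\mathrm{d}s) + \int_0^t K_z(t-r)\,(b+\beta X_r)\,\mathrm{d}r + \sigma\int_0^t K_z(t-r)\sqrt{X_r}\,\mathrm{d}B_r,
\]
with $K_z(u) := \int_{[0,u]} K(u-s)\,\Pi_z(\mathrm{d}s)$. Thus the joint process $(X_t, Z_t)^{\intercal}$ falls into the framework of \eqref{eq:perturbedVolterraprocessDef} with $d=1$, $N=2$, stacked kernels $\widetilde{K}^b=\widetilde{K}^\sigma=(K,K_z)^{\intercal}$, an $\F_0$-measurable input $\widetilde{g}(t)=\bigl(x_0,\,x_0\int_{[0,t]}\Pi_z(\mathrm{d}s)\bigr)^{\intercal}$, and coefficients $b(x)=b+\beta x$, $\sigma(x)=\sigma\sqrt{x}$ that are $\chi_b=1$- and $\chi_\sigma=\tfrac{1}{2}$-H\"older continuous on $\R_+$, respectively.

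Next, I verify the hypotheses of Theorem \ref{thm: density}. Condition (A) holds with $\gamma_K=\gamma$ by \eqref{eq:kernelincrementconditions}. By Lemma \ref{lemma:Kzkernelasymptotics}, $K_z$ is bounded near zero for the fixed $z>0$, so $\int_0^h|\widetilde{K}^a(r)|^2\,\mathrm{d}r\lesssim h^{2\min(\gamma,1/2)}$ and the upper bound \eqref{eq: upper bound} holds with $\gamma_b=\gamma_\sigma=\min(\gamma,\tfrac{1}{2})$. The Gram matrix associated to $\widetilde{K}^\sigma$ equals precisely $M_z$, so $\gamma_*$-nondegeneracy reduces to \eqref{eq:eigenvalueasympAffine}. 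A short calculation with these exponents gives
\[
\min\Bigl\{\gamma_b+\tfrac{1}{2}+\chi_b\hspace{0.02cm}\gamma_K,\ \gamma_\sigma+\chi_\sigma\hspace{0.02cm}\gamma_K\Bigr\} \;=\; \min\{\gamma,\tfrac{1}{2}\}+\tfrac{\gamma}{2},
\]
so the balance condition \eqref{eq: H condition} is exactly \eqref{eq: 9}. Theorem \ref{thm: density} then yields that the restriction of the law of $(X_{t_0},Z_{t_0})$ to $\Gamma_{\sigma,t_0}\times\R=\R_+^*\times\R$ is absolutely continuous with respect to the Lebesgue measure on $\R^2$. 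Since the graph of $\phi$ is Lebesgue-null in $\R^2$ and $\P[X_{t_0}>0]>0$ — which follows from $b>0$ through the positivity of $\E[X_{t_0}]$ derived by a resolvent argument applied to the deterministic linear Volterra equation solved by $\E[X_\cdot]$ — this contradicts $Z_{t_0}=\phi(X_{t_0})$ on the positive-probability event $\{X_{t_0}>0\}$.

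The main technical obstacle is the exponent bookkeeping: extracting the correct upper-bound exponent $\gamma_\sigma$ for $K_z$ from Lemma \ref{lemma:Kzkernelasymptotics} and aligning the abstract balance condition \eqref{eq: H condition} with the concrete condition \eqref{eq: 9}. The hypothesis $L_0'\not\equiv 0$ enters implicitly via Lemma \ref{lemma:Kzkernelasymptotics}, where it guarantees $\mathrm{d}\Pi_z>0$ and hence ensures that $K$ and $K_z$ are sufficiently linearly independent for \eqref{eq:eigenvalueasympAffine} to be meaningful; without it the auxiliary process $Z$ would degenerate and the argument would collapse.
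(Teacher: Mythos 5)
Your proof is correct and follows essentially the same route as the paper: assume Markov, extract from \eqref{eq:fancyexpectationformula} that the path-dependent integral $Z_{t_0}$ must be $\sigma(X_{t_0})$-measurable, rewrite $(X,Z)$ as the two-dimensional Volterra Ito-process with stacked kernel $(K,K_z)^{\intercal}$, and invoke Theorem \ref{thm: density} to contradict the graph concentration. The only cosmetic difference is that you derive $\P[X_{t_0}>0]>0$ directly from nonnegativity of $X$ and positivity of its first moment, whereas the paper invokes the Paley--Zygmund inequality; both are valid, and your exponent bookkeeping ($\gamma_K=\gamma$, $\gamma_b=\gamma_\sigma=\min\{\gamma,1/2\}$, $\chi_b=1$, $\chi_\sigma=1/2$, so \eqref{eq: H condition} reduces exactly to \eqref{eq: 9}) matches the paper's.
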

\begin{proof}
    Suppose that $X$ is a Markov process. Fix $0 < t < T$ such that $z=T-t$. Then by using $\E[X_T \, | \, \mathcal{F}_t ] = \E[X_T \, | \, X_t]$ combined with \eqref{eq:fancyexpectationformula}, we arrive at
    \begin{align}\label{eq: moment}
        \notag \int_{[0,t]} X_{t-s}\,\Pi_{T-t}(\mathrm{d}s) &= \E[X_T \, | \, X_t] - \int_0^{T-t}E_K(s)b\, \mathrm{d}s 
        \\ &\qquad  + \Pi_{T-t}(t)x_0 - ( \Delta_{T-t}E_K\ast L)(0)\hspace{0.02cm}X_t.
    \end{align}
    This shows that $\int_{[0,t]} X_{t-s}\,\Pi_{T-t}(\mathrm{d}s)$ is necessarily $\Sigma(X_t)$-measurable, where $\Sigma(X_t)$ denotes the $\sigma$-algebra generated by $X_t$. To obtain the desired contradiction, let us prove that the latter is impossible. 

    Define $Y^{z'}_t = \int_0^t X_{t-s}\,\Pi_{z'}(\mathrm{d}s) = (X \ast \mathrm{d}\Pi_{z'})_t$ for $z' > 0$. Then for $z'=z = T-t$ we have $Y_t^{T-t} = \int_{[0,t]} X_{t-s}\,\Pi_{T-t}(\mathrm{d}s)$ since the measure induced by $\Pi_{T-t}$ is atomless. By~\eqref{eq: moment} we find a measurable function $g_{t,T}$ such that $Y_t^{T-t} = g_{t,T}(X_t)$ holds a.s. Denote by $G_{t,T} = \{ (x, g_{t,T}(x)) \hspace{0.02cm} : \, x \in \R_+ \}$ the graph of $g_{t,T}$. From the Paley–Zygmund inequality combined with the formula for $\E[X_t]$ given in \cite[Lemma 4.2]{AbiJaLaPu19}, we obtain
    \[
        0 < \frac{(\E[X_t])^2}{\E[X_t^2]}\le \P[ X_t > 0 ]
        = \P\left[ (X_t, g_{t,T}(X_t)) \in G_{t,T} \cap ((0,\infty) \times \R) \right].
    \]
    Hence, to arrive at a contradiction, it suffices to prove that 
    \begin{equation}\label{eq:affinemeasure}
        \P\left[ (X_t, g_{t,T}(X_t)) \in G_{t,T} \cap ((0,\infty) \times \R) \right] = 0,
    \end{equation}
    which, in turn, follows once the law of the two-dimensional vector $(X_t, Y_t^{T-t}) = (X_t, g_{t,T}(X_t))$ is absolutely continuous with respect to the Lebesgue measure on $(0,\infty) \times \R$. 
    
    The desired absolute continuity follows from Theorem \ref{thm: density}. Indeed, by \cite[Lemma 2.1]{AbiJaLaPu19} we may write 
    \[
        Y^z_t = (\Pi_{z}(t)-\Pi_{z}(0))x_0 + \int_0^t K_z(t-s)(b + \beta X_s)\, \mathrm{d}s
        + \sigma \int_0^t K_z(t-s) \sqrt{X_s}\, \mathrm{d}B_s,
    \]
    and hence $Z = (X, Y^{T-t})^{\intercal}$ is a Volterra Ito-process given by 
    \[
        Z_s = \widetilde{g}(s) + \int_0^s \widetilde{K}(s-r)(b+\beta X_r)\, \mathrm{d}r
        + \sigma \int_0^s \widetilde{K}(s-r)\sqrt{X_r}\, \mathrm{d}B_r,\quad s\in\R_+,
    \]
    where $\widetilde{g}(s) = (\Pi_z(s)x_0 - \Pi_z(0)x_0, x_0)^{\intercal}$ and $\widetilde{K}^b(s)=\widetilde{K}^{\sigma}(s)=\widetilde{K}(s) := (K(s), K_z(s))^{\intercal}$. By Lemma \ref{lemma:Kzkernelasymptotics}, the Volterra kernel $K_z$ is bounded and hence satisfies $\int_0^h K_z(r)^2\, \mathrm{d}t \lesssim h$. Consequently, we obtain $\int_0^h |\widetilde{K}(r)|^2\, \mathrm{d}r \lesssim h + h^{2\gamma} \lesssim h^{2(\gamma \wedge 1/2)}$. For the lower bound, we obtain from assumption \eqref{eq:eigenvalueasympAffine} for every $\xi\in\R^2$ with $|\xi|=1$:
    \[
        \int_0^h |\widetilde{K}(r)^{\intercal}\xi|^2\, \mathrm{d}r = \int_0^h \left| K(r)\xi_1 + K_z(r)\xi_2\right|^2\, \mathrm{d}r = \xi^{\top}M_z(h)\hspace{0.02cm} \xi
    \geq C_* h^{2\gamma_*}.
    \]
    Hence, $\widetilde{K}$ is $\gamma_*$-nondegenerate, the regularity condition \eqref{eq: upper bound} holds with parameter $\gamma \wedge \frac{1}{2}$, and by \eqref{eq:kernelincrementconditions}, condition (A) is satisfied with $\gamma_K=\gamma$. Therefore, \eqref{eq: 9} guarantees \eqref{eq: H condition} and Theorem \ref{thm: density} combined with $\Gamma_{\sigma,t} = (0,\infty)$, shows that $\nu_t(A) = \P[ Z_t \in A, \ X_t > 0 ] = \P[ Z_t \in A\cap ((0,\infty) \times \R)]$ has a density. This proves the assertion. 
\end{proof} 

Recall that the Markov property is preserved when passing to a coarser filtration with respect to which $X$ is still adapted. Hence, under the above conditions, $X$ is also not a Markov process with respect to any finer filtration $\mathcal{G}_t \supseteq\mathcal{F}_t$.

Since the proof of \cite[Theorem 4.5\hspace{0.03cm}(i)]{AbiJaLaPu19} is independent of the particular form of $\sigma$, it also holds for general SVEs with affine linear drifts. In this case, $\sigma$ is H\"older continuous with parameter $\chi_{\sigma} \in (0,1]$, $\Gamma_{\sigma,t} = \{x \in \R \ : \ \sigma(x) \neq 0\}$, and we need to assume that $\P[ \sigma(X_{t_0}) \neq 0] > 0$ for some $t_0 > 0$. Then condition \eqref{eq: 9} becomes $\gamma_*<(\gamma\wedge\frac{1}{2})+\gamma\hspace{0.02cm}\chi_{\sigma}$. 

\begin{example}\label{example:affineargumentnotoptimal}
    Suppose that $\beta = 0$ and $K$ is, additionally to the assumptions above, also regularly varying in $t = 0$ with index $H - \frac{1}{2}$, $H \in (0, \frac{1}{2})$. It follows from Lemma~\ref{lemma:Kzkernelasymptotics} that $K_z(0) = K(z) \neq 0$, and hence $K_z$ is regularly varying at $t = 0$ of order $\widetilde{H} = 0$. Consequently, we obtain $\gamma\wedge\frac{1}{2} = H$, and it follows from Lemma~\ref{lemma: fractional like} below that $\gamma_*=\frac{1}{2}$ for all $z>0$. Therefore, Theorem~\ref{thm:affinedensitynonMarkov} is applicable provided that $H>1/3$, while for general Hölder continuous $\sigma$ the condition becomes $H>(2+2\chi_{\sigma})^{-1}$.
\end{example}

While the affine structure of the processes under consideration provides a useful intuition for their path-dependent nature, the practical applicability of Theorem \ref{thm:affinedensitynonMarkov} for establishing the failure of the Markov property is restricted to singular kernels that are ``not too rough''. Indeed, if $K$ is regular, i.e.\ $K(0) < \infty$, so that $K'$ is bounded, then $\gamma = 1/2$ and $\gamma_* \geq 1$ (cf.~\eqref{eq:lambdaminregularasymp}), and in this case condition \eqref{eq: 9} can never be satisfied. The cases of very small Hurst parameters and regular kernels together with general dynamics will be addressed in the following sections by means of Markovian lifts. In contrast to the arguments developed above, one advantage of the Markovian lift approach lies in the flexibility of replacing $K_z$ with other admissible Volterra kernels.

\section{Markovian lifts}\label{section:markovianliftapproach}

\subsection{Abstract formulation}\label{section:abstractLifts}

Below, we introduce a framework for Hilbert space-valued Markovian lifts of \eqref{eq:generalSVIE} developed in \cite{BBCF25} for infinite-dimensional SVEs, but now adjusted to our finite-dimensional setting. Let us suppose that the following condition is satisfied, cf. \cite[Assumption~A]{BBCF25}:
\begin{enumerate}
    \item[(B)] There exist separable Hilbert spaces $\mathcal{H}, \mathcal{V}$ such that $\mathcal{H}\cap\mathcal{V}$ is dense in $\mathcal{V}$, and there exists a $C_0$-semigroup $(S(t))_{t\geq 0}$ on $\mathcal{H}$ that leaves $\mathcal{H} \cap \mathcal{V}$ invariant, is bounded on~$\mathcal{V}$, and its unique extension onto $\mathcal{V}$, which we denote also by $(S(t))_{t \geq 0}$, is again a $C_0$-semigroup. This semigroup satisfies $S(t) \in L(\mathcal{H}, \mathcal{V})$ for $t > 0$, and there exists $\rho \in[0,1/2)$ such that for every $T>0$ we have a constant $C_T(\mathcal{H},\mathcal{V})>0$ with
    \begin{equation}\label{eq:abstractoperatornormestimate}
    \| S(t)\|_{L(\mathcal{H},\mathcal{V})}\leq C_{T}(\mathcal{H},\mathcal{V})(1+t^{-\rho}), \qquad t\in (0,T].
    \end{equation}
    Finally, there exist bounded linear operators $\Xi: \mathcal{V} \longrightarrow \R^d$ and $\xi_b, \xi_{\sigma}: \R^d \longrightarrow \mathcal{H}$.
\end{enumerate}

When studying Markovian lifts, we shall always suppose that assumption (B) is satisfied. Below, we study the following abstract stochastic equation on $\mathcal{V}$:
\begin{align}\label{eq: abstract markovian lift}
    \mathcal{X}_t = S(t)\xi + \int_0^t S(t-s)\hspace{0.02cm}\xi_b\hspace{0.02cm} b(s,\Xi \mathcal{X}_s)\, \mathrm{d}s + \int_0^t S(t-s)\hspace{0.02cm}\xi_{\sigma} \hspace{0.02cm}\sigma(s,\Xi \mathcal{X}_s)\, \mathrm{d}B_s,\quad t\in\R_+,
\end{align}
where $B$ is an $m$-dimensional Brownian motion, $b: \R_+\times\R^d \longrightarrow \R^d$ denotes the drift, $\sigma: \R_+\times\R^d \longrightarrow \R^{d \times m}$ the diffusion coefficient and $\xi \in \mathcal{V}$ describes the initial condition. Equation \eqref{eq: abstract markovian lift} can be seen as a mild formulation of a (time-inhomogeneous) stochastic evolution equation on the Hilbert space $\mathcal{V}$ with the generator $(\mathcal{A}, D(\mathcal{A}))$ determined by the semigroup $(S(t))_{t \geq 0}$. Its existence, uniqueness and stability properties have been studied in \cite{BBCF25} for infinite-dimensional settings which contain \eqref{eq: abstract markovian lift} as a particular case. 

In this framework, stochastic Volterra processes appear as finite-dimensional projections $\Xi \mathcal{X}$ of \eqref{eq: abstract markovian lift}, where $\xi_b,\xi_{\sigma} \in L(\R^d, \mathcal{H})$ play the role of abstract lifts for the Volterra kernels $K^b,K^{\sigma}$. Namely, given a continuous $\mathcal{V}$-valued solution $\mathcal{X}$ of \eqref{eq: abstract markovian lift}, it is easy to see that $X_t := \Xi \mathcal{X}_t$, $t\in\R_+$, solves \eqref{eq:generalSVIE} with the kernel matrices and the initial curve given~by
\begin{align}\label{eq: lift condition}
    K^a(t) := \Xi S(t) \xi_a \in L(\R^d, \R^d) \ \text{ and } \ g(t) := \Xi S(t)\xi \in \R^d, \qquad t > 0,
\end{align}
for $a \in \{b,\sigma\}$, and $g(0)=\Xi\xi$. To state the converse direction, let us define the class of admissible initial conditions by
\begin{equation}\label{eq: admissible input curves}
    \mathcal{G}_p = \{ g = \Xi S(\cdot) \xi \ : \ \xi \in L^p(\Omega, \mathcal{F}_0, \P; \mathcal{V}) \}, \quad p \in [1,\infty].
\end{equation}
The following proposition illustrates how the Markovian lift $\mathcal{X}$, i.e.\ a solution to \eqref{eq: abstract markovian lift}, can be constructed from the associated Volterra process $X$.

\begin{proposition}\label{prop: Markov lift}
    Suppose that condition (B) is satisfied, consider $b \in B(\R_+; C^0(\R^d; \R^d))$ and $\sigma \in B(\R_+; C^0(\R^d; \R^{d \times m}))$, and let $X$ be a continuous weak solution of \eqref{eq:generalSVIE} with kernels $K^a = \Xi S(\cdot)\xi_a$, $a \in \{b,\sigma\}$, and $g \in \mathcal{G}_p$ for some $p > 2$ satisfying
    \begin{equation}\label{eq:prestrictioninitialcond}
        \frac{1}{p} + \rho < \frac{1}{2},
    \end{equation}
    defined on a filtered probability space $(\Omega, \mathcal{F}, (\mathcal{F}_t)_{t \in \R_+}, \P)$. Then for each $T > 0$ there exists a continuous weak solution $\mathcal{X} \in L^p(\Omega, \P; C([0,T];\mathcal{V}))$ of~\eqref{eq: abstract markovian lift} that is defined on the same filtered probability space w.r.t.\ the identical $(\mathcal{F}_t)_{t \in \R_+}$-Brownian motion. This solution satisfies $X = \Xi \mathcal{X}$ on $[0,T]$ and 
    \begin{equation}\label{eq:abstractLiftmomentbounds}
        \E\left[ \sup_{t \in [0,T]}|X_t|^p + \sup_{t \in [0,T]} \| \mathcal{X}_t\|_{\mathcal{V}}^p \right] < \infty.
    \end{equation}
\end{proposition}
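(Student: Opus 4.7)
The plan is to construct $\mathcal{X}$ by substituting the given Volterra process $X$ directly into the right-hand side of \eqref{eq: abstract markovian lift}. Concretely, on the same filtered probability space, set
\begin{equation*}
    \mathcal{X}_t := S(t)\xi + \int_0^t S(t-s)\hspace{0.02cm}\xi_b\hspace{0.02cm} b(s,X_s)\, \mathrm{d}s + \int_0^t S(t-s)\hspace{0.02cm}\xi_{\sigma}\hspace{0.02cm}\sigma(s,X_s)\, \mathrm{d}B_s, \quad t \in [0,T].
\end{equation*}
The Bochner integral converges in $\mathcal{V}$ because $\|S(t-s)\xi_b\|_{L(\R^d,\mathcal{V})}\lesssim 1+(t-s)^{-\rho}$ by \eqref{eq:abstractoperatornormestimate}, $b$ has linear growth, and the sample paths of $X$ are a.s.\ bounded on $[0,T]$; the stochastic integral is well-defined as a $\mathcal{V}$-valued It\^o integral since the Hilbert--Schmidt norm of its integrand is $L^2$-integrable in $s$ whenever $\rho<1/2$.

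Applying the bounded linear operator $\Xi$ to both sides and commuting it with the Bochner and $\mathcal{V}$-valued stochastic integrals, the kernel-lift identities \eqref{eq: lift condition} give
\begin{equation*}
    \Xi\mathcal{X}_t = g(t) + \int_0^t K^b(t-s)\hspace{0.02cm}b(s,X_s)\,\mathrm{d}s + \int_0^t K^{\sigma}(t-s)\hspace{0.02cm}\sigma(s,X_s)\,\mathrm{d}B_s = X_t
\end{equation*}
in view of \eqref{eq:generalSVIE}. Both $\Xi\mathcal{X}$ and $X$ are continuous in $t$, hence indistinguishable on $[0,T]$. Substituting $X_s = \Xi\mathcal{X}_s$ back into the defining equation for $\mathcal{X}$ shows that $\mathcal{X}$ is a weak solution of \eqref{eq: abstract markovian lift} with respect to the given Brownian motion.

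It remains to establish \eqref{eq:abstractLiftmomentbounds} and the $\mathcal{V}$-continuity of $\mathcal{X}$. The bound on $\E\sup_{t\le T}|X_t|^p$ follows from standard SVE moment estimates under condition (A) (extending \cite[Proposition 4.1, Remark 4.2]{F24} to time-dependent coefficients of linear growth), using that $g=\Xi S(\cdot)\xi$ inherits $L^p$-H\"older-type regularity in the sense of \eqref{eq: g Hoelder} from $\xi\in L^p(\Omega;\mathcal{V})$ via \eqref{eq:abstractoperatornormestimate}. For $\mathcal{X}$ itself, I would use the Da Prato--Kwapień--Zabczyk stochastic factorization: pick $\alpha\in(1/p,\,1/2-\rho)$, which is nonempty precisely by \eqref{eq:prestrictioninitialcond}, and write
\begin{equation*}
    \int_0^t S(t-s)\xi_{\sigma}\sigma(s,X_s)\,\mathrm{d}B_s = \frac{\sin(\pi\alpha)}{\pi}\int_0^t (t-s)^{\alpha-1}S(t-s)\hspace{0.02cm}Y_s^{\alpha}\,\mathrm{d}s,
\end{equation*}
where $Y_s^{\alpha}:=\int_0^s(s-r)^{-\alpha}S(s-r)\xi_{\sigma}\sigma(r,X_r)\,\mathrm{d}B_r$. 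The Hilbert-space Burkholder--Davis--Gundy inequality together with $\alpha+\rho<1/2$ yields $\sup_{s\le T}\E\|Y_s^{\alpha}\|_{\mathcal{V}}^p\lesssim 1+\E\sup_{t\le T}|X_t|^p$, while H\"older in the outer convolution using $\alpha>1/p$ and strong continuity of $S$ delivers both the continuity of the stochastic convolution in $\mathcal{V}$ and the required $p$-th moment bound. The drift term is treated analogously but more easily, since it is deterministic integration against the singular weight $(t-s)^{-\rho}$ with exponent below $1$.

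The principal obstacle is the singularity $\|S(t)\|_{L(\mathcal{H},\mathcal{V})}\sim t^{-\rho}$ at $t=0$: a direct $L^p$-bound on the $\mathcal{V}$-valued stochastic convolution is unavailable, and an intermediate factorization exponent $\alpha$ is needed to simultaneously tame the BDG estimate on $Y^{\alpha}$ (requiring $\alpha+\rho<1/2$) and the outer H\"older argument on the convolution (requiring $\alpha>1/p$). The hypothesis \eqref{eq:prestrictioninitialcond} is exactly the compatibility condition that makes both feasible, and the same condition guarantees that $t\mapsto S(t)\xi$ is a continuous $\mathcal{V}$-valued $L^p$-process, closing the argument.
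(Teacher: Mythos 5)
Your construction is exactly the paper's: define $\mathcal{X}$ by plugging $X$ into the mild formula, apply $\Xi$ to recover $X=\Xi\mathcal{X}$ via \eqref{eq: lift condition}, and obtain continuity and $p$-th moments of the stochastic convolution in $\mathcal{V}$ from the factorization method under $\frac1p+\rho<\frac12$ — the paper simply outsources that last step to \cite[Proposition 2.2]{BBCF25}, which it notes is itself an application of factorization. The only cosmetic deviations are that you unfold the factorization explicitly and appeal to ``condition (A)'' for the moment bound on $X$ (not a hypothesis here, though the needed ingredients $K^a\in L^2_{\mathrm{loc}}$ and $L^p$-boundedness of $g$ do follow from (B) and $g\in\mathcal{G}_p$, and the paper alternatively deduces $\E\sup_t|X_t|^p<\infty$ from the bound on $\mathcal{X}$).
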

\begin{proof}
    First note that $|g(t)| \leq \| \Xi \|_{L(\mathcal{V}, \R^d)}\sup_{t \in [0,T]}\|S(t)\|_{L(\mathcal{V})}\|\xi\|_{\mathcal{V}}$, whence $g$ is bounded in $L^p(\Omega, \mathcal{F}_0, \P; \R^d)$ by the boundedness of $(S(t))_{t\ge 0}$ on $\mathcal{V}$ and the definition of $\mathcal{G}_p$. Moreover, using assumption (B) combined with~\eqref{eq: lift condition}, we obtain from \eqref{eq:abstractoperatornormestimate} for every $t,T>0$ with $t\le T$ and $a \in \{b,\sigma\}$:
    \begin{align}\label{eq: K pointwise bound}
        |K^a(t)| \leq \| \Xi \|_{L(\mathcal{V}, \R^d)}\hspace{0.02cm}C_T(\mathcal{H}, \mathcal{V})\hspace{0.02cm}\|\xi_a\|_{L(\R^d,\mathcal{H})}\hspace{0.02cm}(1 + t^{-\rho}),
    \end{align}
    and hence $K^a \in L_{\mathrm{loc}}^2(\R_+; \R^{d \times d})$ as $\rho\in [0,1/2)$. Therefore, \cite[Proposition 4.1, Remark~4.2]{F24}, which straightforwardly generalizes to our time-dependent coefficients as the linear growth condition holds uniformly on $[0,T]$, see also \cite[Lemma 3.4]{PROMEL2023291} for deterministic~$g$, allows us to verify 
    \begin{align}\label{eq: moment bound X}
        \sup_{t \in [0,T]}\E[ |X_t|^p ] < \infty.
    \end{align}
    Next, let us define 
    \begin{equation}\label{eq:MarkovianLiftconstruction}
        \mathcal{X}_t = S(t)\xi + \int_0^t S(t-s)\hspace{0.02cm}\xi_b\hspace{0.02cm} b(s,X_s)\, \mathrm{d}s + \int_0^t S(t-s)\hspace{0.02cm}\xi_{\sigma} \hspace{0.02cm}\sigma(s,X_s)\, \mathrm{d}B_s, \quad t\in\R_+.
    \end{equation}
    Observe that $\mathbb{E}\big[\sup_{t\in [0,T]}\|S(t)\xi\|_{\mathcal{V}}^p\big]\le\sup_{t\in [0,T]}\|S(t)\|_{L(\mathcal{V})}^p\mathbb{E}\big[\|\xi\|_{\mathcal{V}}^p\big]<\infty$, whence the initial curve fulfills the required integrability. Moreover, $S(\cdot)\xi$ has continuous paths as $(S(t))_{t\ge 0}$ is a $C_0$-semigroup and $\xi\in\mathcal{V}$. Combining the linear growth condition of the coefficients $b$, $\sigma$ with $\xi_a \in L(\R^d,\mathcal{H})\cap L_2(\R^d,\mathcal{H})$, $a\in\{b,\sigma\}$, where $L_2(\R^d,\mathcal{H})$ denotes the space of Hilbert-Schmidt operators from $\R^d$ to $\mathcal{H}$, and the moment bound \eqref{eq: moment bound X} shows $\xi_b \hspace{0.02cm}b(\cdot,X)\in L^{\infty}([0,T];L^p(\Omega,\P;\mathcal{H}))$ and $\xi_{\sigma} \hspace{0.02cm}\sigma(\cdot,X)\in L^{\infty}([0,T];L^p(\Omega,\P; L_2(\R^m,\mathcal{H})))$. Thus, an application of \cite[Proposition 2.2]{BBCF25} yields $\mathcal{X} \in L^p(\Omega, \P; C([0,T];\mathcal{V}))$ for all $T>0$. By an application of $\Xi \in L(\mathcal{V}, \R^d)$ and taking into account \eqref{eq: lift condition}, we observe that 
    \begin{align}
        \Xi \mathcal{X}_t = g(t) + \int_0^t K^b(t-s)\hspace{0.02cm} b(s,X_s)\, \mathrm{d}s + \int_0^t K^{\sigma}(t-s)\hspace{0.02cm}\sigma(s,X_s)\, \mathrm{d}B_s=X_t.
    \end{align}    
    Hence $X = \Xi \mathcal{X}$ holds up to indistinguishability and $\mathcal{X}$ is a continuous solution to \eqref{eq: abstract markovian lift}. In particular, $\mathcal{X}$ satisfies the desired moment bound, and thus we can conclude
    \[
        \E \left[\sup_{t \in [0,T]}|X_t|^p\right] \leq \| \Xi \|_{L(\mathcal{V}, \R^d)}^p \hspace{0.02cm}\E \left[\sup_{t \in [0,T]}\|\mathcal{X}_t\|_{\mathcal{V}}^p\right] < \infty,
    \]
    which proves the claimed moment bound also for $X$.
\end{proof}

The required integrability of $g$ with $p$ satisfying \eqref{eq:prestrictioninitialcond} is only relevant when~$g$ is non-deterministic. Moreover, notice that \eqref{eq:prestrictioninitialcond} reduces to $p>2$ when \eqref{eq:abstractoperatornormestimate} is satisfied with $\rho=0$. Such an assumption stems from \cite[Proposition~2.2]{BBCF25} and hence from an application of the factorization method \cite[Proposition 5.9 and Theorem 5.10]{DaPrato_Zabczyk_2014}, which provides a continuous modification for the stochastic convolution in~\eqref{eq: abstract markovian lift}.

If uniqueness in law holds for \eqref{eq: abstract markovian lift}, then, under the above conditions, it defines a Markov process, justifying the notion of a Markovian lift. For completely monotone scalar kernels and time-homogeneous coefficients, a general proof under weak uniqueness is given in \cite[Lemma 4.3]{hamaguchi2023weak}, the $C_b$-Feller property for Lipschitz coefficients is established in \cite[Theorem 2.19]{H23}, and \cite[Corollary~2.6]{BBCF25} studies the case of operator-valued kernels with an analogue for time-dependent coefficients provided in \cite[Corollary~2.5]{BBCF25}.

\subsection{Dimension of the Markovian lift}

Let $\mathcal{X}$ be the Markovian lift constructed in Proposition \ref{prop: Markov lift}. 
We estimate its dimension through the range of the associated covariance operator. Namely, for each fixed $t>0$, the bilinear form $\phi_t$ on $\mathcal{V}$ defined by $(u,v) \longmapsto \E\left[ \langle u, \mathcal{X}_t\rangle_{\mathcal{V}}\langle \mathcal{X}_t, v \rangle_{\mathcal{V}} \right]$ is positive semidefinite and symmetric. Applying the Cauchy-Schwarz inequality twice yields $\phi_t(u,v)\le C\hspace{0.02cm}\|u\|_{\mathcal{V}}\|v\|_{\mathcal{V}}$ with $C:=\E[\|\mathcal{X}_t\|_{\mathcal{V}}^2]<\infty$ due to \eqref{eq:abstractLiftmomentbounds}. By the Riesz representation theorem, there exists a unique \textit{covariance operator} $Q_t \in L(\mathcal{V})$ such that
\begin{align}\label{eq: covariance operator}
    \E\left[ \langle u, \mathcal{X}_t\rangle_{\mathcal{V}}\langle \mathcal{X}_t, v\rangle_{\mathcal{V}} \right] = \langle u, Q_t v \rangle_{\mathcal{V}}, \qquad u,v \in \mathcal{V}.
\end{align}
Clearly, $Q_t$ is self-adjoint. Since $\sum_{n=1}^{\infty}\phi_t(e_n, e_n) \leq \E[\|\mathcal{X}_t\|_{\mathcal{V}}^2] < \infty$ for any orthonormal basis $(e_n)_{n \geq 1} \subseteq \mathcal{V}$ by Parseval's identity, the operator $Q_t$ is also trace-class. 

Under this notation, we obtain the orthogonal decomposition $\mathcal{V} = \overline{\mathrm{im}(Q_t)} \oplus \mathrm{ker}(Q_t)$, where $\mathrm{ker}(Q_t) = \{ y \in \mathcal{V} \ | \ Q_ty = 0 \}$ denotes the null-space of $Q_t$, and $\mathrm{im}(Q_t)$ its range. The dimension of $\mathcal{X}_t$ is defined as the dimension of the linear space $\overline{\mathrm{im}(Q_t)}$, which is justified by the following observation: Evaluating \eqref{eq: covariance operator} for $u = v$ shows that $v \in \mathrm{ker}(Q_t)$ if and only if $\langle v, \mathcal{X}_t\rangle_{\mathcal{V}} = 0$ a.s., which is equivalent to $v \in \mathrm{supp}(\P \circ \mathcal{X}_t^{-1})^{\perp}$. Hence,
\begin{displaymath}
    \overline{\mathrm{span}\ \mathrm{supp}(\P \circ \mathcal{X}_{t}^{-1})}^{\perp} = \mathrm{supp}(\P \circ \mathcal{X}_t^{-1})^{\perp} = \mathrm{ker}(Q_t) = \overline{\mathrm{im}(Q_t)}^{\perp},
\end{displaymath}
and thus we obtain
\[
    \overline{\mathrm{im}(Q_t)} = \overline{\mathrm{span}\ \mathrm{supp}(\P \circ \mathcal{X}_{t}^{-1})}, \qquad t > 0.
\]
Recall that $\Gamma_{\sigma,t}$ is defined in \eqref{eq: Gamma sigma}. The following establishes a sufficient condition for the infinite-dimensionality of $\overline{\mathrm{im}(Q_t)}$. 

\begin{corollary}\label{cor: dimension}
    Suppose that conditions (A) and (B) hold, $b \in B(\R_+; C^{\chi_b}(\R^d; \R^d))$, and $\sigma \in B(\R_+; C^{\chi_{\sigma}}(\R^d; \R^{d\times m}))$ with $\chi_b \in [0,1]$ and $\chi_{\sigma} \in (0,1]$. Let $X$ be a continuous weak solution of~\eqref{eq:generalSVIE} with $K^a=\Xi S(\cdot)\xi_a$, $a \in \{b,\sigma\}$, and $g=\Xi S(\cdot)\xi \in \mathcal{G}_p$ for some $p > 2$ satisfying~\eqref{eq:prestrictioninitialcond}, and let $\mathcal{X}$ be the corresponding Markovian lift of $X$. For $N \geq 1$ and functionals $\Xi_1, \dots, \Xi_N \in \mathcal{V}^*$, define for $a \in \{b,\sigma\}$:
    \[
        \widetilde{K}^a = \big(\Xi_1 S(\cdot)\xi_a, \dots, \Xi_N S(\cdot)\xi_a\big)^{\intercal} \in L_{\mathrm{loc}}^2(\R_+,\R^{N\times d}).
    \] 
    Suppose that $\widetilde{K}^b, \widetilde{K}^{\sigma}$ satisfy \eqref{eq: upper bound}, $\widetilde{K}^{\sigma}$ is $\gamma_*$-nondegenerate and \eqref{eq: H condition} holds. Then for each $t_0 > 0$ with $\P[X_{t_0} \in \Gamma_{\sigma,t_0}] > 0$, one has  
    \[
        \mathrm{dim}\, \overline{\mathrm{im}(Q_{t_0})} \geq N.
    \]
    In particular, if these conditions hold for each $N \geq 1$, then even $\mathrm{dim}\, \overline{\mathrm{im}(Q_{t_0})} = \infty$, i.e.\ $\mathcal{X}_{t_0}$ is necessarily infinite-dimensional.
\end{corollary}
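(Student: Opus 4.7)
The plan is to reduce the claim about $\dim\overline{\mathrm{im}(Q_{t_0})}$ to a linear independence statement for the components of the $\mathbb{R}^N$-valued process $\widetilde{X}_t := (\Xi_1\mathcal{X}_t,\dots,\Xi_N\mathcal{X}_t)^{\intercal}$, and then to extract such linear independence from the absolute continuity provided by Theorem \ref{thm: density}.

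First, I apply each bounded linear functional $\Xi_j$ to the mild formulation \eqref{eq:MarkovianLiftconstruction} of $\mathcal{X}$. Since $\Xi_j\in L(\mathcal{V},\mathbb{R})$, the standard commutation of bounded linear functionals with deterministic and stochastic convolutions yields
\begin{align*}
    \widetilde{X}_t = \widetilde{g}(t) + \int_0^t \widetilde{K}^b(t-s)\hspace{0.02cm}b(s,X_s)\, \mathrm{d}s + \int_0^t \widetilde{K}^{\sigma}(t-s)\hspace{0.02cm}\sigma(s,X_s)\, \mathrm{d}B_s,
\end{align*}
with $\widetilde{g}(t):=(\Xi_j S(t)\xi)_{j=1}^N$, which is $\mathcal{F}_0$-measurable since $\xi$ is. This is exactly a Volterra Ito-process of the form \eqref{eq:perturbedVolterraprocessDef}, and the corollary's hypotheses are precisely those required in Theorem~\ref{thm: density}. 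Consequently,
\[
    \nu_{t_0}(A)=\mathbb{P}\big[\widetilde{X}_{t_0}\in A,\ X_{t_0}\in\Gamma_{\sigma,t_0}\big],\quad A\in\mathcal{B}(\mathbb{R}^N),
\]
is absolutely continuous with respect to the Lebesgue measure on $\mathbb{R}^N$, and since $\nu_{t_0}(\mathbb{R}^N)=\mathbb{P}[X_{t_0}\in\Gamma_{\sigma,t_0}]>0$, the measure $\nu_{t_0}$ is non-zero.

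Next, I argue by contradiction and assume $\dim\overline{\mathrm{im}(Q_{t_0})}<N$. Identifying each $\Xi_j\in\mathcal{V}^*$ with $\eta_j\in\mathcal{V}$ via the Riesz representation, the linear map $\mathbb{R}^N\ni c\longmapsto P\big(\sum_{j}c_j\eta_j\big)\in\overline{\mathrm{im}(Q_{t_0})}$, where $P$ is the orthogonal projection onto $\overline{\mathrm{im}(Q_{t_0})}$, has non-trivial kernel by a dimension count. Pick $c=(c_1,\dots,c_N)\neq 0$ in this kernel, so that $\eta:=\sum_j c_j\eta_j\in\overline{\mathrm{im}(Q_{t_0})}^{\perp}=\ker(Q_{t_0})$, where the last equality uses the self-adjointness of $Q_{t_0}$. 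By \eqref{eq: covariance operator},
\[
    \mathbb{E}\Bigl[\Bigl(\sum_{j=1}^N c_j\hspace{0.02cm}\Xi_j\mathcal{X}_{t_0}\Bigr)^{\!2}\Bigr]=\langle\eta,Q_{t_0}\eta\rangle_{\mathcal{V}}=0,
\]
so $\sum_j c_j\widetilde{X}_{t_0,j}=0$ almost surely. Hence $\widetilde{X}_{t_0}$ is supported on the proper hyperplane $H:=\{x\in\mathbb{R}^N:\sum_j c_j x_j=0\}$, which is Lebesgue-null, and absolute continuity forces $\nu_{t_0}(H)=0$; on the other hand $\nu_{t_0}(H)=\nu_{t_0}(\mathbb{R}^N)>0$, a contradiction. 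Therefore $\dim\overline{\mathrm{im}(Q_{t_0})}\geq N$, and the infinite-dimensional statement follows upon letting $N\to\infty$.

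The only genuinely technical — and rather minor — step is the passage of $\Xi_j$ inside the stochastic convolution to produce the announced scalar Volterra Ito-process; this is routine because $\Xi_j\circ S(t-s)\xi_{\sigma}\in L(\mathbb{R}^d,\mathbb{R})$ is automatically Hilbert–Schmidt in this finite-dimensional setting, so classical results on stochastic integration in Hilbert spaces apply. Beyond this, the argument is a clean combination of Theorem~\ref{thm: density} with the linear-algebraic identity $\mathbb{E}[\Xi_i\mathcal{X}_{t_0}\Xi_j\mathcal{X}_{t_0}]=\langle\eta_i,Q_{t_0}\eta_j\rangle_{\mathcal{V}}$.
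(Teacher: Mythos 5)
Your proof is correct, and it takes a somewhat more economical route than the paper's. The paper first proves that $\Xi_1,\dots,\Xi_N$ are linearly independent (via the $\gamma_*$-nondegeneracy of $\widetilde{K}^{\sigma}$), then shows $P_{\mathrm{im}}v_j\neq 0$ for each Riesz representative $v_j$, builds a nonzero $v=\sum_j c_j v_j\in\ker(Q_{t_0})$, and finally needs a \emph{second} application of Theorem~\ref{thm: density} — to the scalar process $\langle v,\mathcal{X}\rangle$, whose kernel $\widetilde{K}^{\sigma}(\cdot)^{\intercal}c$ must again be checked to be $\gamma_*$-nondegenerate — to contradict $\langle v,Q_{t_0}v\rangle=0$. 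You bypass all of this: a single application of Theorem~\ref{thm: density} to the $\mathbb{R}^N$-valued process $(\Xi_1\mathcal{X},\dots,\Xi_N\mathcal{X})$ gives an absolutely continuous measure $\nu_{t_0}$ of positive total mass, and any nonzero $c\in\R^N$ with $\sum_j c_j\eta_j\in\ker(Q_{t_0})$ would force this measure to concentrate on the Lebesgue-null hyperplane $\{x:\sum_j c_jx_j=0\}$. Notably, your argument does not require the linear independence of the functionals at all — it suffices that $c\neq 0$ in $\R^N$, regardless of whether $\sum_j c_j\eta_j$ vanishes in $\mathcal{V}$ — whereas the paper's contradiction hinges on $v\neq 0$ and hence on that independence. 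What the paper's longer route buys is a reusable intermediate fact ($\gamma_*$-nondegeneracy of one-dimensional projections $\widetilde{K}^{\sigma}(\cdot)^{\intercal}c$ for $c\neq 0$), but for the statement at hand your argument is a genuine simplification. The one technical step you flag — commuting $\Xi_j$ with the convolutions in \eqref{eq:MarkovianLiftconstruction} — is indeed routine and is used implicitly by the paper as well.
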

\begin{proof}
    Let us first show that the operators $\Xi_1, \dots, \Xi_N$ are linearly independent. Take scalars $a_1,\dots, a_N$ such that $\sum_{j=1}^N a_j \Xi_j = 0$. Evaluating this on $S(t)\xi_{\sigma} \in L(\R^d,\mathcal{V})$ with $t > 0$ arbitrary but fixed, gives
    \[
        \big(\widetilde{K}^{\sigma}(t)^{\intercal}a\big)^{\intercal} = a_1 \Xi_1 S(t)\xi_{\sigma}  + \dots + a_N \Xi_N S(t)\xi_{\sigma} = 0, \qquad t > 0,
    \]
    where $a = (a_1,\dots, a_N)^{\intercal}$. By assumption, $\widetilde{K}^{\sigma}$ is $\gamma_*$-nondegenerate according to Definition~\ref{def: nondegeneracy general}, and hence $0 = \int_0^h |\widetilde{K}^{\sigma}(t)^{\intercal}a|^2\, \mathrm{d}t \geq C_* h^{2\gamma_*}|a|^2$ for sufficiently small $h>0$. This yields $a = 0$ and shows that also $\Xi_1,\dots, \Xi_N$ are linearly independent.

    By assumption, we can apply Theorem \ref{thm: density} and thus the law of $(\Xi_1 \mathcal{X}_{t_0}, \dots, \Xi_N \mathcal{X}_{t_0})$ is absolutely continuous with respect to the Lebesgue measure on $\{X_{t_0} \in \Gamma_{\sigma,t_0}\}$. In particular, we obtain
    \begin{align}\label{eq: dimension}
        \P[ \Xi_j \mathcal{X}_{t_0} = 0] 
        = \P[ \Xi_j \mathcal{X}_{t_0} = 0, \ X_{t_0} \not \in \Gamma_{\sigma,t_0}] 
        \leq \P[ X_{t_0} \not \in \Gamma_{\sigma,t_0}] < 1
    \end{align}
    and hence $\P[ \Xi_j \mathcal{X}_{t_0} \neq 0] > 0$ for each $j \in\{1,\dots, N\}$. Since $\Xi_j \in \mathcal{V}^*$, there exists $v_j \in \mathcal{V}$ such that $\Xi_j = \langle v_j, \cdot\rangle_{\mathcal{V}}$. Hence, we obtain
    \[
     0 < \E[ (\Xi_j \mathcal{X}_{t_0})^2 ] = \E[ \langle v_j, \mathcal{X}_{t_0}\rangle^2 ] = \langle v_j, Q_{t_0}v_j \rangle_{\mathcal{V}}
    \]
    and thus $v_1,\dots, v_N \not \in \mathrm{ker}(Q_{t_0})$. Let us denote by $P_{\mathrm{im}}$ and $P_{\mathrm{ker}}$ the orthogonal projection operators onto $\overline{\mathrm{im}(Q_{t_0})}$ and $\mathrm{ker}(Q_{t_0})$, respectively. Since $v_j \not \in \mathrm{ker}(Q_{t_0})$, and $v_j = P_{\mathrm{im}}v_j + P_{\mathrm{ker}}v_j$, we obtain $P_{\mathrm{im}}v_j \neq 0$.

    Suppose that $\mathrm{dim}\,\overline{\mathrm{im}(Q_{t_0})} = n < N$. Since $P_{\mathrm{im}}v_1,\dots, P_{\mathrm{im}}v_N \neq 0$, there exist scalars $c_1,\dots, c_N \in \R$, not all equal to zero, such that
    $\sum_{j=1}^N c_j P_{\mathrm{im}}v_j = 0$. Now define the new vector $v = \sum_{j=1}^N c_j v_j$. Since the functionals $\Xi_1,\dots, \Xi_N$ are linearly independent, also $v_1,\dots, v_N$ are linearly independent, and hence $v \neq 0$. But $P_{\mathrm{im}}v = \sum_{j=1}^N c_j P_{\mathrm{im}}v_j = 0$ by construction. Since $\mathcal{V} = \overline{\mathrm{im}(Q_{t_0})} \oplus \mathrm{ker}(Q_{t_0})$, we obtain $v \in \mathrm{ker}(Q_{t_0})$. The latter leads to a contradiction as shown below.

    Note that $Z := \langle v, \mathcal{X}\rangle_{\mathcal{V}}$ is a Volterra Ito-process given by
    \[
        Z_t = g_v(t) + \int_0^t K^b_v(t-s)\hspace{0.02cm}b(s,X_s)\, \mathrm{d}s + \int_0^t K^{\sigma}_v(t-s)\hspace{0.02cm}\sigma(s,X_s)\, \mathrm{d}B_s,\quad t\in\R_+,
    \]
    where $g_v(t) = \langle v, S(t)\xi \rangle_{\mathcal{V}}$ and $K^a_v(t) = \langle v, S(t)\xi_a \rangle_{\mathcal{V}} = \sum_{j=1}^N c_j \Xi_j S(t)\xi_a\in\R^{1\times d}$ for $a \in \{b,\sigma\}$. Let $c = (c_1,\dots, c_N)^{\intercal} \in \R^N$. Then $\int_0^h |K^{\sigma}_v(t)|^2\, \mathrm{d}t = \int_0^h \big| \widetilde{K}^{\sigma}(t)^{\intercal}c \big|^2\, \mathrm{d}t \geq C_* h^{2\gamma_*} |c|^2 $ for sufficiently small $h>0$ and hence $K^{\sigma}_v$ is $\gamma_*$-nondegenerate by $c\neq 0$. Thus, again by Theorem \ref{thm: density}, the law of $Z_{t_0}$ is absolutely continuous with respect to the Lebesgue measure on the event $\{X_{t_0} \in \Gamma_{\sigma,t_0}\}$. Arguing as in~\eqref{eq: dimension}, we obtain $\P[ Z_{t_0} \neq 0] > 0$, and hence
    \[
        0 < \E\left[ Z_{t_0}^2 \right] = \E\left[ \langle v, \mathcal{X}_{t_0}\rangle_{\mathcal{V}}^2 \right] = \langle v, Q_{t_0}v \rangle.
    \]
    This is a contradiction to $v \in \mathrm{ker}(Q_{t_0})$. Therefore, we may conclude $\mathrm{dim}\,\overline{\mathrm{im}(Q_{t_0})} \geq N$, and the assertion is proved.
\end{proof}

Remark that $K_v^b$, $K_v^{\sigma}$ in the above proof are in general not diagonal-like, except when $\widetilde{K}^b, \widetilde{K}^{\sigma}$ are diagonal-like and there exists $i_0\in\{1,\dots,d\}$ such that $S_{i_0}=\{1,\dots,N\}$. In the subsequent sections, we construct a large class of functionals $\Xi_j$ that satisfy these assumptions. In particular, for fractional kernels, we will show that the Markovian lift is necessarily infinite-dimensional.

\subsection{Conditional distribution given the Volterra process}

It is natural to expect that the failure of the Markov property for the stochastic Volterra process is encoded in a nontrivial behavior of its Markovian lift $\mathcal{X}$, i.e.\ $\mathcal{X}_t$ should not be a function of $X_t=\Xi\mathcal{X}_t$ for some $t>0$. In mathematical terms, this heuristic can be described by the properties of the conditional distribution of $\mathcal{X}_t$ given $X_t = x$, formally determined by the relation
\[
    \P[ \mathcal{X}_t \in \mathrm{d}y, \, X_t \in \mathrm{d}x ]
    = \P[ \mathcal{X}_t \in \mathrm{d}y \, | \, X_t = x]\hspace{0.03cm}\P[X_t \in \mathrm{d}x].
\]
Intuitively, if the Volterra process is a Markov process, then it already contains all the necessary information, and the Markovian lift does not provide any further insights. Thus, for such cases one expects that $\P[ \mathcal{X}_t \in \mathrm{d}y \, | \, X_t = x ] = \delta_{h_t(x)}$ should hold for some measurable function $h_t:\R^d\longrightarrow\mathcal{V}$. Our next result, therefore, provides another perspective on the failure of the Markov property and complements our findings of the previous section.

\begin{corollary}
    Suppose that conditions (A) and (B) hold, $b \in B(\R_+; C^{\chi_b}(\R^d; \R^d))$, and $\sigma \in B(\R_+; C^{\chi_{\sigma}}(\R^d; \R^{d\times m}))$ with $\chi_b \in [0,1]$ and $\chi_{\sigma} \in (0,1]$. Let $X$ be a continuous weak solution of~\eqref{eq:generalSVIE} with $K^a=\Xi S(\cdot)\xi_a$, $a \in \{b,\sigma\}$, and $g=\Xi S(\cdot)\xi \in \mathcal{G}_p$ for some $p > 2$ satisfying~\eqref{eq:prestrictioninitialcond}, and let $\mathcal{X}$ be the corresponding Markovian lift of $X$. If there exist $N \geq 1$, and functionals $\Xi_1, \dots, \Xi_N \in \mathcal{V}^*$ such that 
    \[
        \widetilde{K}^a = \big(\Xi S(\cdot)\xi_a, \Xi_1 S(\cdot)\xi_a, \dots, \Xi_N S(\cdot)\xi_a\big)^{\intercal} \in L_{\mathrm{loc}}^2\big(\R_+,\R^{(d+N)\times d}\big),\quad a\in\{b,\sigma\},
    \] 
    $b$ and $\sigma$ satisfy the conditions of Theorem \ref{thm: density} or Theorem \ref{thm: density diagonal}, respectively, then for any $t_0 > 0$ such that $\P[ X_{t_0} \in \Gamma_{\sigma, t_0}] > 0$ and each version of $\P[ \mathcal{X}_{t_0} \in \mathrm{d}y \, | \, X_{t_0} = x]$: 
    \[
        \mathrm{Leb}_d\big( x \in \Gamma_{\sigma,t_0} \, \big| \, \# \mathrm{supp}\big(\P[ \mathcal{X}_{t_0} \in \mathrm{d}y \ | \ X_{t_0} = x]\big) = \infty \big) > 0.
    \]
\end{corollary}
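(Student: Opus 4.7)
The strategy is to reduce the claim to an application of one of the density theorems (Theorem~\ref{thm: density} or Theorem~\ref{thm: density diagonal}) to a carefully chosen finite-dimensional projection of the Markovian lift, combined with a disintegration argument. Concretely, I would set $\Pi := (\Xi_1, \dots, \Xi_N): \mathcal{V} \longrightarrow \R^N$ and define
\[
Z_t := (X_t, \Pi \mathcal{X}_t)^{\intercal} = (\Xi\mathcal{X}_t, \Xi_1 \mathcal{X}_t,\dots,\Xi_N\mathcal{X}_t)^{\intercal} \in \R^{d+N}.
\]
Applying the functionals $\Xi, \Xi_1,\dots,\Xi_N$ to the mild formulation \eqref{eq: abstract markovian lift} of $\mathcal{X}$ and using the identity $X_t = \Xi \mathcal{X}_t$ from Proposition~\ref{prop: Markov lift}, the process $Z$ is a Volterra Ito-process of the form \eqref{eq:perturbedVolterraprocessDef} with kernel $\widetilde{K}^a$ as in the statement and $\F_0$-measurable initial curve $\widetilde{g}(t) = (\Xi S(t)\xi, \Xi_1 S(t)\xi,\dots,\Xi_N S(t)\xi)^{\intercal}$. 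Since $b, \sigma$ and $\widetilde{K}^a$ satisfy by hypothesis the assumptions of Theorem~\ref{thm: density} (respectively Theorem~\ref{thm: density diagonal}), the measure $\nu_{t_0}(A) := \P[Z_{t_0} \in A, X_{t_0} \in \Gamma_{\sigma,t_0}]$ is absolutely continuous with respect to $\mathrm{Leb}_{d+N}$, with some density $f$ supported on $\Gamma_{\sigma,t_0} \times \R^N$.

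The next step is disintegration. The marginal of $X_{t_0}$ restricted to $\Gamma_{\sigma,t_0}$ has $\mathrm{Leb}_d$-density $f_X(x) := \int_{\R^N} f(x,y)\,\mathrm{d}y$, and for every $x$ in the set $A_0 := \{x \in \Gamma_{\sigma,t_0} : f_X(x) > 0\}$ the conditional distribution $\widetilde{\mu}_x$ of $\Pi\mathcal{X}_{t_0}$ given $X_{t_0} = x$ admits the $\mathrm{Leb}_N$-density $y\mapsto f(x,y)/f_X(x)$. The positivity assumption $\P[X_{t_0} \in \Gamma_{\sigma,t_0}] > 0$ forces $\mathrm{Leb}_d(A_0) > 0$, and for each $x \in A_0$ the measure $\widetilde{\mu}_x$ is a nonzero absolutely continuous probability measure on $\R^N$. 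Consequently $\mathrm{supp}(\widetilde{\mu}_x)$ is uncountable, and in particular infinite.

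To transfer this back to the full conditional law $\mu_x := \P[\mathcal{X}_{t_0} \in \mathrm{d}y \mid X_{t_0} = x]$ on $\mathcal{V}$, note that $\mu_x \circ \Pi^{-1}$ is itself a version of the regular conditional law of $\Pi\mathcal{X}_{t_0}$ given $X_{t_0}$, so $\mu_x \circ \Pi^{-1} = \widetilde{\mu}_x$ for $\P\circ X_{t_0}^{-1}$-a.e.\ $x$, hence for $\mathrm{Leb}_d$-a.e.\ $x \in A_0$ by the absolute continuity of the marginal on $A_0$. Since $\Pi$ is continuous and linear, $\mathrm{supp}(\mu_x \circ \Pi^{-1}) \subseteq \overline{\Pi(\mathrm{supp}(\mu_x))}$, so finiteness of $\mathrm{supp}(\mu_x)$ would entail finiteness of $\mathrm{supp}(\widetilde{\mu}_x)$. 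Contraposing yields $\#\mathrm{supp}(\mu_x) = \infty$ for $\mathrm{Leb}_d$-a.e.\ $x \in A_0$, and the corollary follows from $\mathrm{Leb}_d(A_0) > 0$.

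The main obstacle I foresee is the careful handling of \emph{versions} of the regular conditional distribution, since the statement must hold uniformly over every choice of version. This is resolved precisely by the absolute continuity of the marginal law of $X_{t_0}$ on $A_0$ with respect to $\mathrm{Leb}_d$, which upgrades $\P\circ X_{t_0}^{-1}$-a.e.\ agreement between two versions to $\mathrm{Leb}_d$-a.e.\ agreement on $A_0$; the Lebesgue measure of the set where the support of $\mu_x$ is infinite is therefore the same (up to null sets) irrespective of the version chosen.
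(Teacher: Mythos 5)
Your proposal is correct and follows essentially the same route as the paper's proof: apply Theorem~\ref{thm: density} (or Theorem~\ref{thm: density diagonal}) to the $\R^{d+N}$-valued Volterra It\^o-process $(X_{t_0},\Xi_1\mathcal{X}_{t_0},\dots,\Xi_N\mathcal{X}_{t_0})$, disintegrate the resulting joint density to obtain an absolutely continuous regular conditional law of the finite-dimensional projection given $X_{t_0}$, and then transfer infinite support back to the $\mathcal{V}$-valued conditional law via the continuous projection $\Pi$. Your version spells out slightly more explicitly the two points the paper passes over quickly, namely that $\mathrm{supp}(\mu_x\circ\Pi^{-1})\subseteq\overline{\Pi(\mathrm{supp}(\mu_x))}$ so finite support of $\mu_x$ would force finite support of $\widetilde\mu_x$, and that version-independence is guaranteed because the marginal of $X_{t_0}$ is absolutely continuous on $\Gamma_{\sigma,t_0}$, upgrading $\P\circ X_{t_0}^{-1}$-a.e.\ statements to $\mathrm{Leb}_d$-a.e.\ ones on the positive-density set.
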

\begin{proof}
    Fix any $t_0 > 0$ such that $\P[ X_{t_0} \in \Gamma_{\sigma,t_0}] > 0$. Either by Theorem \ref{thm: density} or Theorem~\ref{thm: density diagonal}, we observe that 
    \[
        A \times B \longmapsto \nu^N(A \times B) := \P\big[ (\Xi_1 \mathcal{X}_{t_0}, \dots, \Xi_N \mathcal{X}_{t_0}) \in A, \, X_{t_0} \in B \big]
    \]
    is absolutely continuous on $\mathcal{B}(\R^N) \otimes \mathcal{B}(\Gamma_{\sigma,t_0})$ with density denoted by $\overline{\nu}^N(y,x)$. Hence, also its regular conditional distribution 
    \begin{align}\label{eq: reg conditional}
        \nu^N( A \, | \, x) = \P[(\Xi_1 \mathcal{X}_{t_0}, \dots, \Xi_N \mathcal{X}_{t_0}) \in A \, | \, X_{t_0} = x]
    \end{align}
    has a version that is absolutely continuous on $\mathcal{B}(\R^N)$ with density $\overline{\nu}^N(y\,|\,x)$ given by
    \[
        \overline{\nu}^N(y \, | \, x) = \1_{\left\{\int_{\R^N} \overline{\nu}^N(y,x)\, \mathrm{d}y > 0 \right\}}(x) \frac{ \overline{\nu}^N(y,x)}{\int_{\R^N} \overline{\nu}^N(y,x)\, \mathrm{d}y}, \qquad x\in\Gamma_{\sigma,t_0}.
    \] 
    In particular, the support of $\nu^N(\cdot\, |\, x)$ necessarily contains infinitely many elements for all $x\in\Gamma_{\sigma,t_0}$ with $p_{t_0}(x):=\int_{\R^N} \overline{\nu}^N(y,x)\, \mathrm{d}y>0$. Hence,
    \begin{align}\label{eq: bla bla}
        0<\P[X_{t_0}\in C]=\int_{\Gamma_{\sigma,t_0}} \1_{\{ x \in C \, | \, \# \mathrm{supp}(\nu^N(\cdot\,|\,x)) = \infty \}}\, p_{t_0}(x)\,\mathrm{d}x,
    \end{align}
    where we used $\P[X_{t_0} \in \mathrm{d}x] = p_{t_0}(x)\,\mathrm{d}x$ on $\Gamma_{\sigma,t_0}$, and $C:= \{x \in \Gamma_{\sigma, t_0} \, | \, p_{t_0}(x) > 0 \}$ necessarily has positive probability by $\P[ X_{t_0} \in \Gamma_{\sigma,t_0}] > 0$. Let $\nu(\mathrm{d}y \, | \, x) = \P[ \mathcal{X}_{t_0} \in \mathrm{d}y \, | \, X_{t_0} = x]$ be any version of the regular conditional distribution. Then
    \begin{align*}
        \int_{\Gamma_{\sigma,t_0}} \1_{\{ x \in C \, | \, \# \mathrm{supp}(\nu(\cdot\,|\, x)) = \infty\}} \hspace{0.02cm}p_{t_0}(x)\, \mathrm{d}x
        &= \P \circ X_{t_0}^{-1}\big( x \in C \, | \, \# \mathrm{supp}(\nu(\cdot\,|\,x)) = \infty \big)
        \\ &\geq  \P \circ X_{t_0}^{-1}\big( x \in C \, | \, \# \mathrm{supp}(\nu^N(\cdot\,|\,x)) = \infty \big)
        \\ &= \int_{\Gamma_{\sigma,t_0}} \1_{\{ x \in C \, | \, \# \mathrm{supp}(\nu^N(\cdot\,|\,x)) = \infty \}} \hspace{0.02cm}p_{t_0}(x)\, \mathrm{d}x > 0,
    \end{align*}
    where the last inequality follows from \eqref{eq: bla bla}. Hence, 
    \begin{displaymath}
        \big\{ x\in C \, | \, \1_{\{  \# \mathrm{supp}(\nu( \cdot \, | \, x )) = \infty\}} = 1 \big\} = \big\{ x\in C \, | \, \# \mathrm{supp}\big(\nu( \cdot \, | \, x )\big) = \infty \big\}
    \end{displaymath}
    also has positive Lebesgue measure. Since $C\subseteq \Gamma_{\sigma,t_0}$, this proves the assertion.
\end{proof}

Let us stress that, while this result hints at the failure of the Markov property, it does not provide a rigorous proof thereof since in the above we do not address if the Markov property for $X$ implies $\P[ \mathcal{X}_t \in \mathrm{d}y \, | \, X_t = x ] = \delta_{h_t(x)}$ for some measurable function $h_t$.

\section{Failure of the Markov property through admissible perturbations}\label{section:failure}

\subsection{Admissible perturbations}\label{subsection:abstractMarkovproperty}

In the present section, we suppose that assumption~(B) from the previous section is satisfied. To extract a nondegenerate behavior from the Markovian lift, we study the distributional properties of the $\R^d \times \R$-valued random vector $\big(\Xi \mathcal{X}_t, \widetilde{\Xi}\mathcal{X}_t \big)$. In contrast to the previous section, here $\widetilde{\Xi}$ is not an arbitrary functional, but needs to be constructed from \textit{admissible perturbations} $\widetilde{k}^a$ of the original Volterra kernels $K^a$, $a \in \{b,\sigma\}$, and $\widetilde{g}$ has to be constructed from an admissible perturbation of the initial condition $g$. 

Such admissible perturbations are given in terms of a family of linear operators $\widetilde{\Xi}(t): \mathcal{D} \longrightarrow\R$ on a linear subspace $\mathcal{D}\subseteq\mathcal{V}$ parameterized by $t > 0$ that are not necessarily bounded on the entire space~$\mathcal{V}$. For $i \in \{1,\dots, d\}$ and a linear mapping $F: \mathcal{D} \longrightarrow \R^d$, let us denote by
\[
    F^i: \mathcal{D} \longrightarrow \R, \qquad y \longmapsto F^iy := (Fy)_i
\]
its $i$-th coordinate functional. Likewise, for a matrix $M \in \R^{d \times d}$, we let $M_i := (M_{ij})_{j \in \{1,\dots, d\}}$ be its $i$-th row. 

\begin{definition}\label{def:admissibleoperator}
    Let $\xi_a \in L(\R^d, \mathcal{H})$ and set $K^a = \Xi S(\cdot) \xi_a \in L_{\mathrm{loc}}^2(\R_+; \R^{d\times d})$ with $a \in \{b,\sigma\}$. A family $(\widetilde{\Xi}(t))_{t > 0}$ of linear operators $\widetilde{\Xi}(t): \mathcal{D} \longrightarrow \R$ on a linear subspace $\mathcal{D} \subseteq \mathcal{V}$ is called admissible, if there exists $(\widetilde{\Xi}_{\lambda})_{\lambda \in (0,1)} \subseteq L(\mathcal{V},\R)$ and some $\widetilde{k}^b,\widetilde{k}^{\sigma} \in L_{\mathrm{loc}}^2(\R_+; \R^{1\times d})$ such that for each $y \in \mathcal{D}$, $T,t>0$, and $a \in \{b,\sigma\}$:
    \begin{equation}\label{eq:admissiblelamdalim}
        \widetilde{\Xi}(t)y = \lim_{\lambda \to 0}\widetilde{\Xi}_{\lambda}S(t)y \ \text{ and } \ 
        \lim_{\lambda \to 0}\int_0^T \big| \widetilde{\Xi}_{\lambda}S(t)\xi_a - \widetilde{k}^a(t)\big|^2\, \mathrm{d}t = 0.
    \end{equation}
    Moreover, for each $\lambda \in (0,1)$ there exist $(N_n^{\lambda})_{n \geq 1}\subseteq \mathbb{N}$, $(a_{ij}^{(n,\lambda)})_{i \in \{1,\dots, d\}, j \in\{ 1,\dots, N_n^{\lambda}\}}\subseteq \R$ and $(z_{ij}^{(n,\lambda)})_{i \in \{1,\dots, d\}, j \in\{1,\dots, N_n^{\lambda}\}}\subseteq \R_+$ such that
    \begin{equation}\label{eq:admissibleNlimy}
        \widetilde{\Xi}_{\lambda}S(t)y = \lim_{n \to \infty}\sum_{i=1}^d\sum_{j=1}^{N_n^{\lambda}} a_{ij}^{(n,\lambda)}\hspace{0.02cm}\Xi^i S\big(z_{ij}^{(n,\lambda)}+t\big)y, \qquad \forall y \in \mathcal{D},
    \end{equation}
    holds for each $t > 0$, and for every $T>0$ we have for $a \in \{b,\sigma\}$:
    \begin{equation}\label{eq:admissibleNlimKerneloperatorcase}
            \lim_{n \to \infty}\int_0^T \bigg| \sum_{i=1}^d\sum_{j=1}^{N_n^{\lambda}} a_{ij}^{(n,\lambda)}K^a_{i}\big(z_{ij}^{(n,\lambda)}+t\big) - \widetilde{\Xi}_{\lambda} S(t)\xi_a\bigg|^2\, \mathrm{d}t = 0.
    \end{equation}
\end{definition} 

Note that in \eqref{eq:admissibleNlimKerneloperatorcase}, the approximation sequence does not depend on $T > 0$. The latter is for convenience only, allowing us to simplify the notation below. The Volterra kernels $\widetilde{k}^b, \widetilde{k}^{\sigma}$ appearing in \eqref{eq:admissiblelamdalim} are uniquely determined by the family $\widetilde{\Xi}(t)$, $t>0$, and the properties~\eqref{eq:admissiblelamdalim} and~\eqref{eq:admissibleNlimKerneloperatorcase}. To indicate this, let us formally write $\widetilde{\Xi}(t) \xi_b := \widetilde{k}^b(t)$ and $\widetilde{\Xi}(t) \xi_{\sigma} := \widetilde{k}^{\sigma}(t)$ as elements in $L_{\mathrm{loc}}^2(\R_+; \R^{1\times d})$, where the notation becomes rigorous once $\xi_b, \xi_{\sigma}\in L(\R^d,\mathcal{D})$. 

The additional approximation via the family of operators $(\widetilde{\Xi}_{\lambda})_{\lambda\in (0,1)}$ is required for singular Volterra kernels, where the family of operators $\widetilde{\Xi}(t)$ is evaluated only in $t > 0$. However, for regular kernels, the family $\widetilde{\Xi}(t)$ may also be well-defined in $t = 0$ as shown in the examples below. 

\begin{example}\label{example:easyadmissible}
 Fix any $i \in \{1, \dots, d\}$ and $a \in \{b,\sigma\}$. The following operators are admissible:
\begin{enumerate}
    \item[(i)] (translations) For $z \geq 0$ the family of bounded linear operators $\widetilde{\Xi}(t) = \Xi^i S(z+t)$, $t\ge 0$, with $\mathcal{D} = \mathcal{V}$, $\widetilde{k}^a(t) = K^a_i(t+z)$ is admissible.

    \item[(ii)] (constants) Suppose that there exists $S_{\infty} \in L(\mathcal{V})$ such that $S(t) \longrightarrow S_{\infty}$ strongly on~$\mathcal{V}$ as $t\to\infty$. Then $S(t)S_{\infty} = S_{\infty}S(t) = S_{\infty} = S_{\infty}^2$, and it is easy to see that $\widetilde{\Xi}(t) = \Xi^i S_{\infty}$ is admissible with $\mathcal{D} = \mathcal{V}$. To compute $\widetilde{k}^a$, note that $S_{\infty}\xi_a := S_{\infty}S(t)\xi_a$ is well-defined since it is independent of $t$. Hence, $\widetilde{k}^a(t) = \Xi^i S_{\infty} S(t)\xi_a = \Xi^i S_{\infty}\xi_a = K^a_i(\infty)$ is the constant Volterra kernel. 

    \item[(iii)] (derivatives) Let $(\mathcal{A}, D(\mathcal{A}))$ be the generator of $(S(t))_{t \geq 0}$, and suppose that $\xi_a \in L(\R^d,D(\mathcal{A}))$. Then $\widetilde{\Xi}(t) = \Xi^i \mathcal{A} S(t)$ is admissible with $\mathcal{D} = D(\mathcal{A})$ and we obtain $\widetilde{k}^a(t) = \Xi^i \mathcal{A} S(t)\xi_a = (K_i^a)'(t)$.
\end{enumerate}
\end{example}

The convergence $S(t) \longrightarrow S_{\infty}$ is satisfied for the commonly used examples of Markovian lifts and naturally appears for the study of invariant measures for stochastic Volterra processes, see \cite{BBCF25}.

Remark that admissible operators are forward-looking in the sense that $\widetilde{\Xi}(t)\xi_b, \widetilde{\Xi}(t)\xi_{\sigma}$ are built from $K^b(r), K^{\sigma}(r)$, $r\ge t$. An important class of examples is based on the subordination of the semigroup. Below, we provide two examples which correspond to abstract forward fractional differentiation and integration, respectively. Our first example provides an abstract infinite-dimensional analogue of the forward (fractional) derivative dating back to Weyl~\cite{MR3618577}~and Marchaud \cite{MR3532941}, see also~\cite{Fe18} for a non-technical exposition on the topic.

\begin{example}\label{admissible operator fractional differentiation}
    Assume that $\|S(z)\|_{L(\mathcal{V})} \lesssim 1 + \sqrt{z}$, $z\in\R_+$, and let $\nu_1,\dots, \nu_d$ be Borel measures on $\R_+$ such that $\nu_i(\{0\}) = 0$ and $\int_{\R_+} (1 \wedge z)\hspace{0.03cm} e^{-\lambda z}\, \nu_i(\mathrm{d}z) < \infty$ for each $\lambda > 0$ and $i \in\{1,\dots, d\}$. Suppose that for each $T > 0$, $i \in \{1,\dots, d\}$, and $a \in \{b,\sigma\}$:
    \begin{equation}\label{eq:Kintegrabilityadmissibledifferentiation}
            \int_0^T \left( \int_{\R_+} |K^a_i(z+t) - K^a_i(t)|\, \nu_i(\mathrm{d}z)\right)^2\, \mathrm{d}t < \infty.
    \end{equation}
    Then $(\widetilde{\Xi}(t))_{t > 0}$ defined by
    \[
        \widetilde{\Xi}(t)y = \sum_{i=1}^dc_i \int_{\R_+} \Xi^i S(t)(S(z)y - y)\, \nu_i(\mathrm{d}z),
    \]
    with $c_1,\dots, c_d \in \R$, is admissible on the domain
    \begin{equation}
            \mathcal{D} = \left\{ y \in \mathcal{V} \ : \ \int_{\R_+} |\Xi^i S(t)(S(z)y - y)|\, \nu_i(\mathrm{d}z) < \infty, \ \forall t > 0,\ \forall i\in\{1,\dots,d\} \right\}
    \end{equation}
    Moreover, $\widetilde{\Xi}(\cdot)\xi_a = \widetilde{k}^a \in L_{\mathrm{loc}}^{2}(\R_+; \R^{1 \times d})$ is given by 
    \begin{align}\label{eq: K admissible differentiation}
        \widetilde{k}^a(t) = \sum_{i=1}^d c_i \int_{\R_+} \left(K^a_i(z+t) - K^a_i(t) \right)\, \nu_i(\mathrm{d}z).
    \end{align}
\end{example}

The proof of the above claims is given in Appendix \ref{subsection:fracdifferentiationadmiss}. For the particular case $\nu_i(\mathrm{d}z) = e^{-\lambda_i z} z^{-1 - \gamma_i}\,\mathrm{d}z$ with $\gamma_i \in (0,1)$ and $\lambda_i \geq 0$, we \textit{formally} obtain $\widetilde{\Xi} = \sum_{i=1}^d c_i \Xi^i (\lambda_i-\mathcal{A})^{\gamma_i}$, which reflects the fractional nature of this operation. Below, we complement the latter with an abstract infinite-dimensional fractional integration.

\begin{example}\label{admissible operator Sinfty}
    Suppose that $S(t) \longrightarrow S_{\infty}$ strongly in $L(\mathcal{V})$ as $t \to \infty$. Let $\nu_1,\dots, \nu_d$ be Borel measures on $\R_+$ such that $\int_{\R_+} e^{-\lambda z}\, \nu_i(\mathrm{d}z) < \infty$ for each $\lambda > 0$ and $i \in\{1,\dots, d\}$. Suppose that for each $T > 0$, $i \in\{1,\dots, d\}$, and $a \in \{b,\sigma\}$:
    \begin{equation}\label{eq:Kintegrabilityadmissible}
            \int_0^T \left( \int_{\R_+} |K^a_i(z+t) - K^a_i(\infty)|\, \nu_i(\mathrm{d}z)\right)^2\, \mathrm{d}t < \infty,
    \end{equation}
    where $K^a(\infty) = \Xi S_{\infty}S(t)\xi_a$. Then $(\widetilde{\Xi}(t))_{t > 0}$ defined by
    \[
        \widetilde{\Xi}(t)y = \sum_{i=1}^d c_i \int_{\R_+} \Xi^i S(z+t)(y - S_{\infty}y)\, \nu_i(\mathrm{d}z),
    \]
    with $c_1,\dots, c_d \in \R$ is admissible on the domain
    \begin{equation}
            \mathcal{D} = \left\{ y \in \mathcal{V} \ : \ \int_{\R_+} |\Xi^i S(z+t)(y - S_{\infty}y)|\, \nu_i(\mathrm{d}z) < \infty, \ \forall t > 0,\ \forall i\in\{1,\dots,d\} \right\}
    \end{equation}
    Moreover, $\widetilde{\Xi}(t)\xi_a = \widetilde{k}^a \in L_{\mathrm{loc}}^{2}(\R_+; \R^{1 \times d})$ is given by 
    \begin{align}\label{eq: K admissible}
        \widetilde{k}^a(t) = \sum_{i=1}^d c_i \int_{\R_+} \left(K^a_i(z+t) - K^a_i(\infty) \right)\, \nu_i(\mathrm{d}z).
    \end{align}
\end{example}

Remark that, since $S(t) \longrightarrow S_{\infty}$ strongly on $\mathcal{V}$, the uniform boundedness principle yields $\sup_{z \geq 0}\|S(z)\|_{L(\mathcal{V})}<\infty$. Hence, the proof of this result is very similar to the argument for Example \ref{admissible operator fractional differentiation} and therefore omitted. The particular choice $\nu_i(\mathrm{d}z) = e^{-\lambda_i z}\frac{z^{\gamma_i-1}}{\Gamma(\gamma_i)}\mathrm{d}z$ with $\gamma_i \in (0,1)$ and $\lambda_i \geq 0$ yields the admissible operator \textit{formally} given by subordination of the semigroup with $\widetilde{\Xi} = \sum_{i=1}^d c_i \Xi^i (\lambda_i - \mathcal{A})^{-\gamma_i}\left(\mathrm{id}_{\mathcal{V}} - S_{\infty}\right)$. For related results on subordination of semigroups, we refer to~\cite{MR4493597}.

\subsection{Failure of the Markov property}

We now have all the tools to prove the failure of the Markov property \eqref{eq: Markov property} for SVEs. Let us denote by $\mathcal{F}_t^{X,B} = \Sigma( X_s \, | \, s \leq t) \vee \Sigma( B_s \, | \, s \leq t)$, $t\in\R_+$, the natural filtration generated by $X$ and $B$. Since $X$ is $(\mathcal{F}_t)_{t\in\R_+}$-adapted and $B$ is an $(\mathcal{F}_t)_{t \in \R_+}$-Brownian motion by the notion of a weak solution, we always have $\mathcal{F}_t^{X,B}\subseteq \mathcal{F}_t$ for all $t\in\R_+$. The following is our main result on the failure of the Markov property.

\begin{theorem}\label{thm: abstract nonmarkov}
    Suppose that condition (B) holds, and consider $b \in B(\R_+; C^0(\R^d; \R^d))$, and $\sigma \in B(\R_+; C^0( \R^d; \R^{d \times m}))$. Let $X$ be a continuous weak solution of \eqref{eq:generalSVIE} defined on some complete filtered probability space $(\Omega, \mathcal{F}, (\mathcal{F}_t)_{t \in \R_+}, \P)$ with $K^{a}=\Xi S(\cdot)\xi_a$, $a\in\{b,\sigma\}$, and $\Xi S(\cdot)\xi=g \in \mathcal{G}_p$ for some $p > 2$ with $\frac{1}{p} + \rho < \frac{1}{2}$. Assume that there exists an admissible operator family $(\widetilde{\Xi}(t))_{t > 0}$ on a linear subspace $\mathcal{D}\subseteq\mathcal{V}$ such that $\P[ \xi \in \mathcal{D} ] = 1$. Denote by $\widetilde{k}^a(t) = \widetilde{\Xi}(t)\xi_a$ with $t > 0$ and $a \in \{b,\sigma\}$ the corresponding admissible Volterra kernels and~define
    \begin{align}\label{eq: Xi X perturbation}
        \widetilde{\Xi}\mathcal{X}_t := \widetilde{\Xi}(t)\xi + \int_0^t \widetilde{k}^b(t-s)\hspace{0.02cm}b(s,X_s)\, \mathrm{d}s + \int_0^t \widetilde{k}^{\sigma}(t-s)\hspace{0.02cm}\sigma(s,X_s)\, \mathrm{d}B_s, \quad t\in\R_+^*.
    \end{align}
    Assume that there exists $t_0 > 0$ such that $\P[X_{t_0} \in \Gamma_{\sigma,t_0}] > 0$ and $(X_{t_0}, \widetilde{\Xi}\mathcal{X}_{t_0})$ is absolutely continuous to the Lebesgue measure on $\Gamma_{\sigma, t_0} \times \R$, where $\Gamma_{\sigma,t_0}$ is defined in~\eqref{eq: Gamma sigma}. Then $X$ is not a Markov process with respect to $(\mathcal{F}^{X,B}_t)_{t \in \R_+}$. 
\end{theorem}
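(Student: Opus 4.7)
The plan is proof by contradiction: assume that $X$ satisfies the Markov property \eqref{eq: Markov property} with respect to $(\mathcal{F}_t^{X,B})_{t\in\R_+}$, and force $\widetilde{\Xi}\mathcal{X}_{t_0}$ to be $\sigma(X_{t_0})$-measurable (up to $\P$-null sets). The Doob--Dynkin lemma then provides a Borel map $\phi\colon\R^d\to\R$ with $\widetilde{\Xi}\mathcal{X}_{t_0}=\phi(X_{t_0})$ a.s., so that $(X_{t_0},\widetilde{\Xi}\mathcal{X}_{t_0})$ is supported on the graph of $\phi$, a Borel set of Lebesgue measure zero in $\R^{d+1}$. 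Combining this with the assumed absolute continuity of the joint law on $\Gamma_{\sigma,t_0}\times\R$ and $\P[X_{t_0}\in\Gamma_{\sigma,t_0}]>0$ yields the contradiction
\[
0=\P\big[(X_{t_0},\widetilde{\Xi}\mathcal{X}_{t_0})\in\mathrm{graph}(\phi),\,X_{t_0}\in\Gamma_{\sigma,t_0}\big]=\P[X_{t_0}\in\Gamma_{\sigma,t_0}]>0.
\]

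The core of the argument is to extract this $\sigma(X_{t_0})$-measurability by unwinding the two-layer approximation in Definition~\ref{def:admissibleoperator}, starting from the building block of a single forward translation. For $T:=z+t_0\ge t_0$ and $i\in\{1,\dots,d\}$, the random variable
\[
Y_T^{(i)}:=g_i(T)+\int_0^{t_0}K^b_i(T-s)\hspace{0.02cm}b(s,X_s)\,\mathrm{d}s+\int_0^{t_0}K^\sigma_i(T-s)\hspace{0.02cm}\sigma(s,X_s)\,\mathrm{d}B_s
\]
associated with the translation admissible operator $\Xi^iS(z+\cdot)$ from Example~\ref{example:easyadmissible}(i) is, by splitting the SVE~\eqref{eq:generalSVIE} at $t_0$, identical to $X_{T,i}-\int_{t_0}^{T}K^b_i(T-s)b(s,X_s)\,\mathrm{d}s-\int_{t_0}^{T}K^\sigma_i(T-s)\sigma(s,X_s)\,\mathrm{d}B_s$. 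Taking $\E[\,\cdot\,|\,\mathcal{F}_{t_0}^{X,B}]$, the It\^o integral contribution vanishes via the tower property together with the $(\mathcal{F}_t)$-martingale property of $\int_{t_0}^{\cdot}\sigma(s,X_s)\,\mathrm{d}B_s$, while conditional Fubini applied to the drift (justified by $K^b\in L^2_{\mathrm{loc}}$, the linear growth of $b$, and the moment bound of Proposition~\ref{prop: Markov lift}) combined with the Markov hypothesis delivers
\[
\E[Y_T^{(i)}\,|\,\mathcal{F}_{t_0}^{X,B}]=\E[X_{T,i}\,|\,X_{t_0}]-\int_{t_0}^{T}K^b_i(T-s)\hspace{0.02cm}\E[b(s,X_s)\,|\,X_{t_0}]\,\mathrm{d}s,
\]
which is $\sigma(X_{t_0})$-measurable. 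For deterministic $g$ the random variable $Y_T^{(i)}$ is already $\mathcal{F}_{t_0}^{X,B}$-measurable and thus coincides a.s.\ with its conditional expectation.

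The next step is to propagate this through the admissibility construction. For each $\lambda\in(0,1)$, applying $\widetilde{\Xi}_\lambda\in L(\mathcal{V},\R)$ to the mild formulation of $\mathcal{X}_{t_0}$---commuting $\widetilde{\Xi}_\lambda$ with the Bochner and It\^o integrals---yields a representation of $\widetilde{\Xi}_\lambda\mathcal{X}_{t_0}$ with integrands $\widetilde{\Xi}_\lambda S(t_0-s)\xi_a$. Combining $\P[\xi\in\mathcal{D}]=1$ with~\eqref{eq:admissibleNlimy}--\eqref{eq:admissibleNlimKerneloperatorcase} (evaluated at $y=\xi$) exhibits $\widetilde{\Xi}_\lambda\mathcal{X}_{t_0}$ as the $L^2(\Omega,\P)$-limit as $n\to\infty$ of the finite sums $\sum_{i,j}a_{ij}^{(n,\lambda)}Y_{z_{ij}^{(n,\lambda)}+t_0}^{(i)}$: It\^o's isometry controls the diffusion term, Cauchy--Schwarz plus the linear growth of $b$ the drift term, and a.s.\ convergence handles the initial sum. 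Since each summand is $\sigma(X_{t_0})$-measurable by the previous paragraph, so is the $L^2$-limit. A final passage $\lambda\to 0$ via~\eqref{eq:admissiblelamdalim} then transmits the measurability to $\widetilde{\Xi}\mathcal{X}_{t_0}$; for genuinely random $\xi$ the same conclusion follows after enlarging $\mathcal{F}_{t_0}^{X,B}$ by the initial data, noting that $g$ restricted to $[0,t_0]$ is recoverable from $(X,B)\vert_{[0,t_0]}$ by inverting the SVE.

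The main obstacle is the careful juggling of the two nested limits with both conditional expectation and the stochastic/Bochner integrals; concretely, the conditional Fubini step and the transmission of $L^2$-convergence through $\E[\,\cdot\,|\,\mathcal{F}_{t_0}^{X,B}]$ both rely on the $L^p$-moment bound~\eqref{eq:abstractLiftmomentbounds} and the local square-integrability of the kernels. Once this bookkeeping is in place, the graph-of-a-measurable-function argument seals the contradiction.
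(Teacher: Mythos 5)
Your proposal is correct and follows essentially the same strategy as the paper's proof: under the Markov hypothesis, show that each translation functional $\Xi^i S(z)\mathcal{X}_{t_0}$ (your $Y_T^{(i)}$, obtained by splitting the SVE at $t_0$) is $\Sigma(X_{t_0})$-measurable, propagate this measurability through the two nested approximation limits of Definition~\ref{def:admissibleoperator} (first $n\to\infty$, then $\lambda\to 0$) to conclude $\widetilde{\Xi}\mathcal{X}_{t_0}=\ell(t_0,X_{t_0})$ a.s.\ by Doob--Dynkin, and then use the graph argument against the assumed joint density on $\Gamma_{\sigma,t_0}\times\R$ to contradict $\P[X_{t_0}\in\Gamma_{\sigma,t_0}]>0$.

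Two small points of imprecision worth noting. First, the $n$-limit is convergence in probability, not $L^2$ of the entire sum: the integral contributions converge in $L^2$ (via It\^o isometry and linear growth, as you say) but the initial term $\widetilde{\Xi}_\lambda^{(n)}S(t_0)\xi$ converges only almost surely by~\eqref{eq:admissibleNlimy}; this is harmless, since measurability passes through probability limits into the completion $\Sigma_0(X_{t_0})$. Second, your proposed fix for random $\xi$ --- ``enlarging $\mathcal{F}_{t_0}^{X,B}$ by the initial data'' --- is not a legitimate step as written: the Markov property is only assumed with respect to $(\mathcal{F}_t^{X,B})$, and it does not carry over to a strictly larger filtration. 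Moreover, inverting the SVE on $[0,t_0]$ recovers $g$ only on $[0,t_0]$, whereas your $Y^{(i)}_T$ requires $g_i(T)$ for $T>t_0$ and the limit requires $\widetilde{\Xi}(t_0)\xi$, neither of which is so recovered. In fairness, the paper's own proof makes the same implicit appeal when it asserts that $\mathcal{X}_t$ is $\mathcal{F}_t^{X,B}$-measurable ``due to \eqref{eq:MarkovianLiftconstruction}'', so this is a shared subtlety rather than a flaw unique to your argument; for deterministic $g$ (the case $\xi\in\mathcal{V}$ nonrandom) both arguments are clean.
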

\begin{proof}
    Denote by $\mathcal{X}$ the corresponding $\mathcal{V}$-valued Markovian lift as constructed in Proposition~\ref{prop: Markov lift} which satisfies $\mathcal{X} \in L^p(\Omega, \P; C([0,T];\mathcal{V}))$ and $X = \Xi \mathcal{X}$ on $[0,T]$ for every $T>0$.

    \textit{Step 1.} Suppose that $X$ forms a Markov process in the sense of \eqref{eq: Markov property} with respect to $(\mathcal{F}^{X,B}_t)_{t \in \R_+}$. Then $\E[f(X_T) \, | \, \F^{X,B}_t] = \E[ f(X_T)\, | \, X_t]$ holds for all $T > t$ and any bounded measurable function $f:\R^d\longrightarrow \R$. By approximation and componentwise evaluation, the identity also extends to functions $f:\R^d\longrightarrow\R^d$ that are measurable and satisfy $\E[|f(X_T)|] < \infty$. Using the semigroup property and $\Xi\mathcal{X}=X$, we obtain
    \begin{align}\label{eq:X S}
        \mathcal{X}_T &= S(T-t)\mathcal{X}_t + \int_t^T S(T-r)\hspace{0.02cm}\xi_b\hspace{0.02cm} b(r, X_r)\, \mathrm{d}r + \int_t^T S(T-r)\hspace{0.02cm}\xi_{\sigma} \hspace{0.02cm}\sigma(r, X_r)\, \mathrm{d}B_r.
    \end{align}
    The bounds \eqref{eq:abstractoperatornormestimate} and \eqref{eq:abstractLiftmomentbounds} imply in connection with $b$, $\sigma$ being of linear growth that $\E[ \int_t^T S(T-r)\hspace{0.02cm}\xi_{\sigma}\hspace{0.02cm} \sigma(r,X_r)\, \mathrm{d}B_r \, | \, \mathcal{F}^{X,B}_t] = 0$ since $B$ is an $(\mathcal{F}_t)_{t\in\R_+}$-Brownian motion and $\mathcal{F}_t^{X,B}\subseteq \mathcal{F}_t$. Hence, by taking conditional expectations in \eqref{eq:X S}, and noting that $\mathcal{X}_t$ is $\mathcal{F}_t^{X,B}$-measurable due to \eqref{eq:MarkovianLiftconstruction}, we find 
    \begin{align}\label{eq:liftconditionalexpectation}
        \E\big[\mathcal{X}_T\, | \, \F^{X,B}_t\big] &= S(T-t)\mathcal{X}_t + \int_t^T S(T-r)\hspace{0.02cm}\xi_b \hspace{0.02cm}\E\big[b(r,X_r)\, | \, \F^{X,B}_t\big]\, \mathrm{d}r,
    \end{align}
    where we applied Fubini's theorem for Bochner integrals, which is allowed by the moment bounds shown in Proposition \ref{prop: Markov lift}. Using the Markov property for $f = \id_{\R^d}$ and $f = b(r,\cdot)$, $r>t$, and an application of $\Xi$ to $\E[\mathcal{X}_T\, | \, \F^{X,B}_t]$, gives 
    \begin{align*}
        \E[X_T\, | \, X_t] = \Xi\, \E[ \mathcal{X}_T \, | \, \F^{X,B}_t ]
        = \Xi S(T-t)\mathcal{X}_t + \int_t^T K^b(T-r) \hspace{0.02cm}\E[b(r,X_r)\, | \, X_t]\, \mathrm{d}r.
    \end{align*}
   Hence, $\Xi S(T-t)\mathcal{X}_t$ is measurable w.r.t.~the $\sigma$-algebra $\Sigma(X_t)$ generated by $X_t$. Letting $T = z + t$ for $z\in\R_+$ and taking projections on the $i$-th coordinate, we observe that also $\Xi^i S(z)\mathcal{X}_t$ is $\Sigma(X_t)$-measurable for each $z \in \R_+$ and every $i \in \{1,\dots, d\}$.

    \textit{Step 2\hspace{0.02cm}.} By the definition of admissibility in Definition \ref{def:admissibleoperator}, we find bounded linear operators $(\widetilde{\Xi}_{\lambda})_{\lambda \in (0,1)}\subseteq L(\mathcal{V},\R)$ such that $\widetilde{\Xi}_{\lambda}S(z)\xi \longrightarrow \widetilde{\Xi}(z)\xi$ for each $z > 0$ in probability by $\xi\in\mathcal{D}$ a.s., and $\widetilde{\Xi}_{\lambda} S(\cdot)\xi_a \longrightarrow \widetilde{k}^a$ in $L^2([0,T]; \R^{1\times d})$ as $\lambda \to 0$ for every $T>0$ and $a \in \{b, \sigma\}$. Fix $\lambda \in (0,1)$. Then we find sequences $(N^{\lambda}_n)_{n\ge 1}$, $(a_{ij}^{(n,\lambda)})_{i \in \{1,\dots, d\}, j \in\{ 1,\dots, N^{\lambda}_n\}}$ and $(z_{ij}^{(n, \lambda)})_{i \in \{1,\dots, d\}, j \in\{ 1,\dots, N^{\lambda}_n\}}$ such that $\widetilde{\Xi}^{(n)}_{\lambda}S(z) y \longrightarrow \widetilde{\Xi}_{\lambda}S(z)y$ holds for each $y \in \mathcal{D}$ and $z > 0$ as $n\to \infty$, where $\widetilde{\Xi}_{\lambda}^{(n)}\in L(\mathcal{V},\R)$ is defined by 
   \begin{align}\label{eq: Xin approximation}
    \widetilde{\Xi}_{\lambda}^{(n)} = \sum_{i=1}^d \sum_{j=1}^{N^{\lambda}_n} a_{ij}^{(n, \lambda)}\hspace{0.02cm} \Xi^i S(z_{ij}^{(n, \lambda)}).
   \end{align}   
   Moreover, for every $T>0$ we obtain $\widetilde{\Xi}_{\lambda}^{(n)}S(\cdot)\xi_a \longrightarrow \widetilde{\Xi}_{\lambda}S(\cdot)\xi_a$ in $L^2([0,T]; \R^{1 \times d})$ as $n\to\infty$ for $a \in \{b, \sigma\}$. The particular form of $\widetilde{\Xi}_{\lambda}^{(n)}$ combined with step 1 shows that the random variable $\widetilde{\Xi}^{(n)}_{\lambda} \mathcal{X}_t$ is measurable w.r.t.~the sigma-algebra generated by $X_t$, i.e.\ $\Sigma(X_t)$. Passing for $t>0$ to the limit $n \to \infty$ in 
   \begin{align*}
       \widetilde{\Xi}^{(n)}_{\lambda} \mathcal{X}_t &= \widetilde{\Xi}^{(n)}_{\lambda} S(t)\xi + \int_0^t \widetilde{\Xi}^{(n)}_{\lambda} S(t-s)\hspace{0.02cm}\xi_b\hspace{0.02cm} b(s,X_s)\, \mathrm{d}s + \int_0^t \widetilde{\Xi}^{(n)}_{\lambda} S(t-s)\hspace{0.02cm}\xi_{\sigma}\hspace{0.02cm} \sigma(s,X_s)\, \mathrm{d}B_s,
   \end{align*}
   we conclude that $\widetilde{\Xi}_{\lambda}^{(n)} \mathcal{X}_t \longrightarrow \widetilde{\Xi}_{\lambda}\mathcal{X}_t$ holds in probability with the limit defined by the right-hand side of
    \begin{align}\label{eq:projectionXitildestochFubini}
        \widetilde{\Xi}_{\lambda}\mathcal{X}_t = \widetilde{\Xi}_{\lambda}S(t)\xi + \int_0^t \widetilde{\Xi}_{\lambda}S(t-s)\hspace{0.02cm}\xi_b \hspace{0.02cm}b(s,X_s)\, \mathrm{d}s + \int_0^t \widetilde{\Xi}_{\lambda}S(t-s)\hspace{0.02cm}\xi_{\sigma} \hspace{0.02cm}\sigma(s,X_s)\, \mathrm{d}B_s.
    \end{align}
   Indeed, for the first term we obtain $\widetilde{\Xi}_{\lambda}^{(n)}S(t)\xi \longrightarrow \widetilde{\Xi}_{\lambda}S(t)\xi$ a.s.\ by $\xi\in\mathcal{D}$ a.s.\ combined with \eqref{eq:admissibleNlimy}, while for the integrals we may prove convergence in $L^2(\Omega,\P;\R)$ from which convergence in probability follows from Vitali's convergence theorem. For the stochastic integral we obtain from the BDG inequality, $\sigma$ being of linear growth, locally uniformly in the time argument (see \eqref{eq:uniformlineargrowth}), and the moment estimate \eqref{eq:abstractLiftmomentbounds} from Proposition~\ref{prop: Markov lift}: 
   \begin{equation}\label{eq:diffusionconvergence}
    \begin{aligned}
   \mathbb{E}&\bigg[\bigg|\int_0^t \big( \widetilde{\Xi}^{(n)}_{\lambda} -\widetilde{\Xi}_{\lambda}\big)S(t-s)\hspace{0.02cm}\xi_{\sigma}\hspace{0.02cm}\sigma(s,X_s)\, \mathrm{d}B_s\bigg|^2\bigg]\\
   &\lesssim  \int_0^t \left| \big(\widetilde{\Xi}^{(n)}_{\lambda} -\widetilde{\Xi}_{\lambda}\big)S(t-s)\hspace{0.02cm}\xi_{\sigma}\right|^2\hspace{0.02cm}\mathbb{E}\big[|\sigma(s,X_s)|^2\big]\, \mathrm{d}s\\
   &\lesssim \sup_{s\in [0,t]}\big( 1+\E[|X_s|^2]\big)\int_0^t \left| \widetilde{\Xi}_{\lambda}^{(n)}S(r)\xi_{\sigma} - \widetilde{\Xi}_{\lambda} S(r) \xi_{\sigma}\right|^2 \, \mathrm{d}r,
   \end{aligned} 
   \end{equation}
   which converges to zero by \eqref{eq:admissibleNlimKerneloperatorcase}. Similarly, we obtain for the drift by an additional application of Jensen's inequality: 
   \begin{equation}\label{eq:driftconvergence}
   \begin{aligned}
   \mathbb{E}&\bigg[\bigg|\int_0^t \big( \widetilde{\Xi}^{(n)}_{\lambda} -\widetilde{\Xi}_{\lambda}\big)S(t-s)\hspace{0.02cm}\xi_b\hspace{0.02cm}b(s,X_s)\, \mathrm{d}s\bigg|^2\bigg]
   \\ &\lesssim  t\int_0^t \left| \big(\widetilde{\Xi}^{(n)}_{\lambda} -\widetilde{\Xi}_{\lambda}\big)S(t-s)\hspace{0.02cm}\xi_b\right|^2\hspace{0.02cm}\mathbb{E}\big[|b(s,X_s)|^2\big]\, \mathrm{d}s\\
   &\lesssim \sup_{s\in [0,t]}\big( 1+\E[|X_s|^2]\big)\int_0^t \left| \widetilde{\Xi}_{\lambda}^{(n)}S(r)\xi_b - \widetilde{\Xi}_{\lambda} S(r) \xi_b\right|^2 \, \mathrm{d}r,
   \end{aligned}  
   \end{equation}
   where convergence to zero as $n\to\infty$ follows again from \eqref{eq:admissibleNlimKerneloperatorcase}. Denote by $\Sigma_0(X_t)$ the canonical completion of $\Sigma(X_t)$ with respect to~$\P$. Since $(\Omega, \mathcal{F}, \P)$ is complete, we obtain $\Sigma_0(X_t) \subseteq \mathcal{F}$. As convergence in probability preserves measurability, $\widetilde{\Xi}_{\lambda} \mathcal{X}_t$ is also $\Sigma_0(X_t)$-measurable. 
   
   Finally, we pass to the limit $\lambda \to 0$, which is justified by the definition of admissibility and similar arguments to above. For the initial curve we obtain from $\xi\in\mathcal{D}$ a.s., $t>0$ and \eqref{eq:admissiblelamdalim} that $\lim_{\lambda\to 0}\widetilde{\Xi}_{\lambda}S(t)\xi=\widetilde{\Xi}(t)\xi$ holds a.s.\ and thus also in probability. Moreover, for the stochastic integral, we find the estimate 
   \begin{align*}
   \mathbb{E}&\bigg[\bigg|\int_0^t \big( \widetilde{\Xi}_{\lambda}S(t-s)\xi_{\sigma}-\widetilde{k}^{\sigma}(t-s)\big)\hspace{0.02cm}\sigma(s,X_s)\, \mathrm{d}B_s\bigg|^2\bigg]\\
   &\lesssim \sup_{s\in [0,t]}\big( 1+\E[|X_s|^2]\big)\int_0^t \left| \widetilde{\Xi}_{\lambda}S(r)\xi_{\sigma} - \widetilde{k}^{\sigma}(r)\right|^2 \, \mathrm{d}r,
   \end{align*}  
   which converges to zero as $\lambda\to 0$ by \eqref{eq:admissiblelamdalim}. Analogously, one verifies the convergence of the pathwise Lebesgue integrals for the drift in $L^2(\Omega,\P;\R)$. Therefore, we observe that $\widetilde{\Xi}\mathcal{X}_t := \lim_{\lambda \to 0}\widetilde{\Xi}_{\lambda}\mathcal{X}_t$ exists in probability, and is given by \eqref{eq: Xi X perturbation}. By the same argument as above, $\widetilde{\Xi}\mathcal{X}_t$ is again $\Sigma_0(X_t)$-measurable. Therefore, by the Doob-Dynkin lemma there exists a measurable function $\ell(t, \cdot): \R^d \longrightarrow \R$ such that $\widetilde{\Xi}\mathcal{X}_t = \ell(t, X_t)$ holds a.s.

    \textit{Step 3.} To derive the desired contradiction, observe that by assumption $(X_{t_0}, \widetilde{\Xi}\mathcal{X}_{t_0})$ has a density with respect to the Lebesgue measure on $\Gamma_{\sigma,t_0} \times \R$. Denote the graph of the measurable function $\ell(t_0,\cdot)$ found in step 2 for $t=t_0$ by $\widetilde{G}_{t_0}:= \big\{\big(x, \ell(t_0,x)\big):\,x\in\R^d\big\}$. Then $\mathrm{Leb}_{d+1}(\widetilde{G}_{t_0}) = 0$ and by the assumed regularity result combined with the relation $\widetilde{\Xi}\mathcal{X}_{t_0} = \ell (t_0,X_{t_0})$ a.s., by step~2, we also get $\P\big[\big(X_{t_0},\ell(t_0,X_{t_0})\big) \in \widetilde{G}_{t_0} \cap (\Gamma_{\sigma,t_0} \times \R)\big] = 0$. Since we have $\P[ X_{t_0} \in \Gamma_{\sigma,t_0}]>0$ by assumption, we observe
    \begin{align}\label{eq:nonMarkovgeneralfinalcontradiction}
        0 < \P[ X_{t_0} \in \Gamma_{\sigma,t_0}]
        = \P[ ( X_{t_0}, \ell(t_0, X_{t_0})) \in \widetilde{G}_{t_0} \cap (\Gamma_{\sigma,t_0} \times \R) ]
        = 0,
    \end{align}
    providing a contradiction. Therefore, $X$ is not a Markov process.
\end{proof}

The assumption that $(X_t, \widetilde{\Xi}\mathcal{X}_t)$ has a density with respect to the Lebesgue measure on $\Gamma_{\sigma,t} \times \R$ can be verified either by Theorem~\ref{thm: density}, or for diagonal $\sigma$ by Theorem \ref{thm: density diagonal}. In any case, these theorems essentially restrict the class of admissible perturbations to nondegenerate ones, where $\widetilde{K}^{\sigma} = (K^{\sigma}, \widetilde{k}^{\sigma})^{\intercal}$ is $\gamma_*$-nondegenerate. For instance, this excludes kernels of the form $c e^{\lambda t}$ with $c,\lambda \in \mathbb{R}$, cf.\ Example \ref{example: exponentials} below, for which the Markov property is indeed satisfied under suitable uniqueness conditions.

\begin{remark}\label{remark:filtration}
Recall that, by the tower property, the Markov property is preserved when passing to a coarser filtration with respect to which $X$ is still adapted. Hence, under the above conditions, $X$ is also not a Markov process with respect to any finer filtration $\mathcal{G}_t\supseteq\mathcal{F}^{X,B}_t$, $t\in\R_+$. In particular, it is not a Markov process with respect to the filtration $(\mathcal{F}_t)_{t\in\R_+}$ that stems from the definition of a weak solution.
\end{remark}

\begin{remark}
The proof concept allows us not only to rule out the Markov property for~$X$, but also for any process of the form $(X, P\mathcal{X})$ where $P \in L(\mathcal{V}, \R^N)$ for some $N \in \mathbb{N}$. Indeed, suppose that $(X, P\mathcal{X})$ is a Markov process on $\R^d \times \R^N$, then 
\[
    \E\big[f(X_T, P\mathcal{X}_T) \, | \, \mathcal{F}_t^{X,B}\big] = \E[f(X_T,P\mathcal{X}_T) \, | \, (X_t, P\mathcal{X}_t) ].
\]
Arguing as in step 1, we arrive at
\[
    \E[(X_T, P\mathcal{X}_T) \, | \, (X_t,P\mathcal{X}_t) ] = (\Xi, P)S(T-t)\mathcal{X}_t + \int_t^T (\Xi, P)S(T-r)\E[b(r,X_r) \, | \, (X_t, P\mathcal{X}_t) ]\, \mathrm{d}r.
\]
Hence $(\Xi, P)S(T-t)\mathcal{X}_t$ is $(X_t, P\mathcal{X}_t)$-measurable. Arguing as in step 2 based on an analogous notion of admissibility, we can conclude that also the Volterra-Ito-type random variable $\widetilde{\Xi} \mathcal{X}_t$ obtained as a probability limit is $(X_t, P\mathcal{X}_t)$-measurable. Hence, we may obtain a contradiction by showing that for some $t=t_0>0$ with $\P[ X_{t_0} \in \Gamma_{\sigma,t_0}]>0$, $(X_{t_0}, P\mathcal{X}_{t_0}, \widetilde{\Xi}\mathcal{X}_{t_0})$ has a density on $\Gamma_{\sigma,t_0} \times \R^N \times \R$. The precise conditions and details are left to the interested reader.
\end{remark}

\subsection{Weak admissibility}\label{subsection:specialcasenonMarkovperturb}

In this subsection, we consider a weaker form of admissibility that is based on the important observation that \eqref{eq: Xin approximation} needs to converge simultaneously for $S(\cdot)\xi_b$ and $S(\cdot)\xi_{\sigma}$ towards $\widetilde{k}^b$ and $\widetilde{k}^{\sigma}$, respectively. Clearly, this is automatically satisfied when $K:= K^b = K^{\sigma}$. Here, the admissibility condition can be weakened to solely requiring an admissible perturbation $\widetilde{k}$ of suitable given functions $f_1,\dots, f_M$ in the following sense:

\begin{definition}\label{def:admissiblekernel}
    Let $M \in \mathbb{N}$ and $f_1,\dots, f_M \in L_{\mathrm{loc}}^2(\R_+; \R^{1\times d})$. For $T > 0$, the collection of admissible perturbations of $\mathbb{F} = \{f_1,\dots, f_M\}$ on $[0,T]$ is defined by  
    \[
        \mathcal{M}_{\mathrm{ad}}( \mathbb{F},T) = \overline{\left\{ \sum_{i=1}^M \sum_{j=1}^N a_{ij} f_i(\cdot + z_{ij}) \ | \ N \geq 1, \ z_{ij} \geq 0, \ a_{ij} \in \R \right\} }^{L^2([0,T]; \R^{1\times d})}
    \] 
    Let $\mathcal{M}_{\mathrm{ad}}(\mathbb{F}) = \cap_{T > 0}\mathcal{M}_{\mathrm{ad}}(\mathbb{F}, T)$ be the collection of functions that are admissible on $[0,T]$ for any $T > 0$. To simplify the notation, we also write $\mathcal{M}_{\mathrm{ad}}(f,T) = \mathcal{M}_{\mathrm{ad}}(\{f\},T)$ and $\mathcal{M}_{\mathrm{ad}}(f) = \mathcal{M}_{\mathrm{ad}}(\{f\})$.
\end{definition} 

By definition, admissible perturbations $\widetilde{k}\in \mathcal{M}_{\mathrm{ad}}(\mathbb{F},T)$ are row-vector-valued functions that can be obtained as $L^2([0,T])$-limits from linear combinations of translations of $f_1,\dots, f_M$. In our framework, admissible perturbations will be constructed using the collection of rows $\mathbb{F} = \{K_1,\dots, K_d\}$. However, in most cases it is already sufficient to consider admissible perturbations based on $\mathbb{F} = \{K_{i_0}\}$ for \textit{one} choice of $i_0 \in \{1,\dots, d\}$. As a consistency check, let us consider the case where $K$ is a linear combination of exponential functions as commonly used for finite-dimensional Markovian approximations, see \cite{markovian_structure, MR4521278}.
\begin{example}\label{example: exponentials}
    Suppose there exist $N \geq 1$, $a_1,\dots, a_N \in \R^*$ and $\lambda_1, \dots, \lambda_N \geq 0$ such that
    \begin{align*}
        f_{\mathrm{exp}}(t) = \sum_{j=1}^N a_j\hspace{0.03cm} e^{- \lambda_j t}, \qquad t\in\R_+.
    \end{align*}
    Then $\mathcal{M}_{\mathrm{ad}}(f_{\mathrm{exp}}) = \{ \sum_{j=1}^{N} c_j e^{-\lambda_j \cdot} \, : \, c_1,\dots, c_N \in \R\}$. Hence, there is no freedom to construct admissible perturbations outside this specific form. This highlights the special role of exponentials for the Markov property. 
\end{example}

\begin{example}
    Let $f(t) = \sin(t)$. Then it follows from the addition theorems for trigonometric functions that $\mathcal{M}_{\mathrm{ad}}(f) = \{ c_1 \sin + c_2 \cos \, : \, c_1, c_2 \in \R \}$.
\end{example}

This notion of admissibility is conceptually similar to the density in $L^2(\R)$ of the linear span of translations $f(\cdot+z)$ with $z \in \R$. Such a problem dates back to Wiener's Tauberian theorems and Fourier methods, see \cite{MR1503035}. However, the techniques therein do not directly apply to our framework as we only consider nonnegative translations $z \geq 0$. Still, by the use of Laplace transforms and hence implicitly M\"untz's theorem for the density of monomials in $L^p$-spaces, a variant of such a result is given below for the multi-dimensional case. For this purpose, we require a mild support condition:

\begin{definition}
    Let $\mu$ be a Borel measure on $\R_+$. We say that its support is nondegenerate if it contains a sequence of pairwise distinct elements $(\lambda_n)_{n \in \mathbb{N}} \subseteq \mathrm{supp}(\mu)\cap\R_+^*$ such that 
    \[
        \sum_{n=1}^{\infty} \frac{1}{\lambda_n} = \infty.
    \] 
\end{definition}

The next lemma shows that admissibility is satisfied for a remarkably large class of functions, whenever $f_1,\dots, f_M$ have at least one component $f_{ij}$ whose derivative is eventually completely monotone with its Bernstein measure having nondegenerate support. In particular, for every such component, each element in $L^2([0,T];\R)$ is admissible.

\begin{lemma}\label{lemma:compmonadmissiblekernel}
    Let $M \in \mathbb{N}$ and $f_1,\dots, f_M \in L_{\mathrm{loc}}^2(\R_+; \R^{1 \times d})$. Denote by $J \subseteq \{1,\dots, d\}$ the collection of indices $j \in J$ such that there exists $i \in \{1,\dots, M\}$ and $m_j \in \mathbb{N}_0$ with $f_{ij} \in C^{\infty}(\R_+^*; \R)$ and its $m_j$-th derivative $f_{ij}^{(m_j)} \in L_{\mathrm{loc}}^2(\R_+; \R)$ is completely monotone with its Bernstein measure $\mu_{ij}$ on $\R_+$ having nondegenerate support. Then for each $T > 0$:
    \[
        \left\{ (\widetilde{k}_1, \dots, \widetilde{k}_d) \in L^2([0,T]; \R^{1\times d}) \ \big| \ \widetilde{k}_j = 0, \ j \not \in J \right\} \subseteq \mathcal{M}_{\mathrm{ad}}(\mathbb{F}, T).
    \]
\end{lemma}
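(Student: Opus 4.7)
The plan is to reduce the statement to a scalar density problem that can be resolved via the Bernstein integral representation combined with a Müntz–Szász argument. Since $\mathcal{M}_{\mathrm{ad}}(\mathbb{F},T)$ is a closed linear subspace of $L^2([0,T];\R^{1\times d})$, I would first decompose any $J$-supported target as $\sum_{j\in J} e_j\widetilde{k}_j$, where $e_j$ denotes the $j$-th standard row vector, and reduce the task to showing $e_j\widetilde{k}_j\in\mathcal{M}_{\mathrm{ad}}(\mathbb{F},T)$ for each $j\in J$ and every $\widetilde{k}_j\in L^2([0,T];\R)$.

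Fix $j\in J$ with associated $i=i(j)$, order $m=m_j$, Bernstein measure $\mu=\mu_{ij}$, and nondegenerate sequence $(\lambda_n)\subseteq\supp(\mu)\cap\R_+^*$ satisfying $\sum_n 1/\lambda_n=\infty$. The core scalar claim to establish is that $\mathrm{span}\{f_{ij}(\cdot+z):z\ge 0\}$ is dense in $L^2([0,T];\R)$. I would prove this by duality. Suppose $\phi\in L^2([0,T];\R)$ satisfies $F(z):=\int_0^T \phi(t)f_{ij}(t+z)\,\mathrm{d}t=0$ for every $z\ge 0$. Using $f_{ij}\in C^\infty(\R_+^*)$ together with local $L^2$-integrability of $f_{ij}^{(m)}$, I differentiate $m$ times in $z>0$ under the integral to obtain $F^{(m)}(z)=\int_0^T\phi(t)f_{ij}^{(m)}(t+z)\,\mathrm{d}t=0$. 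Inserting the Bernstein representation $f_{ij}^{(m)}(s)=\int_0^\infty e^{-\lambda s}\,\mu(\mathrm{d}\lambda)$ and applying Fubini produces
\[
0=\int_0^\infty e^{-\lambda z}\,\widehat{\phi}(\lambda)\,\mu(\mathrm{d}\lambda),\qquad z>0,
\]
where $\widehat{\phi}(\lambda):=\int_0^T\phi(t)e^{-\lambda t}\,\mathrm{d}t$ extends to an entire function on $\mathbb{C}$. Injectivity of the Laplace transform on finite signed Borel measures on $\R_+$ forces $\widehat{\phi}(\lambda)=0$ for $\mu$-a.e.~$\lambda$, and in particular $\widehat{\phi}(\lambda_n)=0$ for every $n\ge 1$. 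If $(\lambda_n)$ has a finite accumulation point, the identity theorem for entire functions already gives $\widehat{\phi}\equiv 0$; otherwise $\lambda_n\to\infty$ and the Müntz–Szász completeness theorem, triggered by the divergence condition $\sum_n 1/\lambda_n=\infty$, again yields $\widehat{\phi}\equiv 0$. Injectivity of the Laplace transform on $L^2([0,T])$ then finishes the scalar density.

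The main obstacle will be lifting this scalar density to the required vector statement. Each translation of a single $f_i$ acts on all $d$ coordinates simultaneously, so approximating $e_j\widetilde{k}_j$ demands not only hitting $\widetilde{k}_j$ in the $j$-th slot but also driving every other combination $\sum_k a_k f_{i,l}(\cdot+z_k)$, $l\ne j$, towards zero. For the diagonal-like families $\mathbb{F}$ arising from the concrete Markovian lift constructions used later in the paper, each $f_i$ is supported in a single coordinate, so the problem decouples and the scalar density transfers coordinate by coordinate, with the outer sum $\sum_{i=1}^M$ in the definition of $\mathcal{M}_{\mathrm{ad}}(\mathbb{F},T)$ absorbing the different coordinates. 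For more general $\mathbb{F}$ an additional cancellation argument that combines translates of several $f_i$ to kill cross-coordinate contributions in the non-$J$ slots is needed; this decoupling, together with the Müntz–Szász argument at the scalar level, constitutes the essential content of the proof.
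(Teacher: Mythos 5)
Your scalar core coincides with the paper's argument: fix $j\in J$, test against all translates of $f_{ij}$, differentiate $m_j$ times in the shift variable, insert the Bernstein representation of $f_{ij}^{(m_j)}$, apply Fubini and uniqueness of Laplace transforms to conclude $\int_0^T e^{-\lambda t}g(t)\,\mathrm{d}t=0$ on $\supp(\mu_{ij})$, and then split into the two cases (finite accumulation point versus $\lambda_n\nearrow\infty$), handled by the identity theorem and a M\"untz-type uniqueness theorem for Laplace transforms, respectively. Two small points you gloss over and the paper spells out: the finiteness of $\int_{\R_+}\int_0^T e^{-xt}|g(t)|\,\mathrm{d}t\,\mu_{ij}(\mathrm{d}x)$ (via Cauchy--Schwarz against $\|f_{ij}^{(m_j)}\|_{L^2}$), which justifies Fubini, and the passage from ``$\mu_{ij}$-a.e.'' to ``everywhere on $\supp(\mu_{ij})$'' via closedness of the zero set of the entire function $\widehat{g}$.

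Where you diverge is the vector step, and your reservation there is substantive rather than a gap in your understanding. The paper does not build the approximating sequence directly; it argues by duality: $ge_j^{\intercal}\in\mathcal{M}_{\mathrm{ad}}(\mathbb{F},T)$ iff $A^{\perp}\subseteq\{ge_j^{\intercal}\}^{\perp}$, and it reduces this to showing that any element of $A^{\perp}$ of the special form $he_j^{\intercal}$ vanishes --- at which point the orthogonality relation involves only the component $f_{ij}$ and the scalar argument applies verbatim. However, what is actually needed is that \emph{every} $\phi=(\phi_1,\dots,\phi_d)\in A^{\perp}$ satisfies $\phi_j=0$, and for a general $\phi$ the relation $\int_0^T\sum_{\ell}\phi_{\ell}(t)f_{i\ell}(t+z)\,\mathrm{d}t=0$ couples all coordinates; knowing that $A^{\perp}$ meets the coordinate subspace $V_j=\{he_j^{\intercal}\}$ only in $0$ is strictly weaker than $A^{\perp}\perp V_j$. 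Indeed, for $d=2$, $M=1$, $f_1=(f_{11},-f_{11})$ with $f_{11}(t)=t^{-1/4}$ one has $1\in J$ and $2\notin J$, yet $\mathcal{M}_{\mathrm{ad}}(\{f_1\},T)=\{(\psi,-\psi)\}$ contains no nonzero $(g,0)$, so the coordinatewise conclusion genuinely requires the decoupling you describe. Your observation that the families actually used downstream (rows of a diagonal kernel, each supported in a single coordinate) decouple completely, so that the scalar density transfers slot by slot, is exactly what makes the lemma valid in all of its applications; for non-decoupled $\mathbb{F}$ an additional cancellation hypothesis or argument is indeed required.
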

\begin{proof}
    Set $\mathbb{F} = \{f_1,\dots, f_M\}$. Since $\mathcal{M}_{\mathrm{ad}}(\mathbb{F}, T)$ is linear, it suffices to show that $g e_j^{\intercal} \in \mathcal{M}_{\mathrm{ad}}(\mathbb{F}, T)$ holds for each $j \in J$ and $g \in L^2([0,T]; \R)$, where $e_j \in \R^d$ denotes the $j$-th canonical basis vector. The latter is equivalent to $\mathcal{M}_{\mathrm{ad}}(\mathbb{F}, T)^{\perp} \subseteq \{ g e_j^{\intercal} \}^{\perp}$. Since $\mathcal{M}_{\mathrm{ad}}(\mathbb{F},T)^{\perp} = \overline{A}^{\perp} = A^{\perp}$, where 
    \[
        A := \left\{ \sum_{i=1}^M \sum_{j=1}^N a_{ij} f_i(\cdot + z_{ij}) \ | \ N \geq 1, \ z_{ij} \geq 0, \ a_{ij} \in \R \right\},
    \]
    it suffices to show that for any $g \in L^2([0,T]; \R)$ with $g e_j^{\intercal} \in A^{\perp}$, one necessarily has $g = 0$. Indeed, then for any choice of $g \in L^2([0,T]; \R)$, the orthogonal projection of $g e_j^{\intercal}$ onto $\mathcal{M}_{\mathrm{ad}}(\mathbb{F},T)^{\perp}$ vanishes and hence $g e_j^{\intercal}\in \mathcal{M}_{\mathrm{ad}}(\mathbb{F},T)$.

    Let us fix any $j \in J$ and $g e_j^{\intercal} \in A^{\perp}$. Select an index $i \in\{1,\dots, M\}$ associated with $j \in J$ according to the definition of $J$. Then it follows that
    \begin{align}\label{eq: 13}
        0 = \int_0^T g(t)e_j^{\intercal} f_i^{\intercal}(t+z)\, \mathrm{d}t = \int_0^T g(t) f_{ij}(t+z)\, \mathrm{d}t, \qquad \forall z \geq 0.
    \end{align}
    Differentiating \eqref{eq: 13} $m_j$-times in $z > 0$, which is justified by dominated convergence, and using the representation $f_{ij}^{(m_j)}(t+z) = \int_{\R_+}e^{-(t+z)x}\, \mu_{ij}(\mathrm{d}x)$ by complete monotonicity, we obtain from Fubini's theorem
    \begin{equation}\label{eq:Laplaceintermedstep}
        \int_{\R_+} e^{-zx} \left( \int_0^T e^{- x t}g(t) \, \mathrm{d}t \right)\, \mu_{ij}(\mathrm{d}x) = 0.
    \end{equation}  
    Note that all integrals are well-defined since by our assumptions:
    \begin{align*}
        \int_{\R_+}\int_0^T e^{- xt} |g(t)|\, \mathrm{d}t \,\mu_{ij}(\mathrm{d}x) 
        &\leq \int_0^T f_{ij}^{(m_j)}(t)\hspace{0.02cm}|g(t)|\, \mathrm{d}t 
        \\ &\leq \big\| f_{ij}^{(m_j)}\big\|_{L^2([0,T])} \hspace{0.02cm}\| g\|_{L^2([0,T])} < \infty.
    \end{align*}
    Hence, as $z\in\R_+$ was arbitrary, uniqueness of Laplace transforms gives $\int_0^T e^{-xt}g(t)\, \mathrm{d}t = 0$ for $\mu_{ij}$-a.a.\ $x\in\R_+$. 
    
    Define $N = \left\{ x \in \R_+ \ : \ \int_0^T e^{-xt}g(t) \, \mathrm{d}t = 0 \right\}$. Then $\mu_{ij}(N^c) = 0$, and since $N$ is closed, it follows from the definition of the support that $\mathrm{supp}(\mu_{ij}) \subseteq N$. Hence, we obtain
    \begin{equation}\label{eq:uniquenesslaplaceadmissibility}
        \int_0^T e^{-xt}g(t)\, \mathrm{d}t = 0, \qquad \forall x \in \mathrm{supp}(\mu_{ij}).
    \end{equation}
    By the nondegeneracy of the support, let $(\lambda_n)_{n \geq 1} \subseteq \mathrm{supp}(\mu_{ij})$ be a sequence of pairwise distinct elements such that $\sum_{n=1}^{\infty}\lambda_n^{-1} = \infty$. Then there exists a subsequence, again denoted by $(\lambda_n)_{n \geq 1}$, such that either $\lambda_n \to \lambda \in \R_+$, or $(\lambda_n)_{n \geq 1}$ is increasing to $\infty$. In the first case, define the entire function $F(x) = \int_0^T e^{-xt}g(t)\,\mathrm{d}t$ and note that $F(\lambda_n) = 0$ for all $n \geq 1$. By the identity theorem for analytic functions, we obtain $F \equiv 0$, and hence uniqueness of Laplace transforms proves $g = 0$. For the second case, we obtain
    \begin{displaymath}
        \frac{1}{\lambda_n}\le \frac{1}{\lambda_n}\frac{2\lambda_n}{\lambda_n +1}= \frac{2+\lambda_n -\lambda_n}{\lambda_n +1}=1-\left|\frac{\lambda_n -1}{\lambda_n +1} \right|,
    \end{displaymath}
    for every $n\ge\overline{n}$ with $\lambda_{\overline{n}}\ge 1$. Thus, $(\lambda_n)_{n \geq 1}$ is a uniqueness sequence for Laplace transforms by \cite[Theorem 1.11.1]{ArBaHiNe11} and evaluating \eqref{eq:uniquenesslaplaceadmissibility} at $x = \lambda_n$ yields again $g = 0$. As $j \in J$ was in both cases arbitrary, the assertion is proved.
\end{proof}

When $m = 0$, this lemma covers completely monotone kernels, while $m = 1$ corresponds to Bernstein functions. Remark that any Borel measure $\mu$ whose support is not nondegenerate necessarily satisfies either $\# \supp(\mu)<\infty$ or $\mathrm{supp}(\mu) = \{ \lambda_n \ : \ n \geq 1\}$ for some sequence $(\lambda_n)_{n \geq 1}$ strictly increasing to infinity with $\sum_{n=1}^{\infty}\lambda_n^{-1} < \infty$. In particular, $f(t) = \sum_{n=1}^{\infty} c_n\hspace{0.02cm}e^{-\lambda_n t}$ and it is clear that for such kernels $\mathcal{M}_{\mathrm{ad}}(f,T)$ is a proper subspace of $L^2([0,T]; \R)$. If $\# \supp(\mu) < \infty$, this follows from Example \ref{example: exponentials}, whereas in the second case it can be shown via M\"untz's theorem.  

\begin{remark}\label{remark:kerneladmissibleexpterm}
The above arguments can be modified straightforwardly to cover also the case $f_{ij}(t) = e^{-\alpha_{ij} t}\hspace{0.02cm} \widetilde{f}_{ij}(t)$ with $\alpha_{ij} \in \R$, provided that $\widetilde{f}=(\widetilde{f}_{ij})_{i \in \{1,\dots, M\}, j\in\{1,\dots,d\}}$ satisfies the conditions of the previous lemma. Indeed, \eqref{eq:Laplaceintermedstep} becomes 
\begin{equation*}
    \int_{\R_+} e^{-zx} \left( \int_0^T e^{- (\alpha_{ij}+x) t}g(t) \, \mathrm{d}t \right)\, \mu_{ij}(\mathrm{d}x) = 0.
\end{equation*}
Then the same arguments as above applied to $e^{-\alpha_{ij}\cdot}g\in L^2([0,T];\R)$ yield $e^{-\alpha_{ij}\cdot}g=0$ and thus again $g=0$. 
\end{remark}

Below we prove the failure of the Markov property under the weaker notion of admissibility introduced in Definition \ref{def:admissiblekernel}.

\begin{theorem}\label{thm: special case nonmarkov}
    Suppose that condition (B) is satisfied with $\xi_K := \xi_b = \xi_{\sigma}$, and consider $b \in B(\R_+; C^{0}(\R^d; \R^d))$, and $\sigma \in B(\R_+; C^{0}(\R^d; \R^{d\times m}))$. Let $X$ be a continuous weak solution of \eqref{eq:generalSVIE} defined on some complete filtered probability space $(\Omega, \mathcal{F}, (\mathcal{F}_t)_{t \in \R_+}, \P)$ with $K^b = K^{\sigma}=K := \Xi S(\cdot)\xi_K$ and $g=\Xi S(\cdot)\xi$ for some deterministic $\xi \in\mathcal{V}$. Suppose that there exists an admissible kernel $\widetilde{k} \in \mathcal{M}_{\mathrm{ad}}(\{K_1,\dots, K_d\})$ according to Definition \ref{def:admissiblekernel} and define 
    \begin{equation}\label{eq:Zperturbationdefinition}
        Z_t = \int_0^t \widetilde{k}(t-s)\hspace{0.02cm}b(s,X_s)\, \mathrm{d}s + \int_0^t \widetilde{k}(t-s)\hspace{0.02cm}\sigma(s,X_s)\, \mathrm{d}B_s,\quad t\in\R_+.
    \end{equation}
    If there exists $t_0 > 0$ such that $\P[X_{t_0} \in \Gamma_{\sigma,t_0}] > 0$ and $(X_{t_0}, Z_{t_0})$ is absolutely continuous to the Lebesgue measure on $\Gamma_{\sigma, t_0} \times \R$, then $X$ is not a Markov process with respect to $(\mathcal{F}_t^{X,B})_{t \in \R_+}$.
\end{theorem}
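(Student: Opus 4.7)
The plan is to follow the same three-step strategy as in the proof of Theorem~\ref{thm: abstract nonmarkov}, but exploit the fact that under $\xi_b=\xi_\sigma=\xi_K$ the admissibility of $\widetilde{k}$ needs to be controlled in only one $L^2$-sense simultaneously for both drift and diffusion. First, I would invoke Proposition~\ref{prop: Markov lift}: since the deterministic $\xi\in\mathcal{V}$ trivially satisfies $g=\Xi S(\cdot)\xi\in\mathcal{G}_p$ for any $p$, we may choose $p>2$ with $\tfrac{1}{p}+\rho<\tfrac{1}{2}$, obtain the Markovian lift $\mathcal{X}\in L^p(\Omega,\P;C([0,T];\mathcal{V}))$ with $X=\Xi\mathcal{X}$, and enjoy $\sup_{t\in[0,T]}\E[|X_t|^p]<\infty$. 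Assuming $X$ is Markov with respect to $(\mathcal{F}^{X,B}_t)_{t\in\R_+}$, Step~1 of the proof of Theorem~\ref{thm: abstract nonmarkov} carries over verbatim and yields that for every $z\geq 0$ and $i\in\{1,\dots,d\}$ the random variable $\Xi^i S(z)\mathcal{X}_t$ is $\Sigma(X_t)$-measurable.

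Next, I would use Definition~\ref{def:admissiblekernel} to pick sequences $N_n\in\mathbb{N}$, $a_{ij}^{(n)}\in\R$, $z_{ij}^{(n)}\geq 0$ such that
\[
\widetilde{k}_n := \sum_{i=1}^d\sum_{j=1}^{N_n} a_{ij}^{(n)}\hspace{0.02cm}K_i(\cdot+z_{ij}^{(n)}) \longrightarrow \widetilde{k} \quad \text{in } L^2([0,T];\R^{1\times d}),
\]
and define $Z_t^n$ by replacing $\widetilde{k}$ with $\widetilde{k}_n$ in \eqref{eq:Zperturbationdefinition}. Using $K_i(\cdot+z_{ij}^{(n)})=\Xi^i S(\cdot+z_{ij}^{(n)})\xi_K$, the semigroup property, and the crucial identity $\xi_b=\xi_\sigma=\xi_K$, a stochastic Fubini for the bounded linear functionals $\Xi^i S(z_{ij}^{(n)})\in L(\mathcal{V},\R)$ yields
\[
Z_t^n = \widetilde{\Xi}_n\bigl[\mathcal{X}_t-S(t)\xi\bigr], \qquad \widetilde{\Xi}_n := \sum_{i=1}^d\sum_{j=1}^{N_n} a_{ij}^{(n)}\hspace{0.02cm}\Xi^i S\bigl(z_{ij}^{(n)}\bigr).
\]
Since each $\Xi^i S(z_{ij}^{(n)})\mathcal{X}_t$ is $\Sigma(X_t)$-measurable by Step~1 and $S(t)\xi$ is deterministic, $Z_t^n$ is $\Sigma(X_t)$-measurable. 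A BDG-plus-Jensen argument as in \eqref{eq:diffusionconvergence}--\eqref{eq:driftconvergence}, combined with the above moment bound for $X$ and the $L^2$-convergence $\widetilde{k}_n\to\widetilde{k}$, shows that $Z_t^n\to Z_t$ in $L^2(\Omega,\P;\R)$; by completeness of $(\Omega,\mathcal{F},\P)$, the limit $Z_t$ is measurable with respect to the canonical completion $\Sigma_0(X_t)\subseteq\mathcal{F}$. Doob--Dynkin then furnishes a measurable $\ell(t,\cdot)\colon\R^d\to\R$ with $Z_t=\ell(t,X_t)$ almost surely.

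Finally, to close the argument, I would evaluate at $t=t_0$: the graph $\widetilde{G}_{t_0}=\{(x,\ell(t_0,x)):x\in\R^d\}$ has Lebesgue measure zero in $\R^{d+1}$, so the assumed absolute continuity of the law of $(X_{t_0},Z_{t_0})$ on $\Gamma_{\sigma,t_0}\times\R$ together with $\P[X_{t_0}\in\Gamma_{\sigma,t_0}]>0$ reproduces the contradiction
\[
0 < \P[X_{t_0}\in\Gamma_{\sigma,t_0}] = \P\bigl[(X_{t_0},\ell(t_0,X_{t_0}))\in\widetilde{G}_{t_0}\cap(\Gamma_{\sigma,t_0}\times\R)\bigr] = 0,
\]
exactly as in \eqref{eq:nonMarkovgeneralfinalcontradiction}. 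The main subtlety is the identity $Z_t^n=\widetilde{\Xi}_n[\mathcal{X}_t-S(t)\xi]$, but since $\widetilde{\Xi}_n$ is a \emph{finite} linear combination of bounded functionals, no auxiliary approximating family $(\widetilde{\Xi}_\lambda)_{\lambda\in(0,1)}$ as in Definition~\ref{def:admissibleoperator} is needed; this is precisely why the weaker admissibility of Definition~\ref{def:admissiblekernel} suffices once $K^b=K^\sigma$.
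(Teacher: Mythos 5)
Your argument reproduces the paper's proof essentially line for line: the use of Proposition~\ref{prop: Markov lift} with deterministic $\xi$, the re-use of Step~1 of Theorem~\ref{thm: abstract nonmarkov}, the finite linear combinations $\widetilde{\Xi}_n$ acting on $\mathcal{X}_t-S(t)\xi$ giving $Z_t^n$, the $L^2$ convergence via BDG and Jensen, the passage to $\Sigma_0(X_t)$-measurability and Doob--Dynkin, and the final graph argument. Your closing remark explaining why $K^b=K^\sigma$ obviates the $\lambda$-family of Definition~\ref{def:admissibleoperator} is exactly the point the paper is making; the proof is correct and takes the same route.
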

\begin{proof}
    First, observe that when $g=\Xi S(\cdot)\xi$ is deterministic, $g \in \mathcal{G}_p$ holds for any $p > 2$ and hence $\frac{1}{p} + \rho < \frac{1}{2}$ is satisfied. In particular, we may apply Proposition \ref{prop: Markov lift} and denote by $\mathcal{X}$ the Markovian lift constructed there, see \eqref{eq:MarkovianLiftconstruction}. The proof can be carried out similarly to the general case of Theorem \ref{thm: abstract nonmarkov}. Indeed, arguing exactly as in step 1 therein, we observe that $\Xi^i S(z)\mathcal{X}_t$ is $\Sigma(X_t)$-measurable for each $z \in \R_+$ and every $i \in \{1,\dots, d\}$. Performing analogous arguments to step 2, but with merely one approximation step, we construct a perturbation that plays the same role as $\widetilde{\Xi}\mathcal{X}_t$ in the proof of Theorem \ref{thm: abstract nonmarkov}.
    
    By assumption, $\widetilde{k}$ corresponds to an admissible kernel according to Definition~\ref{def:admissiblekernel}. For each fixed $t = T > 0$, we find sequences $(N_n)_{n\ge 1}$, $(a_{ij}^{(n)})_{i \in \{1,\dots, d\}, j \in\{ 1,\dots, N_n\}}$ and $(z_{ij}^{(n)})_{i \in \{1,\dots, d\}, j \in\{ 1,\dots, N_n\}}$ such that
    \begin{equation}\label{eq:admissibleNlimKernel}
        \lim_{n \to 0}\int_0^T \bigg| \sum_{i=1}^d \sum_{j=1}^{N_n} a_{ij}^{(n)}K_{i}\big(z_{ij}^{(n)}+t\big) - \widetilde{k}(t)\bigg|^2\, \mathrm{d}t = 0.
    \end{equation}
   Similarly to \eqref{eq: Xin approximation}, we define for every $n\ge 1$ the bounded linear operator
   \begin{align}\label{eq:linearcomboperatorgconstant}
        \widetilde{\Xi}^{(n)} = \sum_{i=1}^d \sum_{j=1}^{N_n} a_{ij}^{(n)}\hspace{0.02cm} \Xi^i S\big(z_{ij}^{(n)}\big).
   \end{align} 
   Noting that we have $\xi_K = \xi_b = \xi_{\sigma}$ and passing to the limit $n \to \infty$ in 
   \begin{align*}
       Z_t^{(n)} :=&\ \widetilde{\Xi}^{(n)} \mathcal{X}_t - \widetilde{\Xi}^{(n)} S(t)\xi \\
       =&\  \int_0^t \widetilde{\Xi}^{(n)} S(t-s)\hspace{0.02cm}\xi_K \hspace{0.02cm}b(s,X_s)\, \mathrm{d}s + \int_0^t \widetilde{\Xi}^{(n)} S(t-s)\hspace{0.02cm}\xi_{K}\hspace{0.02cm} \sigma(s,X_s)\, \mathrm{d}B_s
   \end{align*}
   shows that $Z_t^{(n)} \longrightarrow Z_t$ holds in probability for each $t>0$ with the limit given by
   \begin{align}\label{eq: Z}
    Z_t =  \int_0^t \widetilde{k}(t-s)\hspace{0.02cm} b(s,X_s)\, \mathrm{d}s + \int_0^t \widetilde{k}(t-s)\hspace{0.02cm} \sigma(s,X_s)\, \mathrm{d}B_s.
   \end{align}
   Indeed, combining $\widetilde{\Xi}^{(n)} S(\cdot)\xi_K\longrightarrow \widetilde{k}$ in $L^2([0,t]; \R^{1 \times d})$ according to \eqref{eq:admissibleNlimKernel} with the same estimates as in \eqref{eq:diffusionconvergence} and \eqref{eq:driftconvergence} proves the convergence of both integrals even in $L^2(\Omega,\P;\R)$. 
   
   Then, since $\Xi^i S(z)\mathcal{X}_t$ is $\Sigma(X_t)$-measurable for each $z \in \R_+$ and every $i \in \{1,\dots, d\}$, combined with the particular form of \eqref{eq:linearcomboperatorgconstant}, it follows that also $\widetilde{\Xi}^{(n)} \mathcal{X}_t$ is for every $n\ge 1$ measurable w.r.t.\ $\Sigma(X_t)$, and thus also w.r.t.\ its canonical completion $\Sigma_0(X_t)$. Consequently, since $\xi$ is deterministic, also $Z_t^{(n)}$ is $\Sigma_0(X_t)$-measurable. As the above convergence holds in probability, $Z_t$ inherits the $\Sigma_0(X_t)$-measurability. Thus, we find again for every $t>0$ a measurable function $\ell(t, \cdot): \R^d \longrightarrow \R$ such that $Z_t = \ell(t,X_t)$ a.s. 
    
    Finally, it is straightforward to adjust step 3 to the present case for the $t_0>0$ with $\P[X_{t_0} \in \Gamma_{\sigma,t_0}] > 0$ from our assumptions. Indeed, since $Z_{t_0} = \ell(t_0,X_{t_0})$ a.s.\ and $(X_{t_0}, Z_{t_0})$ is absolutely continuous with respect to the Lebesgue measure on $\Gamma_{\sigma, t_0} \times \R$, the same argument as in \eqref{eq:nonMarkovgeneralfinalcontradiction} proves that $X$ is not a Markov process.
\end{proof}

\section{Application to regularly varying kernels}\label{section: examples}

In this section, we introduce an explicit Markovian lift and apply our abstract results to a concrete class of stochastic Volterra equations with regularly varying Volterra kernels. For clarity of exposition and to allow for a simple verification of the nondegeneracy conditions introduced in Section~\ref{section:abs cont}, we restrict attention to diagonal kernels.  

\subsection{Nondegeneracy for regularly varying kernels}\label{subsection:nondegregvarkernel}

As a preliminary step, we address the nondegeneracy as introduced in Definitions \ref{def: nondegeneracy general} and~\ref{def: nondegeneracy anisotrop}. In both cases, the property was formulated as an asymptotic lower bound for the smallest eigenvalue of the associated Gram matrix functions, see \eqref{eq: Gram} and \eqref{eq: Gram2}. Recall that a measurable function $\ell: \R_+^* \longrightarrow \R_+^*$ is called \textit{slowly varying}, if
\[
    \lim_{t \to \infty}\frac{\ell(\lambda t)}{\ell(t)} = 1,\qquad \forall \lambda > 0.
\]
For properties of slowly varying functions, we refer to \cite{BiGoTe87}. In particular, by \cite[Proposition 1.3.6]{BiGoTe87}, we find for each $\varepsilon>0$ and $c>0$ some $h_0\in (0,1)$ such that 
\begin{align}\label{eq: slowly varying bound}
        c^{-1} \hspace{0.02cm}t^{\varepsilon} \leq \ell(1/t) \leq c \hspace{0.02cm}t^{-\varepsilon}, \qquad t \in (0,h_0).
\end{align}
The next lemma establishes the desired asymptotics.

\begin{lemma}\label{lemma: fractional like}
    Let $k_1,\dots, k_N: \R_+^* \longrightarrow \R$ be locally square-integrable, and regularly varying in $t= 0$, i.e.\ there exist $H_1,\dots, H_N > 0$ and slowly varying functions $\ell_1,\dots, \ell_N: \R_+^* \longrightarrow \R_+^*$ such that 
    \begin{align}\label{eq: remainder fractional}
        k_j(t) \sim c_j\hspace{0.02cm} t^{H_j - 1/2}\hspace{0.02cm}\ell_j(1/t), \qquad t \searrow 0,
    \end{align} 
    holds for all $j \in\{1,\dots, N\}$ with $c_j \neq 0$. Define the Gram matrix function 
    \[
        G_{ij}(h) = \int_0^h k_{i}(r)k_{j}(r)\, \mathrm{d}r, \qquad h>0,\ \, i,j \in\{1,\dots, N\}.
    \]
    If $H_1,\dots, H_N$ are pairwise distinct, then
    \[
        \liminf_{h \searrow 0} \frac{\lambda_{\min}(G(h))}{h^{2H_{\max}} \ell_{\min}(1/h)^{2}} > 0,
    \]
    where $H_{\max} = \max_{j\in\{1,\dots, N\}}H_j$ and $\ell_{\min} = \min_{j\in\{1,\dots, N\}}\ell_j$. 
\end{lemma}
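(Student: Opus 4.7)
The plan is to reduce the desired lower bound to the positive definiteness of a Cauchy-type matrix, and then transfer the result from an idealized model to $G$ via a perturbation argument. By replacing $k_j$ with $-k_j$ whenever $c_j<0$, which conjugates $G(h)$ by a diagonal $\pm 1$ matrix and hence preserves its spectrum, we may and will assume $c_j>0$ for every $j$. We then introduce the idealized kernels
\begin{displaymath}
    \widetilde{k}_j(r):=c_j\hspace{0.02cm} r^{H_j-1/2}\hspace{0.02cm}\ell_j(1/r), \qquad j\in\{1,\dots,N\},
\end{displaymath}
and their Gram matrix $\widetilde{G}_{ij}(h):=\int_0^h\widetilde{k}_i(r)\widetilde{k}_j(r)\,\mathrm{d}r$. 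Since $\ell_i\ell_j$ is slowly varying at $\infty$ and $H_i+H_j>0$, Karamata's theorem (applied after the substitution $u=1/r$) yields the sharp asymptotic
\begin{displaymath}
    \widetilde{G}_{ij}(h)\sim\frac{c_i\hspace{0.02cm}c_j}{H_i+H_j}\hspace{0.02cm}h^{H_i+H_j}\hspace{0.02cm}\ell_i(1/h)\hspace{0.02cm}\ell_j(1/h),\qquad h\searrow 0.
\end{displaymath}

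Next, with the diagonal matrix $D(h):=\diag\bigl(c_j\hspace{0.02cm}h^{H_j}\hspace{0.02cm}\ell_j(1/h)\bigr)$, the normalized matrix $\alpha(h):=D(h)^{-1}\widetilde{G}(h)D(h)^{-1}$ satisfies $\alpha_{ij}(h)\to M'_{ij}:=1/(H_i+H_j)$ entrywise as $h\searrow 0$. The limiting matrix $M'$ is the $L^2(0,1)$-Gram matrix of the family $\{r^{H_j-1/2}\}_{j=1}^N$, which is linearly independent precisely because the exponents $H_j$ are pairwise distinct; consequently $\lambda_{\min}(M')>0$. To transfer this bound to the true $G$, we write $k_j(r)=\widetilde{k}_j(r)(1+\varepsilon_j(r))$ with $\varepsilon_j(r)\to 0$ as $r\searrow 0$, and expand the product to get
\begin{displaymath}
    |G_{ij}(h)-\widetilde{G}_{ij}(h)|\leq 3\hspace{0.02cm}\delta(h)\int_0^h|\widetilde{k}_i\widetilde{k}_j|\,\mathrm{d}r, \qquad \delta(h):=\sup_{r\leq h}\max_j|\varepsilon_j(r)|\to 0,
\end{displaymath}
for all sufficiently small $h$. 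Conjugating by $D(h)^{-1}$ gives $D(h)^{-1}G(h)D(h)^{-1}=\alpha(h)+E(h)$ with entries $|E_{ij}(h)|\leq 3\hspace{0.02cm}\delta(h)\hspace{0.02cm}\alpha'_{ij}(h)$, where $\alpha'_{ij}(h)\to 1/(H_i+H_j)$; since the dimension is finite, this forces $\|E(h)\|_{\mathrm{op}}\to 0$, and hence $\lambda_{\min}(\alpha(h)+E(h))\geq\tfrac{1}{2}\lambda_{\min}(M')$ for all small $h$ by continuity of eigenvalues.

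Finally, for such $h$ and any $\xi\in\R^N$ with $|\xi|=1$, we obtain
\begin{displaymath}
    \xi^{\intercal}G(h)\hspace{0.02cm}\xi\geq\tfrac{1}{2}\lambda_{\min}(M')\hspace{0.02cm}|D(h)\hspace{0.02cm}\xi|^2=\tfrac{1}{2}\lambda_{\min}(M')\sum_{j=1}^N c_j^2\hspace{0.02cm}h^{2H_j}\hspace{0.02cm}\ell_j(1/h)^2\hspace{0.02cm}\xi_j^2,
\end{displaymath}
and, using $H_j\leq H_{\max}$, $\ell_j\geq\ell_{\min}$, and $h\leq 1$, this is bounded below by $\tfrac{1}{2}\lambda_{\min}(M')\hspace{0.02cm}(\min_j c_j^2)\hspace{0.02cm}h^{2H_{\max}}\hspace{0.02cm}\ell_{\min}(1/h)^2$ uniformly in $\xi$. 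Dividing by $h^{2H_{\max}}\ell_{\min}(1/h)^2$ and taking $\liminf_{h\searrow 0}$ yields the claim. The main delicate point to expect is that the different entries of $G(h)$ have vastly different orders of magnitude as $h\searrow 0$ (from $h^{2H_{\min}}$ to $h^{2H_{\max}}$), so a naive operator-norm perturbation bound applied directly to $G(h)-\widetilde{G}(h)$ is insufficient; the crucial observation is that diagonal conjugation by $D(h)$ absorbs all these scale discrepancies and reduces everything to the fixed, nonsingular Cauchy matrix $M'$, whose positive definiteness is stable under small perturbations.
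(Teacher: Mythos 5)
Your proof is correct and follows essentially the same route as the paper: diagonal conjugation by $D(h)$, Karamata's theorem for the leading asymptotics, convergence of the normalized Gram matrix to the Cauchy matrix $J_{ij}=(H_i+H_j)^{-1}$, and a finite-dimensional perturbation argument (you invoke continuity of eigenvalues where the paper uses Weyl's inequality). Your identification of the limiting Cauchy matrix as the $L^2(0,1)$-Gram matrix of the linearly independent family $\{r^{H_j-1/2}\}$ is a cleaner justification of $\lambda_{\min}>0$ than the paper's bare assertion, and the preliminary sign normalization $c_j>0$ is a harmless stylistic choice the paper sidesteps by dividing by $c_ic_j$ directly.
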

\begin{proof}
    
Define the matrix functions $D(h) = \mathrm{diag}(c_1 h^{H_1}\ell_1(1/h), \dots, c_N h^{H_N}\ell_N(1/h))$ and $B(h) = D(h)^{-1}G(h)D(h)^{-1}$. Then, we obtain for $\xi \in \R^{N}$ with $|\xi|=1$ and $h\in (0,1)$:
    \begin{equation}\label{eq:eigenvalueboundregvarproof}
    \begin{aligned}
        \xi^{\intercal}G(h)\xi &= (D(h)\xi)^{\intercal} B(h) (D(h)\xi)
        \\ &\geq \lambda_{\min}(B(h))| D(h)\xi|^2
        \geq \left( \min_{j\in\{1,\dots, N\}}|c_j|^2\right) h^{2H_{\max}}\ell_{\min}(1/h)^2 \lambda_{\min}(B(h)).
    \end{aligned}
    \end{equation}
    Hence, it suffices to bound the smallest eigenvalue for $B(h)$. Take any $\varepsilon \in (0, H_{\min})$ and $h_0 \in (0,1)$ as in \eqref{eq: slowly varying bound}. Then $t^{H_j - 1/2}\ell_j(1/t)\in L^2([0,h_0])$ for each $j \in\{1,\dots, N\}$, which guarantees that all integrals below are well-defined. Define $r_j(t) = k_j(t) - c_j t^{H_j-1/2}\ell_j(1/t)$, then we obtain for $h\in (0,h_0)$ the decomposition $\frac{G_{ij}(h)}{c_ic_j} = G_{ij}^0(h) + G_{ij}^1(h)$ with
    \begin{align*}
        G^0_{ij}(h) &= \int_0^h t^{H_j + H_i - 1}\ell_j(1/t)\ell_i(1/t)\, \mathrm{d}t,
        \\ G_{ij}^1(h) &= \int_0^h \left( t^{H_j - 1/2}\ell_j(1/t) \frac{r_i(t)}{c_i} + t^{H_i - 1/2}\ell_i(1/t) \frac{r_j(t)}{c_j} + \frac{r_i(t) r_j(t)}{c_ic_j} \right)\, \mathrm{d}t.
    \end{align*}
    To bound the second term, let us define $\delta_j(t) = \frac{r_j(t)}{c_j t^{H_j-1/2}\ell_j(1/t)}$ for $j \in\{1,\dots, N\}$ and $t > 0$. It then follows from \eqref{eq: remainder fractional} that $\delta_j(t) \to 0$ as $t \searrow 0$. Hence, for $\varepsilon \in (0,1/3)$ there exists $h_1(\varepsilon) \in (0,h_0)$ such that $|\delta_j(t)| \leq \varepsilon$ for $t \in (0,h_1(\varepsilon))$ and each $j\in\{1,\dots,N\}$. Therefore, we obtain for $h \in (0,h_1(\varepsilon))$:
    \begin{align*}
        |G_{ij}^1(h)| &\leq \int_0^h \left( t^{H_j - 1/2}\ell_j(1/t) \frac{|r_i(t)|}{|c_i|} + t^{H_i - 1/2}\ell_i(1/t)\frac{|r_j(t)|}{|c_j|} + \frac{|r_i(t) r_j(t)|}{|c_ic_j|} \right)\, \mathrm{d}t
        \\ &= \int_0^h t^{H_j + H_i - 1}\ell_j(1/t) \ell_i(1/t) \left( |\delta_i(t)| + |\delta_j(t)| + |\delta_i(t)\delta_j(t)|\right) \, \mathrm{d}t
        \\ &\leq 3 \varepsilon \hspace{0.02cm}G_{ij}^0(h).
    \end{align*}
    Thus, we have shown that for $h \in (0, h_1(\varepsilon))$:
    \begin{align}\label{eq: GG0 bound}
        \left( 1 - 3 \varepsilon\right)G_{ij}^0(h) \leq \frac{G_{ij}(h)}{c_ic_j} \leq \left( 1 + 3 \varepsilon\right)G_{ij}^0(h). 
    \end{align}
    For the first term, we use the substitution $r = 1/t$ and then Karamata's theorem on regularly varying functions (see \cite[Theorem 1.5.11]{BiGoTe87}) to find for $h \searrow 0$:
    \begin{align*}
        G_{ij}^0(h) &= \int_{1/h}^{\infty}r^{-(1 + H_i + H_j)} \ell_i(r)\ell_j(r)\, \mathrm{d}r \sim \frac{h^{H_i + H_j}}{H_i + H_j}\ell_i(1/h)\ell_j(1/h) =: M_{ij}(h).
    \end{align*}
    In particular, we find $h_2(\varepsilon) \in (0,h_1(\varepsilon))$ such that $(1-\varepsilon)M_{ij}(h) \leq G_{ij}^0(h) \leq (1+\varepsilon)M_{ij}(h)$ for $h \in (0,h_2(\varepsilon))$. Combining this with \eqref{eq: GG0 bound}, we arrive at
    \[
        (1-3\varepsilon)(1-\varepsilon)M_{ij}(h) \leq \frac{G_{ij}(h)}{c_ic_j} \leq (1+3\varepsilon)(1+\varepsilon) M_{ij}(h).
    \]
    Next, let us define the Cauchy matrix $J = (J_{ij})_{i,j \in\{1,\dots, N\}}$ with entries $J_{ij} = (H_i + H_j)^{-1}$. Then, using the explicit form $B_{ij}(h) = G_{ij}(h)\hspace{0.02cm} (c_i c_j h^{H_i + H_j}\ell_i(1/h)\ell_j(1/h))^{-1}$, we obtain for $h \in (0, h_2(\varepsilon))$:
    \[
        (1-3\varepsilon)(1-\varepsilon) J_{ij} \leq B_{ij}(h) \leq (1+3\varepsilon)(1+\varepsilon) J_{ij},
    \]
    which yields the following bound on the operator norm:
    \[
     |B(h) - J| \leq \sum_{i,j=1}^N | B_{ij}(h) - J_{ij}|
     \leq (4\varepsilon + 3\varepsilon^2)\sum_{i,j=1}^N J_{ij}.
    \]
    Finally, as $B(h)$, $J$ are symmetric, an application of Weyl's inequality gives for sufficiently small $\varepsilon>0$:
    \begin{align*}
        \lambda_{\min}(B(h)) \geq \lambda_{\min}(J) - |B(h) - J|
        \geq \lambda_{\min}(J) - (4\varepsilon + 3\varepsilon^2)\sum_{i,j=1}^N J_{ij} > 0,
    \end{align*}
    since $\lambda_{\min}(J) > 0$ as the $(H_i)_{i\in\{1,\dots,N\}}$ are positive and pairwise distinct. Thus, in combination with \eqref{eq:eigenvalueboundregvarproof}, we have shown that there exist constants $C_* > 0$ and $\delta \in (0,1)$ such that $\lambda_{\min}(G(h)) \geq C_*\hspace{0.02cm}h^{2H_{\max}}\hspace{0.02cm} \ell_{\min}(1/h)^2$ for $h \in (0,\delta)$, which proves the assertion since $\ell_{\min}(1/h)>0$ by \eqref{eq: slowly varying bound}.
\end{proof}

Below, we collect important examples of regularly varying kernels for which the desired nondegeneracy can be obtained.

\begin{example}
    Let $H_1, \dots, H_N > 0$ be pairwise distinct, and consider $\lambda_1, \dots, \lambda_N \geq 0$, $c_1,\dots, c_N\neq 0$. The following frequently used kernels satisfy the assumptions of the previous lemma:
    \begin{enumerate}
        \item[(a)] Fractional kernels and gamma kernels given by $k_j(t) = c_j t^{H_j - 1/2} e^{-\lambda_j t}$.
        
        \item[(b)] Abel-type Mittag-Leffler kernels $k_j(t) = c_j t^{H_j-1/2} E_{H_j+1/2, H_j+1/2}(- \lambda_j\hspace{0.02cm} t^{H_j+1/2})$, where $E_{H_j+1/2, H_j+1/2}$ denotes the two-parameter Mittag-Leffler function.
    \end{enumerate}
    In particular, each combination of such kernels given by $\widetilde{K}^{\sigma}:=(k_1,\dots,k_N)^{\intercal}$ is $\gamma_*$-nondegenerate according to Definition \ref{def: nondegeneracy general} with $\gamma_* = H_{\max}$ and the regularity condition~\eqref{eq: upper bound} is satisfied with $\gamma_{\sigma} = H_{\min}$. Moreover, all of the above kernels can be also modified by slowly varying functions, which, in view of \eqref{eq: slowly varying bound}, implies the $(H_{\max}+\varepsilon)$-nondegeneracy of the resulting kernel while \eqref{eq: upper bound} still holds with $\gamma_{\sigma}=H_{\min}-\varepsilon$. As $\varepsilon>0$ can be selected arbitrarily small, this generalization has no effect on the regularity condition~\eqref{eq: H condition}. A natural class of such functions is of the form $\log(1+t^{-\beta})^p$ with parameters $p \in \R$ and $\beta > 0$.
\end{example}

Lemma \ref{lemma: fractional like} is naturally suited to verify the nondegeneracy of each column of a general kernel $\widetilde{K}^{\sigma} \in L_{\mathrm{loc}}^2(\R_+; \R^{N \times d})$, characterized by the Gram matrices $G^{(i)}(h)$, see \eqref{eq: Gram}. Nondegeneracy of the entire
kernel~$\widetilde{K}^{\sigma}$ then follows for the choice $\gamma_*=\max\{\gamma_*^1,\dots,\gamma_*^d\}$. Moreover, in the diagonal-like case, it can be directly applied to each $\widetilde{G}^{(i)}(h)$. Therefore, Lemma~\ref{lemma: fractional like} allows us to verify nondegeneracy for a large class of regularly varying kernels. This also includes the case $H_1 = 1/2$, where the first kernel component is regular at zero, provided that the remaining components do not exhibit the same asymptotics. However, when all kernels are regular (i.e., $H_1 = \dots = H_N = 1/2$), the situation may drastically change, as only higher-order asymptotics can be obtained. 
\begin{example}\label{example: regular}
    In dimension $d = 1$, consider $k, \widetilde{k} \in C^1(\R_+)$ with $k(0), \widetilde{k}(0) \neq 0$. Define 
\[
    M(h) = \int_0^h \begin{pmatrix} k(r)^2 & k(r) \widetilde{k}(r) \\ k(r) \widetilde{k}(r) & \widetilde{k}(r)^2 \end{pmatrix}\, \mathrm{d}r.
\]
Using the asymptotics $\int_{0}^{h}k(r)^2\,\mathrm{d}r\sim k(0)^2\hspace{0.02cm}h$, $\int_{0}^{h}\widetilde{k}(r)^2\,\mathrm{d}r\sim \widetilde{k}(0)^2\hspace{0.02cm}h$, and $\int_{0}^{h}k(r)\hspace{0.02cm}\widetilde{k}(r)\,\mathrm{d}r\sim k(0)\hspace{0.02cm}\widetilde{k}(0)\hspace{0.02cm}h$, where in all three cases the remainder is $\mathcal{O}(h^2)$ due to Taylor's theorem, we obtain for the first-order expansion for the smallest eigenvalue of $M(h)$:
\begin{align}\label{eq:lambdaminregularasymp}
   \notag\lambda_{\min}(M(h)) &=\frac{k(0)^2+\widetilde{k}(0)^2}{2}h+\mathcal{O}(h^2)-\frac{h}{2}\sqrt{\big(k(0)^2+\widetilde{k}(0)^2\big)^2+\mathcal{O}(h)}\\
        &=\frac{k(0)^2+\widetilde{k}(0)^2}{2}h+\mathcal{O}(h^2)-\frac{k(0)^2+\widetilde{k}(0)^2}{2}\hspace{0.02cm}h\hspace{0.02cm}\sqrt{1+\mathcal{O}(h)}=\mathcal{O}(h^2),
\end{align}
where we applied the expansion of the binomial series in the final step. Hence, concerning the nondegeneracy of the kernel $(k,\widetilde{k})^{\intercal}$, we necessarily have $\gamma_* \geq 1$ while $\gamma = 1/2$, which is highly restrictive with regard to \eqref{eq: H condition} for general Hölder continuous $b$, $\sigma$.
\end{example}

 However, as the considerations of Subsection~\ref{subsection: abscontkernelnonMarkov} will demonstrate, the admissible perturbations constructed in Subsections \ref{subsection:abstractMarkovproperty} and \ref{subsection:specialcasenonMarkovperturb} yield also kernels $\widetilde{k}$ with asymptotics different from the original kernel, thereby allowing us to utilize the full strength of Lemma~\ref{lemma: fractional like}.

\subsection{Markovian lift based on the shift semigroup}\label{subsection: abscontkernelnonMarkov}

Below, we study a Markovian lift based on \textit{time shifts} (TS). Such lifts have first been established, e.g.\ in~\cite{MR4503737}, for regular Volterra kernels in a weighted Sobolev space $W_{\mathrm{loc}}^{1,2}(\R_+; \R^{d \times d})$ with exponential tails. A modification that allows singular kernels was subsequently discussed in~\cite{MR4181950}, and has the dual of a Banach space equipped with the weak topology as its state-space. We follow the presentation and notation of \cite[Section 6]{BBCF25}, which provides a flexible Hilbert space framework, but now adjusted to diagonal regularly varying kernels with a particular choice for the weight function. 

\begin{enumerate}
    \item[(TS1)] (Small-time asymptotics and regularity) For $a \in \{b,\sigma\}$, let $k^a_1,\dots, k^a_d: \R_+^* \longrightarrow \R$ be continuously differentiable such that $(k_1^a)',\dots, (k_d^a)'$ are monotone in a right-neighborhood of zero. There exist $H^a_1, \dots, H^a_d \in (0,1)$ and slowly varying functions $\ell^a_1,\dots, \ell^a_d: \R_+^* \longrightarrow \R_+^*$ such that for each $j \in\{1,\dots, d\}$:
    \[
        k^a_j(t) \sim t^{H^a_j - 1/2}\ell^a_j(1/t) \ \text{ as } \ t \to 0, \ \ \text{ and } \ \ \int_{1}^{\infty} \big| (k^a_j)'(t)\big|^2\, \mathrm{d}t < \infty.
    \]
    Here and below we shall write $H^a_{\min} = \min\{H_1^a,\dots, H_d^a\}$ and $H_{\min} = \min\{H_{\min}^b, H_{\min}^{\sigma}\}$ with $H^a_{\max}$ and $H_{\max}$ denoting the corresponding maxima.
    
  \item[(TS2)] (Initial driving force) The admissible initial condition is an $\mathcal{F}_0$-measurable function $g: \R_+ \longrightarrow \R^d$ such that $g$ is a.s.\ absolutely continuous on $\R_+^*$, and there exist $\eta_g \in [0,1)$ and $p > 2$ with 
  \[
    \frac{2}{p} + 1 - 2H_{\min} < \eta_g
  \]
  and
    \[
        \E\big[|g(1)|^{p}\big] + \E\left[ \left(\int_{0}^{\infty} |g'(t)|^2\hspace{0.02cm} (1 \wedge t^{ \eta_g})\, \mathrm{d}t \right)^{p/2} \right] < \infty.
    \]
\end{enumerate}

Note that (TS2) is always satisfied for deterministic $g$ that are also constant in time. For given $k^a_1,\dots, k^a_d$, let us define the corresponding Volterra kernels by
\[
    K^a(t) = \mathrm{diag}\big(k^a_1(t), \dots, k^a_d(t)\big),\qquad t > 0, \ a \in \{b,\sigma\}.
\]
The next lemma verifies the kernel increment condition in assumption (A).

\begin{lemma}\label{lemma: AC B condition}
    Suppose that condition (TS1) is satisfied and fix $T > 0$. Then for each sufficiently small $\varepsilon>0$, there exists $C_{T,\varepsilon} > 0$ such that for $a \in \{b, \sigma\}$:
    \[
        \int_0^h |K^a(t)|^2\, \mathrm{d}t + \int_0^T |K^a(t+h) - K^a(t)|^2\, \mathrm{d}t \leq C_{T,\varepsilon}\hspace{0.02cm} h^{2H_{\min} - 2\varepsilon}, \qquad h \in (0,T].
    \]
    Furthermore, for each $\eta_K > 2 - 2H_{\min}$ we have
    \begin{equation}\label{eq:Kprimebound}
        \int_0^1 |(K^a)'(t)|^2 t^{\eta_K}\, \mathrm{d}t < \infty,
    \end{equation}
\end{lemma}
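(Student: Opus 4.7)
The plan is to reduce both estimates to pointwise bounds on the scalar kernels $k_j^a$ and their derivatives near zero, after which everything becomes a direct integration. From the regular variation $k_j^a(t) \sim t^{H_j^a - 1/2}\ell_j^a(1/t)$ combined with Potter's theorem (which yields $\ell_j^a(1/t) \leq c\hspace{0.02cm}t^{-\varepsilon}$ for $t$ small), I immediately get $|k_j^a(t)| \lesssim t^{H_j^a - 1/2 - \varepsilon}$ on a neighborhood $(0,\delta)$ for any small $\varepsilon > 0$. Away from zero, $k_j^a$ is continuous on $\R_+^*$, and $\int_1^{\infty} |(k_j^a)'(t)|^2\,\mathrm{d}t < \infty$ combined with Cauchy–Schwarz forces sub-polynomial growth of $k_j^a$ at infinity, so $k_j^a$ is bounded on $[\delta, T]$ by a constant depending on $T$.

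The main obstacle is establishing the analogous pointwise bound on the derivative, namely $|(k_j^a)'(t)| \lesssim t^{H_j^a - 3/2 - \varepsilon}$ near zero. The idea is to exploit the monotonicity of $(k_j^a)'$ on a right neighborhood of zero guaranteed by (TS1). Shrinking $\delta$ if necessary so that $(k_j^a)'$ has constant sign and monotonicity there, I treat the representative case where $(k_j^a)'$ is positive and decreasing on $(0,\delta)$ (the remaining three sign/monotonicity combinations are entirely symmetric). For $t \in (0,\delta/2)$, monotonicity yields
\begin{equation*}
    \tfrac{t}{2}\hspace{0.02cm}(k_j^a)'(t) \leq \int_{t/2}^{t} (k_j^a)'(s)\,\mathrm{d}s = k_j^a(t) - k_j^a(t/2),
\end{equation*}
and combining this with $|k_j^a(t)|, |k_j^a(t/2)| \lesssim t^{H_j^a - 1/2 - \varepsilon}$ (the second estimate uses slow variation of $\ell_j^a$) gives the required bound. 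This Karamata-style argument works uniformly across $H_j^a \in (0,1)$, including the boundary case $H_j^a = 1/2$, where the leading power cancels and only the slowly varying correction survives.

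With these pointwise bounds in place, the first estimate follows by elementary integration. For $h$ small, $\int_0^h |K^a(t)|^2\,\mathrm{d}t \lesssim \sum_j \int_0^h t^{2H_j^a - 1 - 2\varepsilon}\,\mathrm{d}t \lesssim h^{2H_{\min} - 2\varepsilon}$. For the increment I split $[0,T] = [0,h] \cup [h,T]$: on $[0,h]$, expand $|k_j^a(t+h) - k_j^a(t)|^2$ and absorb into $\int_0^{2h}|k_j^a|^2$; on $[h,T]$, Cauchy–Schwarz gives $|k_j^a(t+h) - k_j^a(t)|^2 \leq h \int_t^{t+h}|(k_j^a)'(s)|^2\,\mathrm{d}s$, and Fubini reduces the double integral to $h^2 \int_h^{T+h} |(k_j^a)'(s)|^2\,\mathrm{d}s$. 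Splitting this last integral at $s = 1$, the pointwise derivative bound gives $\int_h^1 s^{2H_j^a - 3 - 2\varepsilon}\,\mathrm{d}s \lesssim h^{2H_j^a - 2 - 2\varepsilon}$, while on $[1, T+h]$ the integral is uniformly bounded by (TS1); combining these terms yields the claim for $h$ small. For $h$ bounded away from zero both sides are of comparable uniform order, so the constant $C_{T,\varepsilon}$ can be adjusted to absorb this regime.

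The weighted-derivative estimate \eqref{eq:Kprimebound} follows along the same lines. Given $\eta_K > 2 - 2H_{\min}$, I choose $\varepsilon > 0$ small enough that $\eta_K > 2 - 2H_j^a + 2\varepsilon$ holds for every $j$; then the pointwise derivative bound renders $|(k_j^a)'(t)|^2 t^{\eta_K} \lesssim t^{2H_j^a - 3 + \eta_K - 2\varepsilon}$ integrable on $(0,\delta)$, while continuity of $(k_j^a)'$ takes care of $[\delta, 1]$.
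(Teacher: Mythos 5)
Your proof is correct, and it follows a genuinely different route from the paper's in the two technical steps. For the pointwise derivative bound, the paper cites the monotone density theorem to get $t(k_j^a)'(t)/k_j^a(t) \to H_j^a - \frac{1}{2}$ directly; you instead give a self-contained Karamata-style argument (monotonicity of $(k_j^a)'$ plus the integral identity $\int_{t/2}^t (k_j^a)' = k_j^a(t) - k_j^a(t/2)$), which is essentially a proof of the relevant special case of that theorem. For the increment estimate on $[h,T]$, the paper integrates the pointwise bound $|(k_j^a)'(r)| \lesssim r^{H_j^a - 3/2 - \varepsilon}$ directly and performs the substitution $u = t/h$ to produce a closed Beta-type integral; you instead use Cauchy--Schwarz plus Fubini to reduce to $h^2\int_h^{T+h}|(k_j^a)'(s)|^2\,\mathrm{d}s$, then split at $s=1$. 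Both reach the same exponent $2H_j^a - 2\varepsilon$, and your Fubini reduction is arguably cleaner since it also cleanly isolates the role of hypothesis $\int_1^\infty|(k_j^a)'|^2 < \infty$; the paper's substitution gives a sharper explicit constant and avoids the extra constant adjustment for $h$ bounded away from zero. One minor note: your invocation of $\int_1^\infty|(k_j^a)'|^2 < \infty$ and Cauchy--Schwarz to control $k_j^a$ on $[\delta, T]$ is superfluous, since continuity on a compact interval already gives boundedness there; you would genuinely need the growth control only if you wanted an estimate uniform in $T$.
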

\begin{proof}
    Fix $a \in \{b,\sigma\}$. Using assumption (TS1) combined with standard bounds for slowly varying functions (see \eqref{eq: slowly varying bound}), we readily obtain $\int_0^h k^a_i(t)^2\, \mathrm{d}t \lesssim h^{2H^a_i - 2\varepsilon}$ for $i\in\{1,\dots, d\}$ and each small $\varepsilon > 0$. This shows that 
    \begin{align}\label{eq: 12}
        \int_0^h |K^a(t)|^2\, \mathrm{d}t \lesssim h^{2 H_{\min} - 2\varepsilon}, \qquad h \in (0,h_0],
    \end{align}
    for some $h_0>0$, and can be extended to $(0,T]$ by $K^a\in L^2([h_0,T];\R^{d\times d})$ due to continuity. Likewise, since $(k^a_i)'$ is monotone in a right-neighborhood of zero, we can apply the monotone density theorem (see \cite[Theorem 2]{MR94863}), to conclude that $\frac{t(k^a_i)'(t)}{k^a_i(t)} \longrightarrow H^a_i - 1/2$, and hence for $i\in\{1,\dots, d\}$:
    \begin{align}\label{eq: ki derivative 1}
        (k^a_i)'(t) \sim (H^a_i-1/2)\hspace{0.02cm}\frac{k^a_i(t)}{t} \sim (H^a_i-1/2)\hspace{0.02cm}t^{H^a_i - 3/2}\hspace{0.02cm}\ell^a_i(1/t), \qquad t \to 0,
    \end{align}
    provided that $H^a_i \neq 1/2$. For $H^a_i = 1/2$, one has
    \begin{align}\label{eq: ki derivative 2}
        \frac{t(k^a_i)'(t)}{\ell^a_i(1/t)} \longrightarrow 0, \qquad t \to 0.
    \end{align}
    In both cases, we obtain $|(k^a_i)'(t)| \lesssim t^{H^a_i - 3/2 - \varepsilon}$ for $\varepsilon > 0$ and $t\in (0,h_0]$ by \eqref{eq: slowly varying bound}, which can again be extended onto $[h_0,2T]$ by the boundedness of $(k^a_i)'$. 
    
    Hence, for $t \in (0,T]$ and $\varepsilon > 0$ when $H_i^{a} \in (0,\frac{1}{2}]$, and $\varepsilon \in (0, H_i^{a} - \frac{1}{2})$ when $H_i^{a} \in (\frac{1}{2},1 )$, we obtain
    \begin{align*}
        |k^a_i(t+h) - k^a_i(t)| &\leq \int_t^{t+h} |(k_i^a)'(r)|\, \mathrm{d}r 
        \lesssim \int_t^{t+h} r^{H_i^{a} - \frac{3}{2} - \varepsilon}\, \mathrm{d}r
        \\ &= \frac{(t+h)^{H_i^{a} - \frac{1}{2} - \varepsilon} - t^{H_i^{a} - \frac{1}{2} - \varepsilon} }{H_i^{a} - \frac{1}{2} - \varepsilon}
        = h^{H_i^{a} - \frac{1}{2} - \varepsilon} \frac{(1+\frac{t}{h})^{H_i^{a} - \frac{1}{2} - \varepsilon} - \left(\frac{t}{h}\right)^{H_i^{a} - \frac{1}{2} - \varepsilon} }{H_i^{a} - \frac{1}{2} - \varepsilon}.
    \end{align*}
    Thus, we obtain from the substitutions $u=\frac{t}{h}$ and $x=u^{-1}$:
    \begin{align*}
        \int_0^T |k^a_i(t+h) - k^a_i(t)|^2\, \mathrm{d}t 
        &\lesssim h^{2H_i^{a} - 1 - 2\varepsilon} \int_0^T  \left( \left(1+\frac{t}{h} \right)^{H_i^{a} - \frac{1}{2} - \varepsilon} - \left( \frac{t}{h}\right)^{H_i^{a} - \frac{1}{2} - \varepsilon} \right)^2\, \mathrm{d}t
        \\ &\leq h^{2H_i^{a}  - 2\varepsilon} \int_0^{\infty} \left( (1+u)^{H_i^{a} - \frac{1}{2} - \varepsilon} - u^{H_i^{a} - \frac{1}{2} - \varepsilon} \right)^2\, \mathrm{d}u
        \\ &= h^{2H_i^{a} - 2\varepsilon} \int_0^{\infty} u^{2H_i^{a}-1  - 2\varepsilon} \left( \left(1 + u^{-1}\right)^{H_i^{a} - \frac{1}{2} - \varepsilon} - 1 \right)^2\, \mathrm{d}u
        \\ &= h^{2H_i^{a} - 2\varepsilon} \int_0^{\infty} x^{-1 - 2H_i^{a} + 2\varepsilon} \left( \left(1 + x\right)^{H_i^{a} - \frac{1}{2} - \varepsilon} - 1 \right)^2\, \mathrm{d}x.
    \end{align*}
    This proves the first assertion, since the integral on the right-hand side is finite as its integrand is $\lesssim x^{1-2H_i^{a}+2\varepsilon}$ for $x\in (0,1]$. For the second assertion, note that
    \[
        \int_0^1 |(k^a_i)'(t)|^2 t^{\eta_K}\, \mathrm{d}t
        \lesssim \int_0^1 t^{2H^a_i - 3 - 2\varepsilon + \eta_K}\, \mathrm{d}t < \infty
    \]
    since $\eta_K > 2 - 2H^a_i + 2\varepsilon$ when $\varepsilon>0$ is small enough. This completes the proof.
\end{proof}

Next, we construct the corresponding Markovian lift. For given $\eta \geq 0$, let $\mathcal{H}_{\eta}$ be the weighted Hilbert space of equivalence classes of absolutely continuous functions $y: \R_+^* \longrightarrow \R^d$ with finite norm
\[
  \| y\|_{\eta}^2 = |y(1)|^2 + \int_{0}^{\infty} |y'(x)|^2\hspace{0.02cm} (1 \wedge x^{\eta})\, \mathrm{d}x < \infty.
\]
Here, $\eta$ captures the small-time regularity of the derivative. Note that $\mathcal{H}_{\eta} \subseteq \mathcal{H}_{\eta'}$ when $\eta' > \eta$. The next lemma summarizes the properties of the lift and, in particular, shows that condition (B) is satisfied.

\begin{lemma}\label{lemma: AC A condition}
    For each $\eta \geq 0$, $S(t)y(x) = y(t+x)$ with $t,x \geq 0$ defines a $C_0$-semigroup $(S(t))_{t \geq 0}$ on $\mathcal{H}_{\eta}$. If $\eta \in [0,1)$, then $\Xi \in L(\mathcal{H}_{\eta}, \R^d)$ where
    \[
        \Xi y = y(0) = y(1) - \int_0^1 y'(x)\, \mathrm{d}x.
    \]
    In particular, under assumption (TS1), let $\eta_K > 2 - 2H_{\min}$. Then for each $\eta \in [0,1)$ with $\eta \leq \eta_K < \eta + 1$, condition (B) is satisfied for 
    \[
        \mathcal{H} = \mathcal{H}_{\eta_K}, \ \ \mathcal{V} = \mathcal{H}_{\eta}, \ \ \xi_b = K^b, \ \ \xi_{\sigma} = K^{\sigma}, \ \ \text{and} \ \ \rho = \frac{\eta_K - \eta}{2},
    \]
    and $\|S(t)\|_{L(\mathcal{V})}\lesssim 1+\sqrt{t}$, $t\in\R_+$. Furthermore, under assumption (TS2), for each $T > 0$ and $0 \leq s < t \leq T$ it holds that
    \[
        \E[|g(t) - g(s)|^p] \lesssim_T (t-s)^{\frac{1-\eta_g}{2} p}.
    \]
\end{lemma}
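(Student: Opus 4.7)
The proof proceeds through several explicit but largely routine computations in the weighted Hilbert space $\mathcal{H}_\eta$. The semigroup and cocycle laws $S(0) = \mathrm{id}$ and $S(t+s)y(x) = y(t+s+x) = (S(t)S(s)y)(x)$ are immediate. For boundedness on $\mathcal{V} = \mathcal{H}_\eta$, I would substitute $u = x+t$ and write
\begin{displaymath}
\|S(t)y\|_\eta^2 = |y(t+1)|^2 + \int_t^\infty |y'(u)|^2 (1 \wedge (u-t)^\eta)\, \mathrm{d}u.
\end{displaymath}
The first summand is bounded by $2|y(1)|^2 + 2t\|y\|_\eta^2$ after writing $y(t+1) = y(1) + \int_1^{t+1} y'(x)\,\mathrm{d}x$, applying Cauchy-Schwarz, and using $(1\wedge x^\eta) = 1$ on $[1,\infty)$. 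The second summand is bounded by $\|y\|_\eta^2$ because a short case distinction gives $1\wedge(u-t)^\eta \leq 1\wedge u^\eta$. Together these yield $\|S(t)\|_{L(\mathcal{V})}^2 \lesssim 1 + t$. Strong continuity of the extended semigroup on $\mathcal{V}$ reduces to continuity of translation in the weighted $L^2$-space with measure $(1\wedge x^\eta)\,\mathrm{d}x$, which is standard via density of smooth compactly supported functions. The boundedness of $\Xi$ for $\eta \in [0,1)$ is obtained from $y(0) = y(1) - \int_0^1 y'(x)\,\mathrm{d}x$ and Cauchy-Schwarz with the split $|y'(x)| = |y'(x)|x^{\eta/2} \cdot x^{-\eta/2}$, using $\int_0^1 x^{-\eta}\mathrm{d}x = 1/(1-\eta)$.

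The operators $\xi_a = K^a$ belong to $L(\R^d, \mathcal{H}_{\eta_K})$ for $a \in \{b,\sigma\}$: the value $|K^a(1)u|$ is dominated by $|u|$ since each $k^a_j$ is continuously differentiable on $\R_+^*$, while $\int_0^\infty |(K^a)'(x)u|^2(1\wedge x^{\eta_K})\,\mathrm{d}x$ splits at $x=1$, with the $(0,1]$-piece finite by \eqref{eq:Kprimebound} of Lemma~\ref{lemma: AC B condition} (this is where $\eta_K > 2 - 2H_{\min}$ enters) and the $[1,\infty)$-piece finite by the tail condition in (TS1). The embedding $\mathcal{V} = \mathcal{H}_\eta \subseteq \mathcal{H}_{\eta_K} = \mathcal{H}$, valid because $\eta \leq \eta_K$, immediately yields $\mathcal{H} \cap \mathcal{V} = \mathcal{V}$, which is trivially dense in $\mathcal{V}$ and $S(t)$-invariant.

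The main technical point is the mixed-norm estimate $\|S(t)\|_{L(\mathcal{H},\mathcal{V})} \lesssim 1 + t^{-\rho}$. The $|y(t+1)|^2$ contribution is $\leq (1+\sqrt{t})^2 \|y\|_{\eta_K}^2$ by the same argument as above, now applied with the weight of $\mathcal{H}_{\eta_K}$. For the integral part, the substitution $v = u - t$ and multiplying and dividing by $1\wedge(v+t)^{\eta_K}$ yield
\begin{displaymath}
\int_0^\infty |y'(v+t)|^2 (1\wedge v^\eta)\,\mathrm{d}v \leq \bigg(\sup_{v > 0} \frac{1\wedge v^\eta}{1\wedge (v+t)^{\eta_K}}\bigg)\|y\|_{\eta_K}^2.
\end{displaymath}
Optimizing the ratio by splitting according to $v+t \leq 1$ versus $v+t > 1$, and locating the interior critical point at $v^* = \eta t/(\eta_K - \eta)$ when $\eta > 0$ (or $v^* = 0$ when $\eta = 0$), yields $\sup_v(\cdot) \lesssim 1 + t^{-(\eta_K - \eta)}$ on $(0,T]$. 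Combining with the $|y(t+1)|^2$ term produces the claimed bound with $\rho = (\eta_K - \eta)/2 \in [0,1/2)$, and the strict inequality $\rho < 1/2$ encodes exactly $\eta_K < \eta + 1$. This weight-ratio optimization is the main item to execute carefully; the remaining ingredients are bookkeeping.

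Finally, for the Hölder bound on $g$, absolute continuity gives $g(t)-g(s) = \int_s^t g'(u)\,\mathrm{d}u$, and Cauchy-Schwarz with the weight $(1\wedge u^{\eta_g})$ yields
\begin{displaymath}
|g(t)-g(s)|^2 \leq \bigg(\int_s^t (1\wedge u^{\eta_g})^{-1}\,\mathrm{d}u\bigg)\cdot \int_0^\infty |g'(u)|^2(1\wedge u^{\eta_g})\,\mathrm{d}u.
\end{displaymath}
Splitting the first integral at $u=1$ and using the concavity bound $t^{1-\eta_g} - s^{1-\eta_g} \leq (t-s)^{1-\eta_g}$ (valid for $\eta_g \in [0,1)$) together with $t-s \lesssim_T (t-s)^{1-\eta_g}$ on $[0,T]$ gives $\int_s^t(1\wedge u^{\eta_g})^{-1}\,\mathrm{d}u \lesssim_T (t-s)^{1-\eta_g}$. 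Raising to the $p/2$-th power and invoking the moment bound from (TS2) on $\E[(\int_0^\infty |g'(u)|^2(1\wedge u^{\eta_g})\,\mathrm{d}u)^{p/2}]$ concludes the proof.
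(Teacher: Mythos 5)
Your proposal is correct. It differs from the paper's proof only in its degree of self-containment: the paper disposes of the semigroup properties, the boundedness of $\Xi$, and the operator-norm estimate $\|S(t)\|_{L(\mathcal{H},\mathcal{V})}\lesssim 1+t^{-\rho}$ by citing Lemma~6.1 of the reference [BBCF25] (with $\delta=0$ there), only spelling out the $\|S(t)\|_{L(\mathcal{V})}\lesssim 1+\sqrt{t}$ bound and the increment estimate for $g$; you instead prove everything from first principles. Your weight-ratio computation is the right way to obtain the $L(\mathcal{H},\mathcal{V})$ bound, although the interior critical point is unnecessary: on the region $v+t<1$ the elementary inequality $v^{\eta}\le (v+t)^{\eta}$ gives $\frac{1\wedge v^{\eta}}{1\wedge (v+t)^{\eta_K}}\le (v+t)^{\eta-\eta_K}\le t^{-(\eta_K-\eta)}$ directly, which is how one sees that $\rho=(\eta_K-\eta)/2$ and that $\rho<1/2$ encodes $\eta_K<\eta+1$. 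Your treatment of $K^a\in L(\R^d,\mathcal{H}_{\eta_K})$ via \eqref{eq:Kprimebound} and the tail condition in (TS1), your observation that $\mathcal{H}\cap\mathcal{V}=\mathcal{V}$ from the inclusion $\mathcal{H}_{\eta}\subseteq\mathcal{H}_{\eta_K}$, and your increment bound for $g$ (which coincides with the paper's case analysis at $s,t\lessgtr 1$) are all as intended. The self-contained route costs a page of routine estimates but removes the dependence on the external lemma; the paper's route is shorter but opaque to a reader without access to [BBCF25].
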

\begin{proof}
    The first part follows from \cite[Lemma 6.1]{BBCF25} with $\delta = 0$ therein. In particular, \eqref{eq:Kprimebound} proves in combination with condition (TS1) that $K^b,K^{\sigma}\in L(\R^d,\mathcal{H}_{\eta_K})$ for every $\eta_K > 2 - 2H_{\min}$. Concerning the bound on $\|S(t)\|_{L(\mathcal{V})}$, the proof of \cite[Lemma 6.1]{BBCF25} establishes the existence of $C>0$ such that for all $t\in\R_+$, $y\in\mathcal{V}$:
    \begin{displaymath}
        \|S(t)y\|_{\mathcal{V}}^2\le C\hspace{0.02cm}\bigg(1+\int_1^{1+t}\frac{1}{1\wedge x^{\eta}}\,\mathrm{d}x \bigg)\hspace{0.02cm}\|y\|_{\mathcal{V}}^2=C\hspace{0.02cm}\big(1+t \big)\hspace{0.02cm}\|y\|_{\mathcal{V}}^2,
    \end{displaymath}
    where the latter is in our case justified by $\eta\ge 0$. Finally, let us prove the desired bound for the increments of $g$. First, we obtain 
    \begin{align*}
        |g(t) - g(s)| &\leq \int_s^{t}|g'(x)|\, \mathrm{d}x 
        \\ &\leq \left(\int_0^{\infty}|g'(x)|^2 (1 \wedge x^{\eta_g})\, \mathrm{d}x \right)^{1/2} \left( \int_s^{t} \frac{1}{(1 \wedge x^{\eta_g})}\, \mathrm{d}x \right)^{1/2}.
    \end{align*}
    When $s \geq 1$, the last integral is bounded by $\int_s^{t} \frac{1}{(1 \wedge x^{\eta_g})}\, \mathrm{d}x = t-s\lesssim_T (t-s)^{1-\eta_g}$ as $\eta_g\ge 0$. For $s \in [0,1)$, consider first $0 \le s < t < 1$. Then 
    \[
        \int_s^t \frac{1}{(1 \wedge x^{\eta_g})}\, \mathrm{d}x = \int_s^t x^{-\eta_g}\, \mathrm{d}x = \frac{t^{1 - \eta_g} - s^{1-\eta_g}}{1-\eta_g} \leq \frac{(t-s)^{1-\eta_g}}{1 - \eta_g}.
    \]
    When $0 \le s < 1 \leq t$, we find
    \begin{align*}
        \int_s^t \frac{1}{(1 \wedge x^{\eta_g})}\, \mathrm{d}x
        &= \int_s^1 x^{-\eta_g}\, \mathrm{d}x + t - 1
        = \frac{1 - s^{1-\eta_g}}{1 - \eta_g} + t - 1
        \\ &\leq \frac{t^{1-\eta_g} - s^{1-\eta_g}}{1-\eta_g} + t-s
        \leq \frac{(t-s)^{1 - \eta_g}}{1-\eta_g} + t -s 
        \lesssim_T (t-s)^{1-\eta_g}.
    \end{align*}
    The assertion now follows in combination with condition~(TS2).
\end{proof}

The following is our first main result on the failure of the Markov property for \eqref{eq:generalSVIE} under assumptions (TS1) and (TS2) with deterministic $g$ and $K:= K^b = K^{\sigma}$, where we set $H_i =H_i^b= H_i^{\sigma}$ for $i \in \{1,\dots, d\}$. It covers, in particular, Theorem \ref{thm: intro 1}.

\begin{theorem}\label{thm: time shift deterministic}
    Suppose that conditions (TS1), (TS2) are satisfied with $k_i := k_i^b = k_i^{\sigma}$ for all $i \in \{1,\dots, d\}$, $b \in B(\R_+; C^{\chi_b}(\R^d; \R^d))$, and $\sigma \in B(\R_+; C^{\chi_{\sigma}}(\R^d; \R^{d\times m}))$ with $\chi_b \in [0,1]$ and $\chi_{\sigma} \in (0,1]$. Assume that there exist $i_0 \in \{1,\dots, d\}$ and $m_{i_0} \in \mathbb{N}_0$ such that $k_{i_0} \in C^{\infty}(\R_+^*;\R)$, and $k_{i_0}^{(m_{i_0})} \in L_{\mathrm{loc}}^2(\R_+; \R)$ is completely monotone with its Bernstein measure $\mu_{i_0}$ having nondegenerate support. Finally, suppose that $g$ is deterministic and $\sigma$ is either diagonal or 
    \begin{align}\label{eq:parametercondExamples}
        H_{\max} < H_{\min} + \min \left\{ \frac{1}{2} + \chi_b \hspace{0.02cm}\overline{\gamma}_K, \ \chi_{\sigma}\hspace{0.02cm}\overline{\gamma}_K \right\},
    \end{align}
    where $\overline{\gamma}_K=(1-\eta_g)/2$ which can be improved to $\overline{\gamma}_K=H_{\min}$ when $g$ is constant. Then any continuous weak solution $X$ of \eqref{eq:generalSVIE} that satisfies $\P[X_{t_0} \in \Gamma_{\sigma,t_0}] > 0$ for some $t_0 > 0$ is not a Markov process with respect to $(\mathcal{F}_t^{X,B})_{t \in \R_+}$.
\end{theorem}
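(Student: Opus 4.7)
My plan is to apply Theorem \ref{thm: special case nonmarkov} with the time-shift Markovian lift of Subsection \ref{subsection: abscontkernelnonMarkov}; the density hypothesis therein will be verified through Theorem \ref{thm: density diagonal} (when $\sigma$ is diagonal) or Theorem \ref{thm: density} (general case), using an admissible perturbation built by means of Lemma \ref{lemma:compmonadmissiblekernel}. Under (TS1) and (TS2), Lemma \ref{lemma: AC B condition} verifies condition (A) of Section \ref{section:abs cont} with $\gamma_K$ any constant strictly below $\overline{\gamma}_K$, while Lemma \ref{lemma: AC A condition} yields condition (B) on $\mathcal{V} = \mathcal{H}_\eta$ for suitably chosen $\eta, \eta_K$, with $\rho = (\eta_K - \eta)/2 < 1/2$. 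Since $g$ is deterministic, $g \in \mathcal{G}_p$ for every $p > 2$, so the integrability requirement $1/p + \rho < 1/2$ of Theorem \ref{thm: special case nonmarkov} is trivially met.

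The hypothesis on $k_{i_0}$ places $i_0$ into the index set $J$ of Lemma \ref{lemma:compmonadmissiblekernel} applied to $\mathbb{F} := \{K_1, \dots, K_d\}$. Hence, for any $\widetilde{k}_{i_0} \in L^2_{\mathrm{loc}}(\R_+; \R)$, the row vector $\widetilde{k} := \widetilde{k}_{i_0}\hspace{0.02cm} e_{i_0}^{\intercal}$ belongs to $\mathcal{M}_{\mathrm{ad}}(\mathbb{F})$. I will take $\widetilde{k}_{i_0}(t) := t^{\widetilde{H} - 1/2}\hspace{0.02cm}\1_{[0,1]}(t)$ for some $\widetilde{H} \in (0, H_{i_0})$ to be chosen close to $H_{i_0}$. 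The augmented kernel $\widetilde{K}^{\sigma} := (K^{\sigma}, \widetilde{k})^{\intercal}$ is then diagonal-like with partition $S_i = \{i\}$ for $i \neq i_0$ and $S_{i_0} = \{i_0, d+1\}$. Lemma \ref{lemma: fractional like} applied to $(k_{i_0}, \widetilde{k}_{i_0})$, whose regularity exponents $H_{i_0}$ and $\widetilde{H}$ are distinct, gives $\gamma_*^{i_0} = H_{i_0} + \varepsilon$ for arbitrary $\varepsilon > 0$, whereas for $i \neq i_0$ one directly obtains $\gamma_*^i = H_i + \varepsilon$ from $\int_0^h k_i^2\,\mathrm{d}r \asymp h^{2H_i}\ell_i(1/h)^2$; correspondingly $\gamma_b^{i_0} = \gamma_{\sigma}^{i_0} = \widetilde{H} - \varepsilon$ and $\gamma_b^i = \gamma_{\sigma}^i = H_i - \varepsilon$ for $i \neq i_0$.

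In the diagonal case, the per-component balance \eqref{eq: H condition diagonal} of Theorem \ref{thm: density diagonal} reduces, for $i \neq i_0$, to $2\varepsilon < \min\{1/2 + \chi_b \gamma_K,\hspace{0.03cm} \chi_{\sigma} \gamma_K\}$, which is automatic, and for $i = i_0$ to $H_{i_0} - \widetilde{H} + 2\varepsilon < \min\{1/2 + \chi_b \gamma_K,\hspace{0.03cm} \chi_{\sigma} \gamma_K\}$, which is achievable by taking $\widetilde{H}$ close enough to $H_{i_0}$ and $\varepsilon$ small, since the right-hand side is strictly positive. In the non-diagonal case, the block structure of the full Gram matrix $G(h)$ yields $\gamma_* = H_{\max} + \varepsilon$ and $\gamma_b = \gamma_{\sigma} = H_{\min} - \varepsilon$, so \eqref{eq: H condition} of Theorem \ref{thm: density} reduces up to $O(\varepsilon)$ to $H_{\max} - H_{\min} < \min\{1/2 + \chi_b \overline{\gamma}_K,\hspace{0.03cm} \chi_{\sigma} \overline{\gamma}_K\}$, which is precisely \eqref{eq:parametercondExamples}. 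In either case, $(X_{t_0}, Z_{t_0})$ admits a density on $\Gamma_{\sigma, t_0} \times \R$, and Theorem \ref{thm: special case nonmarkov} delivers the failure of the Markov property. The main technical hurdle is the simultaneous orchestration of the parameter choices $\eta, \eta_K, p, \eta_g, \widetilde{H}, \varepsilon$; the crucial flexibility is that Lemma \ref{lemma:compmonadmissiblekernel} allows $\widetilde{H}$ to be chosen arbitrarily close to but distinct from $H_{i_0}$, so that Lemma \ref{lemma: fractional like} keeps $\gamma_*^{i_0}$ essentially at $H_{i_0}$ while the resulting regularity loss in $\gamma_b^{i_0}, \gamma_{\sigma}^{i_0}$ remains negligible.
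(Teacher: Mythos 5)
Your proposal is correct and follows essentially the same route as the paper: embed via the time-shift lift (Lemmas \ref{lemma: AC B condition} and \ref{lemma: AC A condition}), obtain the admissible perturbation $\widetilde{k}\,e_{i_0}^{\intercal}$ from Lemma \ref{lemma:compmonadmissiblekernel}, verify nondegeneracy via Lemma \ref{lemma: fractional like}, and conclude with Theorem \ref{thm: special case nonmarkov} through Theorem \ref{thm: density diagonal} or Theorem \ref{thm: density}. The only (immaterial) difference is your choice $\widetilde{k}_{i_0}(t)=t^{\widetilde H-1/2}\1_{[0,1]}(t)$ with $\widetilde H<H_{i_0}$, which shifts the small loss into the upper-bound exponent $\gamma^{i_0}_{\sigma}=\widetilde H-\varepsilon$, whereas the paper takes $t^{H_{i_0}-1/2+\varepsilon}$ and absorbs the loss into $\gamma_*^{i_0}$; both gaps are controllable and the balance conditions close identically.
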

\begin{proof}
    First, we embed the SVE into the general framework for Markovian lifts established in Subsection \ref{section:abstractLifts}. When $\eta_g> 2- 2H_{\min}$, we may select $\eta_K=\eta_g$. If $\eta_g\le 2- 2H_{\min}$, we take $\eta_K > 2 - 2H_{\min}$ sufficiently close to the lower bound such that, in view of condition~(TS2), we obtain
    \[
        \frac{1}{p} + \frac{\eta_K - \eta_g}{2} < \frac{1}{2}.
    \]
    In particular, $\eta_g \in [0,1)\cap (1-2H_{\min},2-2H_{\min}]$ implies $\eta_g \leq \eta_K < \eta_g + 1$. By Lemma~\ref{lemma: AC A condition}, condition (B) is satisfied in both cases for $\eta = \eta_g$, $\mathcal{H} = \mathcal{H}_{\eta_K}$, $\mathcal{V} = \mathcal{H}_{\eta}$, $\rho = (\eta_K - \eta)/2$, $\xi_K := \xi_b = \xi_{\sigma}=K$, we have $|g(t) - g(s)| \lesssim (t-s)^{\frac{1-\eta_g}{2}}$, and condition (TS2) ensures $\xi=g\in\mathcal{V}$ and $g\in\mathcal{G}_p$ with $\frac{1}{p} + \rho < \frac{1}{2}$. 

    It follows from Lemma \ref{lemma:compmonadmissiblekernel} that $\widetilde{k} e_{i_0}^{\intercal}$ is admissible for any choice of $\widetilde{k} \in L^2_{\mathrm{loc}}(\R_+; \R)$, where $e_{i_0}$ denotes the $i_0$-th canonical basis vector in $\R^d$. Let $\widetilde{k}(t) = t^{H_{i_0}-1/2 + \varepsilon}$ where $\varepsilon > 0$. As $g$ is deterministic, the assumptions of Theorem \ref{thm: special case nonmarkov} are satisfied whenever $(X_{t_0}, Z_{t_0})$ is absolutely continuous with respect to the Lebesgue measure on $\Gamma_{\sigma, t_0} \times \R$, where $Z$ is defined according to \eqref{eq:Zperturbationdefinition} for the kernel $t^{H_{i_0}-1/2 + \varepsilon}\hspace{0.02cm}e_{i_0}^{\intercal}$. Below, we prove the desired absolute continuity.

    By $K = \mathrm{diag}(k_1,\dots, k_d)$, Lemma \ref{lemma: AC B condition} shows that the kernel bounds in condition (A) are satisfied for any choice $0 < \gamma_K < H_{\min}$. Since $\eta_g > 1-2H_{\min}$ by assumption, we may take $\gamma_K=(1-\eta_g)/2 $, whence we obtain also $|g(t) - g(s)| \leq (t-s)^{\gamma_K}$. Moreover, when $g$ is constant, we can select $\gamma_K=H_{\min}-\zeta$ where $\zeta>0$ will be chosen sufficiently small. Concerning the nondegeneracy, note that $\widetilde{K} = (K, \widetilde{k} e_{i_0}^{\intercal})^{\intercal}$ is diagonal-like with $S_j = \{j\}$ for $j \neq i_0$, and $S_{i_0} = \{i_0, d+1\}$ defined in \eqref{eq: S decomposition}. Then we obtain for $\xi \in \R^{d+1}$:
    \begin{align}\label{eq:detexampleslowerbound}
        \int_0^h \big|\widetilde{K}(r)^{\intercal}\xi\big|^2\, \mathrm{d}r = \sum_{j \in\{1,\dots,d\}\setminus \{i_0\}}\xi_j^2 \int_0^h k_j(r)^2\, \mathrm{d}r + \int_0^h \left( k_{i_0}(r) \xi_{i_0} + \widetilde{k}(r)\xi_{d+1}\right)^2\, \mathrm{d}r.
    \end{align}
    By \eqref{eq: slowly varying bound}, we find constants $c_*,c>0$ so that for every $\delta>0$ there exists $h_0>0$ such that for every $j\in\{1,\dots,d\}$: 
    \begin{equation}\label{eq:slowlyvarboundsexamples}
        c_*\hspace{0.02cm} h^{2(H_j+\delta)}\le\int_0^h k_j(r)^2\, \mathrm{d}r \le c\hspace{0.02cm} h^{2(H_j-\delta)},\quad h\in (0,h_0).
    \end{equation}
    For the terms in \eqref{eq:detexampleslowerbound} with $j \neq i_0$, this immediately yields a lower bound, while for the remaining term, we obtain from Lemma \ref{lemma: fractional like}: 
    \[
        \int_0^h \left( k_{i_0}(r) \xi_{i_0} + \widetilde{k}(r)\xi_{d+1}\right)^2\, \mathrm{d}r \geq C_* h^{2(H_{i_0} + \varepsilon+\delta)}\hspace{0.02cm} \left( \xi_{i_0}^2 + \xi_{d+1}^2 \right)
    \]
    for some $C_* > 0$ and $h$ small enough. In particular, the above shows that the diagonal-like kernel $\widetilde{K} = (K, \widetilde{k})^{\intercal}$ is $\gamma_*$-nondegenerate in the sense of Definition~\ref{def: nondegeneracy anisotrop}, where
    \[
        \gamma_*^j = \begin{cases}H_j+\delta, & j \neq i_0
        \\ H_{i_0}  + \varepsilon+ \delta, & j = i_0.
        \end{cases}
    \]
    
    Likewise, again by \eqref{eq:slowlyvarboundsexamples}, it follows that condition \eqref{eq: ani upper bound 1} is satisfied with $\gamma^j = H_j-\delta$ for all $j \in \{1,\dots, d\}$. In case $\sigma$ is diagonal, the desired absolute continuity follows from Theorem \ref{thm: density diagonal} since $\varepsilon, \delta > 0$ can be selected arbitrarily small, $|\gamma_*^j-\gamma^j|=2\delta+\varepsilon\hspace{0.02cm}\mathbbm{1}_{\{i_0\}}(j)$, and $0<\chi_{\sigma}\hspace{0.01cm}\gamma_K\le\chi^j_{\sigma}\hspace{0.02cm}\gamma_K$. For general $\sigma$, Remark \ref{remark:nondegeneracyconnection} guarantees that $\widetilde{K}$ is also $\gamma_*:=\max\{\gamma_*^1, \dots, \gamma_*^d\}$-nondegenerate in the sense of Definition \ref{def: nondegeneracy general}. In particular, $\gamma_*=\max_j(H_{j}+\delta+\varepsilon\hspace{0.02cm}\mathbbm{1}_{\{i_0\}}(j))$. Moreover, $\widetilde{K}$ fulfills the estimate \eqref{eq: upper bound} with $\gamma:=\min\{\gamma^1, \dots, \gamma^d\}=H_{\min}-\delta$. By selecting again $\varepsilon, \delta, \zeta > 0$ sufficiently small and recalling $|\gamma_K-\overline{\gamma}_K|\le\zeta$, \eqref{eq:parametercondExamples} shows that Theorem \ref{thm: density} is applicable. By Theorem \ref{thm: special case nonmarkov}, this completes the proof.
\end{proof}

In particular, by Remark \ref{remark:filtration}, $X$ is not a Markov process w.r.t.\ to \emph{any} filtration to which $(X,B)$ is adapted. While the asymptotics from (TS1) implies in case $H_{i_0} \neq \frac{1}{2}$ that $\mathrm{supp}(\mu_{i_0})$ necessarily has infinitely many elements, its support not necessarily contains a sequence $(\lambda_n)_{n \geq 1}$ such that $\sum_{n=1}^{\infty}\lambda_n^{-1} = \infty$. Indeed, $\mu_{i_0}(\mathrm{d}x) = \sum_{n=1}^{\infty} n^{-2H_{i_0}} \delta_{n^2}(\mathrm{d}x)$ does \textit{not} have nondegenerate support, while $k_{i_0}(t) = \int_{\R_+}e^{-tx}\, \mu_{i_0}(\mathrm{d}x) \sim c\hspace{0.02cm} t^{H_{i_0} - 1/2}$ for some $c > 0$, as can be seen for $H_{i_0}\in (0,1/2)$ from Karamata's Tauberian theorem \cite[Theorem~1.7.1]{BiGoTe87}. Hence, the additional assumption on the support of $\mu_{i_0}$ cannot be omitted.

Below, we illustrate the above result for affine Volterra processes with square-root diffusion coefficients.

\begin{example}[multivariate Volterra square-root process]\label{example:VolterraCIR}
    Let $b, V_0, (\sigma_1,\dots, \sigma_d)^{\intercal} \in \R_+^d$, $\beta \in \R^{d \times d}$ such that $\beta_{ij} \geq 0$ for $i \neq j$, and $0 \neq k_1,\dots, k_d \in L_{\mathrm{loc}}^2(\R_+; \R)$ be completely monotone such that for all $j \in \{1,\dots, d\}$:
    \[
        k_j(t) \sim t^{H_j - 1/2}\ell_j(1/t), \quad t \to 0, \quad \text{ and } \quad \int_{1}^{\infty} | (k_j)'(t)|^2\, \mathrm{d}t < \infty,
    \]
    where $H_j\in (0,1/2)$ and $\ell_j: \R_+^* \longrightarrow \R_+^*$ is a slowly varying function. The multivariate Volterra square-root process is defined as the unique $\R_+^d$-valued weak solution of 
    \begin{align*}
        V_t &= V_0 + \int_0^t K(t-s)\hspace{0.02cm}( b + \beta V_s)\, \mathrm{d}s + \int_0^t K(t-s)\hspace{0.02cm}\sigma(V_s)\, \mathrm{d}B_s,\quad t\in\R_+,
    \end{align*}
    where $\sigma(v) = \mathrm{diag}(\sigma_1 \sqrt{v_1}, \dots, \sigma_d \sqrt{v_d})$ and $K = \mathrm{diag}(k_1,\dots, k_d)$, see \cite[Theorem 6.1]{AbiJaLaPu19}.
    If there exists $i \in \{1,\dots, d\}$ such that the Bernstein measure of $k_i$ has nondegenerate support, then $V$ is not a Markov process provided that there exists $t_0 > 0$ such that $\P[V_{t_0} \in \R_+^d \,\backslash\, \partial \R_+^d] > 0$. In a yet unpublished working paper, we will show that the latter is satisfied even for all $t_0 > 0$. When $d=1$, the existence of $t_0>0$ with $\P[V_{t_0} \in \R_+^*] > 0$ is guaranteed as soon as $V$ is not the zero-process.
\end{example}

Note that in the above example, it suffices that only one $k_{i_0}$ has a Bernstein measure with nondegenerate support. For all $j \neq i_0$, $k_j \equiv 1$ is allowed. In particular, it covers the kernels $k_{i_0}(t) = (t+\varepsilon)^{H_{i_0}-\frac{1}{2}}e^{-\lambda t}$ with $H_{i_0} \in (0,1/2)$, and $\varepsilon, \lambda \geq 0$. Another admissible choice is $k_{i_0}(t) = \log(1 + (\varepsilon + t)^{-\alpha})\hspace{0.02cm}e^{-\lambda t}$ with $\varepsilon, \lambda \geq 0$, and $\alpha \in (0,1]$. Indeed, this function has Bernstein measure $\mathbbm{1}_{(\lambda, \infty)}(x)\hspace{0.02cm}\mu(x-\lambda)\hspace{0.02cm}e^{-\varepsilon x}\,\mathrm{d}x$ with $\mu$ given by the subordination formula $\mu(x) = \int_0^{\infty} \frac{1-e^{-u}}{u} \hspace{0.02cm}p_u(x)\,\mathrm{d}u$, where $p_u(x)$ denotes the density of the $\alpha$-stable Levy subordinator on~$\R_+$ evaluated at time $u$. 

We next turn to the Volterra Heston model, a standard example in the rough volatility literature where the instantaneous variance is modeled by a Volterra square-root process. For simplicity, we focus on a single asset with an underlying stochastic variance factor, noting that analogous results can be obtained for multiple assets and factors.

\begin{example}[Volterra Heston model]\label{example:VolterraHeston}
    The (rough) Volterra Heston model consists of a risky asset $S$ with $S_0>0$ and a variance process $V$ that follow on $\R \times \R_+$ the dynamics 
    \begin{equation}\label{eq:Volterra Heston}
    \begin{aligned}
        \frac{\mathrm{d}S_t}{S_t} &= \sqrt{V_t}\left( \rho \,\mathrm{d}B_t + \sqrt{1 - \rho^2}\,\mathrm{d}B^{\perp}_t \right)
        \\ V_t &= V_0 + \int_0^t k(t-s)\hspace{0.02cm}\kappa\hspace{0.02cm}( \theta- V_s)\, \mathrm{d}s + \sigma \int_0^t k(t-s)\sqrt{V_s}\, \mathrm{d}B_s, \quad t\in\R_+,
    \end{aligned}
    \end{equation}
    where $B, B^{\perp}$ are two independent standard Brownian motions, $V_0, \kappa,\theta \in\R_+$, $\rho \in [-1,1]$, $\sigma > 0$, and $0 \neq k \in L_{\mathrm{loc}}^2(\R_+; \R)$ is completely monotone satisfying for some $H\in(0, 1/2)$: 
    \[
        k(t) \sim t^{H-1/2}\ell(1/t), \quad t \to 0, \quad \text{ and } \quad \int_{1}^{\infty} | k'(t)|^2\, \mathrm{d}t < \infty,
    \]
    where $\ell:\R_+^*\longrightarrow\R_+^*$ is slowly varying. Weak existence and uniqueness for $(S,V)$ is established in \cite[Theorem 7.1]{AbiJaLaPu19}. If the Bernstein measure of $k$ has nondegenerate support and $H\in (1/3,1/2)$ when $\rho\neq 0$, then $(S,V)$ is not a Markov process.
\end{example}
\begin{proof} 
    Define $Y_t = \log(S_t)$, then 
    \[
        \mathrm{d}Y_t = - \frac{V_t}{2}\,\mathrm{d}t + \sqrt{V_t}\left( \rho \,\mathrm{d}B_t + \sqrt{1 - \rho^2}\,\mathrm{d}B^{\perp}_t \right). 
    \]
    In particular, since $\log$ is bijective, $(S,V)$ is a Markov process if and only if $X = (Y, V)$ is Markov. Below, we prove that the latter is not the case. Firstly, note that $X$ is an affine Volterra process and satisfies
    \[
        X_t = X_0 + \int_0^t K(t-s)\hspace{0.02cm}b(X_s)\, \mathrm{d}s + \int_0^t K(t-s)\hspace{0.02cm}\sigma(X_s)\, \mathrm{d}W_s,\quad t\in\R_+,
    \]
    where $W = (B^{\perp}, B)$ is a two-dimensional standard Brownian motion, $X_0=(Y_0, V_0)^{\intercal}$, $K = \mathrm{diag}(1, k)$, $b(x) = (-\frac{v}{2},\, \kappa(\theta- v))^{\intercal}$, and 
    \[
        \sigma(x) = \begin{pmatrix}
            \sqrt{1 - \rho^2}\sqrt{v} & \rho\sqrt{v} \\ 0 & \sigma \sqrt{v} \end{pmatrix}, \ \text{ where } \ x = (y, v) \in \R \times \R_+.
    \]
    Then we obtain from direct computation $\mathrm{det}(\sigma(x) \hspace{0.02cm}\sigma^{\intercal}(x)) = v^2  \sigma^2 \hspace{0.02cm}( 1 - \rho^2 )$. Consequently, if $\rho \in (-1,1)$, we have $\Gamma_{\sigma}\equiv\Gamma_{\sigma,t} = \R\times\R_+^*$. Moreover, note that there exists $t_0>0$ with $\P[ V_{t_0} > 0] > 0$ by $\E[ V_{t_0}^2 ] \neq 0$ as $V$ is not the zero-process. When $\rho=0$, $\sigma$ has a diagonal form, while for $\rho\neq 0$ and $H\in(1/3,1/2)$ we can deduce condition \eqref{eq:parametercondExamples} from $H_{\min}=H$, $H_{\max}=1/2$, $\chi_b=1$, $\chi_{\sigma}=1/2$, $\overline{\gamma}_K=H_{\min}$, and $1/2<\frac{3}{2}H$. Therefore, the assertion follows in both cases from Theorem \ref{thm: time shift deterministic}.

    For the case $\rho \in \{-1,1\}$, Theorem \ref{thm: time shift deterministic} is not directly applicable since $\Gamma_{\sigma} = \emptyset$. However, as $\Gamma_{\sigma}$ was used only to derive the bound \eqref{eq:2dimdensityB3estimation}, and hence to establish the existence of a density for $(X_t,Z_t)$ on $\Gamma_{\sigma, t} \times \R$, below we directly verify \eqref{eq:2dimdensityB3estimation} for some admissible perturbation. Take 
    \[
        \widetilde{K}(t) = \begin{pmatrix}
            1 & 0
            \\ 0 & k(t)
            \\ 0 & \widetilde{k}(t)
        \end{pmatrix}
    \]
    with $\widetilde{k}(t) = t^{\widetilde{H} - 1/2}$ such that $H \neq \widetilde{H}\in (0,1/2)$. For this choice, we obtain from the explicit forms of $\sigma(x)$, $\widetilde{K}$, $1-\rho^2=0$, Lemma~\ref{lemma: fractional like} and \eqref{eq: slowly varying bound} for an arbitrarily small $\delta>0$:
    \begin{align}\label{eq: nondegeneracy TS}
    \notag \int_s^t \left| \sigma(X_s)^{\intercal} \widetilde{K}(t-r)^{\intercal}\xi\right|^2\, \mathrm{d}r
    &= V_s \int_s^t \left( \rho\hspace{0.02cm} \xi_1 + \sigma \hspace{0.02cm}k(t-r)\hspace{0.02cm}\xi_2 + \sigma\hspace{0.02cm} \widetilde{k}(t-r)\hspace{0.02cm}\xi_3\right)^2\, \mathrm{d}r
    \\ &\geq \big(1\wedge \sqrt{V_s}\big)^2\hspace{0.02cm} C_*\hspace{0.02cm} |\xi|^2\hspace{0.02cm}(t-s)^{1+2\delta},
    \end{align}
    where $\xi \in \R^3$ and, therefore, again \eqref{eq:2dimdensityB3estimation} with $\rho=1\wedge \sqrt{V}$. Hence, the law of the random variable $\widetilde{Z}_t = (X_t, Z_t)$ defined as in \eqref{eq:perturbedVolterraprocessDef} with the kernel given by $\widetilde{K}$ is, restricted to $\{V_t>0\}$, absolutely continuous. The assertion now follows by a minor modification of the final step in the proof of Theorem \ref{thm: time shift deterministic}.  
\end{proof}

Below, we complement Theorem \ref{thm: time shift deterministic} by allowing $K^b\neq K^{\sigma}$ and considering not necessarily deterministic $g$. In addition, we will prove that the corresponding Markovian lift has infinite-dimensional range in the sense of Corollary \ref{cor: dimension}. Moreover, the additional parameters $\beta_b, \beta_{\sigma}$ below will allow for different power-law asymptotics for small and large $t$, covering, e.g., kernels of the form $t^{H - \frac{1}{2}} + t^{\beta}$ with $\beta\ge H - \frac{1}{2}$.

\begin{theorem}\label{theorem:NonMarkovSec5MostAbstract}
    Assume conditions (TS1), (TS2), let $b \in B(\R_+; C^{\chi_b}(\R^d; \R^d))$, and $\sigma \in B(\R_+; C^{\chi_{\sigma}}(\R^d; \R^{d\times m}))$ where $\chi_b \in [0,1]$ and $\chi_{\sigma} \in (0,1]$. Suppose that there exists $i_0 \in \{1,\dots, d\}$ with the properties:
    \begin{enumerate}
        \item[(i)] $|H^{b}_{i_0}-H^{\sigma}_{i_0}|<1/2$, and $H_{i_0}^{\sigma} \neq \frac{1}{2}$;
        \item[(ii)] There exist $0 \leq \beta_b,\beta_{\sigma} < \min\{ H_{i_0}^b, H_{i_0}^{\sigma}, \frac{1 - \eta_g}{2}\}$ and $h_0 \in (0,1)$ such that for $t \in (0,h_0)$ and $u \geq 1$:
        \[
            \left| \frac{k_{i_0}^a(t\hspace{0.02cm} u)}{k_{i_0}^a(t)}\right| \lesssim u^{\beta_a}, \qquad a \in \{b,\sigma\}.
        \]
    \end{enumerate}
    Finally, let us define the constants $H_{\min,i_0}^{a}:=\min\big\{H_{\min}^{a},H_{i_0}^{a}- \max\{\beta_b, \beta_{\sigma}\}\big\}>0$, for $a\in\{b,\sigma\}$, and $\overline{\gamma}_K=(1-\eta_g)/2$, which can be improved to $\overline{\gamma}_K=H_{\min}$ when $g$ is constant in $t$. Suppose that the parameters satisfy 
    \begin{align}\label{eq: H condition Section 5}
        H_{\max}^{\sigma} <  \min\left\{H_{\min,i_0}^{b}+ \tfrac{1}{2} + \chi_b \overline{\gamma}_K,\ H_{\min,i_0}^{\sigma}+\chi_{\sigma} \overline{\gamma}_K\right\},
    \end{align} 
    and if $\sigma$ is diagonal, this condition can be replaced to
    \begin{align}\label{eq: H condition Section 5 diagonal}
        H_{i_0}^{\sigma} <  \min\left\{H_{i_0}^{b} - \max\{\beta_b, \beta_{\sigma}\} + \tfrac{1}{2} + \chi_b^{i_0}\hspace{0.02cm} \overline{\gamma}_K,\ H_{i_0}^{\sigma} - \max\{\beta_b, \beta_{\sigma}\} + \chi_{\sigma}^{i_0}\hspace{0.02cm} \overline{\gamma}_K\right\},
    \end{align}
    where $\chi_b^{i_0}\ge\chi_b$ and $\chi_{\sigma}^{i_0}\ge \chi_{\sigma}$ are determined by the Hölder regularity of the component functions $b_{i_0}$ and $\sigma_{i_0}$, respectively.
    
    Then any continuous weak solution $X$ of \eqref{eq:generalSVIE} that satisfies $\P[X_{t_0} \in \Gamma_{\sigma,t_0}] > 0$ for some $t_0 > 0$ is not a Markov process with respect to $(\mathcal{F}_t^{X,B})_{t \in \R_+}$. Furthermore, the corresponding Markovian lift $\mathcal{X}$ has infinite-dimensional range.
\end{theorem}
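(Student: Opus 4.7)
The strategy extends Theorem \ref{thm: time shift deterministic} along two directions: allowing $K^b\neq K^{\sigma}$ and random $g$ forces the general admissibility framework of Theorem \ref{thm: abstract nonmarkov} rather than the simplified Theorem \ref{thm: special case nonmarkov}, while the dimension claim will be obtained from Corollary \ref{cor: dimension}. Both parts are driven by the abstract fractional differentiation of Example \ref{admissible operator fractional differentiation}. As in Theorem \ref{thm: time shift deterministic}, I would realize the SVE in the shift-semigroup lift via Lemma \ref{lemma: AC A condition}, selecting $\eta_K > 2-2H_{\min}$ close to this lower bound so that $\tfrac{1}{p}+\rho < \tfrac{1}{2}$; combined with (TS2), this gives $g\in\mathcal{G}_p$, $\|S(t)\|_{L(\mathcal{V})}\lesssim 1+\sqrt{t}$, and condition (A) holds with $\gamma_K=\overline{\gamma}_K$ thanks to Lemma \ref{lemma: AC B condition}.

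For the perturbation I would apply Example \ref{admissible operator fractional differentiation} with $c_i = \delta_{i i_0}$ and a single measure $\nu_{i_0}(\mathrm{d}z)=e^{-z}z^{-1-\gamma}\mathrm{d}z$ for some small $\gamma>\max\{\beta_b,\beta_\sigma\}$ to be fixed. Because $K^{\sigma}$ is diagonal, this produces a diagonal-like perturbation $\widetilde{k}_\gamma^a(t)=\phi_\gamma^a(t)\hspace{0.02cm}e_{i_0}^{\intercal}$ with $\phi_\gamma^a(t)=\int_{\R_+}(k_{i_0}^a(z+t)-k_{i_0}^a(t))\hspace{0.02cm}e^{-z}z^{-1-\gamma}\mathrm{d}z$. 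The integrability \eqref{eq:Kintegrabilityadmissibledifferentiation} can be established by partitioning $(z,t)\in\R_+\times(0,T)$ into the regions $z\leq t$, $t\leq z\leq h_0$, and $z\geq h_0$: the first is handled by a Taylor bound using the derivative asymptotics from the proof of Lemma \ref{lemma: AC B condition}, where the choice $\gamma>\max\{\beta_b,\beta_\sigma\}$ secures integrability in $t$; the intermediate region is controlled by condition (ii), which gives $|k_{i_0}^a(z+t)|\lesssim (z/t)^{\beta_a}|k_{i_0}^a(t)|$; and the far region is absorbed by the exponential weight. That $\xi=g$ lies in the domain $\mathcal{D}$ a.s.\ follows from (TS2) via the same decomposition.

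Applying the substitution $z=tu$ together with Karamata's theorem and dominated convergence (both enabled by (ii) and the slow variation of $\ell_{i_0}^a$), I expect $\phi_\gamma^a(t)\sim c_\gamma^a\hspace{0.02cm}t^{H_{i_0}^a-1/2-\gamma}\ell_{i_0}^a(1/t)$ as $t\searrow 0$, with $c_\gamma^a=\int_0^{\infty}((1+u)^{H_{i_0}^a-1/2}-1)u^{-1-\gamma}\mathrm{d}u$ vanishing only when $H_{i_0}^a=1/2$, which is excluded for $a=\sigma$ by (i). Picking $\gamma$ to avoid also the finite set $\{H_{i_0}^{\sigma}-H_j^{\sigma}:j\neq i_0\}$, the augmented diagonal-like kernel $\widetilde{K}^{\sigma}=(K^{\sigma},\widetilde{k}_\gamma^{\sigma})^{\intercal}$ has two distinct exponents $H_{i_0}^{\sigma}$ and $H_{i_0}^{\sigma}-\gamma$ in the $i_0$-th block of Definition \ref{def: nondegeneracy anisotrop}, and is $\gamma_*$-nondegenerate by Lemma \ref{lemma: fractional like}. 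The corresponding upper bounds \eqref{eq: ani upper bound 1} hold with $\gamma_a^{i_0}=H_{i_0}^a-\gamma-\delta$, and the balance condition \eqref{eq: H condition Section 5 diagonal} (diagonal $\sigma$), resp.\ \eqref{eq: H condition Section 5} (general $\sigma$), translates exactly into \eqref{eq: H condition diagonal} (resp.\ \eqref{eq: H condition}) upon sending $\gamma,\delta\searrow 0$, the presence of $\max\{\beta_b,\beta_\sigma\}$ and of $H_{\min,i_0}^a$ encoding precisely the lower bound on the admissible values of $\gamma$. Theorem \ref{thm: density diagonal} (resp.\ \ref{thm: density}) then supplies absolute continuity of $(X_{t_0},\widetilde{\Xi}_\gamma\mathcal{X}_{t_0})$ on $\Gamma_{\sigma,t_0}\times\R$, and Theorem \ref{thm: abstract nonmarkov} delivers the failure of the Markov property.

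For infinite-dimensionality, I would fix any $N\geq 1$ and select distinct $\gamma_1,\dots,\gamma_N$ in the admissible window avoiding the exceptional set above. Since Corollary \ref{cor: dimension} requires bounded functionals in $\mathcal{V}^*$, I would replace each $\widetilde{\Xi}_{\gamma_n}$ by its truncation $\widetilde{\Xi}_{\lambda,\gamma_n}=\int_\lambda^{1/\lambda}\Xi^{i_0}(S(z)-\mathrm{id})\hspace{0.02cm}e^{-z}z^{-1-\gamma_n}\mathrm{d}z\in\mathcal{V}^*$, which inherits the same leading-order asymptotic in $t$ as $\widetilde{\Xi}_{\gamma_n}$ provided $\lambda$ is chosen small enough. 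The same Lemma \ref{lemma: fractional like} argument, now applied to the $(d+N)\times d$ diagonal-like kernel with $N+1$ distinct exponents in the $i_0$-block, verifies the hypotheses of Corollary \ref{cor: dimension}, yielding $\dim\overline{\mathrm{im}(Q_{t_0})}\geq N$; as $N$ is arbitrary, the lift is infinite-dimensional. The main technical obstacle is the careful integrability check for \eqref{eq:Kintegrabilityadmissibledifferentiation} combined with the precise asymptotic analysis of $\phi_\gamma^a$ for varying $\gamma$, where condition (ii) and the shift parameters $\beta_a$ play the central role, together with the uniform control of truncation errors across the chosen $\gamma_n$ in the dimension step.
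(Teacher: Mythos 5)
Your argument for the failure of the Markov property is essentially the paper's proof: same shift-semigroup lift with $\eta_K>2-2H_{\min}$ chosen near its lower bound, same admissible perturbation $\widetilde{\Xi}(t)y=\int_0^\infty(y_{i_0}(t+z)-y_{i_0}(t))e^{-z}z^{-1-\gamma}\,\mathrm{d}z$ with $\gamma\in(\max\{\beta_b,\beta_\sigma\},\min\{H^b_{i_0},H^\sigma_{i_0}\})$, same small-time asymptotics $\widetilde{k}^a_\gamma(t)\sim C(\gamma)\,t^{H^a_{i_0}-1/2-\gamma}\ell^a_{i_0}(1/t)$ with $C(\gamma)\neq 0$ forced by $H^\sigma_{i_0}\neq\tfrac12$ (this is exactly Lemma~\ref{lemma: appendix B}, which also settles your integrability check \eqref{eq:Kintegrabilityadmissibledifferentiation}), the same use of Lemma~\ref{lemma: fractional like} for nondegeneracy, and the same passage through Theorems~\ref{thm: density}/\ref{thm: density diagonal} and \ref{thm: abstract nonmarkov}. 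The translation of \eqref{eq: H condition Section 5} and \eqref{eq: H condition Section 5 diagonal} into \eqref{eq: H condition} and \eqref{eq: H condition diagonal} as $\gamma,\delta\searrow 0$ is also the paper's bookkeeping. This part is fine.

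The infinite-dimensionality step, however, has a genuine gap. You truncate to $\widetilde{\Xi}_{\lambda,\gamma_n}=\int_\lambda^{1/\lambda}\Xi^{i_0}(S(z)-\id)e^{-z}z^{-1-\gamma_n}\,\mathrm{d}z$ and claim the truncated kernels keep the leading-order asymptotics $t^{H^\sigma_{i_0}-1/2-\gamma_n}$. They do not: for \emph{fixed} $\lambda>0$ the extra singularity $t^{-\gamma_n}$ is produced entirely by the region $z\asymp t\searrow 0$, which the cut-off $z\ge\lambda$ removes. Indeed, $\int_\lambda^{1/\lambda}k^\sigma_{i_0}(t+z)\,\nu_{\gamma_n}(\mathrm{d}z)$ tends to a constant as $t\searrow 0$, so the truncated kernel equals $-\nu_{\gamma_n}([\lambda,1/\lambda])\,k^\sigma_{i_0}(t)+O(1)$; all $N$ truncated rows are asymptotically proportional to $k^\sigma_{i_0}$ modulo index-$0$ terms, the exponents in the $i_0$-block are no longer pairwise distinct in the sense required by Lemma~\ref{lemma: fractional like}, and after an invertible recombination the best nondegeneracy rate one can extract is $\gamma^{i_0}_*\ge\tfrac12$, which is incompatible with the balance condition for small $H^\sigma_{i_0}$. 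The truncation is also unnecessary: the increment bound from the proof of Lemma~\ref{lemma: AC A condition} gives $|y_{i_0}(t+z)-y_{i_0}(t)|\lesssim\|y\|_{\eta}\,z^{(1-\eta_g)/2}$ for $z\in(0,1]$ and $\lesssim\|y\|_{\eta}\sqrt{z}$ for $z\ge1$, uniformly down to $t=0$, so the \emph{untruncated} $\widetilde{\Xi}_{\gamma_n}$ is already a bounded functional on $\mathcal{V}=\mathcal{H}_{\eta_g}$ whenever $\gamma_n<\tfrac{1-\eta_g}{2}$ — which condition (ii) guarantees once the $\gamma_n$ are taken close to $\max\{\beta_b,\beta_\sigma\}$. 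One then applies Corollary~\ref{cor: dimension} directly to $\widetilde{K}^a=(\widetilde{\Xi}_{\gamma_1}S(\cdot)\xi_a,\dots,\widetilde{\Xi}_{\gamma_N}S(\cdot)\xi_a)^{\intercal}$, choosing the $\gamma_n$ pairwise distinct but with $\max_n\gamma_n-\min_n\gamma_n<\min\{\tfrac12-|H^b_{i_0}-H^\sigma_{i_0}|,\chi_\sigma\overline{\gamma}_K\}$ so that \eqref{eq: H condition} holds with $\gamma_*=H^\sigma_{i_0}-\min_n\gamma_n+\delta$ and $\gamma_a=H^a_{i_0}-\max_n\gamma_n-\delta$.
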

\begin{proof}
    First, observe that the SVE can be embedded into the framework for Markovian lifts from Subsection \ref{section:abstractLifts} exactly as in the proof of Theorem \ref{thm: time shift deterministic}. In particular, condition~(B) is satisfied for $\mathcal{H} = \mathcal{H}_{\eta_K}$ with a suitable $\eta_K > 2 - 2H_{\min}$, $\mathcal{V} = \mathcal{H}_{\eta}$ with $\eta=\eta_g$, $\rho = (\eta_K - \eta)/2$, $\xi_b=K^b$, $\xi_{\sigma}=K^{\sigma}$, $g\in\mathcal{G}_p$ with $\frac{1}{p} + \rho < \frac{1}{2}$, and condition (A) is satisfied for $\gamma_K=(1-\eta_g)/2 $, noting that when $g$ is constant, the latter can be improved to $\gamma_K=H_{\min}-\zeta$ with an arbitrarily small $\zeta>0$.
    
    Let us construct for each $N \geq 1$ admissible perturbations $\widetilde{\Xi}_1(t),\dots, \widetilde{\Xi}_N(t)$, $t>0$, that can be used to generate $\R^{N\times d}$-valued nondegenerate Volterra kernels. Fix $i_0\in \{1,\dots, d\}$ as in the assumptions. Define for $j \in\{1,\dots, N\}$ and $t>0$:
    \begin{equation}\label{eq:operatordefforwarddiff}
        \widetilde{\Xi}_j(t)y = \int_{0}^{\infty}\left( y_{i_0}(t+z) - y_{i_0}(t)\right) \,  \frac{e^{-z}\,\mathrm{d}z}{z^{1 + \alpha_j}},\quad y\in\mathcal{D}_j\subseteq\mathcal{V},
    \end{equation}
    where $\alpha_1,\dots, \alpha_N \in \big( \max\{\beta_b, \beta_{\sigma}\},\, \min\{H^{b}_{i_0},H^{\sigma}_{i_0}\}\big)$ are pairwise distinct. Note that the interval is nondegenerate by $\max\{\beta_b, \beta_a\} < \min\{H_{i_0}^b, H_{i_0}^{\sigma}\}$ according to condition (ii). Let us further define 
    \[
        \widetilde{k}_j^{a}(t) = \int_0^{\infty}\left( k_{i_0}^{a}(t+z) - k_{i_0}^{a}(t)\right)\frac{e^{-z}\,\mathrm{d}z}{z^{1 + \alpha_j}}.
    \]
    By Lemma \ref{lemma: appendix B} combined with condition (ii) and assumption (TS1), it follows that $\widetilde{k}_j^{a} \in L_{\mathrm{loc}}^2(\R_+; \R)$ and it satisfies \eqref{eq:Kintegrabilityadmissibledifferentiation} with $\nu_{i_0}^{(j)}(\mathrm{d}z) = e^{-z}z^{-1 - \alpha_{j}}\, \mathrm{d}z$. Since $\|S(z)\|_{L(\mathcal{V})}\lesssim 1+\sqrt{z}$, $z\in\R_+$ by Lemma \ref{lemma: AC A condition}, Example~\ref{admissible operator fractional differentiation} shows that the operators $\widetilde{\Xi}_j(t)$ are admissible on the (joint) domain
    \[
        \mathcal{D} = \left\{ y \in \mathcal{H}_{\eta} \ : \ \int_0^{\infty} |y_{i_0}(t+z) - y_{i_0}(t)|\, e^{-z}z^{-1 - \alpha_{j}}\, \mathrm{d}z < \infty, \ \ \forall t > 0, \ \ \forall j \in\{ 1,\dots, N\} \right\}.
    \] 
    Repeating the final part of the proof of Lemma \ref{lemma: AC A condition} and noting $\eta=\eta_g$, we observe for each $y \in \mathcal{H}_{\eta}$ that $|y_{i_0}(t+z) - y_{i_0}(t)| \lesssim_T \| y\|_{\eta}\hspace{0.02cm} z^{\frac{1-\eta}{2}}$ for $t \in [0,T]$, arbitrary $T>0$, and $z \in (0,1]$, and $|y_{i_0}(t+z) - y_{i_0}(t)| \lesssim \|y\|_{\eta}\hspace{0.02cm} z^{1/2}$ for $z \geq 1$. Hence, it follows that $\mathcal{D} = \mathcal{H}_{\eta}$ whenever $\alpha_j < \frac{1 - \eta}{2}$. As condition (ii) ensures $\max\{\beta_b, \beta_{\sigma}\} < \frac{1 - \eta}{2}$, by taking $\alpha_j > \max\{\beta_b, \beta_{\sigma}\}$ close enough to its lower bound, we may always select for every $N\ge 1$ a family $(\alpha_j)_{j\in\{1,\dots,N\}}$ that satisfies $\alpha_j < \frac{1 - \eta}{2}$. Moreover, note that the increment bound on $y_{i_0}$ remains valid for $t=0$, whence~\eqref{eq:operatordefforwarddiff} defines also for $t=0$ a bounded linear operator $\widetilde{\Xi}_j$ on $\mathcal{H}_{\eta}$. In particular, we obtain the consistency property $\widetilde{\Xi}_j(t)=\widetilde{\Xi}_j S(t)$ for every $t>0$ and $j\in\{1,\dots,N\}$.

    Let $e_{i_0} \in \R^d$ denote the $i_0$-th canonical basis vector. Then $\widetilde{K}^{a} = (K^{a}, \widetilde{k}_1^{a} e_{i_0}^{\intercal})^{\intercal}$ are diagonal-like with $S_j = \{j\}$ for $j \neq i_0$ and $S_{i_0} = \{i_0, d+1\}$ defined as in \eqref{eq: S decomposition}. The assertion follows from Theorem \ref{thm: abstract nonmarkov}, once we have shown that the law of $\widetilde{Z}_t$ given by
    \[
        \widetilde{Z}_t = \widetilde{\Xi}(t)\xi + \int_0^t \widetilde{K}^b(t-s)\hspace{0.02cm}b(s,X_s)\, \mathrm{d}s + \int_0^t \widetilde{K}^{\sigma}(t-s)\hspace{0.02cm}\sigma(s,X_s)\, \mathrm{d}B_s
    \]
    is absolutely continuous to the Lebesgue measure on $\Gamma_{\sigma, t} \times \R$ for $t=t_0$. Since $H_{i_0}^{\sigma} \neq \frac{1}{2}$, the constant $C(\alpha_1)$ in Lemma \ref{lemma: appendix B} does not vanish, whence $\widetilde{k}_1^{\sigma}$ is regularly varying with index $H_{i_0}^{\sigma} - \frac{1}{2} - \alpha_1$. In particular, it follows from Lemma \ref{lemma: fractional like} and the specific structure of $S_1,\dots, S_d$ that $\widetilde{K}^{\sigma}$ is $\gamma_*$-nondegenerate with $\gamma_*^j = H^{\sigma}_j+\delta$ for $j \in \{1,\dots, d\}$ and an arbitrarily small $\delta>0$. Likewise, due to \eqref{eq: B appendix regular varying}, also condition \eqref{eq: ani upper bound 1} is satisfied for $\gamma_{a}^j = H_{j}^{a}-\delta$ when $j \neq i_0$ and $\gamma_{a}^{i_0} = H^{a}_{i_0}-\delta - \alpha_{1}$. Hence, under \eqref{eq: H condition Section 5}, we may apply Theorem~\ref{thm: density}, or, in view of \eqref{eq: H condition Section 5 diagonal} for diagonal $\sigma$, its anisotropic analogue given in Theorem~\ref{thm: density diagonal}, provided that $\alpha_{1}$ is sufficiently close to its lower bound $\max\{\beta_b, \beta_{\sigma}\}$ and $\delta,\zeta>0$ are chosen small enough.

    For the second assertion, note that also $\widetilde{K}^{a} = (\widetilde{\Xi}_1S(\cdot)\xi_{a}, \dots, \widetilde{\Xi}_N S(\cdot)\xi_{a})^{\intercal}$, $a\in\{b,\sigma\}$, are diagonal-like with $S_{i_0} = \{1, \dots, N\}$. Moreover, $\widetilde{K}^{\sigma}$ is $\gamma_*$-nondegenerate for any choice of $N \geq 1$ with $\gamma_* = H^{\sigma}_{i_0}-\min_{j\in\{1,\dots,N\}}\alpha_{j}+\delta$. Likewise, also condition \eqref{eq: upper bound} is satisfied for $\gamma_a = H^a_{i_0} - \max_{j\in\{1,\dots,N\}}\alpha_{j}-\delta$. As $\delta>0$ can again be chosen arbitrarily small, \eqref{eq: H condition} is implied by
    \begin{align*}
        H^{\sigma}_{i_0}-\min_{j\in\{1,\dots,N\}}\alpha_{j} &< H^b_{i_0} - \max_{j\in\{1,\dots,N\}}\alpha_{j} + \frac{1}{2} + \chi_b \overline{\gamma}_K,
        \\ H^{\sigma}_{i_0}-\min_{j\in\{1,\dots,N\}}\alpha_{j} &< \ H^{\sigma}_{i_0} - \max_{j\in\{1,\dots,N\}}\alpha_{j} + \chi_{\sigma}\overline{\gamma}_K.
    \end{align*}
    Choosing $\alpha_1,\dots, \alpha_N$ distinct, but sufficiently close to each other such that  
    \begin{displaymath}
        \max_{j\in\{1,\dots,N\}}\alpha_{j} - \min_{j\in\{1,\dots,N\}}\alpha_{j} < \min\big\{1/2-|H^{b}_{i_0}-H^{\sigma}_{i_0}|, \chi_{\sigma} \overline{\gamma}_K\big\}
    \end{displaymath}
    shows that both conditions are automatically satisfied due to $|H^{b}_{i_0}-H^{\sigma}_{i_0}|<1/2$. The assertion follows from Corollary \ref{cor: dimension}. 
\end{proof}

Next, we provide a proof for Theorem \ref{thm: intro} from the introduction.

\begin{proof}[Proof of Theorem \ref{thm: intro}]
    This theorem is a particular case of Theorem \ref{theorem:NonMarkovSec5MostAbstract} where $k_i^a = k^a$ is independent of $i \in \{1,\dots, d\}$. Thus, $H^b = H_{i}^b$, $H^{\sigma} = H_i^{\sigma}$, $\beta_b = (H^b - \frac{1}{2})_+$, $\beta_{\sigma} = (H^{\sigma} - \frac{1}{2})_+$, and, hence, condition \eqref{eq: H condition Section 5} reduces to 
    \begin{align*}
        H^{\sigma} + \max \left\{ (H^b - \tfrac{1}{2})_+, (H^{\sigma} - \tfrac{1}{2})_+ \right\} &< H^b + \tfrac{1}{2} + \chi_b H_{\mathrm{min}}
        \\ \max \left\{ (H^b - \tfrac{1}{2})_+, (H^{\sigma} - \tfrac{1}{2})_+ \right\} &< \chi_{\sigma}H_{\min}.
    \end{align*}
    If $H_{\max} \leq 1/2$, both conditions are satisfied by $\chi_{\sigma}H_{\min}>0$ and $|H^b-H^{\sigma}|<\frac{1}{2}$. When $H_{\max} \in (\frac{1}{2},1)$, we obtain the following cases and corresponding conditions:
    \begin{enumerate}
        \item[(i)] $H^{\sigma} < H^b$ with $H^b < \frac{1}{2} + \chi_{\sigma}H^{\sigma}$;
        \item[(ii)] $H^b \leq H^{\sigma}$ with $H^{\sigma} < \frac{1}{2} + \min\{\chi_{\sigma}, \frac{1+\chi_b}{2}\}H^b$. 
    \end{enumerate}
    Both conditions are satisfied under the assumptions of Theorem \ref{thm: intro}.
\end{proof}

\begin{remark}\label{remark:abstractnonMarkovsimpli}
    Conditions \eqref{eq: H condition Section 5} and \eqref{eq: H condition Section 5 diagonal} simplify significantly once there exists $i_0\in\{1,\dots,d\}$ such that $H^{b}_{i_0}, H^{\sigma}_{i_0} \in (0, 1/2)$, i.e.\ both $k_{i_0}^b$, $k_{i_0}^{\sigma}$ are singular. Here, condition~(i) holds trivially and, if $\beta_b=\beta_{\sigma}=0$, \eqref{eq: H condition Section 5} simplifies to
    \begin{align*}
        H_{\max}^{\sigma} <  \min\left\{H_{\min}^{b}+ \tfrac{1}{2} + \chi_b \overline{\gamma}_K,\ H_{\min}^{\sigma}+\chi_{\sigma} \overline{\gamma}_K\right\},
    \end{align*}
    while for diagonal $\sigma$, \eqref{eq: H condition Section 5 diagonal} is always satisfied due to $\chi_{\sigma}^{i_0}\hspace{0.03cm} \overline{\gamma}_K>0$. 
\end{remark}

As before, it suffices that the Volterra kernel $K^{\sigma}$ is regularly varying with a non-zero index only in one component $k_{i_0}^{\sigma}$, while the indices in all other components are arbitrary. Furthermore, the Volterra square-root process and the rough Heston model discussed in Examples \ref{example:VolterraCIR} and \ref{example:VolterraHeston} are covered by the above theorem, whence their Markovian lift is infinite-dimensional. Likewise, our results contain equations driven by additive fractional noise. We illustrate this for the simple case of additive Riemann-Liouville noise:
\begin{example}
    Let $b \in B(\R_+; C^{\chi_b}(\R; \R))$ with $\chi_b \in [0,1]$ and 
    \[
        H\in \big(0,\, \tfrac{3+\chi_b}{4}\big)\setminus\left\{\tfrac{1}{2}\right\}.
    \]
    Then any continuous weak solution $X$ of the stochastic equation 
    \[
        X_t = x_0 + \int_0^t b(s,X_s)\, \mathrm{d}s + \sigma\int_0^t \frac{(t-s)^{H - \frac{1}{2}}}{\Gamma(H+\frac{1}{2})} \, \mathrm{d}B_s,\quad t\in\R_+,
    \]
    where $x_0\in\R$, $\sigma\neq 0$, does not possess the Markov property. Moreover, its Markovian lift is infinite-dimensional.
\end{example}

\begin{appendices}

\section{Anisotropic regularity of densities for Volterra Ito-processes}

Let $N \geq 1$. An $N$-tuple of positive numbers $a:=(a_1,\dots,a_N)\in(\R_+^*)^N$ is called anisotropy when it satisfies $a_1+\dots+a_N=N$. Consider $\lambda>0$ such that $\lambda/a_j\in (0,1)$ for every $j\in\{1,\dots,N\}$. Then the anisotropic Besov space $B_{1,\infty}^{\lambda,a}(\R^N)$ with parameters $\lambda,a$ is defined as the Banach space of integrable scalar functions $f\in L^1(\R^N)$ equipped with the norm
\begin{displaymath} 
\|f\|_{B_{1,\infty}^{\lambda,a}(\R^N)}:=\|f\|_{L^1(\R^N)}+\sum_{j=1}^N \sup_{h\in [-1,1]}|h|^{-\lambda/a_j}\hspace{0.02cm}\|f(\cdot + h \hspace{0.02cm}e_j)-f\|_{L^1(\R^N)},
\end{displaymath}
where $e_j$ denotes the $j$-th canonical basis vector. The anisotropic Hölder-Zygmund space $C_{b}^{\lambda,a}(\R^N)$ is defined as the Banach space of bounded scalar functions $\phi\in L^{\infty}(\R^N)$ endowed with the norm
\begin{displaymath} 
\|\phi\|_{C_{b}^{\lambda,a}(\R^N)}:=\|\phi\|_{L^{\infty}(\R^N)}+\sum_{j=1}^N \sup_{h\in [-1,1]}|h|^{-\lambda/a_j}\hspace{0.02cm}\|\phi(\cdot + h \hspace{0.02cm}e_j)-\phi\|_{L^{\infty}(\R^N)}.
\end{displaymath}
Remark that $\phi$ is globally H\"older continuous since for $x,y \in \R^N$ we obtain 
\begin{align}
    |\phi(x) - \phi(y)| 
    \leq 2\| \phi\|_{C_b^{\lambda,a}(\R^N)} \sum_{j=1}^N  |x_j - y_j|^{\lambda/ a_j}. \label{eq:anisotropictestfctHölder}
\end{align} 
For more details and further useful properties of these spaces, we refer to \cite{T06}. 
In the following, we study the absolute continuity of the law for the Volterra Ito-process 
\begin{align}\label{eq:anisotropicVolterraIto}
    X_t = \widetilde{g}(t) + \int_0^t \widetilde{K}^b(t-s)\hspace{0.02cm}b_s\, \mathrm{d}s
    + \int_0^t \widetilde{K}^{\sigma}(t-s)\hspace{0.03cm} \mathrm{diag}(\sigma_s^1, \dots, \sigma_s^d)\, \mathrm{d}B_s,\quad t\in\R_+.
\end{align}
Here, $B$ denotes a $d$-dimensional Brownian motion on some filtered probability space $(\Omega,\mathcal{F},(\mathcal{F}_t)_{t \in\R_+},\mathbb{P})$, the initial curve $\widetilde{g}:\Omega\times\R_+\longrightarrow\R^N$ is $\mathcal{F}_0\otimes \mathcal{B}(\R_+)$-measurable, $\widetilde{K}^b, \widetilde{K}^{\sigma} \in L_{\mathrm{loc}}^2(\R_+; \R^{N \times d})$, and $b: \Omega\times \R_+ \longrightarrow \R^d$ and $\sigma^1,\dots, \sigma^d: \Omega\times \R_+ \longrightarrow \R$ are progressively measurable processes.

\begin{proposition}\label{proposition:VolterraItoanisotropdensityperturbations}
    Let $X$ be the Volterra Ito-process \eqref{eq:anisotropicVolterraIto}. Suppose that the following conditions are satisfied:
    \begin{enumerate}
        \item[(D1)] For each $j \in \{1,\dots, N\}$ and $a\in\{b,\sigma\}$, there exist $\widetilde{k}^{a}_j\in L_{\mathrm{loc}}^2(\R_+;\R)$ and a unique $a(j) \in \{1,\dots, d\}$ such that
        \[
            \widetilde{K}^{a}_{j\ell}(t) = \delta_{\ell a(j)}\hspace{0.02cm}\widetilde{k}^{a}_j(t), \qquad t > 0, \ \ \ell \in \{1,\dots, d\}.
        \]
        \item[(D2)] Define $S_i = \{ j \in \{1,\dots, N\} \, : \, a(j) = i \}$, $i\in\{1,\dots, d\}$. There exist $C_*, C>0$, $\gamma_{b} = (\gamma_{b}^1,\dots, \gamma_{b}^d), \gamma_{\sigma} = (\gamma_{\sigma}^1,\dots, \gamma_{\sigma}^d), \gamma_* = (\gamma_*^1,\dots, \gamma_*^d)\in (\R_+^*)^d$, and $h_0 \in (0,1)$ such that for each $i \in \{1,\dots, d\}$ and $a\in\{b,\sigma\}$:
        \[
            \int_0^h \big|\widetilde{k}^{a}_j(r)\big|^2\, \mathrm{d}r \leq C h^{2\gamma_{a}^i}, \qquad j \in S_i, \ h \in (0,h_0),
        \]
        holds, and the Gram matrix $\widetilde{G}_{\ell \ell'}^{(i)}(h) = \int_0^h \widetilde{k}^{\sigma}_{\ell}(r)\widetilde{k}^{\sigma}_{\ell'}(r)\, \mathrm{d}r$, $\ell, \ell' \in S_i$, satisfies
        \[
            \lambda_{\min}\big(\widetilde{G}^{(i)}(h)\big) \geq C_* h^{2\gamma_*^i}, \qquad h \in (0,h_0),
        \]
        and for each $i \in \{1,\dots, d\}$ with $S_i=\emptyset$ we may w.l.o.g.\ select $\gamma_{a}^{i}=\infty$ and $\gamma_*^{i}=0$.
        \item[(D3)] There exist $p\ge 2$ and $\alpha_b^i \geq 0$ and $\alpha_{\sigma}^i>0$ with $i \in\{ 1,\dots, d\}$ such that for each $T>0$ we find $C_{p,T}>0$ satisfying for all $0<s<t\le T$:
        \begin{displaymath}
            \big\|b_t^i-b_s^i\big\|_{L^p(\Omega)}\le C_{p,T}\hspace{0.02cm}(t-s)^{\alpha_b^i}\quad\mbox{and}\quad \big\|\sigma_t^i-\sigma_s^i\big\|_{L^p(\Omega)}\le C_{p,T}\hspace{0.02cm}(t-s)^{\alpha_{\sigma}^i}.
        \end{displaymath}
        \item[(D4)] The parameters satisfy the relation 
        \[
            \gamma_*^i < \min  \left\{ \gamma_b^i + \frac{1}{2} + \alpha_b^i, \ \gamma_{\sigma}^i + \alpha_{\sigma}^i \right\}, \qquad i \in \{1,\dots, d\}.
        \]
    \end{enumerate}
    Then, for every $t>0$, the measure 
    \begin{equation}\label{eq:anisotropicmeasureperturbations}
        \mathcal{B}\big(\R^{N}\big)\ni A\longmapsto \mathbb{E}\left[ \left(1\wedge\min_{i\in\{1,\dots,d\}}\big|\sigma_t^i\big|\right) \mathbbm{1}_A(X_t)\right],
\end{equation}
   is absolutely continuous with respect to the Lebesgue measure on $\R^{N}$.
\end{proposition}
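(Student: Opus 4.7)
The plan is to mirror the density method of \cite[Theorem 3.6(b)]{F24} but keep track of the block structure of $\widetilde{K}^{\sigma}$ to obtain an \emph{anisotropic} smoothness estimate in Besov scale. Specifically, I will prove that for each $j \in \{1, \dots, N\}$ with $j \in S_i$ there exist $\lambda_j > 0$ and $C_j > 0$ such that, for every bounded measurable $\phi:\R^N\longrightarrow\R$ and every $h \in [-1,1]$,
\begin{align}\label{eq:planincrementbound}
\Big|\E\Big[\big(1\wedge{\textstyle\min_k}|\sigma_t^k|\big)\big(\phi(X_t + h e_j) - \phi(X_t)\big)\Big]\Big| \leq C_j\, |h|^{\lambda_j}\, \|\phi\|_\infty.
\end{align}
Normalizing $a_j = \lambda_0/\lambda_j$ with a sufficiently small $\lambda_0 > 0$ so that $a_1+\dots+a_N = N$ and $\lambda_0/a_j \in (0,1)$, the estimate \eqref{eq:planincrementbound} then shows that the measure in \eqref{eq:anisotropicmeasureperturbations} admits a density in the anisotropic Besov space $B_{1,\infty}^{\lambda_0,a}(\R^N)$ by the standard duality between difference quotients in $L^1$ and Hölder-Zygmund bounds, which in particular yields absolute continuity with respect to Lebesgue measure.

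To establish \eqref{eq:planincrementbound}, fix $t>0$, $j \in S_i$ and $h \in [-1,1]$, and for $\delta \in (0, t \wedge h_0)$ to be optimized later decompose
\[
X_t = Y_{t-\delta} + \widetilde{W}_\delta + E_\delta^b + E_\delta^\sigma,
\]
where $Y_{t-\delta}$ collects the $\mathcal{F}_{t-\delta}$-measurable part, $\widetilde{W}_\delta$ is the diffusion increment from $t-\delta$ to $t$ with coefficients \emph{frozen} at time $t-\delta$, i.e.\ $\widetilde{W}_\delta^j = \sigma_{t-\delta}^{i}\int_{t-\delta}^t \widetilde{k}_j^\sigma(t-s)\, \mathrm{d}B_s^{i}$ for $j \in S_i$, and $E_\delta^b, E_\delta^\sigma$ are the freezing errors on drift and diffusion. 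Using (D2) together with Jensen's inequality for the drift and the BDG inequality for the diffusion, combined with the Hölder continuity (D3) of $b^i$ and $\sigma^i$, the error components in coordinate $j \in S_i$ satisfy
\[
\|E_\delta^{b,j}\|_{L^p(\Omega)} \lesssim \delta^{\alpha_b^i + \gamma_b^i + 1/2}, \qquad \|E_\delta^{\sigma,j}\|_{L^p(\Omega)} \lesssim \delta^{\alpha_\sigma^i + \gamma_\sigma^i}.
\]

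Conditional on $\mathcal{F}_{t-\delta}$, the vector $\widetilde{W}_\delta$ is centered Gaussian with a \emph{block-diagonal} covariance matrix: the blocks indexed by the partition $(S_i)_{i\in\{1,\dots,d\}}$ from (D1) are mutually independent since they are driven by different Brownian components, and within the block $S_i$ the covariance equals $(\sigma_{t-\delta}^i)^2\,\widetilde{G}^{(i)}(\delta)$. The lower bound in (D2) gives that this block has smallest eigenvalue at least $C_*\,(\sigma_{t-\delta}^i)^2\,\delta^{2\gamma_*^i}$. The classical Cameron-Martin shift estimate (equivalently, an integration-by-parts on the Gaussian density along $e_j$) therefore yields, on the event $\{|\sigma_{t-\delta}^i| > 0\}$,
\[
\Big|\E\big[\phi(\widetilde{W}_\delta + v + h e_j) - \phi(\widetilde{W}_\delta + v)\,\big|\,\mathcal{F}_{t-\delta}\big]\Big| \leq C\,\|\phi\|_\infty\,\frac{|h|}{|\sigma_{t-\delta}^i|\,\delta^{\gamma_*^i}}
\]
uniformly in the deterministic $v \in \R^N$, so that after multiplying by the weight $1 \wedge \min_k|\sigma_t^k|$, replacing $\sigma_t^k$ by $\sigma_{t-\delta}^k$ via (D3) (an extra $O(\delta^{\alpha_\sigma^k})$ error), and absorbing the factor $|\sigma_{t-\delta}^i|$ into this weight, the singular denominator vanishes.

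Accounting for the freezing error via a second call to \eqref{eq:planincrementbound} for the difference $\phi(X_t + he_j) - \phi(Y_{t-\delta} + \widetilde{W}_\delta + he_j)$ — controlled by a Gaussian mollification argument that uses the same Gaussian lower bound to exchange a small $L^p$-error for a small weighted $L^\infty$-error — combining the two pieces gives, for $j \in S_i$,
\[
\big|\E[\rho_t\,\Delta_j^h \phi(X_t)]\big| \lesssim \|\phi\|_\infty\,\Big(\delta^{-\gamma_*^i}|h| + \delta^{(\alpha_b^i + \gamma_b^i + 1/2) - \gamma_*^i} + \delta^{(\alpha_\sigma^i + \gamma_\sigma^i) - \gamma_*^i}\Big).
\]
Setting $\eta_i := \min\{\alpha_b^i + \gamma_b^i + 1/2,\, \alpha_\sigma^i + \gamma_\sigma^i\} - \gamma_*^i$, which is strictly positive by (D4), and optimizing in $\delta = |h|^{1/(1+\eta_i/\gamma_*^i)}$ yields \eqref{eq:planincrementbound} with $\lambda_j = \eta_i/(\gamma_*^i + \eta_i) > 0$, completing the proof. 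The principal obstacle is step three: one must simultaneously exploit the block-diagonal Gaussian structure to obtain a \emph{coordinate}-wise shift estimate controlled by the block eigenvalue $\delta^{2\gamma_*^i}$, while the weight $\rho_t = 1 \wedge \min_k|\sigma_t^k|$ — which is exactly calibrated so that all blocks become simultaneously non-degenerate whenever $\rho_t > 0$ — is needed to cancel the $|\sigma_{t-\delta}^i|^{-1}$ singularity uniformly in $i$.
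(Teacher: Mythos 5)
Your overall architecture (split $X_t$ into an $\mathcal{F}_{t-\delta}$-measurable part plus a conditionally Gaussian increment with frozen coefficients, bound the freezing errors in $L^p$ via (D2)--(D3) and BDG/Jensen, exploit the block-diagonal Gaussian covariance to get a coordinate-wise shift bound scaling like $|h|\,\rho_{t-\delta}^{-1}\delta^{-\gamma_*^i}$, and optimize $\delta$) matches the paper's. Your Cameron--Martin / Gaussian integration-by-parts route for the shift estimate is a perfectly acceptable alternative to the paper's Pinsker/Kullback--Leibler calculation.

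The genuine gap is in how you handle the freezing error while insisting that the increment bound~\eqref{eq:planincrementbound} should hold for merely \emph{bounded measurable}~$\phi$. When $\phi$ is only bounded, the quantity $\E\big[\rho_t\big|\phi(X_t+he_j)-\phi(Y_{t-\delta}+\widetilde{W}_\delta+he_j)\big|\big]$ cannot be made small by the $L^p$-bounds on $E_\delta^b,E_\delta^\sigma$: closeness in $L^p$ says nothing about $\phi$ of two nearby random points if $\phi$ has no modulus of continuity. Your ``second call to~\eqref{eq:planincrementbound}'' to control this term is circular (you invoke the estimate you are in the middle of proving), and the ``Gaussian mollification'' you describe does not resolve it because the error $E_\delta$ is \emph{not} $\mathcal{F}_{t-\delta}$-measurable nor independent of $\widetilde{W}_\delta$; you cannot freeze $E_\delta$ and integrate the Gaussian law of $\widetilde{W}_\delta$ against it. The paper sidesteps this by testing against $\phi\in C_b^{\lambda,a}(\R^N)$: the anisotropic H\"older regularity of the test function (the bound~\eqref{eq:anisotropictestfctHölder}) converts the $L^p$-control on $|X_t-X_t^\varepsilon|$ directly into a bound on $\E\big[\rho_t\,|\Delta_{he_j}\phi(X_t)-\Delta_{he_j}\phi(X_t^\varepsilon)|\big]$, and absolute continuity then follows from \cite[Lemma~3.1]{FJR18} via the duality between $C_b^{\lambda,a}$ and $B_{1,\infty}^{\lambda,a}$. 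If you want to keep your scheme, you must either weaken your claim to H\"older test functions (reproducing the paper's argument) or supply a genuinely different, non-circular control on the freezing error for bounded $\phi$ --- which is strictly harder and not available here without further structure.

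A secondary, smaller discrepancy: your ansatz $a_j=\lambda_0/\lambda_j$ with $\lambda_j=\eta_i/(\gamma_*^i+\eta_i)$ gives an anisotropy different from the paper's choice $a_\ell\propto\gamma_*^{a(\ell)}$; both could in principle be made to work once the estimate is established, but you would still need to check that $\lambda_0/a_j\in(0,1)$ for all $j$ simultaneously, which is automatic only after you take $\lambda_0$ small, as the paper does at the end of its verification of~\eqref{eq:anisotropycoeffcond1perturb}.
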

\begin{proof}
   Fix $t>0$, select $T=t+1$ and an arbitrary $\varepsilon\in (0, h_0\wedge t)$ and let us introduce the difference operator $\Delta_yf:=f(\cdot+y)-f$ for $f:\R^N\longrightarrow\R$ and $y\in\R^N$. Define $X_t^{\varepsilon} = U_t^{\varepsilon} + V_t^{\varepsilon}$ on $\R^N$, where
   \begin{align*}
       U_t^{\varepsilon}&= \widetilde{g}(t) + \int_0^{t-\varepsilon} \widetilde{K}^b(t-s)\hspace{0.02cm}b_s\, \mathrm{d}s + \int_{t-\varepsilon}^{t} \widetilde{K}^b(t-s)\hspace{0.02cm} b_{t-\varepsilon}\, \mathrm{d}s 
       \\ &\qquad  \quad\hspace{-0.02cm} + \int_0^{t-\varepsilon}\widetilde{K}^{\sigma}(t-s)\hspace{0.03cm} \mathrm{diag}(\sigma^1_s, \dots, \sigma_s^d)\, \mathrm{d}B_s,
       \\ V_t^{\varepsilon } &= \int_{t-\varepsilon}^t\widetilde{K}^{\sigma}(t-s)\hspace{0.02cm} \mathrm{diag}(\sigma^1_{t-\varepsilon}, \dots, \sigma_{t - \varepsilon}^d)\, \mathrm{d}B_s.
   \end{align*}   
   Then we immediately obtain
    \begin{equation*}
   \begin{aligned}
       X_t-X_t^{\varepsilon}
       = \int_{t-\varepsilon}^{t} \widetilde{K}^b(t-s)\hspace{0.02cm}(b_{s} - b_{t-\varepsilon})\, \mathrm{d}s 
       + \int_{t-\varepsilon}^t \widetilde{K}^{\sigma}(t-s)\hspace{0.02cm} \mathrm{diag}(\sigma_s^1 - \sigma_{t-\varepsilon}^1, \dots, \sigma_s^d - \sigma_{t-\varepsilon}^d)\, \mathrm{d}B_s,
   \end{aligned}
   \end{equation*}
   and for fixed $j \in \{1,\dots, N\}$, by assumption (D1), its $j$-th coordinate process has the form
   \begin{align*}
       X_t^j - X_t^{\varepsilon, j}
       &= \int_{t-\varepsilon}^t \sum_{\ell = 1}^d \widetilde{K}^b_{j\ell}(t-s)\big(b_s^{\ell} - b_{t-\varepsilon}^{\ell}\big)\, \mathrm{d}s
        + \int_{t-\varepsilon}^t \sum_{\ell=1}^d \widetilde{K}^{\sigma}_{j \ell}(t-s) \big(\sigma_s^{\ell} - \sigma_{t- \varepsilon}^{\ell}\big)\, \mathrm{d}B_s^{\ell}
        \\ &= \int_{t-\varepsilon}^t  \widetilde{k}^b_{j}(t-s)\big(b_s^{a(j)} - b_{t-\varepsilon}^{a(j)}\big)\, \mathrm{d}s
        + \int_{t-\varepsilon}^t \widetilde{k}^{\sigma}_{j}(t-s) \big(\sigma_s^{a(j)} - \sigma_{t- \varepsilon}^{a(j)}\big)\, \mathrm{d}B_s^{a(j)}.
   \end{align*}
   Thus, by an application of the BDG and Jensen's inequality, we obtain from the upper bound in (D2) combined with (D3):
     \begin{align}\label{eq:anisotropmomentestimateperturb}
      \notag\mathbb{E}\Big[\big|X_t^j-X_t^{\varepsilon, j}\big|^p\Big]
      &\lesssim \bigg(\int_{t-\varepsilon}^t \big|\widetilde{k}_j^{b}(t-s)\big|\, \mathrm{d}s\bigg)^{p-1}\int_{t-\varepsilon}^t \big|\widetilde{k}_j^{b}(t-s)\big|\hspace{0.02cm}\mathbb{E}\left[\big|b_s^{a(j)}-b_{t-\varepsilon}^{a(j)}\big|^p\right]\, \mathrm{d}s
      \\  \notag&\quad+ \bigg(\int_{t-\varepsilon}^t \big|\widetilde{k}_j^{\sigma}(t-s)\big|^2 \, \mathrm{d}s\bigg)^{\frac{p}{2}-1}\int_{t-\varepsilon}^t \big|\widetilde{k}_j^{\sigma}(t-s)\big|^2 \hspace{0.02cm}\mathbb{E}\left[\big|\sigma_s^{a(j)}-\sigma_{t-\varepsilon}^{a(j)}\big|^p\right]\, \mathrm{d}s
      \\ &\lesssim \varepsilon^{\big(\gamma_b^{a(j)} + \frac{1}{2}+\alpha_b^{a(j)}\big)p}+\varepsilon^{\big(\gamma_{\sigma}^{a(j)}+\alpha_{\sigma}^{a(j)}\big)p}.
   \end{align}
   For an arbitrary $\phi\in C_{b}^{\lambda,a}(\R^N)$, where $\lambda>0$ and $a=(a_j)_{j\in\{1,\dots,N\}}\in (\R_+^*)^N$ shall be chosen below, we estimate for each $j\in\{1,\dots,N\}$ and $h\in [-1,1]$:
   \begin{align*}
       \left| \mathbb{E}\big[\rho_t\hspace{0.02cm}\Delta_{he_j}\phi(X_t)\big]\right|
       &\le \left|\mathbb{E}\big[\rho_t\hspace{0.02cm}\Delta_{he_j}\phi(X_t)-\rho_{t-\varepsilon}\hspace{0.02cm}\Delta_{he_j}\phi(X_t^{\varepsilon})\big]\right| + \left|\mathbb{E}\big[\rho_{t-\varepsilon}\hspace{0.02cm}\Delta_{he_j}\phi(X_t^{\varepsilon})\big]\right|
       \\ &=:R_1+R_2,
   \end{align*}
   where $\rho_t = 1 \wedge \min_{i\in\{1,\dots,d\}}|\sigma_t^i|$. For $R_1$, we use $\rho_t \leq 1$ combined with the Lipschitz continuity of the minimum, the Hölder regularity of $\phi\in C_{b}^{\lambda,a}(\R^N)$ according to \eqref{eq:anisotropictestfctHölder}, and finally~\eqref{eq:anisotropmomentestimateperturb} to find:
   \begin{align*}
       R_1 &\le \mathbb{E}\big[|\rho_t-\rho_{t-\varepsilon}|\hspace{0.02cm}|\Delta_{he_j}\phi(X_t)|\big] + \mathbb{E}\big[\rho_{t-\varepsilon}\hspace{0.02cm}|\Delta_{he_j}\phi(X_t)-\Delta_{he_j}\phi(X_t^{\varepsilon})|\big]
       \\ &\le \|\phi\|_{C_{b}^{\lambda,a}(\R^N)}|h|^{\lambda/a_j}\sum_{n=1}^d\mathbb{E}\big[|\sigma_t^n-\sigma_{t-\varepsilon}^n|\big]+ 4\|\phi\|_{C_{b}^{\lambda,a}(\R^N)}\hspace{0.02cm}\sum_{k=1}^N \mathbb{E}\Big[\big|X_t^{k} - X_t^{\varepsilon,k}\big|^{\lambda/a_{k}}\Big]
       \\ &\lesssim \|\phi\|_{C_{b}^{\lambda,a}(\R^N)}\left(|h|^{\lambda/a_j}\varepsilon^{\alpha_{\sigma}^*} + \max_{k \in \{1,\dots, N\}}\varepsilon^{\big(\gamma_b^{a(k)}+\frac{1}{2}+\alpha_b^{a(k)}\big)\frac{\lambda}{a_k}} + \max_{k \in \{1,\dots, N\}} \varepsilon^{\big(\gamma_{\sigma}^{a(k)}+\alpha_{\sigma}^{a(k)}\big)\frac{\lambda}{a_k}}\right),
   \end{align*}
    where $\alpha_{\sigma}^{*}:=\min_{n\in\{1,\dots,d\}}\alpha_{\sigma}^n$. For the second term, observe that $U_t^{\varepsilon}$ is $\mathcal{F}_{t-\varepsilon}$-measurable, while $V_t^{\varepsilon}$ is, conditionally on $\mathcal{F}_{t-\varepsilon}$, centered Gaussian with covariance matrix 
   \begin{align*}
       \mathrm{cov}(V_t^{\varepsilon}\,|\,\mathcal{F}_{t-\varepsilon})_{\ell \ell'}
       = \delta_{a(\ell)a(\ell')}\big(\sigma_{t-\varepsilon}^{a(\ell)}\big)^2\int_{0}^\varepsilon \widetilde{k}_{\ell}^{\sigma}(s)\hspace{0.02cm}\widetilde{k}_{\ell'}^{\sigma}(s)\,\mathrm{d}s
       = \sum_{i=1}^d \1_{\{ \ell, \ell' \in S_i\}} (\sigma_{t-\varepsilon}^i)^2 \hspace{0.02cm}\widetilde{G}_{\ell \ell'}^{(i)}(\varepsilon),
   \end{align*}
   where $\ell, \ell' \in \{1,\dots, N\}$.

    For $i \in \{1,\dots, d\}$, let $P_i x = (x_j)_{j \in S_i}$ be the projection onto the $S_i$-coordinates of $x \in \R^N$. Then, up to permutation of coordinates, $V_t^{\varepsilon} = ( P_1V_t^{\varepsilon}, \dots, P_dV_t^{\varepsilon})$, where each block is given by $(P_iV_t^{\varepsilon})_{j} = \int_{t-\varepsilon}^t \widetilde{k}_j^{\sigma}(t-s)\hspace{0.02cm}\sigma_{t-\varepsilon}^{i}\, \mathrm{d}B_s^i$ with $j \in S_i$. In particular, conditionally on $\mathcal{F}_{t-\varepsilon}$, $P_1V_t^{\varepsilon}, \dots, P_dV_t^{\varepsilon}$ are mutually independent centered Gaussian random vectors with covariance matrix (up to a change of variables $\{1,\dots, |S_i|\} \simeq S_i$) given by
    \begin{align*}
        \mathrm{cov}\left( P_i V_t^{\varepsilon} \, | \, \mathcal{F}_{t-\varepsilon}\right)_{\ell \ell'} = (\sigma_{t-\varepsilon}^i)^2\int_{t-\varepsilon}^t \widetilde{k}_{\ell}^{\sigma}(t-s)\hspace{0.02cm}\widetilde{k}_{\ell'}^{\sigma}(t-s)\, \mathrm{d}s = (\sigma_{t-\varepsilon}^i)^2 \widetilde{G}_{\ell \ell'}^{(i)}(\varepsilon),
    \end{align*}
    where $\ell, \ell' \in S_i$. In particular, by $\{\rho_{t-\varepsilon}>0\}\subseteq\{|\sigma_{t-\varepsilon}^i|>0\}$ and the lower bound in assumption (D2), we obtain
   \begin{align}\label{eq:anisotroppermutedGaussianblock}
   \xi^{\intercal} \mathrm{cov}\left( P_i V_t^{\varepsilon} \, | \, \mathcal{F}_{t-\varepsilon}\right) \xi \geq C_* \hspace{0.02cm}\rho_{t-\varepsilon}^2\hspace{0.02cm}\varepsilon^{2\gamma_*^i} |\xi |^2
   \end{align}
   for all $\xi = (\xi_j)_{j \in S_i}$, where we again implicitly use $\{1,\dots, |S_i|\} \simeq S_i$. Hence, $\mathcal{L}(P_i V_t^{\varepsilon}\, | \, \mathcal{F}_{t - \varepsilon})$ has a density $f_t^{\varepsilon, i}$ with respect to the Lebesgue measure. By the conditional independence of $P_1V_t^{\varepsilon}, \dots, P_dV_t^{\varepsilon}$ on $\mathcal{F}_{t-\varepsilon}$, also $\mathcal{L}(V_t^{\varepsilon} \, | \, \mathcal{F}_{t-\varepsilon})$ has a density given by $f_t^{\varepsilon} = \prod_{i=1}^d f_t^{\varepsilon, i}$, where we use the convention $f_t^{\varepsilon, i} := 1$ if $S_i = \emptyset$, and this relation holds up to permutation of the variables.
   
   For every $i\in\{1,\dots,d\}$ and $h\in\R^{|S_i|}$, by Pinsker's inequality for the total variation norm and a direct computation for the Kullback-Leibler divergence of Gaussian densities, we can estimate on the event $\{\rho_{t-\varepsilon}>0\} \subseteq \{ |\sigma_{t-\varepsilon}^i| > 0\}$:
   \begin{align}\label{eq:anisotropicGaussiandensityestimateperturb}
       \notag \int_{\R^{|S_i|}} \left|f_t^{\varepsilon,i}(x)-f_t^{\varepsilon,i}(x-h)\right|\,\mathrm{d}x 
       &\leq \sqrt{ 2 D_{\mathrm{KL}}\big(f_t^{\varepsilon, i} \, \| \, f_t^{\varepsilon, i}(\cdot - h)\big)}
       \\ \notag &\le \hspace{0.02cm} |h| \hspace{0.02cm}\left(\lambda_{\min}\left(\mathrm{cov}\left( P_iV_t^{\varepsilon}\,|\,\mathcal{F}_{t-\varepsilon}\right)\right)\right)^{-1/2}
       \\ &\lesssim |h| \hspace{0.02cm} \rho_{t-\varepsilon}^{-1}\hspace{0.02cm}\varepsilon^{-\gamma_*^i}, 
   \end{align} 
   where the last estimate is justified by \eqref{eq:anisotroppermutedGaussianblock}. Hence, for $j \in \{1,\dots, N\}$ let $i \in \{1,\dots, d\}$ be the unique index such that $j \in S_i$, i.e.\ $i=a(j)$. Then, using the $\mathcal{F}_{t-\varepsilon}$-measurability of $U_t^{\varepsilon}$, the density $f_t^{\varepsilon}$ for $\mathcal{L}(V_t^{\varepsilon}\,|\,\mathcal{F}_{t-\varepsilon})$ on $\{\rho_{t-\varepsilon}>0\}$, a shift of the integration variable, the boundedness of $\phi\in C_{b}^{\lambda,a}(\R^N)$, and Fubini's theorem, we can estimate $R_2$ by
   \begin{align*}
       R_2 &= \left|\mathbb{E}\big[\mathbb{E}\big[\rho_{t-\varepsilon}\hspace{0.02cm}\Delta_{he_j}\phi(U_t^{\varepsilon}+V_t^{\varepsilon})\,\big|\,\mathcal{F}_{t-\varepsilon}\big]\big]\right|
       \\ &= \bigg|\mathbb{E}\bigg[\rho_{t-\varepsilon}\mathbbm{1}_{\{\rho_{t-\varepsilon}>0\}}\int_{\R^N}\big(\phi(U_t^{\varepsilon}+z+he_j)-\phi(U_t^{\varepsilon}+z)\big) \hspace{0.02cm}f_t^{\varepsilon}(z)\,\mathrm{d}z\bigg]\bigg|
       \\  &\le \mathbb{E}\bigg[\rho_{t-\varepsilon}\mathbbm{1}_{\{\rho_{t-\varepsilon}>0\}}\int_{\R^N}\big|\phi(U_t^{\varepsilon}+z)\big| \hspace{0.02cm}\big|f_t^{\varepsilon}(z)-f_t^{\varepsilon}(z-he_j)\big|\,\mathrm{d}z\bigg]
       \\ &\le \|\phi\|_{C_{b}^{\lambda,a}(\R^N)} \hspace{0.02cm}\mathbb{E}\bigg[\rho_{t-\varepsilon}\mathbbm{1}_{\{\rho_{t-\varepsilon}>0\}}
       \\ &\hskip29mm \cdot\int_{\R^N}\prod_{k\in\{1,\dots,d\}\setminus\{a(j)\}}f_t^{\varepsilon,k}( P_kz )\hspace{0.02cm}\big|f_t^{\varepsilon,a(j)}(P_{a(j)} z) - f_t^{\varepsilon,a(j)}\big(P_{a(j)} (z - he_{j})\big)\big|\,\mathrm{d}z\bigg]
       \\ &= \|\phi\|_{C_{b}^{\lambda,a}(\R^N)} \hspace{0.02cm}\mathbb{E}\bigg[\rho_{t-\varepsilon}\mathbbm{1}_{\{\rho_{t-\varepsilon}>0\}}\int_{\R^{|S_i|}}\big|f_t^{\varepsilon,a(j)}(w)-f_t^{\varepsilon,a(j)}\big(w - h\hspace{0.02cm}P_{a(j)}e_j \big)\big|\,\mathrm{d}w\bigg]\\
       &\lesssim \|\phi\|_{C_{b}^{\lambda,a}(\R^N)} \hspace{0.02cm} |h| \hspace{0.02cm} \varepsilon^{-\gamma_*^{a(j)}},
   \end{align*}
   where the last step is justified by~\eqref{eq:anisotropicGaussiandensityestimateperturb}. Therefore, combining the bounds for $R_1$ and $R_2$ and selecting $\varepsilon = (1\wedge t)\frac{h_0}{2} |h|^{\delta}$, with a parameter $\delta>0$ to be specified below, gives
   \begin{align*}
       \left| \mathbb{E}\big[\rho_t\hspace{0.02cm}\Delta_{he_j}\phi(X_t)\big]\right| 
       &\lesssim \|\phi\|_{C_{b}^{\lambda,a}(\R^N)} \bigg(|h|^{\lambda/a_j+\delta\alpha_{\sigma}^*} + \max_{k \in \{1,\dots, N\}}|h|^{\delta\big(\gamma_b^{a(k)}+\frac{1}{2}+\alpha_b^{a(k)}\big)\frac{\lambda}{a_k}} 
       \\ &\qquad \qquad  \qquad \qquad + \max_{k \in \{1,\dots, N\}} |h|^{\delta\big(\gamma_{\sigma}^{a(k)}+\alpha_{\sigma}^{a(k)}\big)\frac{\lambda}{a_k}} + |h|^{1-\delta\gamma_*^{a(j)}} \bigg).
   \end{align*}    
   For an application of \cite[Lemma 3.1]{FJR18}, it is sufficient to prove the existence of an anisotropy $a=(a_1,\dots,a_N)\in (\R_+^*)^N$, $\lambda>0$ with $(\lambda/a_1,\dots,\lambda/a_N)\in (0,1)^N$ and $\delta>0$ such that each of the four exponents above is strictly larger than $\lambda/a_j$ for each $j \in \{1,\dots, N\}$. Notice that for the first term, this is trivial as $\alpha_{\sigma}^*>0$. Moreover, the remaining three exponents yield for each $j,k\in\{1,\dots,N\}$:
   \begin{align}\label{eq:anisotropycoeffcond1perturb}
      \delta\hspace{0.02cm}\min\big\{\gamma_b^{a(k)}+\tfrac{1}{2}+\alpha_b^{a(k)},\gamma_{\sigma}^{a(k)}+\alpha_{\sigma}^{a(k)}\big\}\hspace{0.02cm}\frac{\lambda}{a_k}>\frac{\lambda}{a_j} \quad\text{and}\quad 1 - \delta \gamma_*^{a(j)} > \frac{\lambda}{a_j}.
   \end{align}
   Let us now consider the anisotropy defined by
   \[
   a_{\ell} = \gamma_*^{a(\ell)} \left(\frac{1}{N}\sum_{k=1}^N \gamma_*^{a(k)}\right)^{-1}, \qquad \ell \in\{1,\dots, N\}.
   \]
   Then condition \eqref{eq:anisotropycoeffcond1perturb} reduces to 
   \[ 
    \delta > \frac{\gamma_*^{a(k)}}{\min\big\{\gamma_b^{a(k)}+\tfrac{1}{2}+\alpha_b^{a(k)},\gamma_{\sigma}^{a(k)}+\alpha_{\sigma}^{a(k)}\big\}} \frac{1}{\gamma_*^{a(j)}}\quad\text{and}\quad \delta < \frac{1}{\gamma_*^{a(j)}}-\frac{\lambda}{\gamma_*^{a(j)} a_j},
   \]
   for all $j,k \in \{1,\dots,N\}$. As we may choose $\lambda>0$ arbitrarily small, this can be achieved as soon as
   \[
    \forall j,k\in\{1,\dots,N\}:\quad\frac{\gamma_*^{a(k)}}{\min\big\{\gamma_b^{a(k)}+\tfrac{1}{2}+\alpha_b^{a(k)},\gamma_{\sigma}^{a(k)}+\alpha_{\sigma}^{a(k)}\big\}} \frac{1}{\gamma_*^{a(j)}} < \frac{1}{\gamma_*^{a(j)}},
   \]
   which is an immediate consequence of assumption (D4). We may, therefore, select $\lambda\in(0,\min_{k\in\{1,\dots,N\}}a_k)$ sufficiently close to $0$, which also gives $(\lambda/a_1,\dots,\lambda/a_N)\in (0,1)^N$. Hence, \cite[Lemma 3.1]{FJR18} proves that the measure defined in \eqref{eq:anisotropicmeasureperturbations} is absolutely continuous with respect to the $N$-dimensional Lebesgue measure.
\end{proof}

Using \cite[Lemma 3.1]{FJR18}, one can in fact even show that the density belongs to an appropriate anisotropic Besov space $B_{1,\infty}^{\lambda,a}\big(\R^{N}\big)$. Since this refinement is not needed for the present work, we omit the details and refer the interested reader to \cite{FJR18}.

\section{Fractional forward differentiation}\label{section:fracdifferentiation}

\subsection{Admissibility}\label{subsection:fracdifferentiationadmiss}

In this section, we prove the assertions of Example \ref{admissible operator fractional differentiation}.

\begin{proof}[Proof of Example \ref{admissible operator fractional differentiation}]

 \textit{Step 1.} Let us first suppose that $\int_{\R_+}(1 + z)\,\nu_i(\mathrm{d}z) < \infty$ for every $i\in\{1,\dots,d\}$. Then $\widetilde{\Xi}$ defined via $\widetilde{\Xi} = \sum_{i=1}^d c_i \int_{\R_+} \Xi^{i} (S(z) - \mathrm{id}_{\mathcal{V}})\, \nu_i(\mathrm{d}z)$ is a bounded linear operator on the entire space~$\mathcal{V}$ since
 \begin{displaymath}
     |\widetilde{\Xi}y| \lesssim \hspace{0.03cm}\sum_{i=1}^d |c_i|\hspace{0.02cm} \| \Xi^{i} \|_{L(\mathcal{V}, \R)} \int_{\R_+} \big( 1 + \sqrt{z}\big)\, \nu_i(\mathrm{d}z) \hspace{0.02cm}\|y\|_{\mathcal{V}},
 \end{displaymath}
 where we have used that $\|S(z)\|_{L(\mathcal{V})} \lesssim 1 + \sqrt{z}$ by assumption. In particular, this implies $\widetilde{\Xi}(t)=\widetilde{\Xi}S(t) \in L(\mathcal{V}, \R)$ for all $t \geq 0$ and hence $\mathcal{D} = \mathcal{V}$. For the Volterra kernels, we obtain for $a \in \{b,\sigma\}$:
 \begin{displaymath}
     |\widetilde{k}^a(t)| \leq \sum_{i=1}^d |c_i| \| \Xi^{i} \|_{L(\mathcal{V}, \R)} \int_{\R_+} \big( 1 + \sqrt{z}\big)\, \nu_i(\mathrm{d}z)\hspace{0.02cm} \|S(t)\|_{L(\mathcal{H},\mathcal{V})} \|\xi_a\|_{L(\R^d,\mathcal{H})},
 \end{displaymath}
 and hence by \eqref{eq:abstractoperatornormestimate} from condition (B) it follows that $\widetilde{k}^b, \widetilde{k}^{\sigma} \in L_{\mathrm{loc}}^{2}(\R_+; \R^{1 \times d})$. Setting $\widetilde{\Xi}_{\lambda} := \widetilde{\Xi}$ for $\lambda\in (0,1)$, it remains to verify that $\widetilde{\Xi}$ can be approximated by linear combinations of $\Xi^{i}S(z)$ with $i \in \{1,\dots, d\}$ and $z\in\R_+$. Since by $\int_{\R_+}(1 + z)\,\nu_i(\mathrm{d}z) < \infty$, $\nu_i$ is a finite Borel measure on~$\R_+$, after normalization, we may apply \cite[Theorem 6.18]{MR2459454} to find a sequence of measures $(\nu_{i}^n)_{n\in\mathbb{N}}$ such that $\nu_{i}^n \longrightarrow \nu_{i}$ in the Wasserstein-$1$ distance, where
 \begin{displaymath}
     \nu_{i}^n = \sum_{j=1}^{N_n^{i}}a_{ij}^{(n)} \delta_{z_{ij}^{(n)}}
 \end{displaymath}
 is a finite linear combination of Dirac measures with $N_n^{i} \geq 1$, $(a_{ij}^{(n)})_{i \in \{1,\dots, d\}, j \in\{1,\dots, N_n^{i}\}} \subseteq \R$ and $(z_{ij}^{(n)})_{i \in \{1,\dots, d\}, j \in\{ 1,\dots, N_n^{i}\}} \subseteq \R_+$. Define $N_n = \max\{N_n^1, \dots, N_n^d\}$ and $a_{ij}^{(n)} = z_{ij}^{(n)} = 0$ when $j > N_n^i$ and $i \in \{1,\dots, d\}$. Then $\nu_i^n = \sum_{j=1}^{N_n}a_{ij}^{(n)}\delta_{z_{ij}^{(n)}}$.

 Since for $y \in \mathcal{V}$ and $i \in \{1,\dots, d\}$, $\Xi^{i} (S(\cdot)y - y) \in C(\R_+; \R)$ has linear growth by $\|S(z)\|_{L(\mathcal{V})} \lesssim 1 + \sqrt{z}$ and $\Xi^i \in L(\mathcal{V},\R)$, we obtain from the convergence in the Wasserstein distance:
 \begin{align*}
     \widetilde{\Xi}y = \sum_{i=1}^d c_i \int_{\R_+} \Xi^{i} (S(z)y - y)\, \nu_i(\mathrm{d}z) 
      = \lim_{n \to \infty}\sum_{i=1}^d c_i \int_{\R_+} \Xi^i(S(z)y - y)\, \nu_{i}^n(\mathrm{d}z).
 \end{align*}
 When replacing $y$ with $S(t)y \in \mathcal{V}$, we obtain the property~\eqref{eq:admissibleNlimy}, since $-c_i\hspace{0.02cm}\nu_{i}^n(\R_+)\hspace{0.02cm}\Xi^iS(t)y$ also has the structure required for admissible summands therein. For \eqref{eq:admissibleNlimKerneloperatorcase}, let us first observe that $\widetilde{\Xi}(t)\xi_b = \widetilde{k}^b(t)$ and $\widetilde{\Xi}(t)\xi_{\sigma} = \widetilde{k}^{\sigma}(t)$ with the right-hand sides given by \eqref{eq: K admissible differentiation}. Hence, we obtain for our choice of approximation:
 \begin{align} \label{eq: 11}
    & \int_0^T \left| \sum_{i=1}^d c_i \left(\sum_{j=1}^{N_n} a_{ij}^{(n)}K^a_i\big(z_{ij}^{(n)}+t\big) - K_i^{a}(t)\hspace{0.02cm}\nu_i(\R_{+})\right) - \widetilde{k}^a(t)\right|^2\, \mathrm{d}t 
    \\ &= \int_0^T \left| \sum_{i=1}^d c_i \int_{\R_+} \Xi^{i} S(z + t)\hspace{0.02cm}\xi_a\, \big(\nu_{i}^n(\mathrm{d}z) - \nu_i(\mathrm{d}z)\big)\right|^2\, \mathrm{d}t,  \notag
 \end{align}
 for $a \in \{b, \sigma\}$. By $\nu_{i}^n \longrightarrow \nu_{i}$ in the Wasserstein distance, combined with the continuity of $\Xi^{i} S(\cdot+t)\xi_a$ for each $t \in (0,T]$ and its linear growth in $z$ due to
 \begin{align}\label{eq: 10}
    |\Xi^{i} S(z+t)\xi_a| \lesssim \|\Xi^{i} \|_{L(\mathcal{V}, \R)} \left( 1 + \sqrt{z}\right) (1 + t^{-\rho}) \|\xi_a\|_{L(\R^d,\mathcal{H})},
 \end{align}
 where we have used \eqref{eq:abstractoperatornormestimate}, the inner integrals converge pointwise in $t$ to zero for every $i\in\{1,\dots,d\}$. Moreover, the entire integrand of the outer integral has an integrable majorant due to~\eqref{eq: 10}, $\rho\in[0,1/2)$, and $\sup_{n \geq 1}\int_{\R_+}(1+\sqrt{z})\, \nu_i^n(\mathrm{d}z)< \infty$, which is implied by
 \[
    \lim_{n \to \infty}\int_{\R_+}(1+\sqrt{z})\, \nu_i^n(\mathrm{d}z) = \int_{\R_+}(1+\sqrt{z})\, \nu_i(\mathrm{d}z) < \infty,
 \]
 following from the convergence in the Wasserstein distance. Thus, by dominated convergence, \eqref{eq: 11} converges to zero as $n\to\infty$ for fixed $T > 0$. This shows the admissibility for the case with $\int_{\R_+}(1 + z)\, \nu_i(\mathrm{d}z) < \infty$ for every $i\in\{1,\dots,d\}$.

 \textit{Step 2.} Let us now prove the general case where $\int_{\R_+}(1\wedge z)\hspace{0.03cm}e^{-\lambda z}\, \nu_i(\mathrm{d}z) < \infty$ for all $i \in \{1,\dots, d\}$ and each $\lambda > 0$. Define for fixed $\lambda > 0$ and $i\in\{1,\dots,d\}$ the operator
 \[
    \widetilde{\Xi}_{\lambda}^{i} = \int_{\R_+} \Xi^{i} (S(z) - \mathrm{id}_{\mathcal{V}})\, e^{-\lambda z}\1_{(\lambda, \infty)}(z)\,\nu_i(\mathrm{d}z).
 \]
 Since $\nu^{\lambda}_{i}(\mathrm{d}z):=e^{-\lambda z}\1_{(\lambda, \infty)}(z)\,\nu_i(\mathrm{d}z)$ satisfies $\int_{\R_+}(1+z)\,\nu^{\lambda}_i(\mathrm{d}z) < \infty$ by construction, it follows from step 1 that $\widetilde{\Xi}_{\lambda}^{i} \in L(\mathcal{V},\R)$. In particular, step 1 proves that the family $(\widetilde{\Xi}_{\lambda})_{\lambda\in (0,1)}$ defined by $\sum_{i=1}^dc_i\hspace{0.02cm}\widetilde{\Xi}_{\lambda}^{i}$ fulfills both \eqref{eq:admissibleNlimy} and \eqref{eq:admissibleNlimKerneloperatorcase} in Definition \ref{def:admissibleoperator}, where \eqref{eq:admissibleNlimy} holds even for all $y\in\mathcal{V}$. Moreover, when passing to the limit $\lambda\to 0$, we obtain for each $y \in \mathcal{D}$ and $t>0$ from the dominated convergence theorem and $\nu_i(\{0\}) = 0$:
 \begin{align*}
    \widetilde{\Xi}_{\lambda}S(t)y = \sum_{i=1}^d c_i \int_{\R_+} \Xi^{i} S(t)(S(z)y - y)\, e^{-\lambda z} \1_{(\lambda, \infty)}(z)\, \nu_i(\mathrm{d}z) \longrightarrow \widetilde{\Xi}(t)y, \qquad \mbox{as}\ \ \lambda \rightarrow 0.
 \end{align*}
 Likewise, an iterated application of the dominated convergence theorem in combination with \eqref{eq:Kintegrabilityadmissibledifferentiation} gives for $a \in \{b,\sigma\}$:
 \begin{align*}
     &\int_0^T \left| \widetilde{\Xi}_{\lambda}S(t)\xi_a - \widetilde{k}^a(t) \right|^2\, \mathrm{d}t
     \\ &\qquad  \leq \int_0^T \left( \sum_{i=1}^d |c_i| \int_{\R_+} \big|K^a_i(z+t) - K^a_i(t)\big| \big(1 - e^{-\lambda z} \1_{(\lambda, \infty)}(z) \big)\, \nu_i(\mathrm{d}z)\right)^2\, \mathrm{d}t \longrightarrow 0,
 \end{align*}
 as $\lambda \rightarrow 0$, which shows $\widetilde{\Xi}_{\lambda}S(\cdot)\xi_a \longrightarrow \widetilde{k}^a$ in $L^2([0,T];\R^{1\times d})$ for every $T>0$, i.e.\ the second condition in \eqref{eq:admissiblelamdalim}. This proves all assertions. 
\end{proof}

\subsection{Stability for regularly varying functions}

Below, we prove that the forward differentiation rule preserves the class of regularly varying functions.

\begin{lemma}\label{lemma: appendix B}
    Let $k \in C^1(\R_+^*) \cap L_{\mathrm{loc}}^2(\R_+; \R)$ be regularly varying in $t=0$ with index $\rho < 1$, and~$k'$ being monotone in a right-neighborhood of $t = 0$. Suppose that there exists $\beta \geq 0$, $h_0 \in (0,1)$, and $C > 0$ such that
    \begin{align}\label{eq: bounded away}
        \left| \frac{k(tu)}{k(t)}\right| \leq Cu^{\beta},\qquad t \in (0,h_0], \ u \geq 1.
    \end{align} 
    Define for $\alpha\in (\beta, 1)$ the function
        \[ 
        \widetilde{k}(t) = \int_{0}^{\infty}\big( k(t+z) - k(t)\big)\,\frac{e^{-z}\mathrm{d}z}{z^{1 +  \alpha}}, \qquad t > 0.
        \]
    Then there exists an explicit constant $C(\alpha) \in \R$ such that
    \begin{align}\label{eq: B appendix regular varying}
        \lim_{t \searrow 0} \frac{\widetilde{k}(t)}{t^{-\alpha}k(t)} = C(\alpha).
    \end{align} 
    In particular, if $\rho > - \frac{1}{2}$ and $\alpha\in(\beta, \min\{\rho+\frac{1}{2},1\})$, then $\widetilde{k} \in L_{\mathrm{loc}}^2(\R_+; \R)$ and for every $T>0$: 
    \begin{align}\label{eq: B last}
        \int_0^T \left( \int_{0}^{\infty} |k(t+z) - k(t)|\, \frac{e^{-z}\,\mathrm{d}z}{z^{1+\alpha}} \right)^2\, \mathrm{d}t < \infty.
    \end{align} 
\end{lemma}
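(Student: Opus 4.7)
The plan is to reduce both claims to a single dominated-convergence argument after the rescaling $z = tu$, which yields
\begin{equation*}
\frac{\widetilde{k}(t)}{t^{-\alpha}k(t)} = \int_0^\infty \left[\frac{k(t(1+u))}{k(t)} - 1\right]\frac{e^{-tu}}{u^{1+\alpha}}\,\mathrm{d}u.
\end{equation*}
Regular variation of $k$ with index $\rho$ together with $e^{-tu}\to 1$ delivers the pointwise limit $((1+u)^\rho-1)/u^{1+\alpha}$, identifying the candidate constant $C(\alpha) = \int_0^\infty ((1+u)^\rho-1)u^{-1-\alpha}\mathrm{d}u$. Its finiteness follows from $\alpha<1$ at the origin and from $\rho\le\beta<\alpha$ at infinity, where the inequality $\rho\le\beta$ is forced by \eqref{eq: bounded away} upon letting $t\searrow 0$ for fixed $u\ge 1$.

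The main work will be to produce a $t$-independent integrable dominant. On $[1,\infty)$, assumption \eqref{eq: bounded away} applied with $t\in(0,h_0]$ and $1+u\ge 1$ directly bounds the bracket by $C(1+u)^\beta$, so the integrand is $O(u^{\beta-1-\alpha})$ and integrable precisely because $\beta<\alpha$. On $(0,1]$, I would write $k(t(1+u))-k(t) = t\int_0^u k'(t(1+r))\,\mathrm{d}r$ and invoke the monotone density theorem (applicable since $k$ is regularly varying of index $\rho<1$ with $k'$ eventually monotone near zero), which yields $s\hspace{0.02cm}k'(s)/k(s)\to\rho$ as $s\searrow 0$. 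Combining this with the Potter-type bound $k(t(1+r))/k(t)\lesssim 1$ for $r\in[0,1]$ and small $t$ produces the uniform estimate $|k(t(1+u))/k(t) - 1|\lesssim u$, so the integrand is dominated by $C\hspace{0.02cm}u^{-\alpha}$, integrable at the origin since $\alpha<1$. Dominated convergence then yields \eqref{eq: B appendix regular varying}.

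Once \eqref{eq: B appendix regular varying} is established, the asymptotics $\widetilde{k}(t)\sim C(\alpha)\,t^{\rho-\alpha}\ell(1/t)$ as $t\searrow 0$ combined with continuity of $\widetilde{k}$ on $\R_+^*$ (again by dominated convergence applied on compact subsets of $\R_+^*$, using the $C^1$-smoothness of $k$) gives $\widetilde{k}\in L_{\mathrm{loc}}^2(\R_+;\R)$ precisely when $2(\rho-\alpha)>-1$, i.e.\ under the extra hypothesis $\alpha<\rho+\tfrac{1}{2}$. The estimate \eqref{eq: B last} is then obtained by repeating the first two steps with $|k(t+z)-k(t)|$ in place of $k(t+z)-k(t)$: the same dominant applies verbatim, the limiting constant becomes the finite $\int_0^\infty|(1+u)^\rho-1|u^{-1-\alpha}\mathrm{d}u$, and the same $t^{2(\rho-\alpha)}\ell(1/t)^2$ local integrability follows.

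The hard part is setting up the dominant near $u=0$: one must translate the hypotheses (regular variation plus \eqref{eq: bounded away}) into a uniform-in-$t$ bound on $k'(s)/k(t)$ for $s$ close to $t$, which is exactly what the monotone density theorem combined with Potter's bounds provides. After that, the remaining arguments are standard applications of dominated convergence and the asymptotic $k(t)\sim t^\rho\ell(1/t)$.
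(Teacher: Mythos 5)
Your proposal follows essentially the same route as the paper's proof: the substitution $z = tu$, the split at $u=1$ with the direct use of \eqref{eq: bounded away} on $[1,\infty)$ and a mean-value/fundamental-theorem argument combined with the monotone density theorem on $(0,1]$, the observation that $\rho\le\beta$ follows from \eqref{eq: bounded away}, dominated convergence for \eqref{eq: B appendix regular varying}, and finally the asymptotic $\widetilde{k}(t)\sim C(\alpha)t^{\rho-\alpha}\ell(1/t)$ together with boundedness on $[\overline{h}_0,T]$ for \eqref{eq: B last}. The only cosmetic difference is that you express the increment $k(t(1+u))-k(t)$ as $t\int_0^u k'(t(1+r))\,\mathrm{d}r$ whereas the paper uses the mean value theorem directly; both are controlled in the same way via the bound $\sup_{r\in(0,h_0]}|rk'(r)/k(r)|<\infty$ and \eqref{eq: bounded away}.
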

\begin{proof}
    Remark that by assumption \eqref{eq: bounded away}, for each $t_0 > 0$ there exists some $C_{t_0} > 0$ such that $|k(t)| \leq C_{t_0}t^{\beta}$ holds for $t \geq t_0$. Indeed, without loss of generality we may suppose that $t_0 \in (0,h_0)$. Then
    \[
        |k(t)| = \left| \frac{k\big( \frac{t}{t_0} t_0\big)}{k(t_0)}\right| \cdot |k(t_0)| \leq C |k(t_0)| \left( \frac{t}{t_0}\right)^{\beta}.
    \]
    
    Let us first show that the integral in the definition of $\widetilde{k}$ converges for each $t>0$. For $z \in (0,1]$ we use the mean-value theorem to find $\xi_z \in [t,t+1]$ such that $k(t+z) - k(t) = k'(\xi_z)z$. Hence, using the conclusion after \eqref{eq: ki derivative 2}, we find $|k'(\xi_z)| \lesssim_{\varepsilon} \xi_z^{\rho - 1 - \varepsilon} \lesssim t^{\rho - 1 - \varepsilon}$ for each $\varepsilon > 0$ by $\rho <1$. Further, by assumption we have $|k(r)| \leq C_t\hspace{0.02cm} r^{\beta}$ for $r \geq t$, and hence 
    \begin{align}\label{eq:ktildeforwardfracbound}
        \notag\big|\widetilde{k}(t)\big| &\leq \int_{0}^{1}\left| k(t+z) - k(t)\right| \, \frac{e^{-z}\,\mathrm{d}z}{z^{1 + \alpha}}
        + \int_1^{\infty} \left| k(t+z) - k(t)\right|\, \frac{e^{-z}\,\mathrm{d}z}{z^{1 + \alpha}}
        \\ &\lesssim_{\varepsilon,t}  \int_0^1 \frac{e^{-z}\mathrm{d}z}{z^{\alpha}} + \int_1^{\infty} \left(  (t+z)^{\beta} + t^{\beta} \right)\, \frac{\mathrm{d}z}{z^{1 + \alpha}} < \infty
    \end{align}
    for all $\alpha\in(\beta, 1)$ and $t>0$.

    Next, we prove \eqref{eq: B appendix regular varying}. By assumption, we can write $k(t) = t^{\rho}L(t)$, where $\ell(t) = L(1/t)$ is slowly varying. Using the substitution $z = t u$, we obtain
    \begin{align*}
        \frac{\widetilde{k}(t)}{t^{\rho - \alpha}L(t)} &= \frac{t^{-\alpha}}{t^{\rho - \alpha}L(t)}\int_{0}^{\infty} \big( k(t(1+u)) - k(t) \big)\, \frac{e^{-tu}\, \mathrm{d}u}{u^{1 + \alpha}}
        \\ &= \int_0^{\infty} \left( (1+u)^{\rho}\hspace{0.02cm}\frac{L(t(1+u))}{L(t)} - 1 \right)\, \frac{e^{-tu}\, \mathrm{d}u}{u^{1 + \alpha}}.
    \end{align*}
    For each fixed $u \in (0,\infty)$, the integrand converges as $t \searrow 0$. 

    To apply dominated convergence, note that for $u \geq 1$ and $t \in (0,h_0)$, we obtain by assumption \eqref{eq: bounded away}:
    \begin{align*}
        \left|(1+u)^{\rho}\hspace{0.02cm}\frac{L(t(1+u))}{L(t)}\right|
        = \left|\frac{k(t(1+u))}{k(t)}\right|
        \leq C (1+u)^{\beta}
    \end{align*}
    and hence
    \[
       \int_1^{\infty} \left| (1+u)^{\rho}\hspace{0.02cm}\frac{L(t(1+u))}{L(t)} - 1 \right|\, e^{-tu} \frac{\mathrm{d}u}{u^{1+\alpha}} 
        \lesssim \int_1^{\infty} \left((1+u)^{\beta} + 1 \right) \, \frac{\mathrm{d}u}{u^{1+\alpha}}
    \]
    which is finite due to $\alpha >\beta$. For $u \in [0,1]$, we use again the mean-value theorem to find $\xi \in [1,2]$ such that $k(t(1+u)) - k(t) = tu \hspace{0.02cm}k'(t\xi)$. By an application of the monotone density theorem similarly to \eqref{eq: ki derivative 1} and \eqref{eq: ki derivative 2}, we obtain 
    \begin{align}\label{eq: appendix 2}
        \sup_{\xi \in [1,2]}\sup_{t \in (0,h_0/2)} \left| \frac{t\xi\hspace{0.02cm} k'(t\xi)}{k(t\xi)} \right|=\sup_{r\in (0,h_0]} \left| \frac{r k'(r)}{k(r)} \right| < \infty.
    \end{align} 
    Hence, using \eqref{eq: bounded away}, we obtain for $t\in (0,h_0/2)$: 
    \begin{align*}
        \left|(1+u)^{\rho}\hspace{0.02cm}\frac{L(t(1+u))}{L(t)} - 1 \right| 
        &= \left|\frac{k(t(1+u))}{k(t)} - 1 \right| 
        \\ &= u \left| \frac{t k'(t\xi)}{k(t\xi)} \right| \cdot\left| \frac{k(t\xi)}{k(t)} \right|  
        \lesssim u\cdot \sup_{r \in (0, h_0]}\left| \frac{r k'(r)}{k(r)} \right| \cdot \xi^{\beta-1} 
        \lesssim u.
    \end{align*}
    Due to $\alpha<1$, this yields
    \begin{align*}
        \int_0^{1} \left| (1+u)^{\rho}\hspace{0.02cm}\frac{L(t(1+u))}{L(t)} - 1 \right|\, e^{-tu} \frac{\mathrm{d}u}{u^{1+\alpha}} 
        &\lesssim \int_0^1 \, \frac{\mathrm{d}u}{u^{\alpha}} < \infty.
    \end{align*}
    By dominated convergence, we obtain
    \begin{align*}
        \lim_{t \searrow 0}\int_0^{\infty} \left( (1+u)^{\rho}\hspace{0.02cm}\frac{L(t(1+u))}{L(t)} - 1 \right)\, \frac{e^{-tu}\, \mathrm{d}u}{u^{1 + \alpha}} 
        = \int_0^{\infty} \big( (1+u)^{\rho} - 1 \big)\, \frac{\mathrm{d}u}{u^{1 + \alpha}}, 
    \end{align*}
     where the latter is finite by the mean value theorem for small $u$ and $\max\{0,\rho\} < \alpha$ for large $u$, which follows from $\beta\in[0,\alpha)$ and $\rho\le\beta$ implied by \eqref{eq: bounded away} and $k$ being regularly varying in $t=0$. 
     
     Finally, repeating the above argument for 
     \[ 
        \overline{k}(t) := \int_{0}^{\infty}\big| k(t+z) - k(t)\big|\hspace{0.02cm}\frac{e^{-z}\mathrm{d}z}{z^{1 +  \alpha}}, \qquad t > 0,
        \]
    proves that also $\overline{k}$ satisfies \eqref{eq: B appendix regular varying} for a constant $\overline{C}(\alpha)\in\R$. According to \eqref{eq: slowly varying bound} and assuming $\alpha<\rho+1/2$, there exists $\overline{h}_0>0$ such that $\overline{k}\in L^2([0,\overline{h}_0];\R)$. Moreover, for all $T\ge \overline{h}_0$ and each $t\in [\overline{h}_0,T]$ the same estimate as in \eqref{eq:ktildeforwardfracbound} yields  
    \begin{align*}
        \big|\overline{k}(t)\big|  \le \sup_{r\in [\overline{h}_0,T+1]} |k'(r)|\int_0^1 \frac{e^{-z}\mathrm{d}z}{z^{\alpha}} + \int_1^{\infty} C_{\overline{h}_0}\left( (t+z)^{\beta} + t^{\beta} \right)\, \frac{\mathrm{d}z}{z^{1 + \alpha}}=:C(\overline{h}_0,T) < \infty,
    \end{align*}
    where we utilized $\beta\in[0,\alpha)$. Consequently, we have $\overline{k} \in L_{\mathrm{loc}}^2(\R_+; \R)$, which implies condition \eqref{eq: B last}. As $\big|\widetilde{k}\big|\le \overline{k}$, this proves also $\widetilde{k} \in L_{\mathrm{loc}}^2(\R_+; \R)$.
\end{proof}

\end{appendices}

\begin{footnotesize}
\bibliographystyle{siam}
\bibliography{literature}
\end{footnotesize}

\end{document}